\def\R{{\mathbb R}}
\def\C{{\mathbb C}}
\def\bd{\partial}
\def\minus{\backslash}
\DeclareMathOperator{\cl}{Cl}
\newcommand{\arxiv}[1]{\href{http://arxiv.org/abs/#1}{\tt arXiv:#1}}
\newtheorem{theorem}{Theorem}[section]
\newtheorem{lemma}[theorem]{Lemma}
\newtheorem{corollary}[theorem]{Corollary}
\newtheorem{proposition}[theorem]{Proposition}
\newtheorem{definition}[theorem]{Definition}
\let\@fnsymbol\@arabic\makeatother
\begin{document}
\title{Polynomials vanishing on Cartesian products: The Elekes-Szab\'o Theorem revisited\let\thefootnote\relax\footnotetext{
Work on this paper by Orit E. Raz and Micha Sharir was supported by Grant 892/13 from the Israel Science Foundation, and
by the Israeli Centers of Research Excellence (I-CORE) program (Center No.~4/11).
Work by Micha Sharir was also supported by Grant 2012/229 from the U.S.--Israel Binational Science Foundation, 
and by the Hermann Minkowski-MINERVA Center for Geometry at Tel Aviv University.
Work on this paper by Frank de Zeeuw was partially supported by Swiss National Science Foundation Grants 200020-144531 and 200021-137574.}}

\author{
Orit E. Raz\thanks{%
School of Computer Science, Tel Aviv University,
Tel Aviv
, Israel.
{\sl oritraz@post.tau.ac.il} }
\and
Micha Sharir\thanks{%
School of Computer Science, Tel Aviv University,
Tel Aviv
, Israel.
{\sl michas@post.tau.ac.il} }
\and
Frank de Zeeuw\thanks{%
Mathematics Department, EPFL, Lausanne, Switzerland.
{\sl fdezeeuw@gmail.com}} }

\date{}
\maketitle

\begin{abstract}
Let $F\in\C[x,y,z]$ be a constant-degree polynomial,
and let $A,B,C\subset\C$ be finite sets of size $n$.
We show that $F$ vanishes on at most $O(n^{11/6})$ points of the 
Cartesian product $A\times B\times C$,
unless $F$ has a special group-related form. 
This improves a theorem of Elekes and Szab\'o \cite{ES12}, and generalizes a result of Raz, Sharir, and Solymosi \cite{RSS14a}. 
The same statement holds over $\R$, and a similar statement holds
when $A, B, C$ have different sizes (with a more involved bound replacing $O(n^{11/6})$).

This result provides a unified tool for improving bounds in various Erd\H os-type problems in combinatorial geometry,
and we discuss several applications of this kind.
\end{abstract}


\section{Introduction}\label{sec:intro}
In 2000, Elekes and R\'onyai \cite{ER00} proved the following result.
Given a constant-degree real polynomial $f(x,y)$, and finite sets $A,B,C\subset \R$ of size $n$, 
we have
\[\big|\left\{(x,y,z)\in\R^3\mid z-f(x,y)=0\right\} \;\cap\; (A\times B\times C)\big|=o(n^2),\]
unless $f$ has one of the forms $f(x,y)=g(h(x)+k(y))$ or $f(x,y)=g(h(x)k(y))$, with univariate real polynomials $g,h,k$.
Recently, Raz, Sharir, and Solymosi \cite{RSS14a} extended an argument introduced in \cite{SSS13} to improve the upper bound  
to $O(n^{11/6})$ (when $f$ does not have one of the special forms).

Elekes and Szab\'o \cite{ES12} generalized the result of \cite{ER00} to any complex algebraic surface of the form
\[Z(F):=\left\{(x,y,z)\in\C^3\mid F(x,y,z)=0\right\},\]
where $F$ is an irreducible polynomial in $\C[x,y,z]$. 
They showed that if $A,B,C\subset\C$ are finite sets of size $n$, then $|Z(F)\cap (A\times B\times C)|$ 
is subquadratic in $n$, unless $F$ has a certain exceptional form. 
The exceptional form of $F$ in this statement is harder to describe (see $(ii)$ in Theorem \ref{thm:main1} below), 
but is related to an underlying group structure that describes the dependencies of $F$ on each of the variables 
(similar to the addition or multiplication that appear in the exceptional forms of $F(x,y,z)=z-f(x,y)$ in \cite{ER00, RSS14a}).
The upper bound that Elekes and Szab\'o obtained, when $F$ is not exceptional, was 
$|Z(F) \cap (A\times B\times C)|=O(n^{2-\eta})$, for an undetermined constant $\eta>0$ that depends on the degree of $F$.

\paragraph{Our results.}
In this paper, we show that the theorem of Elekes and Szab\'o holds for $\eta=1/6$, thereby extending the strengthened result 
of \cite{RSS14a} to the generalized setup in \cite{ES12}. More precisely, our main result is the following theorem.

\begin{theorem}[{\bf Balanced case}]\label{thm:main1}
Let $F\in \C[x,y,z]$ be an irreducible polynomial of degree $d$, and assume that none of the derivatives 
$\partial F/\partial x$, $\partial F/\partial y$, $\partial F/\partial z$ is identically zero. Then one of the following two statements holds.\\
$(i)$ For all $A,B,C\subset \C$ with $|A|=|B|=|C|=n$ we have
\[|Z(F) \cap (A\times B\times C)|=O(d^{13/2}n^{11/6}).\]
$(ii)$ There exists a one-dimensional subvariety
$Z_0\subset Z(F)$,
such that for all $v\in Z(F)\minus Z_0$, 
there exist open sets $D_1,D_2,D_3\subset \C$ and one-to-one analytic functions $\varphi_i: D_i\to \C$ with analytic inverses,
 for $i=1,2,3$,
such that $v\in D_1\times D_2\times D_3$ and for all $(x,y,z)\in D_1\times D_2\times D_3$,
\[(x,y,z)\in Z(F)~~~\text{if and only if}~~~\varphi_1(x)+\varphi_2(y)+\varphi_3(z) = 0.\]
\end{theorem}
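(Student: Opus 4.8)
The plan is to follow the quantitative incidence-bounding strategy of Raz–Sharir–Solymosi, adapted to the implicitly-defined surface $Z(F)$ rather than a graph $z=f(x,y)$. First I would fix a generic point $v_0=(x_0,y_0,z_0)\in Z(F)$ away from the singular locus and from the zero sets of the partial derivatives, and use the implicit function theorem to write, locally near $v_0$, $z$ as an analytic function $z=g(x,y)$ on a small bidisk $D_1\times D_2$, with $\partial g/\partial x$ and $\partial g/\partial y$ nowhere zero there. The combinatorial core is then to count solutions of $g(x,y)=g(x',y')$; exactly as in \cite{RSS14a} one sets up the surface $\{(x,y,x',y') : g(x,y)=g(x',y')\}$ in $\C^4$, parametrizes the curves $y\mapsto (x,y,x',y')$ for fixed $(x',y')$, and bounds the number of pairs of these curves that are \emph{not} ``infinitesimally parallel'' using a Szemerédi–Trotter / Guth–Katz type incidence bound for curves in the plane. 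This yields the $O(n^{11/6})$ bound on $|Z(F)\cap(A\times B\times C)|$, with the $d$-dependent factor coming from Bézout-type bounds on the degrees of the auxiliary curves, \emph{provided} the ``too many parallel curve pairs'' scenario does not occur.

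The dichotomy comes precisely from analyzing that exceptional scenario. If the incidence bound fails, then for a positive-proportion subfamily the curves are pairwise infinitesimally parallel on a common open set, which forces a differential identity: the local function $g$ satisfies a PDE of the form $\partial^2(\log\partial_x g / \partial_y g)/\partial x\,\partial y \equiv 0$ (or the analogous Elekes–Rónyai-type condition), whose general solution, by the classical Elekes–Rónyai analysis, is that $g(x,y)=h(p(x)+q(y))$ for local analytic $p,q,h$. Unwinding this back to $F$: near $v_0$, $F(x,y,z)=0 \iff z=h(p(x)+q(y)) \iff p(x)+q(y)-h^{-1}(z)=0$, so setting $\varphi_1=p$, $\varphi_2=q$, $\varphi_3=-h^{-1}$ gives the additive form in $(ii)$ on that neighborhood. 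The remaining task is to upgrade this from ``at one generic point'' to ``at every point of $Z(F)$ outside a one-dimensional subvariety $Z_0$'': because the differential identity holds on an open set and $Z(F)$ is irreducible, it propagates (as an identity between restrictions of $F$ and the algebraic conditions defining the $\varphi_i$ to all of $Z(F)$), and the locus where the relevant Jacobians or the parametrizations degenerate is a proper subvariety of the surface, hence one-dimensional; this is where I would invoke irreducibility of $F$ and the nonvanishing of the three partials to ensure $Z_0 \subsetneq Z(F)$ and that the local $\varphi_i$ can be chosen one-to-one with analytic inverse.

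The main obstacle, I expect, is the passage from the graph setting of \cite{RSS14a} to the implicit setting: one must show that the two-variable ``infinitesimally parallel'' analysis is symmetric enough in $x,y,z$ that the conclusion genuinely produces a \emph{ternary additive} relation $\varphi_1(x)+\varphi_2(y)+\varphi_3(z)=0$ rather than just a binary one with $z$ privileged — in particular, ruling out the multiplicative form $g(x,y)=h(p(x)q(y))$ that appears in Elekes–Rónyai requires noting that $p(x)q(y)=w$ is itself $\log$-additive, $\log p(x)+\log q(y)=\log w$, so after composing with a local logarithm it folds into case $(ii)$. A secondary technical point is controlling the constant $d^{13/2}$: this requires being careful that every auxiliary polynomial (resultants, discriminants, the curves carrying the incidences) has degree polynomially bounded in $d$, and that the number of bad points contributed by each degenerate stratum is likewise $O(\mathrm{poly}(d))$, so that they can all be absorbed into $Z_0$ without affecting the main term.
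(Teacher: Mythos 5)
Your high-level end goal is right: the target is exactly the additive form $\varphi_1(x)+\varphi_2(y)+\varphi_3(z)=0$, your proposed differential criterion $\partial^2\bigl(\log(\partial_x g/\partial_y g)\bigr)/\partial x\,\partial y\equiv 0$ for the local implicit function $g$ is essentially equivalent to the Jacobian condition that eventually appears in the paper (Lemma 3.8 with $x',y'$ frozen reduces $\det J_{\Phi_v}=0$ to precisely the separability of $g_x/g_y$), and your observation that the multiplicative Elekes--R\'onyai form folds into the additive one via a local logarithm is correct. The incidence setup via quadruples and curves $\gamma_{y,y'}$ and a Szemer\'edi--Trotter-type bound with explicit $d$-dependence is also roughly the one the paper uses.

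The genuine gap is the mechanism by which ``the incidence bound fails'' is supposed to imply that the PDE holds. You write that ``a positive-proportion subfamily of curves are pairwise infinitesimally parallel on a common open set, which forces a differential identity,'' but that inference is exactly the hard step, and the route you indicate --- ``exactly as in \cite{RSS14a}'' --- does not carry over. The RSS14a argument for handling curves with many common components is purely algebraic (resultants, factorization, and degree counting on the {\em polynomial} $f$), and it has no direct analogue when $z=g(x,y)$ is merely a local analytic branch of $Z(F)$ rather than a polynomial graph. The paper explicitly flags this: the common-component case is handled by ``entirely different machinery.'' What actually bridges the gap is the introduction of the eight-dimensional algebraic variety
\[
V=\{(x,x',y,y',z_1,z_2,z_3,z_4):F(x,y,z_1)=F(x,y',z_2)=F(x',y,z_3)=F(x',y',z_4)=0\},
\]
an algebraic ``Jacobian polynomial'' $G$ (so that $W=V\cap Z(G)$ is again algebraic), and a dimension-counting dichotomy on $\cl(\pi_1(W))$. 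When $\gamma$ is a popular curve with associated curve $\gamma^*$, one shows $V\cap(\C^2\times\gamma^*\times\gamma\times\gamma)$ is a {\em three}-dimensional subset that the local bianalytic map $\Phi_v$ would have to send into the {\em two}-dimensional $\gamma\times\gamma$, forcing $\det J_{\Phi_v}=0$; the algebraic nature of $W$ then makes it possible to (a) bound the number of popular curves polynomially in $d$ via degree bounds on $\cl(\pi_1(W))$, and (b) in the degenerate case, propagate the Jacobian vanishing from one open set to a dense open subset of $Z(F)$ through an extension lemma for $V'$. Your proposal has neither of these ingredients: ``the identity propagates because $Z(F)$ is irreducible'' does not work as stated, since the $\varphi_i$ are non-algebraic analytic functions and the thing that must be shown to hold globally is the vanishing of the algebraic $G$ on a full-dimensional component of $V$, together with the nontrivial fact that almost every point of $Z(F)$ extends to a regular point of that component off $V_0$. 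Without some replacement for this algebraic-variety scaffolding, both the $O(\mathrm{poly}(d))$ bound on the number of popular curves and the propagation step are unsubstantiated.
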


Note that for an arbitrary nonzero polynomial $F\in \C[x,y,z]$ of degree $d$ and $A,B,C\subset \C$ with $|A|=|B|=|C|=n$, the Schwartz-Zippel lemma (stated as Lemma \ref{lem:schwartzzippel} in the appendix) gives the bound $|Z(F)\cap(A\times B\times C)| \leq dn^2$.
Thus, for polynomials that do not satisfy property $(ii)$, the bound in property $(i)$ gives an improvement when $d$ is not too large (to be precise, for $d=o(n^{1/33})$).

When property $(ii)$ holds, property $(i)$ fails.
Indeed, consider any $v=(x_0,y_0,z_0)$ and $\varphi_i,D_i$ as in property $(ii)$.
If we set $t_1=\varphi_1(x_0)$, $t_2=\varphi_2(y_0)$, and $t_3=\varphi_3(z_0)$, then we have $t_1+t_2+t_3=0$.
Now choose $A\subset D_1$, $B\subset D_2$, and $C\subset D_3$ so that we have $\varphi_1(A)=\{t_1,t_1+a,t_1+2a,\ldots,t_1+(n-1)a\}$, 
$\varphi_2(B)=\{t_2,t_2+a,t_2+2a,\ldots,t_2+(n-1)a\}$, and 
$\varphi_3(C)=\{t_3,t_3-a,t_3-2a,\ldots, t_3-(n-1)a\}$; this is clearly possible for $a\in \C$ with a sufficiently small absolute value. 
Then $|Z(F)\cap (A\times B\times C)|\geq n^2/4$.

The theorem can easily be extended to polynomials $F$ that are not irreducible. 
Indeed, we can factor $F$ into irreducible factors and apply the theorem to each factor.
Then we can conclude that either $(i)$ holds for $F$, or $(ii)$ holds for one of the factors of $F$,
or one of the factors of $F$ has an identically zero partial derivative, i.e., does not depend on one of the variables.

Our proof also works when the sets $A,B,C$ do not have the same size.
Such an ``unbalanced'' form was not considered in \cite{ER00} or \cite{ES12},
but similar unbalanced bounds were obtained in \cite{RSS14a}, and they are useful in applications where the roles of 
$A,B,C$ are not symmetric (see for instance Corollary \ref{cor:directions}).
We obtain the following result, 
which subsumes Theorem \ref{thm:main1} (as will be argued later); we have stated both for clarity.

\begin{theorem}[{\bf Unbalanced case}]\label{thm:main2}
In Theorem \ref{thm:main1}, property $(i)$ can be replaced by:\\
$(i^*)$ For all triples $A,B,C\subset \C$ of finite sets, we have
\begin{align*}
|Z(F) \cap (A\times B\times C)|=O\Big(\hspace{-2pt}\min\Big\{
 d^{{13}/{2}}|A|^{1/2}|B|^{2/3}|C|^{2/3}&+ d^{17/2}|A|^{1/2}\Big(|A|^{1/2}+|B|+|C|\Big),\\
d^{{13}/{2}}|B|^{1/2}|A|^{2/3}|C|^{2/3}&+d^{{17}/{2}}|B|^{1/2}\Big(|B|^{1/2}+|A|+|C|\Big),\\
d^{13/2}|C|^{1/2}|A|^{2/3}|B|^{2/3}+&d^{17/2}|C|^{1/2}\Big(|C|^{1/2}+|A|+|B|\Big)\Big\}\Big).
\end{align*}
\end{theorem}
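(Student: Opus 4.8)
The plan is to prove Theorem~\ref{thm:main2}; Theorem~\ref{thm:main1} then follows by taking $|A|=|B|=|C|=n$, noting that $d^{17/2}n^{3/2}$ is dominated by $d^{13/2}n^{11/6}$ whenever $d\le n^{1/6}$, while for $d\ge n^{1/33}$ the Schwartz--Zippel bound $dn^2$ of Lemma~\ref{lem:schwartzzippel} is itself at most $d^{13/2}n^{11/6}$. Since the three alternatives inside the minimum are obtained from one another by permuting the roles of $A,B,C$, it suffices to establish the first one, so we single out the variable $x$ for a Cauchy--Schwarz step. Write $M:=|Z(F)\cap(A\times B\times C)|$, and for $a\in A$ put $M_a:=|Z(F)\cap(\{a\}\times B\times C)|$, so that $M=\sum_{a\in A}M_a$ and $M^2\le|A|\sum_a M_a^2$. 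The sum $\sum_a M_a^2$ counts quintuples $(a,b,c,b',c')$ with $F(a,b,c)=F(a,b',c')=0$; the diagonal $(b,c)=(b',c')$ contributes exactly $M$, so it suffices to bound the number $I$ of off-diagonal such quintuples, since then $M\le|A|+\sqrt{|A|\,I}$.

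Next I would recast the off-diagonal count as a planar point--curve incidence problem. For fixed $(b,c')\in B\times C$, consider the set $\{(a,c,b')\in\C^3 : F(a,b,c)=0,\ F(a,b',c')=0\}$; for all but an exceptional collection of pairs $(b,c')$ this is a curve in $\C^3$, and projecting out the coordinate $a$ gives a plane curve $\gamma_{b,c'}$ in the $(c,b')$-plane, namely the zero set of $\mathrm{Res}_a\big(F(a,b,c),\,F(a,b',c')\big)$, a polynomial of degree $O(d^2)$ in $(c,b')$. The exceptional pairs $(b,c')$ are those for which $F(x,b,z)$ or $F(x,b',z)$ is independent of $x$, or for which the projection fails to be finite-to-one; since $F$ is irreducible and genuinely depends on $x$, each such constraint confines $(b,c')$ to a variety of bounded degree, and the total contribution of exceptional pairs is controlled by Schwartz--Zippel. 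Off the exceptional locus the projection is $O(d^{O(1)})$-to-one, so $I\le d^{O(1)}\cdot\mathcal I$, where $\mathcal I$ is the number of incidences between the $|B||C|$ points of $C\times B$ and the $|B||C|$ curves $\gamma_{b,c'}$.

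To bound $\mathcal I$ I would apply the Pach--Sharir incidence bound for points and plane curves with two degrees of freedom, which gives $\mathcal I=O\big((|B||C|)^{4/3}+|B||C|\big)$, provided that (1) any two distinct curves $\gamma_{b,c'}$ meet in $O(1)$ points, and (2) through any two points of the plane pass only $O(1)$ of the curves. Claim (1) follows from B\'ezout unless two distinct $\gamma_{b,c'}$ share a common component, which I would again show is an exceptional situation. Claim (2) unwinds as follows: for two points $(c_1,b_1')$ and $(c_2,b_2')$, a pair $(b,c')$ has both points on $\gamma_{b,c'}$ precisely when, for $i=1,2$, there is $a_i$ with $F(a_i,b,c_i)=F(a_i,b_i',c')=0$; for fixed base points each of these two conditions cuts out a curve of degree $O(d^2)$ in the $(b,c')$-plane (a resultant in $a$), and (2) asserts that for generic base points these two curves share no component, whence they meet in $O(d^{O(1)})$ points. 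Feeding $\mathcal I$ back into $M\le|A|+\sqrt{|A|\,I}$ and tracking the degree growth through the resultants and the incidence constant yields $M=O\big(d^{13/2}|A|^{1/2}|B|^{2/3}|C|^{2/3}+d^{17/2}|A|^{1/2}(|A|^{1/2}+|B|+|C|)\big)$, i.e.\ alternative $(i^*)$; the term $d^{17/2}|A|^{1/2}(|B|+|C|)$ arises from the ``$+\,m+n$'' part of the incidence bound together with the finite-to-one factor $d^{O(1)}$ and the estimate $(|B||C|)^{1/2}\le\tfrac12(|B|+|C|)$, and the $|A|$ term from the diagonal.

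The crux — where all the real work lies — is the remaining case, in which claim (2) fails: for generic base points the two resultant-curves in the $(b,c')$-plane do share a component, so a one-parameter family of curves $\gamma_{b,c'}$ passes through two generic points. I expect this algebraic coincidence to be rigid enough to manufacture a hidden one-dimensional group acting compatibly on the three coordinate projections of $Z(F)$, in the spirit of the original Elekes--Szab\'o argument; one must then identify this group — over $\C$, up to a local analytic change of coordinates, the only connected one-dimensional groups are the additive one and the multiplicative one, and the latter becomes additive under a logarithm — and push the resulting \emph{local} parametrization back to $F$, obtaining one-to-one analytic maps $\varphi_1,\varphi_2,\varphi_3$ with analytic inverses such that $\varphi_1(x)+\varphi_2(y)+\varphi_3(z)=0$ describes $Z(F)$ locally on a Zariski-dense open subset, its complement sitting inside a one-dimensional subvariety $Z_0$ — this is alternative $(ii)$. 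Beyond extracting and identifying this group structure, the remaining laborious parts will be making all the ``exceptional locus'' estimates in the incidence reduction uniform over arbitrary finite $A,B,C$, and verifying that this degenerate scenario is genuinely incompatible with the position hypotheses (1)--(2) needed for the incidence bound, so that the dichotomy $(i^*)$ vs.\ $(ii)$ is exhaustive.
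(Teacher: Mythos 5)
Your overall architecture --- Cauchy--Schwarz in one variable to reduce to a quadruple count, elimination to produce a planar point--curve incidence problem with bounded-degree curves, and a dichotomy between a good incidence configuration and a degenerate one that forces $(ii)$ --- matches the skeleton of the paper's proof (and of its predecessors \cite{SSS13,RSS14a}). Your slicing is different in detail: the paper parametrizes curves by $(b,b')\in B^2$ living in the $(c,c')$-plane (so $\Pi\subset C\times C$ and $|\Gamma|\le|B|^2$), whereas you fix $(b,c')\in B\times C$ and take curves in the $(c,b')$-plane; both are Cartesian-product configurations and both deliver the same $|A|^{1/2}(|B||C|)^{2/3}$ main term, so this choice is largely cosmetic. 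Two points, however, need correction or substantial work.

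First, the incidence theorem. You invoke Pach--Sharir with two degrees of freedom, but that is a theorem about \emph{real} curves, and here everything is over $\C$; moreover, the claimed $d^{13/2}$ and $d^{17/2}$ require an incidence bound with \emph{explicit} polynomial dependence on the degree and the multiplicity parameters. The paper uses the Solymosi--de~Zeeuw bound for complex algebraic curves meeting a Cartesian product (Theorem~\ref{thm:incprod}), promoted to multisets with $(\lambda,\mu)$-bounded multiplicity (Theorem~\ref{cor:multcor}). A replacement with this flavor is essential; Pach--Sharir will not do.

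Second, and more seriously, the degenerate case is the whole ball game, and your sketch there is not a proof and points in a direction the paper deliberately avoids. You propose to ``manufacture a hidden one-dimensional group \ldots in the spirit of the original Elekes--Szab\'o argument.'' But the original argument of \cite{ES12} (resultants, composition series, model-theoretic/group-theoretic machinery) produces an exponent $\eta>0$ that depends on $\deg F$ in an unspecified way and gives no explicit $d$-dependence; following it would not yield $\eta=1/6$, let alone $d^{13/2}$. What the paper actually does (Section~\ref{sec:proofofkey}, Proposition~\ref{prop:keyprop}) is entirely different and is inspired by Tao's finite-field argument \cite{Tao11a}: it introduces the $4$-dimensional variety $V\subset\C^8$ cut out by $F(x,y,z_1)=F(x,y',z_2)=F(x',y,z_3)=F(x',y',z_4)=0$, writes $V$ locally as the graph of an analytic map $\Phi_v:(x,x',y,y')\mapsto(z_1,z_2,z_3,z_4)$, and shows that ``many popular curves'' forces the determinant of $J_{\Phi_v}$ to vanish on a $4$-dimensional component of $V$; from the resulting first-order PDE in the implicit derivatives one extracts, after passing to characteristic coordinates $\xi=\varphi_1(x)+\varphi_2(y)$, $\eta=\varphi_1(x)-\varphi_2(y)$, the additive normal form $\varphi_1(x)+\varphi_2(y)+\varphi_3(z)=0$. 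The complementary case ($\dim\operatorname{Cl}(\pi_1(W))\le3$) is handled by a separate argument (associated curves, a many-curve B\'ezout lemma) showing there are only $O(d^{O(1)})$ popular curves, with an explicit excluded subvariety $\mathcal Y$ of degree $O(d^{11})$ --- this quantitative control, not mere genericity, is what lets the incidence bound kick in with the stated constants. None of this is present in your sketch, so as written the proposal has a genuine gap precisely where the paper's contribution lies.

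Minor remarks: you should note (as the paper does via Lemma~\ref{lem:MtoQ}) that discarding the auxiliary variable $a$ can be many-to-one, contributing a $d$-factor and an additive $O(d^4|A|)$ from pairs $(b,c')$ with the fiber all of $\C$; and the algebraic identity you use for the $(|B|+|C|)$ term is AM--GM applied to $(|B||C|)^{1/2}$, which is fine but should be flagged.
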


We also have the following specialization of Theorem \ref{thm:main2} when $F$ is a real polynomial.
Of course, Theorems \ref{thm:main1} and \ref{thm:main2} also hold when $F$ is real, but it 
does not immediately follow that, in property $(ii)$ of these theorems, the functions $\varphi_i$ 
can be chosen so that they map $\R$ to $\R$.
The following theorem shows that this is indeed the case.
We write $Z_\R(F)$ for the real zero set of a real polynomial, and we refer to \cite{BPR03} for the definition of the dimension of a real zero set.
Note that if $Z_\R(F)$ has dimension less than 2, then it is not hard to obtain a better bound than in $(i)$.

\begin{theorem}[{\bf Real case}]\label{thm:main3}
Let $F\in \R[x,y,z]$ be a polynomial of degree $d$
that is irreducible over $\R$, such that $Z_\R(F)$ has dimension 2.
Then property $(ii)$ in both Theorems \ref{thm:main1} and \ref{thm:main2} can be replaced by:\\
$(ii)_\R$ 
There is a one-dimensional subvariety $Z_0\subset Z_\R(F)$
such that for all $v\in Z_\R(F)\minus Z_0$,
there are open intervals $I_1,I_2,I_3\subset \R$ and one-to-one real-analytic functions\footnote{%
  A real function $f:I\to \R$ on an interval $I$ is \emph{real-analytic} if it has a power series expansion at each point of $I$.}
$\varphi_i: I_i\to \R$ with real-analytic inverses, 
for $i=1,2,3$,
such that $v\in I_1\times I_2\times I_3$ and for all $(x,y,z)\in I_1\times I_2\times I_3$,
\[(x,y,z)\in Z(F)~~~\text{if and only if}~~~\varphi_1(x)+\varphi_2(y)+\varphi_3(z) = 0.\]
\end{theorem}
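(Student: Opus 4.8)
The plan is to deduce property $(ii)_\R$ from property $(ii)$ of Theorems~\ref{thm:main1} and~\ref{thm:main2}, using the fact that, since $F$ has real coefficients, the complex variety $Z(F)$ is invariant under coordinatewise complex conjugation. First I would note that the hypothesis $\dim Z_\R(F)=2$ already forces $F$ to be irreducible over $\C$ (up to a real scalar): if $F$ were a real scalar times $G\overline{G}$ with $G$ irreducible over $\C$ and not proportional to $\overline G$, then for a real point $p$ one has $\overline G(p)=\overline{G(p)}$, so $Z_\R(F)$ would equal the common real zero set of the real and imaginary parts of $G$; any common real irreducible factor of these two polynomials would divide $G$ over $\C$ and contradict the irreducibility of $G$, so this set has real dimension at most $1$, contrary to assumption (and the remaining case $G$ proportional to $\overline G$ likewise makes $F$ irreducible over $\C$). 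Hence Theorem~\ref{thm:main1} applies; since property $(i)$/$(i^*)$ is unaffected and continues to hold verbatim in the real setting (because $Z_\R(F)\subset Z(F)$), I may assume that property $(ii)$ holds, with a complex one-dimensional exceptional variety $Z_0\subset Z(F)$ and complex-analytic $\varphi_i$. I then set $Z_0^\R:=Z_0\cap\R^3$; writing $Z_0\cap\R^3=(Z_0\cap\overline{Z_0})\cap\R^3$ exhibits it as the set of real points of a real algebraic variety of complex dimension at most $1$, so $Z_0^\R$ is a subvariety of $Z_\R(F)$ of real dimension at most $1$, in particular properly contained in $Z_\R(F)$.

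Now fix $v=(x_0,y_0,z_0)\in Z_\R(F)\minus Z_0^\R$. Since $v\in Z(F)\minus Z_0$, property $(ii)$ provides $D_i\subset\C$ and $\varphi_i:D_i\to\C$ as stated; after shrinking, I take each $D_i$ to be a disk centered at the (real) $i$-th coordinate of $v$, so that each $D_i$ is conjugation-invariant. Define $\psi_i(w):=\overline{\varphi_i(\overline w)}$, which is again one-to-one analytic on $D_i$ with analytic inverse; using $\overline F=F$ one checks that on $D_1\times D_2\times D_3$ one has $(x,y,z)\in Z(F)$ if and only if $\psi_1(x)+\psi_2(y)+\psi_3(z)=0$. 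The key step is a rigidity statement: because $\varphi_3$ has nonvanishing derivative, $Z(F)\cap(D_1\times D_2\times D_3)$ is, near $v$, the graph of an analytic function $\psi(x,y)$, and the identities $\varphi_3(\psi(x,y))=-\varphi_1(x)-\varphi_2(y)$ and $\psi_3(\psi(x,y))=-\psi_1(x)-\psi_2(y)$, after differentiating in $x$ and in $y$ and separating variables, force the existence of constants $c\neq 0$ and $d_1,d_2,d_3$ with $d_1+d_2+d_3=0$ such that $\psi_i=c\varphi_i+d_i$ for all $i$. Applying the conjugation operation twice returns the identity $\varphi_i(w)=\overline{\psi_i(\overline w)}=|c|^2\varphi_i(w)+(\overline c\,d_i+\overline{d_i})$, so, as $\varphi_i$ is nonconstant, $|c|=1$ and $\overline c\,d_i+\overline{d_i}=0$ for each $i$.

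It remains to correct the $\varphi_i$ into functions that are real on the real axis. I would choose $\alpha$ with $|\alpha|=1$ and $\alpha^2=c$, and $\beta_i\in\C$ with $\operatorname{Im}\beta_i=\overline\alpha d_i/(2i)$ — which is consistent, since the relations above make $\overline\alpha d_i$ purely imaginary — and with $\operatorname{Re}\beta_i$ chosen so that $\beta_1+\beta_2+\beta_3=0$, which is possible because $\sum_i\operatorname{Im}\beta_i=\overline\alpha(\sum_i d_i)/(2i)=0$. Setting $\tilde\varphi_i:=\alpha\varphi_i+\beta_i$, a short computation gives $\overline{\tilde\varphi_i(\overline w)}=\tilde\varphi_i(w)$, so $\tilde\varphi_i$ is real-valued on $I_i:=D_i\cap\R$; moreover $\tilde\varphi_i$ is still one-to-one analytic with analytic inverse, and $\beta_1+\beta_2+\beta_3=0$ guarantees that $\tilde\varphi_1(x)+\tilde\varphi_2(y)+\tilde\varphi_3(z)=0$ continues to characterize $Z(F)$ on $D_1\times D_2\times D_3$. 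Restricting to the open intervals $I_i$ then yields $(ii)_\R$: each $\tilde\varphi_i|_{I_i}$ is a one-to-one real-analytic map $I_i\to\R$ with real-analytic inverse, $v\in I_1\times I_2\times I_3$, and for real $(x,y,z)\in I_1\times I_2\times I_3$ we have $(x,y,z)\in Z_\R(F)$ if and only if $\tilde\varphi_1(x)+\tilde\varphi_2(y)+\tilde\varphi_3(z)=0$.

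I expect the rigidity step to be the main obstacle: one must show that any two additive representations of the same local analytic graph differ only by a common affine rescaling of the three univariate functions, so that the conjugated representation $(\psi_i)$ can be compared term-by-term with the original $(\varphi_i)$. Once this is available, the passage to the real setting is exactly the elementary linear algebra of the order-two conjugation symmetry carried out above, together with routine bookkeeping about shrinking neighborhoods and verifying that the restricted maps have real-analytic inverses.
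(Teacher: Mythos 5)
Your proof is correct, and it takes a genuinely different route from the paper's.

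The paper first treats the case where $F$ is irreducible over $\C$: it applies the complex property $(ii)$, writes $\varphi_j=\mathsf{Re}(\varphi_j)+i\,\mathsf{Im}(\varphi_j)$, notes that for real inputs the vanishing condition is equivalent to both $\sum_j\mathsf{Re}(\varphi_j)=0$ and $\sum_j\mathsf{Im}(\varphi_j)=0$, argues that at least one of these equations does not hold identically on the real box, and shows (via the real implicit function theorem and a graph comparison) that the corresponding equation must characterize $Z_\R(F)$ locally, so that the real or imaginary parts serve as the $\varphi_i$. It then treats separately the case where $F$ is reducible over $\C$ (necessarily $F=c\,H\bar H$), applying property $(ii)$ to each factor and piecing the exceptional sets together. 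Your proof differs on both points, and buys something each time. First, you observe that the hypothesis $\dim Z_\R(F)=2$ already forces irreducibility over $\C$ (since $Z_\R(H\bar H)$ is the common real zero set of the coprime real polynomials $\mathsf{Re}(H)$ and $\mathsf{Im}(H)$, hence at most one-dimensional), so the second case is vacuous and can be dropped; the paper does not make this observation and carries the extra case. Second, rather than selecting between $\mathsf{Re}(\varphi_j)$ and $\mathsf{Im}(\varphi_j)$, you exploit the uniqueness of the additive representation: the conjugated functions $\psi_i(w)=\overline{\varphi_i(\bar w)}$ give a second such representation, a separation-of-variables argument on the local graph forces $\psi_i=c\varphi_i+d_i$ with $\sum d_i=0$, applying the involution twice yields $|c|=1$ and $\bar c d_i+\bar d_i=0$, and an explicit affine correction $\tilde\varphi_i=\alpha\varphi_i+\beta_i$ with $\alpha^2=c$ produces functions invariant under $w\mapsto\overline{\tilde\varphi_i(\bar w)}$. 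This approach is more structural and, notably, hands you one-to-one real-analytic functions with real-analytic inverses for free (since the $\tilde\varphi_i$ are nonzero affine images of the complex-bianalytic $\varphi_i$ and are real-valued on the real axis with real nonzero derivative there), whereas the paper's route must in principle still check that the chosen real or imaginary part has nonvanishing derivative on the relevant interval, a point its proof leaves implicit. I verified that your rigidity step, your check that $\bar\alpha d_i$ is purely imaginary, and the normalization $\sum\beta_i=0$ are all correct.
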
 

\paragraph{Discussion.}
Our proof follows the setup of Sharir, Sheffer, and Solymosi \cite{SSS13}.
We convert the problem at hand into an incidence problem for points and curves, and then apply a Szemer\' edi-Trotter-like incidence theorem (in our case a variant due to Solymosi and De Zeeuw \cite{SZ15}) to obtain the bound in $(i)$.
This theorem does not apply when the constructed curves have many common components, and in this case the special form in $(ii)$ is deduced.
In the various instances of this setup, including Raz, Sharir, and Solymosi \cite{RSS14a}, it is this second part of the analysis that is the most difficult, and the same is true for our proof.  

Although the results in this paper generalize those of \cite{RSS14a}, 
our analysis of the case where the curves have many common components uses entirely different machinery.
Instead of the purely algebraic study of properties of polynomials that was used in \cite{RSS14a}, the approach here requires more advanced tools from algebraic geometry,
and applies them in a considerably more involved style.
This part of our proof was inspired by a technique used by Tao \cite{Tao11a} for a variant of the Elekes-R\'onyai problem over finite fields.

That the current problem is considerably more difficult than the Elekes-R\'onyai problem (in spite of the similarities) can also be seen by comparing the original respective studies in \cite{ER00} and in \cite{ES12}. 
We regard the considerable simplification (on top of the improvement of the bound) of the analysis of Elekes and Szab\'o in \cite{ES12} 
as a major outcome of this paper. 

We note that the polynomial dependence of our bound on the degree of $F$ is also a
significant feature, because it allows us to obtain non-trivial bounds for polynomials of non-constant (but not too large) degree.
In some of the applications mentioned below, an improved dependence on $d$ could have interesting consequences.
In the proof of \cite{ES12}, determining this dependence was out of the question, since their exponent depended on the degree of the polynomial.
Another feature is that we work over $\C$ instead of $\R$, as was partially done in \cite{ES12}, but not in \cite{RSS14a}.

In order to apply our results to a specific problem, like those mentioned below,
one needs to verify that the polynomial $F$ that arises does not have the special form in property $(ii)$. 
This step is far from being straightforward. 
The problem can be reduced to verifying a certain identity in the partial derivatives of $F$ (see \cite[Lemma 33]{ES12}), but even for polynomials of fairly low degree, this can be computationally forbidding. 
In other cases, one may not have an explicit form for $F$, and then a deeper analyis of the specific problem is required.

\paragraph{Applications.}
Besides being an interesting problem in itself, the Elekes-Szab\'o setup arises in many problems in combinatorial geometry.
To demonstrate this, we describe how the setup can be applied to the following problem, which was studied in \cite{ES12,SS}:
Given three \emph{non-collinear} points $p_1, p_2,p_3$ and a point set $P$ in $\R^2$,
obtain a lower bound for the number of distinct distances between $p_1, p_2,p_3$ and the points of $P$.

Let $D$ denote the set of squared distances between $p_1, p_2,p_3$ and the points in $P$.
A point $q\in P$ determines three squared distances to $p_1$, $p_2$, $p_3$, given by
$$
a  = (x_q-x_{p_1})^2 + (y_q-y_{p_1})^2,~~
b  = (x_q-x_{p_2})^2 + (y_q-y_{p_2})^2,~~
c  = (x_q-x_{p_3})^2 + (y_q-y_{p_3})^2 .
$$
The variables $x_q$ and $y_q$ can be eliminated from these equations to yield a quadratic equation $F(a,b,c)=0$
(with coefficients depending on $p_1$, $p_2$, $p_3$).
By construction, for each point $q\in P$, the corresponding squared distances $a$, $b$, $c$ belong to $D$. 
The resulting triples $(a,b,c)$ are all distinct, so $F$ vanishes at $|P|$ triples of $D\times D\times D$. 
Moreover, using the fact that $p_1$, $p_2$, $p_3$ are non-collinear, one can show\footnote{As mentioned in the discussion, this step is not straightforward.} that $F$ does not have the special form in property $(ii)_\R$ of Theorem~\ref{thm:main3}. 
Then property $(i)$ gives $|P| = O(|D|^{11/6})$, or $|D| = \Omega(|P|^{6/11})$, which is the same lower bound that was obtained in \cite{SS} using an ad hoc analysis.
When $p_1$, $p_2$, and $p_3$ are collinear, $F$ becomes
a linear polynomial, in which case it certainly satisfies property $(ii)_\R$, and the above bound on $|D|$ does not hold; in this case the minimum is $\Theta(|P|^{1/2})$ (see \cite{ES12,SS}).

Many other combinatorial questions involving geometric notions such as distances, slopes, or collinearity lead to polynomial relations of the form $F(x,y,z)=0$, and often they can be reduced to studying the number of zeros that such polynomials attain on a Cartesian product.
Several examples where $F$ has the form $z - f(x,y)$ are given in \cite{RSS14a}. 
The following is a sample of other problems that fit into the current framework, but that do not have the explicit form $z-f(x,y)$  required in \cite{RSS14a}.

\medskip

\noindent $(1)$  
Bounding from below the number of distinct distances determined by a point set contained in an algebraic curve~\cite{C14, PZ14}.

\smallskip

\noindent $(2)$ 
Bounding from above the number of triple intersection points of three families of unit circles, 
where the circles in each family pass through a fixed point \cite{ESSz, RSS14b}. 

\smallskip

\noindent $(3)$ 
Bounding from above the number of collinear triples among $n$ points on an algebraic curve \cite{ES13}. 

\medskip

Each of these problems can be seen to fit the current framework, and property $(i)$ of Theorem \ref{thm:main1} 
would give an interesting bound, if one can show that property $(ii)$ does not hold.
For problems $(1)$ and $(2)$, the bound that one would obtain in this way was already achieved 
in the respective references~\cite{PZ14,RSS14b}, using an ad hoc analysis.

That leaves problem $(3)$, for which so far no direct approach has succeeded.
In Section \ref{sec:collinear}, we show how this problem can be attacked with our theorem, using ideas based on those of Elekes and Szab\'o \cite{ES13}.
We state one of the resulting theorems here, but several variants can be found in Section \ref{sec:collinear}.
We say that a triple is \emph{proper} if its entries are distinct.

\begin{theorem}
Any $n$ points on a constant-degree irreducible algebraic curve in $\C^2$ determine $O(n^{11/6})$ proper collinear triples, unless the curve is a line or a cubic.
\end{theorem}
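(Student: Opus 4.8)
The plan is to cast the problem into the Elekes--Szab\'o framework of Theorem~\ref{thm:main1} and to prove that, when the curve has degree at least $4$, the exceptional alternative $(ii)$ is impossible, so that alternative $(i)$ must hold. Let $C\subset\C^2$ be irreducible of degree $d$. We may assume $d\ge 4$: lines and cubics are excluded by hypothesis, and an irreducible curve of degree $2$ has no proper collinear triples, since a line meets it in at most two points.

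\emph{Encoding.} Let $P\subset C$ with $|P|=n$. After a generic linear change of coordinates we may assume that $C$ is not a vertical line and that the degree-$d$ projection $\pi\colon C\to\C$, $(x,y)\mapsto x$, is injective on $P$; set $A:=\pi(P)$, so $|A|=n$. Eliminating the three $y$-coordinates from the three copies of the defining equation of $C$ and the $3\times 3$ collinearity determinant produces a polynomial $F\in\C[x,y,z]$ of degree $O(1)$ that cuts out the Zariski closure of the image of the locus of distinct collinear triples on $C$ under $(p_1,p_2,p_3)\mapsto(\pi p_1,\pi p_2,\pi p_3)$; replacing this locus by an irreducible component if necessary, we may take $F$ irreducible, and since $d\ge 4$ and the coordinates are generic its zero set $Z(F)$ is a genuine surface that is not a cylinder in any coordinate direction, so no partial derivative of $F$ vanishes identically. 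As $\pi$ is injective on $P$, distinct proper collinear triples in $P$ map to distinct points of $Z(F)\cap(A\times A\times A)$, and therefore the number of such triples is at most $|Z(F)\cap(A\times A\times A)|$. We may thus apply Theorem~\ref{thm:main1} to $F$, with all three point sets equal to $A$.

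\emph{The dichotomy.} If alternative $(i)$ holds, then $|Z(F)\cap(A\times A\times A)|=O(d^{13/2}n^{11/6})=O(n^{11/6})$ since $d$ is constant, which is the asserted bound; it remains to rule out alternative $(ii)$. Suppose $(ii)$ holds. For a generic collinear triple $(p_1,p_2,p_3)$ on $C$, the point $v:=(\pi p_1,\pi p_2,\pi p_3)$ lies in $Z(F)\setminus Z_0$, and $\pi$ is a local biholomorphism near each (smooth) point $p_i$; feeding this into the local description in $(ii)$ and using that, near $v$, the only collinear triple with the prescribed $x$-coordinates is $(p_1,p_2,p_3)$ itself, one finds that, locally near $(p_1,p_2,p_3)$, three points of $C$ are collinear if and only if $\psi_1+\psi_2+\psi_3=0$, where $\psi_i:=\varphi_i\circ\pi$ is analytic, and in particular one-to-one, near $p_i$. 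So collinearity on $C$ is governed locally by an additive relation.

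\emph{Globalizing and concluding.} The main step, and the chief obstacle, is to promote this local relation to a global one. Continuing the $\psi_i$ analytically along paths in (an irreducible component of) the collinearity variety, and noting that the ambiguity introduced by continuation is a locally constant, hence globally constant, translation, one obtains a single multivalued analytic function $\psi$ on $C$---equivalently a nonconstant holomorphic map $\psi\colon C\to\C/\Lambda$ for a subgroup $\Lambda\subset\C$---such that every collinear triple on $C$ satisfies $\psi(p_1)+\psi(p_2)+\psi(p_3)\equiv c\pmod\Lambda$ for a fixed $c$; this is where the heavier input is needed, namely analytic continuation, irreducibility and monodromy of the collinearity variety, and an Abel's-theorem-type comparison of the linearly equivalent line sections of $C$. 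Granting this, fix a generic point $q_1\in C$. A generic line $\ell$ through $q_1$ meets $C$ in $q_1$ together with $d-1\ge 3$ further distinct points $q_2,\dots,q_d$; applying the global relation to the collinear triples $(q_1,q_i,q_j)$ for distinct $i,j\in\{2,\dots,d\}$ gives $\psi(q_i)=\psi(q_j)$ in $\C/\Lambda$ for all such $i,j$. Letting $\ell$ range over the pencil of lines through $q_1$, the points $q_2,\dots,q_d$ cover all of $C\setminus\{q_1\}$, so $\psi$ takes only finitely many values on $C$ and hence, being continuous on the connected curve $C$, is constant. But then each $\psi_i$, and therefore each $\varphi_i$, is constant on a neighbourhood, contradicting the requirement in $(ii)$ that the $\varphi_i$ be one-to-one. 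Hence alternative $(ii)$ cannot hold, alternative $(i)$ does, and the bound follows. (For $d=3$ no contradiction arises: a generic line through $q_1$ then meets $C$ in only $q_1$ and two further points, and the single resulting relation is exactly $\psi(q_1)+\psi(q_2)+\psi(q_3)\equiv c$, which for a smooth cubic is the elliptic group law and for a nodal or cuspidal cubic comes from the $\C^*$- or $\C$-structure on the smooth locus---so cubics, like lines, are genuine exceptions.)
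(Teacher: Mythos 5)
Your overall strategy is genuinely different from the paper's, and this is worth spelling out. The paper's proof (Theorem~\ref{thm:threecurves} and Corollary~\ref{cor:baltrips}) never globalizes the local additive relation from property~$(ii)$. Instead, once a proper collinear triple $(p_1,p_2,p_3)$ is found around which collinearity is group-related, it builds a ``cantilever'': a growing configuration of collinear points on the curve(s), generated by repeatedly intersecting the lines of the configuration with the curves, with positions controlled by the local additive coordinates. It then observes that the first nine cantilever points lie on some cubic $D$ and that collinearity is also group-related on $D$, so the cantilever can be extended \emph{inside $D$ without reference to the $C_i$}. Running the cantilever long enough to produce more than $3d$ common points of $C_i$ and $D$, B\'ezout forces each $C_i$ to be a component of $D$, i.e.\ a line, a conic, or a cubic. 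The whole thing is local and elementary, with B\'ezout as the only algebro-geometric input. Your route instead tries to promote the local relation to a genuine global one $\psi(p_1)+\psi(p_2)+\psi(p_3)\equiv c$ on $C$, and then to argue, via the pencil of lines through a fixed $q_1$, that a curve of degree $\ge 4$ forces $\psi$ to be constant. That last idea is nice (the point is that $2\psi(q_i)\equiv c-\psi(q_1)$ is a fixed equation with only finitely many solutions in $\C/\Lambda$, which you should make explicit — merely saying the $q_i$ ``cover all of $C\setminus\{q_1\}$'' does not by itself give finitely many values), but it rests entirely on the globalization.

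And that globalization is a genuine gap, which you yourself flag with ``Granting this.'' Property~$(ii)$ hands you three \emph{separate} analytic charts $\varphi_1,\varphi_2,\varphi_3$, each defined only on a small open set, and each determined a priori only up to an additive constant; the relation $\varphi_1+\varphi_2+\varphi_3=0$ ties those constants together \emph{locally}, but says nothing about how the constants behave as the base point $v$ moves along $Z(F)\setminus Z_0$. To obtain a single multivalued $\psi\colon C\to\C/\Lambda$ with a globally fixed constant $c$ you would have to (a)~check that the monodromy translations of $\psi_1,\psi_2,\psi_3$ around loops in the collinearity variety are compatible, (b)~check that the three functions differ by constants so that they patch to one $\psi$ (this uses the symmetry of the collinearity relation and is not automatic), and (c)~identify the resulting period group $\Lambda$. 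None of this is carried out; the appeal to ``analytic continuation, irreducibility and monodromy\ldots and an Abel's-theorem-type comparison of the linearly equivalent line sections'' names the right machinery but is exactly the content of the missing argument, not a proof of it. For a curve of high genus the naive Abel map lands in a high-dimensional Jacobian, and getting down to a one-dimensional $\C/\Lambda$ is precisely the nontrivial step that property~$(ii)$ is supposed to provide but only does so locally. The paper's cantilever argument is designed to sidestep all of this: it replaces global analytic continuation by a finite chain of local incidence constructions plus B\'ezout, and is therefore both shorter and more robust. If you want to keep your route, the cleanest repair is probably to mimic the paper: rather than globalizing $\psi$, use the local relation to build a long cantilever and then compare $C$ with a cubic through nine of its points.

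Two smaller points. Your claim that for $d\ge 4$ and generic coordinates the resulting $F$ is irreducible with no vanishing partial derivative is asserted rather than proved; the paper handles this by first showing the collinearity variety is purely two-dimensional (Lemma~\ref{lem:dimension}), then removing the diagonal planes corresponding to improper triples, and only then applying Theorem~\ref{thm:main2} componentwise. And your reduction correctly disposes of $d=2$ (an irreducible conic has no proper collinear triples), which the paper also needs at the very end of the proof of Theorem~\ref{thm:threecurves}.
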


\paragraph{Organization.}
The paper is organized as follows. 
The proof of Theorem~\ref{thm:main2} is given in Section~\ref{sec:proofofmain}, except for the key Proposition~\ref{prop:keyprop}, which is proved in Section~\ref{sec:proofofkey}, and the incidence bound, which is presented in Section~\ref{sec:incgeom}.
Theorem~\ref{thm:main1} is deduced from Theorem~\ref{thm:main2} at the end of Section~\ref{sec:proofofmain}.
Section~\ref{sec:real} shows how to deduce the real case, Theorem~\ref{thm:main3}, from the proof of the complex case.
The applications of our theorem to collinear triples on curves are given in Section~\ref{sec:collinear}. 
The appendix presents some algebraic geometry infrastructure that is heavily used in the proof.

\paragraph{Acknowledgments.}
Part of this research was performed while the authors were visiting the Institute for Pure and Applied Mathematics (IPAM), which is supported by the National Science Foundation. The authors deeply appreciate the stimulating environment and facilities provided by IPAM, which have facilitated the intensive and productive collaboration that have lead to this paper. 
Some of the insights in our analysis were inspired by talks given by Terry Tao at IPAM about his work \cite{Tao11a}.
The authors would also like to thank Kaloyan Slavov, J\'ozsef Solymosi, and Hong Wang for several helpful discussions.


\section{Proof of Theorem \ref{thm:main2}}\label{sec:proofofmain}
In this section we prove Theorem \ref{thm:main2}, except that the proof of the crucial Proposition \ref{prop:keyprop} 
is deferred to  Section \ref{sec:proofofkey}, and the incidence bound that we use is established in Section \ref{sec:incgeom}.
As noted above, the basic tools that we require for the analysis are given in the Appendix, where they are stated, referenced,
and in some cases also proved.

Let $F\in\C[x,y,z]$ be as in the statement of the theorem, and let $A,B,C\subset\C$ be finite sets.
The quantity we wish to bound is
$$
M:=|Z(F)\cap (A\times B\times C)|.
$$
The strategy of the proof is to transform the problem of bounding $M$ into an incidence problem for points and curves in $\C^2$. 
The latter problem can then be tackled using the machinery that we establish in Section~\ref{sec:incgeom}, {\em provided} that 
the resulting curves have well-behaved intersections, in a sense that we will make precise in Definition \ref{def:boundedmult}.
A major component of the proof is to show that 
if the points and curves that we are about to define 
do not have well-behaved intersections in this sense, 
then $Z(F)$ must have the special form described in property $(ii)$ of the theorem.

\subsection{Quadruples}
Define the set of quadruples
$$
Q:=\big\{(b,b',c,c')\in B^2\times C^2\mid \exists a\in A~\text{such that}~F(a,b,c)=F(a,b',c')=0\big\}.
$$
We show, in Lemma~\ref{lem:MtoQ} below, that $M$ can be bounded in terms of $|Q|$. 
The proof requires the following lemma, which says that a surface can contain only a bounded number of vertical lines, unless it is a cylindrical surface (i.e., it consists of all vertical lines through a curve $h(u,v)=0$ in the $uv$-plane).

\begin{lemma}\label{lem:excepa}
Let $H\in\C[u,v,w]$ be an irreducible polynomial of degree $d$ with $\partial H/\partial w$ not identically zero. Then
$$
\displaystyle\Bigl|\Bigl\{(u_0,v_0)\in\C^2\mid H(u_0,v_0,w) =0 ~\text{for all}~w\in \C
\Bigr\}\Bigr|\leq d^2.
$$
\end{lemma}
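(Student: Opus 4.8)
The plan is to show that if the set $S:=\{(u_0,v_0)\in\C^2\mid H(u_0,v_0,w)=0\text{ for all }w\}$ is infinite, then $H$ must factor through a polynomial in $u,v$ alone, contradicting irreducibility (given that $\partial H/\partial w\not\equiv 0$); and if $S$ is finite, then its size is controlled by Bézout-type considerations. First I would write $H(u,v,w)=\sum_{j=0}^{k} H_j(u,v)\,w^j$, where $k=\deg_w H\geq 1$ (since $\partial H/\partial w\not\equiv 0$), and each $H_j\in\C[u,v]$ has degree at most $d$. A point $(u_0,v_0)$ lies in $S$ precisely when the univariate polynomial $H(u_0,v_0,w)\in\C[w]$ is identically zero, i.e. when $H_j(u_0,v_0)=0$ for all $j=0,\dots,k$. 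In particular $S\subseteq Z(H_k)\cap Z(H_{k-1})$, say, or more simply $S\subseteq Z(H_i)\cap Z(H_j)$ for any pair $i\neq j$.

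The main case analysis is then the following. If the two polynomials $H_k$ and $H_{k-1}$ (the top two coefficients, say — or one should pick two that are not both zero and have no common factor; I address this below) have no common factor, then by Bézout's theorem in $\C^2$ the intersection $Z(H_k)\cap Z(H_{k-1})$ is finite of size at most $(\deg H_k)(\deg H_{k-1})\leq d^2$, which gives the claimed bound. If instead all pairs among the nonzero $H_j$ share a common factor, one argues that there is a single nonconstant $g\in\C[u,v]$ dividing every $H_j$: indeed, let $g=\gcd$ of all the $H_j$; if $g$ is nonconstant then $g\mid H$ in $\C[u,v,w]$, and since $H$ is irreducible and $\deg_w g=0<\deg_w H$, this forces $H$ to be (a scalar times) $g$, contradicting $\partial H/\partial w\not\equiv 0$. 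Hence $g$ is constant, meaning the $H_j$ have no common factor, and then at least two of them — after possibly relabeling — are coprime, so we are back in the Bézout case.

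I would organize it as: (1) reduce membership in $S$ to simultaneous vanishing of the coefficients $H_j$; (2) show $\gcd_j H_j$ is constant using irreducibility of $H$ and $\partial H/\partial w\not\equiv 0$; (3) extract two coprime coefficients $H_i,H_j$ and apply Bézout to conclude $|S|\leq\deg H_i\cdot\deg H_j\leq d^2$. The step I expect to need the most care is step (2)–(3): from "$\gcd$ of the whole family is constant" one does not immediately get two \emph{pairwise} coprime members, so the cleanest route is to note $S\subseteq Z(\gcd_j H_j)=\emptyset$ is false, and instead use that the ideal generated by the $H_j$ has a zero set of dimension $\leq 0$ when the $H_j$ have no common component — a fact that can be phrased via Bézout applied to suitable generic linear combinations $\lambda_1 H_1+\cdots$ and $\mu_1 H_1+\cdots$, which are coprime of degree $\leq d$ for generic coefficients. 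That generic-combination argument, or an equivalent elimination-theory statement, is the real content; everything else is bookkeeping. (The appendix's algebraic-geometry infrastructure presumably contains exactly such a Bézout statement, which I would invoke.)
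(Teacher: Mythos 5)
Your proposal follows essentially the same route as the paper's proof: expand $H$ in powers of $w$ with coefficients $H_j\in\C[u,v]$, identify the set with $\bigcap_j Z(H_j)$, use irreducibility of $H$ to show the $H_j$ have no common factor, and finish with B\'ezout. What you add — and it is worth adding — is a careful treatment of the step from ``the $H_j$ collectively have trivial gcd'' to ``some two of them are coprime.'' You are right to flag this: the inference is not automatic, and in fact the paper's own proof makes exactly this leap when it asserts that there is an $i_0>0$ with $\gcd(\alpha_0,\alpha_{i_0})=1$. For example, the irreducible polynomial $H=uv+uw+vw^2$ (irreducible because it has $u$-degree~$1$ and $\gcd(v+w,\,vw^2)=1$; here $\partial H/\partial w=u+2vw\not\equiv0$) has $\alpha_0=uv$, $\alpha_1=u$, $\alpha_2=v$: the gcd of the family is $1$, yet $\gcd(\alpha_0,\alpha_1)=u$ and $\gcd(\alpha_0,\alpha_2)=v$, so no such $i_0$ exists. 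The lemma's conclusion still holds ($|Z(uv,u,v)|=1\le d^2$), and your proposed repair via two generic linear combinations $\sum\lambda_jH_j$, $\sum\mu_jH_j$ is sound: if the $H_j$ have no common factor, a generic member of the pencil has no fixed irreducible component, hence two generic members are coprime of degree $\le d$, and B\'ezout gives the $d^2$ bound. An alternative that fits the paper's toolkit is to invoke Lemma~\ref{lem:bezoutmany} (B\'ezout for many curves), which bounds $\deg\bigl(\bigcap_j Z(H_j)\bigr)\le d^2$ directly whenever the intersection is finite; that lemma happens to appear later in the paper but is self-contained and would close the gap cleanly.
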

\begin{proof}
Write 
\[H(u,v,w)=\sum_{i=0}^d \alpha_i(u,v)w^i,\]
with suitable polynomials $\alpha_0(u,v),\ldots,\alpha_d(u,v)$. 
For $(u_0,v_0)\in \C^2$ we have $H(u_0,v_0,w)\equiv 0$ if and only if $\alpha_i(u_0,v_0) =0$ for all $i$.

Observe that $\alpha_0\equiv 0$ would contradict the irreducibility of $H$, 
and $H \equiv \alpha_0$ would contradict $\partial H/\partial w$ not being identically zero.
In addition, the polynomials $\alpha_i$ cannot have a common factor, since this would again contradict the irreducibility of $H$. 
So there exists an index $i_0>0$ such that $\alpha_0$ and $\alpha_{i_0}$ do not have a common factor.
Therefore, by B\'ezout's inequality (Theorem \ref{thm:bezout}),
we have $|Z(\alpha_0)\cap Z(\alpha_{i_0})|\le d^2$.
It follows that there are at most $d^2$ points $(u_0,v_0)$ such that $H(u_0,v_0,w)\equiv 0$.
This proves the lemma.
\end{proof}

\begin{lemma}\label{lem:MtoQ}
We have
$\displaystyle{M=O\left( d^{1/2}|A|^{1/2}|Q|^{1/2}+ d^2|A|\right).}$
\end{lemma}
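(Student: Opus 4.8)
The plan is to bound $M$ by a Cauchy--Schwarz argument relating $M$ to the number of quadruples $|Q|$. For each $a\in A$, let
\[
N_a := \big|\{(b,c)\in B\times C \mid F(a,b,c)=0\}\big|,
\]
so that $M=\sum_{a\in A}N_a$. The key point is that a quadruple $(b,b',c,c')\in Q$ is ``witnessed'' by some $a\in A$ with $F(a,b,c)=F(a,b',c')=0$, so counting, for each fixed $a$, the pairs of pairs $(b,c),(b',c')$ that both lie on the fiber of $Z(F)$ over $a$ gives $\sum_{a\in A}N_a^2$ ordered pairs; this is at most $|A|\cdot|Q'|$-type quantity, but more precisely, summing over $a$ we get $\sum_{a\in A} N_a^2 \le |Q| + (\text{diagonal and collision terms})$. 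The subtlety — and the reason the lemma is not completely trivial — is that the same quadruple $(b,b',c,c')$ could be witnessed by many different values of $a$, so $\sum_a N_a^2$ is not directly bounded by $|Q|$; we need to control the over-counting.

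Concretely, I would first split off a degenerate contribution. Consider triples $(b,c)$ for which the set $\{a\in A\mid F(a,b,c)=0\}$ is large. If for a fixed $(b_0,c_0)$ we have $F(a,b_0,c_0)=0$ for more than $d$ values of $a$, then $F(x,b_0,c_0)$ is the zero polynomial in $x$; writing $F=\sum_i \beta_i(y,z)x^i$, this means all $\beta_i(b_0,c_0)=0$. Arguing as in Lemma~\ref{lem:excepa} (with the roles of the variables permuted, using that $\partial F/\partial x\not\equiv 0$ and $F$ irreducible so the $\beta_i$ share no common factor), there are at most $d^2$ such points $(b_0,c_0)$, and each contributes at most $|A|$ to $M$, giving an $O(d^2|A|)$ term. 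After removing these, every surviving triple $(b,c)$ has at most $d$ witnesses $a\in A$.

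For the main term, let $M'$ be the number of triples $(a,b,c)$ with $F(a,b,c)=0$ where $(b,c)$ is a surviving pair; then $M\le M' + O(d^2|A|)$. Write $M' = \sum_{a\in A} N'_a$. Double counting the set of pairs $\big((a,b,c),(a,b',c')\big)$ with both triples in $Z(F)$ and $(b,c),(b',c')$ surviving: this is $\sum_{a\in A}(N'_a)^2$ from one side, and from the other side each such pair gives a quadruple $(b,b',c,c')\in Q$, and each quadruple in $Q$ is produced by at most $d$ values of $a$ (since $(b,c)$ is surviving, it has at most $d$ witnesses $a$, and any witnessing $a$ for the quadruple is in particular a witness for $(b,c)$). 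Hence $\sum_{a\in A}(N'_a)^2 \le d\,|Q|$. By the Cauchy--Schwarz inequality,
\[
M' = \sum_{a\in A} N'_a \le |A|^{1/2}\Big(\sum_{a\in A}(N'_a)^2\Big)^{1/2} \le |A|^{1/2}(d|Q|)^{1/2} = d^{1/2}|A|^{1/2}|Q|^{1/2}.
\]
Combining with the degenerate term yields $M = O\big(d^{1/2}|A|^{1/2}|Q|^{1/2} + d^2|A|\big)$, as claimed.

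The main obstacle is the over-counting control in the second-to-last step: one must argue carefully that a quadruple in $Q$ has few witnesses $a$, which forces the preliminary removal of the $(b,c)$ that lie on the ``bad locus'' where the fiber $F(\cdot,b,c)$ degenerates, and this removal is exactly where Lemma~\ref{lem:excepa}-type reasoning (B\'ezout on the coefficient polynomials) re-enters. Everything else is Cauchy--Schwarz bookkeeping.
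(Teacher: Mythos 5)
Your proof is correct and follows essentially the same route as the paper: Cauchy--Schwarz to pass from $M$ to a sum of squares over $a\in A$, followed by a degeneracy argument of Lemma~\ref{lem:excepa} type to relate that sum to $|Q|$. The only organizational difference is that you carve out the degenerate pairs $(b,c)$ from $M$ first and then apply Cauchy--Schwarz to the survivors, whereas the paper applies Cauchy--Schwarz globally (getting $M\le |A|^{1/2}|R|^{1/2}$) and handles degeneracy at the quadruple level via the set $S$ when bounding $|R|\le d|Q|+O(d^4|A|)$; both bookkeeping choices yield the same bound.
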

\begin{proof}
Define 
\[R:= \{(a,b,b',c,c')\in A\times B^2\times C^2\mid F(a,b,c)=F(a,b',c')=0\},
\]
and consider the standard projection $\tau:\C\times \C^4\to \C^4$ (in which the first coordinate is discarded).
We have $Q = \tau(R)$.
For each $a\in A$, we write 
\[
\left(B\times C\right)_a:=\{(b,c)\in B\times C\mid F(a,b,c)=0\},
\]
so that 
\[ |R| = \sum_{a\in A}|\left(B\times C\right)_a|^2.
\]
Using the Cauchy-Schwarz inequality, we have
\[M
= \sum_{a\in A}|\left(B\times C\right)_a|
\le  |A|^{1/2} \Big(\sum_{a\in A}|\left(B\times C\right)_a|^2\Big)^{1/2}
\le 
|A|^{1/2}|R|^{1/2}.
\]

We claim that $|R|\le d|Q|+O(d^4|A|)$. To prove this, let \[
S :=\left\{(b,b',c,c')\in B^2\times C^2\mid  F(a,b,c)= F(a,b',c')=0~\text{for all}~a\in \C
\right\}.\]
A double application of Lemma \ref{lem:excepa} gives $|S|= O(d^4)$.
Observe that for $(b,b',c,c')\in Q\minus S$ we have $|\tau^{-1}(b,b',c,c')\cap R|\leq d$, 
while for $(b,b',c,c')\in S$ we have $|\tau^{-1}(b,b',c,c')\cap R|= |A|$.
Thus 
\[|R| = |\tau^{-1}(Q)| = |\tau^{-1}(Q\minus S)|+|\tau^{-1}(S)| \leq d|Q| + O(d^4|A|),\]
which proves the claim and the lemma.
\end{proof}

In what follows, we derive an upper bound on $|Q|$. 
It will turn out that, when we fail to obtain the bound we are after, 
 $F$ must have the special form in property $(ii)$.

\subsection{Curves}

\paragraph{Primal curves.}
For every point $(y,y')\in \C^2$, we define 
\begin{equation}\label{eq:gamma}
\gamma_{y,y'}:=\cl\Bigl(\big\{(z,z')\in\C^2\mid \exists x\in\C~\text{such that}~F(x,y,z)=F(x,y',z')=0 \big\}\Bigr),
\end{equation}
where $\cl(X)$ stands for the {\em Zariski closure} of $X$ (see Section \ref{sec:projclos}).\footnote{%
  The set before the closure can be viewed as a projection of a variety in $\C^3$.
  Since we work in affine space, the projection of an algebraic variety need not be a variety, and this is why we need to take 
  the Zariski closure in the definition of $\gamma_{y,y'}$. 
  We could instead have worked with constructible sets in affine space, or with projective varieties in projective space, 
  and then the closure would not have been needed, but this does not seem to simplify the proof in any significant way.}
It is not always true that the set $\gamma_{y,y'}$ is a curve; it can turn out to be zero-dimensional or two-dimensional.
The following lemma quantifies the exceptional cases, allowing us to exclude them in what follows.

\begin{lemma}\label{lem:quantelim}
Let $F\in \C[x,y,z]$ be an irreducible polynomial of degree $d$ such that none of $\partial F/\partial x, \partial F/\partial y,\partial F/\partial z$ is identically zero. 
Then there is a finite set $\mathcal{S}\subset \C^2$ with $|\mathcal{S}|=O(d^4)$ such that, for all $(y_1,y_2)\not\in \mathcal{S}$,
the set $\gamma_{y_1,y_2}$, as defined in \eqref{eq:gamma}, is an algebraic curve of degree at most $d^2$, or the empty set.
\end{lemma}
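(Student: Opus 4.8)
The plan is to analyze the set $\gamma_{y_1,y_2}$ via elimination theory, treating $F(x,y_1,z)$ and $F(x,y_2,z')$ as polynomials in the three variables $x,z,z'$ (with $y_1,y_2$ as parameters), and eliminating $x$. First I would fix generic $(y_1,y_2)$ and consider the ideal in $\C[x,z,z']$ generated by $F(x,y_1,z)$ and $F(x,y_2,z')$; the Zariski closure of its projection to the $(z,z')$-plane is cut out by the elimination ideal, whose generator (when $F$, viewed in $x$, has nonzero leading coefficient) is essentially the resultant $\mathrm{Res}_x\bigl(F(x,y_1,z),\,F(x,y_2,z')\bigr)$. This resultant is a polynomial in $z,z'$ (with coefficients polynomial in $y_1,y_2$) of degree $O(d^2)$, which would give the claimed degree bound for $\gamma_{y_1,y_2}$ as long as the resultant is not identically zero and not a nonzero constant.

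The bulk of the work is therefore to bound the exceptional set $\mathcal{S}$ where the resultant degenerates. There are three kinds of bad $(y_1,y_2)$: (a) those for which the leading coefficient of $F$ in $x$ vanishes, so the resultant formula misbehaves; (b) those for which $\mathrm{Res}_x$ vanishes identically in $z,z'$, meaning $F(x,y_1,z)$ and $F(x,y_2,z')$ share a common factor as polynomials in $x$ for all $z,z'$ — this forces $\gamma_{y_1,y_2}$ to be two-dimensional; and (c) those for which the resultant is a nonzero constant, forcing $\gamma_{y_1,y_2}$ to be empty or zero-dimensional. For case (a), the leading coefficient of $F$ in $x$ is a polynomial in $y,z$ of degree at most $d$; since $\partial F/\partial x\not\equiv 0$ and $F$ is irreducible, this leading coefficient is not divisible by any factor forcing all of $F$ to drop degree, and one argues that only $O(d)$ values of $y$ (hence $O(d^2)$ pairs, or fewer) are affected — here I would invoke an argument like the one in Lemma~\ref{lem:excepa}, or simply note that the set of bad $y$ is the projection of a curve. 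For case (b), a common $x$-factor for generic $z,z'$ would mean, by a specialization/irreducibility argument on $F$ itself, that $F(x,y_1,z)$ as a polynomial in $\C(y_1)[x,z]$ is ``linked'' to $F(x,y_2,z')$; one shows this can only happen when $y_1,y_2$ lie in a proper subvariety of $\C^2$, and then bounds its contribution — actually the cleanest route is: the resultant $R(y_1,y_2,z,z') := \mathrm{Res}_x(F(x,y_1,z),F(x,y_2,z'))$ is a fixed polynomial, not identically zero (this must be checked using that $F$ genuinely depends on $x$ and on $z$), so the set of $(y_1,y_2)$ for which it vanishes identically in $(z,z')$ is the zero set of finitely many coefficient polynomials, each of degree $O(d^2)$ in $(y_1,y_2)$, and one needs these not to have a common factor to get an $O(d^4)$ bound from B\'ezout. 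Case (c) is handled similarly by looking at the coefficients of the positive-degree monomials.

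The main obstacle I anticipate is showing $R(y_1,y_2,z,z')$ is not identically zero and that the relevant families of coefficient polynomials in $(y_1,y_2)$ have no common factor, so that B\'ezout's inequality (Theorem~\ref{thm:bezout}) yields the clean $O(d^4)$ bound rather than something worse. Non-vanishing of $R$ amounts to saying that for generic $y_1,y_2,z,z'$ the univariate polynomials $F(x,y_1,z)$ and $F(x,y_2,z')$ in $x$ have no common root; if they did for all generic parameter values, then by irreducibility of $F$ (and the hypothesis $\partial F/\partial z\not\equiv 0$, so $F$ is not independent of $z$) one derives a contradiction, essentially because $F(x,y,z)=0$ would then impose, for each $x$, a relation between the $y$'s and $z$'s that is incompatible with $z$ varying freely. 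Establishing the no-common-factor property for the coefficient polynomials is the subtler point: I would argue that if they shared a nonconstant common factor $p(y_1,y_2)$, then $R$ would be divisible by $p$, and dividing out $p$ would still leave a valid defining polynomial for $\gamma_{y_1,y_2}$ of degree $O(d^2)$ for $(y_1,y_2)$ off $Z(p)$, but one then also needs to absorb $Z(p)$ into $\mathcal S$ — which is fine provided $p$ has bounded degree, and it does ($O(d^2)$), so in fact one can afford to be slightly wasteful and simply take $\mathcal S$ to be the union of all these zero sets, each of degree $O(d^2)$, pairwise intersected or not; the counting that produces $|\mathcal S| = O(d^4)$ then comes from one application of B\'ezout to a suitably chosen pair of non-proportional coefficient polynomials, exactly as in the proof of Lemma~\ref{lem:excepa}.
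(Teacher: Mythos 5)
Your approach is genuinely different from the paper's. You go through elimination theory and the resultant $\mathrm{Res}_x(F(x,y_1,z),F(x,y_2,z'))$, whereas the paper works directly with the variety $X=Z(F(x,y_1,z),F(x,y_2,z'))\subset\C^3$ and its projection $\tau$ to the $(z,z')$-plane. The paper's route is cleaner precisely because it reduces \emph{all} the exceptional behavior to a single statement about $F$ itself: it defines $R=\{y_0\mid\exists x_0,\,F(x_0,y_0,z)\equiv0\text{ or }\exists z_0,\,F(x,y_0,z_0)\equiv0\}$, bounds $|R|=O(d^2)$ by a double application of Lemma~\ref{lem:excepa}, and sets $\mathcal S=R\times R$. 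It then shows that for $(y_1,y_2)\notin R\times R$ the variety $X$ (and hence $\gamma_{y_1,y_2}=\cl(\tau(X))$) is purely one-dimensional or empty, and the degree bound $d^2$ comes from Lemmas~\ref{lem:degreebound} and~\ref{lem:projectionpreserves} applied to $X$ and its projection --- no resultants, no leading-coefficient bookkeeping.

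There are two genuine gaps in your proposal. First, you do not actually establish that $\gamma_{y_1,y_2}$ \emph{is} a curve: the resultant argument, at best, shows that $\gamma_{y_1,y_2}$ is \emph{contained in} the zero set of a polynomial of degree $O(d^2)$. It could still be a finite nonempty set, or have isolated zero-dimensional components, inside a nonconstant resultant's zero locus --- your case (c) catches only the constant-resultant possibility. The paper handles the isolated-point case directly: an isolated point $(z_1,z_2)$ of $\gamma_{y_1,y_2}$ forces $X$ (each of whose components has dimension $\geq1$ by Lemma~\ref{lem:dimension}) to contain the vertical line $Z(z-z_1,z'-z_2)$, which means $F(x,y_1,z_1)\equiv0$ and $F(x,y_2,z_2)\equiv0$ identically in $x$, hence $y_1,y_2\in R$. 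You would need an analogous argument, and the resultant does not provide it automatically.

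Second, your $O(d^4)$ count is not secured. You propose applying B\'ezout to two coprime coefficient polynomials $c_{i,j}(y_1,y_2)$ of the resultant, and you correctly flag as the subtler point the case where all the $c_{i,j}$ share a nonconstant common factor $p(y_1,y_2)$. But your proposed fix --- dividing out $p$ and absorbing $Z(p)$ into $\mathcal S$ --- cannot work: $Z(p)$ is a one-dimensional variety, hence infinite, while the lemma requires $\mathcal S$ to be a \emph{finite} set of size $O(d^4)$. So you would actually have to \emph{prove} that no such common factor exists (presumably from the irreducibility of $F$), which you do not do. By contrast, the paper sidesteps this entirely: finiteness of $\mathcal S$ is automatic from $\mathcal S=R\times R$ with $R$ finite, with the whole burden transferred to Lemma~\ref{lem:excepa}, where the coprimality argument is much easier (the $\alpha_i$ cannot share a common factor because that would contradict the irreducibility of $H$).
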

\begin{proof}
Define
\[R := \Bigl\{y_0\mid  \exists x_0 ~\text{such that}~F(x_0,y_0,z)\equiv 0
~\text{or}~
 \exists z_0 ~\text{such that}~F(x,y_0,z_0)\equiv 0 \Bigr\}.\]
Then $|R|=O(d^2)$ by Lemma \ref{lem:excepa}. 
We will show that if $\gamma_{y_1,y_2}$ is not an algebraic curve or an empty set, then $y_1,y_2\in R$.

Consider $\C^3=\C\times\C^2$ with the coordinates $(x,z,z')$, and let $\tau:\C\times\C^2\to \C^2$ be the projection $(x,z,z')\mapsto (z,z')$.
Then 
$\gamma_{y_1,y_2} = \cl(\tau(X))$, where
\[
X := Z(F(x,y_1,z),F(x,y_2,z'))\subset\C^3,
\]
and both $F(x,y_1,z)$, $F(x,y_2,z')$ are considered as polynomials in $x,z,z'$.

Suppose that $\gamma_{y_1,y_2}$ has a zero-dimensional component $\{(z_1,z_2)\}$ (i.e., an isolated point).
Every component of $X$ is at least one-dimensional, as follows from Lemma \ref{lem:dimension} in the appendix, so
$X$ must contain the line $Z(z-z_1,z'-z_2)$, because that is the only one-dimensional variety that is mapped to $\{(z_1,z_2)\}$  by $\tau$.
Thus $F(x,y_1,z_1)\equiv 0$ and $F(x,y_2,z_2)\equiv 0$, 
which means that $y_1,y_2\in R$.

Now suppose that $\gamma_{y_1,y_2}$ has a two-dimensional component (i.e., $\gamma_{y_1,y_2}=\C^2$).
Then, by Lemma \ref{lem:projectionpreserves}, either $X$ has a two-dimensional component, or $X=\C^3$.
The latter would imply that $F(x,y_1,z)\equiv 0$ (as a polynomial in $x,z$), so $y-y_1$ would divide $F$, 
contradicting the irreducibility of $F$.
If $X$ has a two-dimensional component, then $F(x,y_1,z)$ and $F(x,y_2,z')$ (regarded as polynomials in $x,z,z'$) 
have a common factor, and this factor must be a univariate polynomial in $x$. 
In particular, they have a common factor of the form $x-x_0$, 
so that $F(x_0,y_1,z)\equiv 0$ and $F(x_0,y_2,z) \equiv 0$,
which means that $y_1,y_2\in R$.

Put $\mathcal{S}:= R\times R$. We have shown that, when $(y_1,y_2)\not\in \mathcal{S}$,
$X$ and $\gamma_{y_1,y_2}$ are both purely one-dimensional or both empty.
In the former case, since $X$ is defined by two polynomials of degree at most $d$, it has degree at most $d^2$, by Lemma \ref{lem:degreebound}.
Since $\gamma_{y_1,y_2}$ is the closure of the image of $X$ under a projection, it also has degree at most $d^2$, by Lemma \ref{lem:projectionpreserves}.
\end{proof}

Let $\mathcal{S}\subset \C^2$ be the set given by Lemma~\ref{lem:quantelim} for our $F$. That is, $|\mathcal{S}|=O(d^4)$ and, 
for every $(y,y')\in \C^2\minus\mathcal S$, the set $\gamma_{y,y'}$ is an algebraic curve of degree at most $d^2$, or the empty set 
(a possibility we can safely ignore).

\paragraph{Dual curves.}
We define, in an analogous manner, a dual system of curves by switching the roles of the $y$- and $z$-coordinates, as follows. 
For every point $(z,z')\in\C^2$, we define
\[
\gamma^*_{z,z'}:=\cl\Bigl(\big\{(y,y')\in\C^2\mid 
\exists x\in\C~\text{such that}~F(x,y,z)=F(x,y',z')=0\big\}\Bigr).
\]
As above, Lemma \ref{lem:quantelim} implies 
that there exists an exceptional set $\mathcal{T}$ of size $O(d^4)$, such that for every 
$(z,z')\in \C^2\minus \mathcal T$ the set $\gamma_{z,z'}^*$ is an algebraic curve of degree at most $d^2$ (or empty).

We would like to say that ``$(z,z')\in\gamma_{y,y'}$ if and only if $(y,y')\in\gamma_{z,z'}^*$'', but this is not quite true because of the closure operation.
By a basic observation from algebraic geometry (see Lemma~\ref{lem:projectionclosure}),
the closures in the definitions of $\gamma_{y,y'}$ and $\gamma^*_{z,z'}$ add only finitely many points to either curve.
It follows that for all but finitely many points $(z,z')\in\gamma_{y,y'}$ we have $(y,y')\in\gamma_{z,z'}^*$,
and for all but finitely many $(y,y')\in\gamma_{z,z'}^*$ we have $(z,z')\in\gamma_{y,y'}$.

We will analyze what happens when many of these curves have a large common intersection.
The following definition introduces terminology for this, which we will use throughout.

\begin{definition}\label{def:popcurves}
We say that an irreducible algebraic curve $\gamma\subset\C^2$ is a {\em popular curve} if there exist at least $d^4+1$ distinct points $(y,y')\in\C^2\minus \mathcal{S}$ such that $\gamma\subset\gamma_{y,y'}$. 
We denote by $\mathcal C$ the set of all popular curves.\\
Similarly, we say that an irreducible algebraic curve $\gamma^*\subset\C^2$ is {\em a popular dual curve} if there exist at least $d^4+1$ distinct points $(z,z')\in\C^2\minus \mathcal{T}$ such that $\gamma^*\subset\gamma_{z,z'}^*$. We denote by $\mathcal{D}$ the set of all popular dual curves. 
\end{definition}

The main step in our proof is the following key proposition, which shows that we can exclude the popular curves and popular dual curves.
Its proof is quite involved and takes up Section \ref{sec:proofofkey}. 
Note that the statement is only about $F$ and does not involve the specific sets $A,B,C$.

\begin{proposition}\label{prop:keyprop}
Either $F$ satisfies property $(ii)$ of Theorem \ref{thm:main2}, or both of the following properties hold.\\
$(a)$ There exists a one-dimensional variety $\mathcal{Y}\subset\C^2$ of degree $O(d^{11})$ containing $\mathcal{S}$, 
such that for every $(y,y')\in\C^2\minus \mathcal{Y}$, the curve $\gamma_{y,y'}$ does not contain a popular curve.\\
$(b)$ There exists a one-dimensional variety $\mathcal{Z}\subset\C^2$ of degree $O(d^{11})$ containing $\mathcal{T}$,
such that for every $(z,z')\in\C^2\minus \mathcal{Z}$, the dual curve $\gamma_{z,z'}^*$ does not contain a popular dual curve.
\end{proposition}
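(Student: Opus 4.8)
The plan is to establish $(a)$ and $(b)$ by a symmetric argument, so I will only describe the proof of $(a)$. The goal is to show that, unless $F$ has the special additive form in $(ii)$, the ``bad'' parameters $(y,y')$ for which $\gamma_{y,y'}$ contains a popular curve are confined to a low-degree one-dimensional variety $\mathcal{Y}$. The first step is to understand the structure of popular curves. Suppose $\gamma$ is a popular curve, so $\gamma\subset\gamma_{y,y'}$ for at least $d^4+1$ parameters $(y,y')\notin\mathcal{S}$. Since each $\gamma_{y,y'}$ has degree at most $d^2$ and is obtained by eliminating $x$ from the pair $F(x,y,z)=F(x,y',z')=0$, a popular curve $\gamma$ forces a strong coincidence: a single irreducible curve $\gamma$ in the $(z,z')$-plane lies in the $x$-elimination of infinitely many distinct pairs of fibers $F(\cdot,y,\cdot)$, $F(\cdot,y',\cdot)$. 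I would exploit this by a dimension/parameter count: consider the incidence variety $\{(y,y',z,z')\mid (z,z')\in\gamma_{y,y'}\}\subset\C^4$ and the sub-locus where $\gamma_{y,y'}$ degenerates by containing a popular curve; the collection of popular curves must itself form a bounded family (bounded in number, or sweeping out a variety of controlled degree), because each is a component of some $\gamma_{y,y'}$ and the total ``degree budget'' is polynomial in $d$.

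The second, and central, step is to show that if there are ``too many'' popular curves — enough that the bad set of $(y,y')$ is two-dimensional rather than contained in a curve — then one can propagate the coincidences to build the analytic functions $\varphi_1,\varphi_2,\varphi_3$ of property $(ii)$. This is where I expect to follow the Tao-inspired machinery advertised in the introduction. Concretely: the statement ``$(z,z')\in\gamma_{y,y'}$'' defines, generically, a correspondence between the $y$-line and, for fixed $y'$, a relation between $z$ and $z'$; a popular curve sitting inside many $\gamma_{y,y'}$ means this correspondence has an unusually large symmetry group. The classical move (cf. the group-structure analysis in Elekes–Szabó) is to show that such a symmetry forces the local foliations of $Z(F)$ by the three families of level sets to satisfy a ``hexagon/Abelian'' closure condition, which by a theorem on webs (or by directly integrating the resulting functional equation) yields local analytic coordinates $\varphi_i$ in which $Z(F)$ is the graph of $\varphi_1(x)+\varphi_2(y)+\varphi_3(z)=0$. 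I would organize this as: (1) translate ``$\gamma_{y,y'}$ contains a popular curve for a two-dimensional set of $(y,y')$'' into an algebraic identity among partial derivatives of $F$ (the type of identity appearing in \cite[Lemma 33]{ES12}); (2) show this identity is exactly the integrability condition for the additive form; (3) invoke an analytic integration lemma to produce $\varphi_1,\varphi_2,\varphi_3$ off a one-dimensional exceptional set $Z_0$.

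For the degree bookkeeping, which produces the explicit $O(d^{11})$ bound on $\deg\mathcal{Y}$, the plan is: each popular curve has degree $\le d^2$; the popular curves are components of curves $\gamma_{y,y'}$ of degree $\le d^2$; using Bézout-type bounds (Theorem~\ref{thm:bezout}) and the degree estimates for projections and eliminations from the appendix (Lemmas~\ref{lem:degreebound}, \ref{lem:projectionpreserves}), the set of $(y,y')$ for which $\gamma_{y,y'}$ shares a component with a fixed popular curve is cut out by vanishing of resultants/subresultants whose degrees are polynomially bounded in $d$; summing over the (boundedly many) popular curves and throwing in $\mathcal{S}$ (degree $O(d^4)$, by Lemma~\ref{lem:quantelim}) gives a variety $\mathcal{Y}$ of total degree $O(d^{11})$. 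I would be careful to first argue that if $\mathcal{Y}$ as constructed were two-dimensional (i.e.\ equaled $\C^2$), then case $(ii)$ must hold — this is the dichotomy — so that in the alternative we genuinely get $\dim\mathcal{Y}\le 1$.

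The main obstacle, I expect, is step two: converting the combinatorial-algebraic statement ``many $\gamma_{y,y'}$ contain popular curves'' into the clean local-analytic additive normal form, and in particular handling the exceptional one-dimensional locus $Z_0$ on which the analytic functions may fail to be defined or one-to-one (singular points of $Z(F)$, branch loci of the various projections, points where derivatives of $F$ vanish). Making the foliation/web argument rigorous over $\C$ — ensuring the locally defined $\varphi_i$ are genuinely analytic with analytic inverses and that the ``if and only if'' holds on a full open polydisc $D_1\times D_2\times D_3$ — is the delicate part, and it is presumably why the authors defer this to the long Section~\ref{sec:proofofkey}.
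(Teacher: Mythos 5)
Your high-level outline correctly identifies the shape of the proof — a dichotomy between a ``few popular curves, so the bad $(y,y')$ lie on a low-degree curve'' branch and an ``extract an additive normal form'' branch — and you correctly self-identify step two as the hard part. But the proposal is missing the single construction that makes both branches go through, and without it neither branch actually works. The paper builds the four-dimensional variety $V\subset\C^8$ cut out by $F(x,y,z_1)=F(x,y',z_2)=F(x',y,z_3)=F(x',y',z_4)=0$, shows (via the implicit function theorem) that near generic points $V$ is the graph of an analytic map $\Phi_v:(x,x',y,y')\mapsto(z_1,z_2,z_3,z_4)$, and writes down an explicit polynomial $G$ whose vanishing on $V$ is equivalent to $\det J_{\Phi_v}=0$; the dichotomy is then on $\dim\cl(\pi_1(W))$ where $W=V\cap Z(G)$, not directly on the dimension of the bad set of $(y,y')$. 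Your proposed $\C^4$ incidence variety $\{(y,y',z,z'):(z,z')\in\gamma_{y,y'}\}$ does not carry the $4\times4$ Jacobian structure that lets one deduce the functional equation by a one-line determinant computation; replacing it by web theory (Abelian hexagon closure) is a genuinely different route that you do not carry out, and it is not clear how it would yield the explicit $O(d^{11})$ degree bound, which in the paper falls out of $\cl(\pi_1(W))\cup\cl(\pi_1(V_0))$ being a projection of a variety cut by five polynomials of degree $O(d)$.

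The ``boundedly many popular curves'' step is also asserted rather than proved. Your reasoning (``each is a component of some $\gamma_{y,y'}$ and the total degree budget is polynomial in $d$'') would only bound the number of popular curves inside any single $\gamma_{y,y'}$, not globally — there could a priori be a different popular curve for each $(y,y')$. The paper closes this gap with the \emph{associated curve} mechanism: by the B\'ezout-for-many-curves lemma, every popular curve $\gamma$ has an associated irreducible curve $\gamma^*$ with $\gamma\subset\gamma_{y,y'}$ for all but finitely many $(y,y')\in\gamma^*$, and then one shows $\C^2\times\gamma^*\subset\cl(\pi_1(W))\cup\cl(\pi_1(V_0))$, so that in Case 1 each $\C^2\times\gamma^*$ must be one of the $O(d^5)$ irreducible components of that three-dimensional, degree-$O(d^5)$ variety. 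This, together with a separate argument for axis-parallel lines and for isolated ``associated points,'' is what produces $\mathcal{Y}=\mathcal{Y}_1\cup\mathcal{Y}_2$ of degree $O(d^{11})$. Without the associated-curve argument and the projection $\pi_1(W)$ to test against, your degree bookkeeping has no variety to bound.
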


\subsection{Incidences}
We continue with the analysis, assuming the truth of Proposition~\ref{prop:keyprop}.
We introduce the following set of points and {\em multiset} of curves (some of the curves may coincide as point sets):
\[\Pi:=(C\times C)\minus \mathcal{Z}~~~~~~\text{and}~~~~~~\Gamma := \{\gamma_{b,b'}\mid (b,b')\in(B\times B)\minus\mathcal{Y}\}.\]
By definition, for every $(b,b',c,c')\in Q$, 
we have $(c,c')\in \gamma_{b,b'}$ and $(b,b')\in \gamma^*_{c,c'}$ 
(albeit not necessarily vice versa, 
because the definition of the curves involves a closure, 
and does not require the qualifying point $x$ to be in $A$).
This lets us relate $|Q|$ to $I(\Pi,\Gamma)$, the number of incidences between these points and curves;
since $\Gamma$ is a multiset, these incidences are counted with the multiplicity of the relevant curves.

\begin{lemma}\label{lem:Qtoincs}
We have $\displaystyle{|Q| \le I(\Pi, \Gamma) + O\left(d^{13}|B||C|+d^4|B|^2+d^4|C|^2\right).}$
\end{lemma}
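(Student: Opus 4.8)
The plan is to bound $|Q|$ by the number of incidences $I(\Pi,\Gamma)$ plus a controlled error term accounting for the discrepancies introduced by (a) excluding the exceptional sets $\mathcal{Y}$ and $\mathcal{Z}$, and (b) the gap between the defining set of each curve $\gamma_{b,b'}$ and its Zariski closure. First I would recall the basic relation: for every quadruple $(b,b',c,c')\in Q$ there exists $a\in A$ with $F(a,b,c)=F(a,b',c')=0$, hence by definition $(c,c')$ lies in the defining set of $\gamma_{b,b'}$, and in particular $(c,c')\in\gamma_{b,b'}$; symmetrically $(b,b')\in\gamma^*_{c,c'}$. So a quadruple in $Q$ naturally yields an incidence between the point $(c,c')$ and the curve $\gamma_{b,b'}$, provided $(b,b')\notin\mathcal{Y}$ (so that $\gamma_{b,b'}\in\Gamma$) and $(c,c')\notin\mathcal{Z}$ (so that $(c,c')\in\Pi$).

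The next step is to bound the number of quadruples that are lost by these exclusions. The quadruples with $(b,b')\in\mathcal{Y}$: since $\mathcal{Y}$ is a one-dimensional variety of degree $O(d^{11})$, the number of points of $B\times B$ on $\mathcal{Y}$ is $O(d^{11}|B|)$ by the Schwartz--Zippel-type bound (Lemma~\ref{lem:schwartzzippel}, or directly B\'ezout against generic lines); for each such $(b,b')$, the number of completions $(c,c')$ with $(b,b',c,c')\in Q$ is at most $|\gamma_{b,b'}\cap (C\times C)|$, which is $O(d^2|C|)$ since $\gamma_{b,b'}$ is a curve of degree at most $d^2$ — giving $O(d^{13}|B||C|)$. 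Symmetrically, the quadruples with $(c,c')\in\mathcal{Z}$ contribute $O(d^{13}|B||C|)$ by the dual argument (using that $\gamma^*_{c,c'}$ has degree at most $d^2$ and $(b,b')\in\gamma^*_{c,c'}$ for all but finitely many such quadruples, the finitely-many exception per $(c,c')$ being absorbed into a lower-order term). One must also handle the quadruples for which $(b,b')\in\mathcal{S}$ or $(c,c')\in\mathcal{T}$, where $\gamma_{b,b'}$ or $\gamma^*_{c,c'}$ might fail to be a curve of the expected degree; but $\mathcal{S}\subset\mathcal{Y}$ and $\mathcal{T}\subset\mathcal{Z}$ by Proposition~\ref{prop:keyprop}, so these are already covered.

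The remaining subtlety — and the step I expect to be the main obstacle — is the closure discrepancy. A point $(c,c')\in\gamma_{b,b'}$ need not lie in the pre-closure defining set, so not every incidence of $\Pi$ with $\Gamma$ comes from a genuine quadruple; but this direction is harmless, since we only need $|Q|\le I(\Pi,\Gamma)+(\text{error})$, i.e., we map $Q$ \emph{into} incidences, not onto them. The genuinely delicate point is whether the map $(b,b',c,c')\mapsto\bigl((c,c'),\gamma_{b,b'}\bigr)$ from $Q$ (minus the excluded quadruples) into incidences is injective \emph{as a count with multiplicity}: distinct pairs $(b,b')$ may give the same curve $\gamma_{b,b'}$ as a point set, but $\Gamma$ is explicitly a multiset indexed by $(b,b')\in (B\times B)\setminus\mathcal{Y}$, so each such $(b,b')$ contributes its own copy, and the map is injective by construction. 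Hence every surviving quadruple is charged to a distinct point-curve incidence (counted with multiplicity), giving $|Q\setminus(\text{excluded})|\le I(\Pi,\Gamma)$. Combining with the error estimates above yields
\[
|Q|\le I(\Pi,\Gamma)+O\bigl(d^{13}|B||C|+d^4|B|^2+d^4|C|^2\bigr),
\]
where the $d^4|B|^2$ and $d^4|C|^2$ terms absorb, respectively, the quadruples with $b=b'$ or $c=c'$ (where the curves degenerate and a cruder bound is used) together with the finitely-many closure-exception points per curve; I would verify that in each such degenerate case the number of quadruples is $O(d^4|B|^2)$ or $O(d^4|C|^2)$ using Lemma~\ref{lem:excepa} and a direct B\'ezout count, which is routine.
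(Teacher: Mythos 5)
Your mapping from quadruples to incidences, the treatment of pairs $(b,b')\in\mathcal{Y}$ and $(c,c')\in\mathcal{Z}$ via Schwartz--Zippel, and your observation that the closure discrepancy is harmless for this direction of the inequality, are all correct and match the paper. The gap is in what you say about the exceptional pairs $\mathcal{S}$ and $\mathcal{T}$. You assert that since $\mathcal{S}\subset\mathcal{Y}$ and $\mathcal{T}\subset\mathcal{Z}$, these pairs are ``already covered,'' but that is not so: your estimate for the contribution of a pair $(b,b')\in\mathcal{Y}$ used $|\gamma_{b,b'}\cap(C\times C)|=O(d^2|C|)$, which rests on $\gamma_{b,b'}$ being an algebraic curve of degree at most $d^2$ --- and that is precisely what Lemma~\ref{lem:quantelim} only guarantees for $(b,b')\notin\mathcal{S}$. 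For $(b,b')\in\mathcal{S}$, the set $\gamma_{b,b'}$ can be all of $\C^2$, and then the only available bound is the trivial $|\gamma_{b,b'}\cap(C\times C)|\le|C|^2$. So these pairs need separate treatment, and they are in fact the genuine source of the lower-order terms: since $|\mathcal{S}|=O(d^4)$ and $|\mathcal{T}|=O(d^4)$, the trivial bounds give $O(d^4|C|^2)$ and $O(d^4|B|^2)$ respectively.

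By contrast, the two phenomena you do blame for the $d^4|B|^2+d^4|C|^2$ terms are both non-issues. Diagonal pairs $b=b'$ give a perfectly good curve $\gamma_{b,b}$ (of degree $\le d^2$) so long as $(b,b)\notin\mathcal{S}$; there is no separate degeneracy to control. And you already correctly argued that the closure operation can only make the curve larger, so it cannot cause a quadruple in $Q$ to be undercounted. Replacing your ``already covered'' step with the explicit trivial-bound count over $\mathcal{S}\times(C\times C)$ and $(B\times B)\times\mathcal{T}$, and dropping the spurious discussion of diagonal and closure cases, would close the gap and reproduce the paper's proof.
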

\begin{proof}
Any $(b,b',c,c')$ in $Q$ that is not counted in $I(\Pi,\Gamma)$ must have
$(b,b')\in\mathcal{Y}$ or $(c,c')\in\mathcal{Z}$.

We have $|\mathcal{Y}\cap (B\times B) | = O(d^{11}|B|)$ and 
$|\mathcal{Z}\cap (C\times C)|=O(d^{11}|C|)$ by the Schwartz-Zippel lemma (Lemma \ref{lem:schwartzzippel}).\footnote{To be precise, we apply Lemma \ref{lem:schwartzzippel} to the purely one-dimensional component of $\mathcal{Y}$, which actually has degree $O(d^7)$ (see Lemma \ref{lem:X2}), and add the number of zero-dimensional components of $\mathcal{Y}$, which is $O(d^{11})$. The overestimate in the weaker argument given above does not affect the subsequent bounds. We do the same for $\mathcal{Z}$.}
For any $(b,b')\in\mathcal{Y}\minus \mathcal{S}$, we have $|\gamma_{b,b'}\cap (C\times C)| =O(d^2|C|)$, and for any $(c,c')\in \mathcal{Z}\minus \mathcal{T}$ we have $|\gamma_{c,c'}^*\cap (B\times B)| =O(d^2|B|)$, by the Schwartz-Zippel lemma again.
Thus the contribution from these excluded pairs is $O(d^{13}|B||C|)$.

For $(b,b')\in \mathcal{S}$, $(c,c')\in \mathcal{T}$, the sets $\gamma_{b,b'}$ and $\gamma_{c,c'}^*$ could in fact be 
all of $\C^2$, so we only have the trivial bounds $|\gamma_{b,b'}\cap (C\times C)| =O(|C|^2)$ and $|\gamma_{c,c'}^*\cap (B\times B)| =O(|B|^2)$.
Fortunately, we have $|\mathcal{S}| =O(d^4)$ and $|\mathcal{T}| =O(d^4)$,
so the contribution from these pairs is only $O(d^4|B|^2+d^4|C|^2)$.
\end{proof}

We now define exactly in what sense we require the curves to have well-behaved intersections.

\begin{definition}\label{def:boundedmult}
Let $\Pi$ be a finite set of distinct points in $\C^2$, 
and let $\Gamma$ be a finite multiset of curves in $\C^2$.
We say that the system $(\Pi,\Gamma)$ has \emph{$(\lambda,\mu)$-bounded multiplicity} 
if\,\footnote{In both (a) and (b) the curves should be counted with their multiplicity.}\\ 
$(a)$ for any curve $\gamma\in \Gamma$,
there are at most $\lambda$ curves $\gamma'\in \Gamma$ such that 
there are more than $\mu$ points contained in both $\gamma$ and $\gamma'$; and\\
$(b)$ for any point $p\in \Pi$, 
there are at most $\lambda$ points $p'\in \Pi$ such that there are more than $\mu$ curves that contain both $p$ and $p'$.
\end{definition}

We claim that the system $(\Pi,\Gamma)$ has $(d^6,d^4)$-bounded multiplicity.
Indeed, by Proposition \ref{prop:keyprop}(a) and the fact that we have avoided $\mathcal{Y}$ when defining $\Gamma$, 
any component of a curve $\gamma\in\Gamma$ is not a popular curve, and is thus shared with at most $d^4$ other curves. 
The curve $\gamma$ has at most $d^2$ irreducible components (see Lemma \ref{lem:degreebound}), so there are at most $d^4\cdot d^2=d^6$ curves $\gamma'\in \Gamma$ such that $\gamma$ and $\gamma'$ have a common component. 
Curves $\gamma'$ that do not have a common component with $\gamma$ intersect it in at most $d^4$ points
by B\'ezout's inequality (Theorem \ref{thm:bezout}); thus condition (a) in the definition of $(d^6,d^4)$-bounded multiplicity is satisfied. 
The argument for condition (b) is fully symmetric.

In Section~\ref{sec:incgeom} we derive an incidence bound, Theorem~\ref{cor:multcor}, resembling the classical Szemer\'edi-Trotter point-line incidence bound \cite{ST83}.
It applies to a set $\Pi$ of points and a multiset $\Gamma$ of curves in $\C^2$ of degree at most $\delta$,
under the conditions that $(\Pi,\Gamma)$ has $(\lambda,\mu)$-bounded multiplicity 
and that $\Pi$ is a subset of a Cartesian product.
The theorem asserts that
\[I(\Pi, \Gamma) 
=O\left(\delta^{4/3}\lambda^{4/3}\mu^{1/3}|\Pi|^{2/3}|\Gamma|^{2/3}+\lambda^2\mu|\Pi|+\delta^4\lambda|\Gamma|\right).
\]
Applying this bound with $|\Pi|\leq |C|^2$ and $|\Gamma|\leq |B|^2$, and with $\delta = d^2$, $\lambda =d^6$, and $\mu = d^4$, we get
\begin{align*}
 I(\Pi,\Gamma) & 
= O\left((d^2)^{4/3}(d^6)^{4/3}(d^4)^{1/3}|B|^{4/3}|C|^{4/3}
+(d^6)^2d^4|B|^2
+(d^2)^4d^6|C|^2\right)\\
& = O\left(d^{12}|B|^{4/3}|C|^{4/3}
+d^{16}|B|^2
+d^{14}|C|^2\right),
\end{align*}
which, together with Lemma \ref{lem:Qtoincs}, gives
\begin{align*}
|Q|&= I(\Pi,\Gamma) + O\left(d^{13}|B||C|+d^4|B|^2+d^4|C|^2\right)\\
 &= O\Big(d^{12}|B|^{4/3}|C|^{4/3}
+d^{16}|B|^2
+d^{14}|C|^2\Big).
\end{align*}
Then, from Lemma \ref{lem:MtoQ}, we get
\begin{align}
M&\leq d^{1/2}|A|^{1/2}|Q|^{1/2}+d^2|A|\nonumber\\
&=
O\left(
d^{13/2}|A|^{1/2}|B|^{2/3}|C|^{2/3}+d^{17/2}|A|^{1/2}|B|+d^{15/2}|A|^{1/2}|C|+d^2|A|\right),\label{eq:unbalbound}
\end{align}
which gives the first of the three bounds in Theorem \ref{thm:main2}$(i)$ (with some overestimate, or rather ``rounding up'',
in the exponents of $d$ in the third and fourth terms).
The other two follow by symmetric arguments.
This completes the proof of Theorem \ref{thm:main2}.

To deduce Theorem \ref{thm:main1} from Theorem \ref{thm:main2}, we set $|A|=|B|=|C|=n$ and observe that for $d=O(n^{1/6})$ the first term of \eqref{eq:unbalbound} dominates, which gives the bound $M=O(d^{13/2}n^{11/6})$ in property $(i)$ of Theorem \ref{thm:main1}.
On the other hand, 
when $d=\Omega(n^{1/6})$, 
then the bound $M=O(d^{13/2}n^{11/6})$ is implied by the Schwartz-Zippel lemma (Lemma \ref{lem:schwartzzippel}), which gives the upper bound $M\leq dn^2$.



\section{Proof of Proposition \ref{prop:keyprop}}\label{sec:proofofkey}
\subsection{Overview of the proof}\label{sec:overview}
We adapt an idea used by Tao \cite{Tao11a} to study the expansion of a polynomial $P(x,y)$ over finite fields.
As part of his analysis he considered the map $\Psi:\mathbb \C^4\to\mathbb \C^4$ defined by
\[
\Psi:(a,b,c,d)\mapsto (P(a,c),P(a,d),P(b,c),P(b,d)),
\]
where $P$ is a polynomial in $\C[x,y]$.
Tao showed that if the image $\Psi(\C^4)$ is four-dimensional, then lower bounds on the expansion of $P$ can be derived.
On the other hand, if the image has dimension at most 3,
then $P$ must have one of the special forms $G(H(x)+K(y))$ or $G(H(x)K(y))$ (as in \cite{ER00}), for suitable
polynomials $G,H,K$. 
Tao proved this by observing that in this case the determinant of the Jacobian matrix of $\Psi$ must vanish identically, leading to an identity for the partial derivatives of $P$, from which the special forms of $P$ can be deduced. 

Following Tao's general scheme, albeit in a different context, we define a variety
\begin{align*}
V:=\big\{(x,x',y,y',z_1, &z_2,z_3,z_4)\in\C^8\mid\\
&F(x,y,z_1)=F(x,y',z_2)=F(x',y,z_3)=
F(x',y',z_4)=0\big\}.
\end{align*}
Note that if $F(x,y,z)=z-P(x,y)$, then $V$ is the \emph{graph} of the map $\Psi$ above.
Also observe that if we fix $y,y'$ in $V$ and eliminate $x,x'$, the range of the last four coordinates of $V$ is $\gamma_{y,y'}\times \gamma_{y,y'}$ 
(up to the closure operation).
For a general polynomial $F$, near most points $v\in V$, we use the implicit function theorem  
to represent $V$ as the graph of a locally defined analytic function
\[
\Phi_v:(x,x',y,y')\mapsto\big( g_1(x,y),~g_2(x,y'),~g_3(x',y),~g_4(x',y') \big).
\]
This function serves as a local analogue of the map $\Psi$ above.
If the determinant of the Jacobian matrix of $\Phi_v$ vanishes on $V$ in some neighborhood of $v$,
then we obtain the special form of $F$. 
This derivation is similar to that of Tao, although our special form requires a somewhat different treatment. 

The other side of our argument, when the determinant of the Jacobian is not identically zero,
is very different from that of Tao.
(We will gloss over many details in the rest of this outline.)
We want to show that there are only finitely many popular curves.
We show that if $\gamma$ is a popular curve 
(i.e., there are more than $d^4$ curves $\gamma_{y,y'}\in \Gamma$ that contain $\gamma$),
then it is \emph{infinitely} popular, in the sense that there is a one-dimensional curve $\gamma^*$ 
of pairs $(y,y')\in \C^2$ for which $\gamma_{y,y'}$ contains $\gamma$.
For $V$, this implies that if we restrict $(y,y')$ to $\gamma^*$ and project to the last four coordinates, then the image is contained 
in $\gamma\times \gamma$.
In other words, the local map $\Phi_v$ sends an open subset of the three-dimensional variety 
$\C^2\times \gamma^*$ to an open subset of the two-dimensional variety $\gamma\times \gamma$. 
The inverse mapping theorem 
now tells us that the determinant of the Jacobian of $\Phi_v$ vanishes on the three-dimensional variety $\C^2\times \gamma^*$. 
Given that by assumption this determinant is not identically zero,
its zero set is three-dimensional, so $\C^2\times \gamma^*$ must be one of its $O_d(1)$ irreducible components.
It follows that there are only $O_d(1)$ popular dual curves, which in turn implies, with some additional reasoning, that there are only $O_d(1)$ popular curves.
This, together with the first part of the argument, essentially establishes Proposition~\ref{prop:keyprop}.

\subsection{The varieties \texorpdfstring{$V$, $V_0$, and $W$}{V, V0 and W}}
\label{sec:thevarietyV}
Throughout Section~\ref{sec:proofofkey} we write 
$\pi_1:\C^8\to\C^4$ and  $\pi_2:\C^8\to\C^4$
for the standard projections onto the first and the last four coordinates, respectively.
We define (as already mentioned above) the variety
\begin{align*}
V:=\big\{(x,x',y,y',z_1, z_2,z_3,z_4)&\in\C^8\mid\\
&F(x,y,z_1)=F(x,y',z_2)=F(x',y,z_3)=
F(x',y',z_4)=0\big\}.
\end{align*}
We first prove that $V$ has the dimension that one expects from a variety defined by four equations in $\C^8$.

\begin{lemma}\label{lem:Vdim4}
The variety $V$ has dimension $4$.
\end{lemma}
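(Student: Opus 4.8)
The plan is to show that $\dim V=4$ by the standard dimension argument for varieties defined by several equations, combined with a matching lower bound coming from the hypothesis that $F$ genuinely depends on each variable. First I would establish the upper bound $\dim V\le 4$: the variety $V$ is cut out inside $\C^8$ by the four polynomials $F(x,y,z_1),\ F(x,y',z_2),\ F(x',y,z_3),\ F(x',y',z_4)$, so by the affine dimension theorem (each hypersurface drops the dimension by at most one, Lemma~\ref{lem:dimension} in the appendix applied repeatedly) every irreducible component of $V$ has dimension at least $8-4=4$; conversely, I would argue that no component can have dimension larger than $4$ because the four defining equations are ``independent'' in a suitable sense, which is where the hypothesis $\partial F/\partial z\not\equiv 0$ enters.

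The cleanest way to see both bounds at once is via the projection $\pi:V\to\C^4$ onto the coordinates $(x,x',y,y')$. For a generic choice of $(x,x',y,y')$, each of the four equations $F(x,y,z_i)=0$ (in the single unknown $z_i$) is a nontrivial univariate polynomial equation — nontrivial precisely because $\partial F/\partial z\not\equiv 0$ guarantees that $F(x,y,z)$, viewed as a polynomial in $z$, does not vanish identically for generic $(x,y)$, and in fact has some fixed positive degree $d_z$ in $z$ on a Zariski-dense open set. Hence over such a generic point the fiber $\pi^{-1}(x,x',y,y')$ is a nonempty finite set (of size at most $d_z^4$). Therefore $\pi$ is dominant with generic fiber of dimension $0$, which forces $\dim V = \dim\overline{\pi(V)} = \dim\C^4 = 4$ for the component(s) of $V$ dominating $\C^4$; and the lower bound from the affine dimension theorem shows \emph{every} component has dimension $\ge 4$, so in fact $\pi$ is dominant on every component and $\dim V = 4$. (Here I would use Lemma~\ref{lem:dimension} for the lower bound and a fiber-dimension / upper-semicontinuity statement — the kind of fact collected in the appendix's algebraic-geometry infrastructure — for the upper bound.)

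I would fill in the one point that needs the irreducibility and nonvanishing hypotheses on $F$: namely, that $F(x,y,z)$ is not divisible by any polynomial in $x,y$ alone, so that for generic $(x,y)$ the polynomial $z\mapsto F(x,y,z)$ is not the zero polynomial. This is immediate: if $F$ were divisible by such a factor, irreducibility would force $F\in\C[x,y]$, contradicting $\partial F/\partial z\not\equiv 0$. The same remark shows the generic fiber of $\pi$ is nonempty and finite, completing the argument. One should also note that the different components could a priori fail to dominate $\C^4$, but the affine dimension theorem rules this out, as explained above — so no case analysis on components is really needed beyond invoking that lemma.

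The main obstacle, modest as it is, is making the ``upper bound $\le 4$'' rigorous: the affine dimension theorem only gives a lower bound on component dimensions, so one genuinely needs the fiber-dimension argument (or an explicit transversality/Jacobian computation) to prevent a component of dimension $5,6,7,$ or $8$. The fiber argument via $\pi$ handles this uniformly, and the only input it requires — that $F$ has positive $z$-degree generically — is exactly what the hypothesis on $\partial F/\partial z$ provides. Everything else is bookkeeping with the appendix lemmas on dimension and projections.
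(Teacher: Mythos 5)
Your lower bound ($\dim V\ge 4$, provided $V\neq\emptyset$) matches the paper's, but the upper bound argument has a genuine gap, and the route you take is different from the paper's. The paper bounds $\dim V$ from above by projecting to the \emph{two} coordinates $(y,y')$: each fiber is $V_{y,y'}\times V_{y,y'}$ with $V_{y,y'}=\{(x,z_1,z_2):F(x,y,z_1)=F(x,y',z_2)=0\}$, which is $1$-dimensional outside a \emph{finite} exceptional set $\mathcal S$ and at most $2$-dimensional always (by irreducibility of $F$). So generic fibers give $\dim \le 2+2=4$, and the finitely many exceptional fibers each contribute at most $0+2+2=4$ — no further case analysis is needed because the exceptional locus in the base is zero-dimensional.

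Your argument projects instead to the \emph{four} coordinates $(x,x',y,y')$, which is the map $\pi_1$ the paper uses elsewhere (in Lemma~\ref{lem:piV=4}), and you correctly observe that fibers are finite over the dense open set $U=\C^4\minus\sigma'$, where $\sigma'$ is the (at most $3$-dimensional) exceptional locus. The gap is in the sentence claiming ``the affine dimension theorem rules this out — so no case analysis on components is really needed.'' Lemma~\ref{lem:dimension} only gives a \emph{lower} bound on component dimensions; it does not imply that every irreducible component of $V$ dominates $\C^4$. A priori, a component $V'$ could have $\cl(\pi_1(V'))\subset\sigma'$ (hence dimension $\le 3$) with positive-dimensional generic fibers, since over $\sigma'$ some of the four polynomials $F(x,y,z_i)$ can vanish identically in $z_i$, making the fiber a product involving one or more full lines. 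Such a component would satisfy $\dim V'\ge 4$ without dominating, and the fiber-finiteness over $U$ tells you nothing about it — the ``generic fiber'' of $\pi_1|_{V'}$ is a generic fiber over $\cl(\pi_1(V'))$, not over $\C^4$. To close this gap you would need to bound $\dim(V\cap\pi_1^{-1}(\sigma'))\le 4$ directly, which amounts to stratifying $\sigma'$ by how many of the four relevant bivariate slices of $F$ vanish identically and checking that dimension-plus-fiber-dimension never exceeds $4$ on each stratum. That is doable (using that $\sigma_0$ contains no axis-parallel lines, by irreducibility of $F$), but it is an additional, nontrivial argument that your write-up omits. The paper's choice of the coarser projection to $(y,y')$ sidesteps this entirely, because there the exceptional base set is finite.

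Two smaller points: you should verify $V\neq\emptyset$ before invoking Lemma~\ref{lem:dimension} for the lower bound (the paper does this by noting $(x,x,y,y,z,z,z,z)\in V$ for any $(x,y,z)\in Z(F)$); and the hypothesis needed for generic finiteness is not just ``$F$ is not divisible by a polynomial in $x,y$ alone'' but that $F(x,y,\cdot)$ is not constant (including nonzero constants) for generic $(x,y)$, which is what the paper's Lemma~\ref{lem:excepb} addresses.
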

\begin{proof}
The variety $V$ is not empty, since it contains the point $(x,x,y,y,z,z,z,z)$ for any point $(x,y,z)\in Z(F)$.
Thus, by Lemma \ref{lem:dimension},
$V$ has dimension at least 4.

With a suitable permutation of the coordinates, we can write the set $V$ as a disjoint union
\begin{equation}\label{eq:union}
V=\bigcup_{(y,y')\in \C^2}\{(y,y')\}\times V_{y,y'}\times  V_{y,y'},
\end{equation}
where
$$
V_{y,y'}:=\{(x,z_1,z_2)\mid F(x,y,z_1)=0,F(x,y',z_2)=0\}\subset \C^3.
$$
Note that, if we project $V_{y,y'}$ to the last two coordinates and take the closure, 
we get the set $\gamma_{y,y'}$.
In the proof of Lemma \ref{lem:quantelim}, we saw that $V_{y,y'}$ is one-dimensional for all but $O(d^4)$ points $(y,y')\in \C^2$.
It follows that, excluding finitely many $(y,y')$, the union in \eqref{eq:union} is four-dimensional. 

Consider one of the excluded points $(y_0,y_0')$.
If $F(x,y_0,z)$ were identically zero as a polynomial in $x,z$, then $F(x,y,z)$ would have a factor $y-y_0$, contradicting the irreducibility of $F$. Thus $F(x,y_0,z)$ is not identically zero, which implies that $V_{y_0y_0'}$ is at most two-dimensional.
Hence each of the finitely many excluded sets $\{(y_0,y_0')\}\times V_{y_0,y_0'}\times  V_{y_0,y_0'}$ is at most four-dimensional.
This finishes the proof.
\end{proof}

Our analysis also requires that the projection $\pi_1(V)$ of $V$ to the first four coordinates be 
four-dimensional. 
This fact, which does not follow directly from $V$ being four-dimensional, is established in Lemma~\ref{lem:piV=4}  below. 
The proof requires the following technical lemma. (The lemma and its proof should be compared to Lemma~\ref{lem:excepa}.)

\begin{lemma}\label{lem:excepb}
Let $H\in\C[u,v,w]$ be an irreducible polynomial of degree $d$ with $\partial H/\partial w$ not identically zero. Then
\[\dim\Big(\cl\big(\left\{(u_0,v_0)\in\C^2\mid \exists c~\text{such that}~H(u_0,v_0,w)\equiv c~\text{for all}~w\in \C \right\}\big)\Big)\le 1.\]
\end{lemma}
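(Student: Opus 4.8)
The statement says that the set of points $(u_0,v_0)$ for which $w\mapsto H(u_0,v_0,w)$ is a (nonzero or zero) \emph{constant} function is at most one-dimensional after taking closure. The plan is to mimic the proof of Lemma~\ref{lem:excepa}, but now accounting for the fact that the constant $c$ is allowed to vary with $(u_0,v_0)$. Write $H(u,v,w)=\sum_{i=0}^d\alpha_i(u,v)w^i$. The condition $H(u_0,v_0,w)\equiv c$ for some $c$ is equivalent to $\alpha_i(u_0,v_0)=0$ for \emph{all} $i\ge 1$ (with $c=\alpha_0(u_0,v_0)$ then being whatever it is). So the set in question is exactly $\bigcap_{i\ge 1}Z(\alpha_i)$.

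First I would rule out degenerate behaviour of the $\alpha_i$ with $i\ge 1$. Since $\partial H/\partial w$ is not identically zero, not all of $\alpha_1,\dots,\alpha_d$ vanish identically; pick an index $i_0\ge 1$ with $\alpha_{i_0}\not\equiv 0$. Then $\bigcap_{i\ge 1}Z(\alpha_i)\subseteq Z(\alpha_{i_0})$, which is a hypersurface in $\C^2$, hence of dimension at most $1$. Taking Zariski closure does not increase dimension, so the closure of the set in the statement has dimension $\le 1$. This is essentially immediate once the set is identified with an intersection of zero sets of the $\alpha_i$, $i\ge 1$, and one such $\alpha_i$ is nonzero.

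The only point that needs a sentence of care is the identification of the set: for fixed $(u_0,v_0)$, $H(u_0,v_0,w)$ is a univariate polynomial in $w$, and it is a constant function of $w$ precisely when its coefficients of $w^1,\dots,w^d$ all vanish, i.e. $\alpha_i(u_0,v_0)=0$ for $i=1,\dots,d$. Here I do not even need irreducibility of $H$ or any statement about $\alpha_0$; I only need one nonzero $\alpha_i$ with $i\ge1$, which is exactly what $\partial H/\partial w\not\equiv 0$ provides (indeed $\partial H/\partial w=\sum_{i\ge 1}i\,\alpha_i(u,v)w^{i-1}$, and in characteristic zero this is nonzero iff some $\alpha_i$ with $i\ge1$ is nonzero). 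I expect no real obstacle here; unlike Lemma~\ref{lem:excepa}, there is no need to invoke B\'ezout, because we are only claiming a dimension bound rather than a cardinality bound, so containment in a single hypersurface $Z(\alpha_{i_0})$ already suffices.
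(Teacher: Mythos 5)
Your proof is correct and takes essentially the same approach as the paper: write $H=\sum_i \alpha_i(u,v)w^i$, observe the set in question is $\bigcap_{i\ge1}Z(\alpha_i)$, and note that $\partial H/\partial w\not\equiv 0$ guarantees some $\alpha_{i_0}\not\equiv 0$ with $i_0\ge 1$, so the set is contained in the hypersurface $Z(\alpha_{i_0})$ and hence has dimension at most one. Your side remark that irreducibility of $H$ is not actually needed is accurate; the paper states the lemma with that hypothesis but its own proof does not use it.
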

\begin{proof}
The case $d=1$ is easy, so we may assume that $d\ge 2$.
Write 
\[H(u,v,w)=\sum_{i=0}^d \alpha_i(u,v)w^i,\]
with suitable polynomials $\alpha_0(u,v),\ldots,\alpha_d(u,v)$. 
Observe that there exists some $i_0>0$ such that $\alpha_{i_0}$ is not identically zero,
for otherwise $\partial H/\partial w$ would be identically zero, contrary to assumption.
If $H(u_0,v_0,w)\equiv c$ for some $u_0,v_0,c$, then $\alpha_i(u_0,v_0) = 0$ for $i>0$, and in particular $(u_0,v_0)\in Z(\alpha_{i_0})$.
Hence the variety in the statement of the lemma is contained in $Z(\alpha_{i_0})$, which implies that it has dimension at most $1$.
\end{proof}

\begin{lemma}\label{lem:piV=4}
We have $\cl(\pi_1(V))=\C^4$.
\end{lemma}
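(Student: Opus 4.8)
The plan is to show that $\pi_1(V)$ is four-dimensional, which (since $\pi_1(V)\subseteq\C^4$ and its Zariski closure is irreducible only if we are careful — but $\C^4$ is the only four-dimensional subvariety of $\C^4$) forces $\cl(\pi_1(V))=\C^4$. So the whole task reduces to producing, inside $V$, a four-dimensional piece on which $\pi_1$ is finite-to-one, or equivalently to showing that the generic fiber of $\pi_1|_V$ is zero-dimensional. A point of $\pi_1(V)$ is a tuple $(x,x',y,y')$ for which there exist $z_1,z_2,z_3,z_4$ with $F(x,y,z_1)=F(x,y',z_2)=F(x',y,z_3)=F(x',y',z_4)=0$; such $z_i$ exist exactly when none of the four univariate polynomials $F(x,y,\cdot),F(x,y',\cdot),F(x',y,\cdot),F(x',y',\cdot)$ is the zero polynomial (a nonzero univariate polynomial over $\C$ always has a root). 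So $\pi_1(V)$ contains every $(x,x',y,y')$ for which $F(x,y,z)$, $F(x,y',z)$, $F(x',y,z)$, $F(x',y',z)$ are all nonzero as polynomials in $z$.

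The key step is therefore to bound the set of \emph{bad} tuples. Let $E:=\{(u,v)\in\C^2\mid F(u,v,z)\equiv 0 \text{ as a polynomial in }z\}$. By Lemma~\ref{lem:excepa} (applied to $H=F$ with $w=z$; the hypothesis $\partial F/\partial z\not\equiv 0$ is exactly what we are given), we have $|E|\le d^2$, so $E$ is finite. A tuple $(x,x',y,y')$ fails to lie in $\pi_1(V)$ only if one of $(x,y),(x,y'),(x',y),(x',y')$ lies in $E$. Since $E$ is finite, say $E\subseteq F_1\times F_2$ for finite sets $F_1,F_2\subset\C$ (taking coordinate projections), the set of bad tuples is contained in a finite union of sets of the form $\{x\in F_1\}$, $\{y\in F_2\}$, etc., each of which is a hyperplane-type subvariety of $\C^4$ of dimension $3$. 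Hence the bad set has dimension at most $3$, and its complement — which is contained in $\pi_1(V)$ — is a nonempty Zariski-open subset of $\C^4$, whose closure is all of $\C^4$. Therefore $\cl(\pi_1(V))=\C^4$.

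I expect the main (and essentially only) subtlety to be the bookkeeping that turns ``$E$ is finite'' into ``the bad tuples form a subvariety of dimension $\le 3$'': one must observe that $(x,y)\in E$ with $E$ finite forces $x$ into the finite set $p_1(E)$ (or, more precisely, for each of the finitely many pairs $(u_0,v_0)\in E$ the locus $\{x=u_0,\,y=v_0\}$ is two-dimensional in $\C^4$, and the locus $\{(x,y)\in E\}$ is a finite union of such, hence $2$-dimensional, well below $4$). The cited lemma that \ref{lem:excepb} was prepared for does not seem to be needed for this particular statement — Lemma~\ref{lem:excepa} suffices — though the authors may use \ref{lem:excepb} instead if they prefer to argue via the ``$F(u,v,\cdot)$ constant'' locus rather than the ``$\equiv 0$'' locus; either way the structure of the argument is the same. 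No delicate algebraic geometry is required beyond Lemma~\ref{lem:dimension}-style dimension counting and the elementary fact that a nonzero complex univariate polynomial has a root.
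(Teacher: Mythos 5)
There is a genuine gap. You assert that for $(x_0,x_0',y_0,y_0')$ the needed $z_i$ exist ``exactly when none of the four univariate polynomials $F(x_0,y_0,\cdot),\dots$ is the zero polynomial,'' on the grounds that ``a nonzero univariate polynomial over $\C$ always has a root.'' This is false: a nonzero \emph{constant} polynomial has no root. The actually problematic case is when, say, $F(x_0,y_0,z)\equiv c$ for some nonzero constant $c$ --- then no $z_1$ with $F(x_0,y_0,z_1)=0$ exists, even though the polynomial is not identically zero. (Conversely, $F(x_0,y_0,z)\equiv 0$ is perfectly fine: every $z_1$ works.) So the set you need to excise is not $E=\{(u,v)\mid F(u,v,z)\equiv 0\}$ but rather $\sigma=\{(u,v)\mid \exists\, c\in\C,\ F(u,v,z)\equiv c\}$, and your ``good set'' as defined is \emph{not} contained in $\pi_1(V)$, so the final step does not go through.

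This is exactly why the paper invokes Lemma~\ref{lem:excepb} rather than Lemma~\ref{lem:excepa}: the locus $\sigma$ can genuinely be one-dimensional, not finite. A concrete example is $F(x,y,z)=xz-y$, which satisfies all the hypotheses (irreducible, no vanishing partial); then $F(0,y,z)\equiv -y$ for all $z$, so $\sigma$ contains the entire line $\{x=0\}$, and no tuple $(0,x',y,y')$ with $y\neq 0$ lies in $\pi_1(V)$. Lemma~\ref{lem:excepa} gives you no control over these points, since they are not in your $E$. Fortunately the architecture of your argument survives the correction: with $\dim(\cl(\sigma))\le 1$ from Lemma~\ref{lem:excepb}, the set $\sigma'$ of tuples $(x,x',y,y')$ with one of $(x,y),(x,y'),(x',y),(x',y')$ in $\sigma$ is still at most $3$-dimensional, so $\C^4\minus\sigma'$ is Zariski dense and contained in $\pi_1(V)$. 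That is precisely the paper's proof; the claim that Lemma~\ref{lem:excepa} suffices and that the exceptional set is finite is where your proposal breaks.
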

\begin{proof}
Let $(x_0,x_0',y_0,y_0')\in\C^4$. There exist $z_1,z_2,z_3,z_4\in\C$ such that
\[
F(x_0,y_0,z_1)=F(x_0,y_0',z_2)=F(x_0',y_0,z_3)=F(x_0',y_0',z_4)=0,
\]
unless $F(x_0,y_0,z)\equiv c$ for some  nonzero $c\in \C$,
or a similar identity holds for one of the other pairs $(x_0,y_0')$, $(x_0',y_0)$, $(x_0',y_0')$.
In other words, we have $(x_0,x_0',y_0,y_0')\in\pi_1(V)$ unless one of these exceptions holds.

Let 
$$
\sigma:=\cl\left(\{(x_0,y_0)\in\C^2\mid \exists c~\text{such that}~ F(x_0,y_0,z)\equiv c\}\right)
$$
(note that here we include the case $c=0$). 
By Lemma~\ref{lem:excepb}
we have $\dim(\sigma)\leq 1$, so the set
\[
\sigma':=\left\{(x,x',y,y')\mid \text{one of}~(x,y), (x,y'),(x',y),(x',y')~\text{is in}~\sigma\right\}
\]
has dimension at most $3$.
It follows that $\cl(\C^4\minus\sigma')=\C^4$ (see Lemma~\ref{lem:dense}).
As observed above, we have $\C^4\minus \sigma'\subset \pi_1(V)$, so 
we conclude that $\cl(\pi_1(V))=\C^4$.
\end{proof}

We will use the implicit function theorem (spelled out in Lemma~\ref{lem:ift2})
to locally express each 
of the variables $z_1,z_2,z_3,z_4$ in terms of the corresponding pair of the first four variables $x,x',y,y'$. 
To facilitate this we first exclude the subvariety of $V$ defined by
$$V_0:=V_1\cup V_2\cup V_3,$$ where
\[V_i:=\left\{(x,x',y,y',z_1,z_2,z_3,z_4)\in V\mid F_i(x,y,z_1)F_i(x,y',z_2)F_i(x',y,z_3)F_i(x',y',z_4)=0\right\},
\] 
and $F_i$ stands for the derivative of $F$ with respect to its $i$th variable, for $i=1,2,3$.\footnote{Note 
that, just to apply the implicit function theorem, it would suffice to exclude $V_3$; 
we exclude $V_1$ and $V_2$ for technical reasons that arise in the proofs of Lemmas \ref{lem:Vgcont} and \ref{lem:special} below.}

Let $\rho:\C^8\to \C^6$ be the (permuted) projection 
$$\rho:(x,x',y,y',z_1,z_2,z_3,z_4)
\mapsto (x,y,z_1, x',y',z_4).$$
We now show that $\cl(\rho(V_0))$ is at most three-dimensional, from which it follows that $\cl (\pi_1(V_0))$ is also at most three-dimensional, 
which will allow us to exclude it in most of our analysis. 

\begin{lemma}\label{lem:vzero}
The variety $\cl(\rho(V_0))$ has dimension at most 3.
\end{lemma}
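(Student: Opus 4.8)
The plan is to analyze the structure of $V_0 = V_1 \cup V_2 \cup V_3$ and show that each piece $V_i$ projects under $\rho$ to a variety of dimension at most $3$. By definition, $V_i$ is cut out inside $V$ by the extra equation $F_i(x,y,z_1)F_i(x,y',z_2)F_i(x',y,z_3)F_i(x',y',z_4)=0$, so $V_i$ decomposes further into four pieces according to which factor vanishes. For instance, consider the piece of $V_3$ where $F_3(x,y,z_1) = \partial F/\partial z(x,y,z_1) = 0$. On this piece the pair $(x,y,z_1)$ is constrained to lie on $Z(F) \cap Z(F_3)$, which is a curve (of degree $O(d^2)$) because $F$ is irreducible and $F_3 \not\equiv 0$, hence $F$ and $F_3$ share no component. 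So among the six coordinates $(x,y,z_1,x',y',z_4)$ retained by $\rho$, the first three are confined to a one-dimensional set, while $(x',y')$ range freely in $\C^2$ and $z_4$ is then (generically finitely) determined by $F(x',y',z_4)=0$; thus this piece of $\rho(V)$ has dimension at most $1 + 2 = 3$.

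The remaining cases are handled by the same bookkeeping. For the piece of $V_3$ with $F_3(x,y',z_2)=0$: here $(x,y',z_2)$ lies on the curve $Z(F)\cap Z(F_3)$, so $x$ and $y'$ are jointly confined to a one-dimensional set; then $y$ is free (one dimension), $x'$ is free (one dimension), but wait --- $\rho$ retains $(x,y,z_1,x',y',z_4)$, and $z_2,z_3$ are forgotten, so the free parameters are $y$ and $x'$ (and $z_1$ determined by $F(x,y,z_1)=0$, $z_4$ by $F(x',y',z_4)=0$), giving dimension $\le 1 + 1 + 1 = 3$. The pieces with $F_3(x',y,z_3)=0$ or $F_3(x',y',z_4)=0$ are symmetric, confining $(x',y)$ or $(x',y')$ to a curve; in each case one counts at most three free directions among the six retained coordinates. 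For $V_1$ and $V_2$ the argument is identical, using that $F$ and $F_1$ (resp. $F_2$) share no component since $F_1, F_2 \not\equiv 0$ by hypothesis, so $Z(F)\cap Z(F_i)$ is again a curve. Since $\cl(\rho(V_0))$ is the union of these finitely many pieces, each of dimension at most $3$, it has dimension at most $3$.

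The main technical point to get right is the careful dimension count for each of the twelve pieces: one must track which three of the six $\rho$-coordinates $(x,y,z_1,x',y',z_4)$ the constraint $F_i(\cdot)=0$ acts on, and verify that in every case the constraint removes at least one degree of freedom from the naive count of $4$ (which is $\dim \cl(\rho(V)) \le \dim V = 4$, using that $z_1, z_4$ are generically determined by the first four coordinates). This follows because each factor $F_i(x,y,z_1)$, etc., together with the corresponding $F(x,y,z_1)=0$ already in the definition of $V$, forces a triple of coordinates onto the one-dimensional set $Z(F)\cap Z(F_i)$; the subtlety is only to confirm that at least one of these three coordinates is among those retained by $\rho$ and is thereby genuinely constrained rather than being a forgotten coordinate like $z_2$ or $z_3$. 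Inspecting the four factors shows that $F_i(x,y,z_1)$ constrains $x,y$ (both retained), $F_i(x,y',z_2)$ constrains $x,y'$ (both retained), $F_i(x',y,z_3)$ constrains $x',y$ (both retained), and $F_i(x',y',z_4)$ constrains $x',y'$ (both retained), so in every case at least two retained coordinates are confined to a curve, and the count goes through cleanly. I do not anticipate a serious obstacle beyond this organized casework, together with an appeal to the basic dimension estimates (Lemmas on fiber dimension and on $\dim Z(F)\cap Z(F_i)=1$) from the appendix.
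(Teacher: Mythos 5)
Your proof is correct and follows essentially the same strategy as the paper: decompose $V_0$ into the pieces cut out by the vanishing of a single factor $F_i(\cdot,\cdot,\cdot)$, and for each piece exhibit a three-dimensional variety containing its $\rho$-image. The paper proves this explicitly only for the piece with $F_i(x,y,z_1)=0$, taking $\widehat V_i = (Z(F)\cap Z(F_i))\times Z(F)$, and dismisses the remaining nine pieces as ``treated symmetrically.'' What you have noticed — and handled correctly — is that this symmetry is not literal: the projection $\rho$ retains $(x,y,z_1,x',y',z_4)$ and forgets $(z_2,z_3)$, so for the factors $F_i(x,y',z_2)$ and $F_i(x',y,z_3)$ the constrained triple involves a forgotten coordinate, and the product structure of $\widehat V_i$ does not transfer verbatim. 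Your modified count is the right fix: the curve $Z(F)\cap Z(F_i)$ in, say, $(x,y',z_2)$-space projects to an at-most-one-dimensional set in the retained $(x,y')$-plane, and this, together with $F(x,y,z_1)=0$ fibering $(y,z_1)$ one-dimensionally over fixed $x$ and $F(x',y',z_4)=0$ fibering $(x',z_4)$ one-dimensionally over fixed $y'$, gives the bound $1+1+1=3$. One small wobble: mid-paragraph you first say it is enough that ``at least one'' of the three constrained coordinates be retained, which would not suffice; but you immediately correct this to ``at least two,'' which is what the case-check actually delivers and what the dimension count needs. Apart from that slip, your bookkeeping is sound and supplies precisely the detail the paper's ``symmetry'' remark glosses over.
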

\begin{proof}
It suffices to show that, for 
$$
\overline{V}_i:=\{(x,x',y,y',z_1,z_2,z_3,z_4)\in V\mid F_i(x,y,z_1)=0\},
$$ 
$\cl(\rho(\overline{V}_i))$ is at most three-dimensional, for each $i=1,2,3$; the other nine cases can be treated symmetrically.
So let $i$ be fixed. 
Then $\overline{V}_i$ is the common zero set of the five polynomials 
$$
F(x,y,z_1),\;F(x,y',z_2),\;F(x',y,z_3),\;F(x',y',z_4),\; F_i(x,y,z_1).
$$
The polynomial $F(x,y,z_1)$ is assumed to be irreducible,
and $F_i(x,y,z_1)$ has lower degree and is not identically zero by assumption. 
It follows that $F(x,y,z_1)$ and $F_i(x,y,z_1)$ are coprime polynomials,
so the equations $F(x,y,z_1)=0,F_i(x,y,z_1)=0$ define a one-dimensional variety in $\C^3$ (with coordinates $x,y,z_1$). 
Clearly, 
$F(x',y',z_4)=0$ defines a two-dimensional variety in a complementary copy of $\C^3$ (with coordinates $x',y',z_4$). Thus 
\[
\widehat{V}_i:=\{(x,y,z_1)\mid F(x,y,z_1)=F_i(x,y,z_1)=0\}\times \{(x',y',z_4)\mid F(x',y',z_4)=0\},
\]
viewed as a variety in $\C^6=\C^3\times\C^3$, with coordinates $x,y,z_1,x',y',z_4$, is three-dimensional. 
Since $\rho(\overline{V}_i)\subset \widehat{V}_i$, and since the closure operation does not increase the dimension
(Lemma \ref{lem:projectionpreserves}), the lemma follows.
\end{proof}

As explained in the overview in Section~\ref{sec:overview}, we want to view $V$, around most of its points, 
as the graph of a locally defined mapping.  We now define this mapping.

\begin{lemma}\label{lem:locallygraph}
For each point $v\in V\minus V_0$,
there is an open neighborhood $N_v\subset \C^8$ of $v$, 
disjoint from $V_0$,
and an analytic mapping
$\Phi_v : \pi_1(N_v)\to \pi_2(N_v),$
such that 
$$
V\cap N_v = \{(u,\Phi_v(u))\mid u\in \pi_1(N_v)\}.
$$
\end{lemma}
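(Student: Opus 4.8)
The plan is to apply the implicit function theorem (Lemma~\ref{lem:ift2}) to the four defining equations of $V$, using the variables $z_1,z_2,z_3,z_4$ as the ``dependent'' variables and $x,x',y,y'$ as the ``independent'' ones. Concretely, write $V$ as the zero set of the map $G:\C^8\to\C^4$ given by
\[
G(x,x',y,y',z_1,z_2,z_3,z_4)=\big(F(x,y,z_1),\,F(x,y',z_2),\,F(x',y,z_3),\,F(x',y',z_4)\big).
\]
The Jacobian of $G$ with respect to $(z_1,z_2,z_3,z_4)$ is the diagonal matrix with entries $F_3(x,y,z_1)$, $F_3(x,y',z_2)$, $F_3(x',y,z_3)$, $F_3(x',y',z_4)$, where $F_3=\partial F/\partial z$. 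Its determinant is the product of these four entries. By the very definition of $V_0$ (in fact already by the definition of $V_3$ alone), for any $v\in V\minus V_0$ none of these four factors vanishes at $v$, so the Jacobian is invertible at $v$.

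First I would invoke Lemma~\ref{lem:ift2}: since $G(v)=0$ and the partial Jacobian $\partial G/\partial(z_1,z_2,z_3,z_4)$ is nonsingular at $v$, there is an open neighborhood of $v$ in which the solution set of $G=0$ is the graph of an analytic map expressing $(z_1,z_2,z_3,z_4)$ as a function of $(x,x',y,y')$. Write $v=(u_0,w_0)$ with $u_0=\pi_1(v)$, $w_0=\pi_2(v)$; the implicit function theorem gives an open set $U\subset\C^4$ containing $u_0$, an open set $W\subset\C^4$ containing $w_0$, and an analytic map $\Phi_v:U\to W$ with $\Phi_v(u_0)=w_0$, such that $V\cap(U\times W)=\{(u,\Phi_v(u))\mid u\in U\}$. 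Then I would set $N_v:=U\times W$; by construction $\pi_1(N_v)=U$ (it is a Cartesian product, so the projections are exactly the factors, and $\Phi_v$ does map into $\pi_2(N_v)=W$), and $V\cap N_v=\{(u,\Phi_v(u))\mid u\in\pi_1(N_v)\}$, as required.

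The only remaining point is to arrange that $N_v$ is disjoint from $V_0$. This is a routine open-condition argument: $V_0$ is a closed subvariety of $\C^8$ not containing $v$, so $\C^8\minus V_0$ is an open neighborhood of $v$, and I would simply intersect the neighborhood $N_v$ produced above with $\C^8\minus V_0$, shrinking $U$ and $W$ accordingly so that $N_v$ remains a Cartesian product $U\times W$ (e.g.\ replace $N_v$ by a small polydisk around $v$ contained in $N_v\minus V_0$). The graph description is inherited by the smaller neighborhood. There is no serious obstacle here; the main (and essentially only) substantive step is checking that the $z$-Jacobian is invertible off $V_0$, which is immediate from how $V_0$ was defined, and then quoting the implicit function theorem in the packaged form of Lemma~\ref{lem:ift2}.
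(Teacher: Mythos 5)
Your proof is correct and follows essentially the same approach as the paper: apply the implicit function theorem to solve for $(z_1,\dots,z_4)$ as analytic functions of $(x,x',y,y')$, using the fact that $F_3$ does not vanish at the four relevant triples because $v\notin V_0$ (indeed $v\notin V_3$ already suffices), and then shrink to a product neighborhood disjoint from $V_0$. Two small remarks. First, for the one-shot version you describe --- applying the IFT to the map $G:\C^8\to\C^4$ with a diagonal $z$-Jacobian --- you should cite the general implicit function theorem, Lemma~\ref{lem:ift1}, rather than Lemma~\ref{lem:ift2}, which is the specialization to a single polynomial defining a surface in $\C^3$. Second, the paper instead applies the $\C^3$ version four separate times (once per defining equation), which produces the components of $\Phi_v$ explicitly as $g_1(x,y),\,g_2(x,y'),\,g_3(x',y),\,g_4(x',y')$, each depending on only two of the four $\pi_1$-coordinates. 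Your argument gives the same map by uniqueness of the implicit function, but it does not make the separated form visible; that structure is what Lemmas~\ref{lem:locallysingular}, \ref{lem:Vgcont}, and~\ref{lem:special} actually rely on, so while your proof establishes the lemma as stated, it would be worth recording the componentwise description before moving on.
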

\begin{proof}
Let $v=(a,a',b,b',c_1,c_2,c_3,c_4)\in V\minus V_0$ be an arbitrary point.
We apply the implicit function theorem in $\C^3$ (Lemma \ref{lem:ift2}) to the equation $F(x,y,z_1)=0$ at the point $(a,b,c_1)$.
Since $v\not\in V_0$, we have 
$F_3(a,b,c_1)\neq 0$.
We thus obtain neighborhoods $U_{a,b}$ of $(a,b)$ in $\C^2$ and $U_{c_1}$ of $c_1$ in $\C$, 
and an analytic mapping $g_1:U_{a,b}\to U_{c_1}$ such that 
\[
\{(x,y,z_1)\in U_{a,b}\times U_{c_1} \mid F(x,y,z_1) =0\}=\{(x,y,g_1(x,y)) \mid (x,y)\in U_{a,b}\}.
\]

We can do the same at each of the points $(a,b',c_2),(a',b,c_3),(a',b',c_4)$, leading to analogous mappings $g_2,g_3,g_4$.
It follows that we can find neighborhoods $N_1$ of $a$, $N_2$ of $a'$, $N_3$ of $b$, and $N_4$ of $b'$, and $N_1'$, $N_2'$, $N_3'$, $N_4'$ of $c_1$, $c_2$, $c_3$ $c_4$, respectively, such that 
\[
\Phi_v:(x,x',y,y')\mapsto\big( g_1(x,y),~g_2(x,y'),~g_3(x',y),~g_4(x',y') \big)
\]
defines an analytic map from $N_1\times N_2\times N_3\times N_4$ to $N_1'\times N_2'\times N_3'\times N_4'$.
Then 
\[N_v:=N_1\times N_2\times N_3\times N_4\times N_1'\times N_2'\times N_3'\times N_4', 
\] 
is a neighborhood of $v$ in $\C^8$ satisfying the conclusion of the lemma. If needed, we can shrink it to be disjoint from $V_0$.
\end{proof}

Let $G$ be the polynomial in $\C[x,x',y,y',z_1,z_2,z_3,z_4]$ given by 
\begin{align*}\label{eqq}
G = F_2(x,y,z_1)F_1(x,y',z_2)&F_1(x',y,z_3)F_2(x',y',z_4)\\
&-F_1(x,y,z_1)F_2(x,y',z_2)F_2(x',y,z_3)F_1(x',y',z_4).
\end{align*}
Consider the  subvariety $W :=V\cap Z(G)$ of $V$.
The significance of $W$ (and of $G$) lies in the following lemma; 
it says that $W$ is the subvariety of points $v\in V$ such that the determinant of the Jacobian of the local map $\Phi_v$ vanishes at $v$.
See Section \ref{sec:analysis} for the definition of the Jacobian matrix $J_{\bf f}$ of a map ${\bf f}$.

\begin{lemma}\label{lem:locallysingular}
For $v\in V\minus V_0$ we have $v\in W$ if and only if $\det(J_{\Phi_v}(\pi_1(v)) )=0$.
\end{lemma}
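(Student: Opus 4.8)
The plan is to compute the Jacobian of $\Phi_v$ explicitly using implicit differentiation of the four defining equations $F(x,y,z_1)=F(x,y',z_2)=F(x',y,z_3)=F(x',y',z_4)=0$, and to show that the resulting determinant, up to a nonvanishing factor, equals $G(v)$. First I would recall from Lemma~\ref{lem:locallygraph} that on $N_v$ the variety $V$ is the graph of $\Phi_v:(x,x',y,y')\mapsto(g_1(x,y),g_2(x,y'),g_3(x',y),g_4(x',y'))$, where each $g_k$ is the implicit solution for the appropriate $z$-variable; in particular $z_1=g_1(x,y)$, etc., and on $N_v$ none of the derivatives $F_1,F_2,F_3$ appearing in $G$ vanishes (since $N_v$ is disjoint from $V_0=V_1\cup V_2\cup V_3$), so all the denominators below are nonzero.

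The key computation is implicit differentiation. Differentiating $F(x,y,g_1(x,y))\equiv 0$ gives $\partial g_1/\partial x = -F_1(x,y,z_1)/F_3(x,y,z_1)$ and $\partial g_1/\partial y = -F_2(x,y,z_1)/F_3(x,y,z_1)$, with $z_1=g_1(x,y)$; analogous formulas hold for $g_2$ (in $x,y'$, with $z_2$), $g_3$ (in $x',y$, with $z_3$), and $g_4$ (in $x',y'$, with $z_4$). Ordering the input variables as $(x,x',y,y')$ and the output components as $(g_1,g_2,g_3,g_4)$, the Jacobian matrix $J_{\Phi_v}$ has a block structure: $g_1$ depends only on $x,y$; $g_2$ only on $x,y'$; $g_3$ only on $x',y$; $g_4$ only on $x',y'$. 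Explicitly,
\[
J_{\Phi_v}=
\begin{pmatrix}
\partial_x g_1 & 0 & \partial_y g_1 & 0\\
\partial_x g_2 & 0 & 0 & \partial_{y'} g_2\\
0 & \partial_{x'} g_3 & \partial_y g_3 & 0\\
0 & \partial_{x'} g_4 & 0 & \partial_{y'} g_4
\end{pmatrix}.
\]
Expanding the determinant (say along the first column, then cofactors) yields
\[
\det(J_{\Phi_v})=\partial_x g_1\,\partial_{y'}g_2\,\partial_{x'}g_3\,\partial_y g_4-\partial_y g_1\,\partial_{x'}g_3\,\partial_x g_2\,\partial_{y'}g_4,
\]
and substituting the implicit-derivative formulas, each term acquires the common denominator $F_3(x,y,z_1)F_3(x,y',z_2)F_3(x',y,z_3)F_3(x',y',z_4)$, which is nonzero on $N_v$, while the numerator is exactly
\[
F_1(x,y,z_1)F_2(x,y',z_2)F_2(x',y,z_3)F_1(x',y',z_4)-F_2(x,y,z_1)F_1(x,y',z_2)F_1(x',y,z_3)F_2(x',y',z_4),
\]
which is $-G$ evaluated at $v$. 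Hence $\det(J_{\Phi_v}(\pi_1(v)))$ vanishes exactly when $G(v)=0$, i.e.\ when $v\in V\cap Z(G)=W$.

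The only real care needed is bookkeeping: getting the pairing of input/output variables right so that the $2\times2$ cofactor expansion produces precisely the two products appearing in $G$ (with the correct signs), and confirming that the sign discrepancy between $\det(J_{\Phi_v})$ and $G$ is an overall $\pm1$ that does not affect the ``vanishes iff'' conclusion. There is no genuine obstacle here — the statement is essentially the assertion that $G$ is the numerator of the Jacobian determinant — so the ``hard part'' is merely organizing the indices cleanly; I would present the block matrix above, quote the implicit function derivative formula, and let the reader verify the one-line determinant expansion.
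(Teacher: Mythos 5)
Your approach is correct and essentially the same as the paper's: write the Jacobian of $\Phi_v$ using implicit differentiation, exploit the sparsity pattern coming from the fact that each $g_k$ depends on only two of the four input variables, expand the $4\times 4$ determinant, and observe that after clearing the common factor $F_3(x,y,z_1)F_3(x,y',z_2)F_3(x',y,z_3)F_3(x',y',z_4)$ (nonzero since $N_v\cap V_0=\emptyset$, in particular $N_v\cap V_3=\emptyset$) the numerator is $\pm G$. The paper writes the Jacobian as the transpose of yours, which of course does not change the determinant.

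One small slip in the intermediate display: your first term reads $\partial_x g_1\,\partial_{y'}g_2\,\partial_{x'}g_3\,\partial_y g_4$, but $g_4=g_4(x',y')$ does not depend on $y$, so $\partial_y g_4\equiv 0$ and that term, as written, vanishes identically. The correct pairing is $\partial_y g_3\,\partial_{x'}g_4$; the expansion of the block matrix you wrote gives
\[
\det(J_{\Phi_v})=-\partial_x g_1\,\partial_{y'}g_2\,\partial_y g_3\,\partial_{x'}g_4+\partial_y g_1\,\partial_x g_2\,\partial_{x'}g_3\,\partial_{y'}g_4,
\]
and substituting the implicit-differentiation formulas one finds that the numerator is exactly $G$ (not $-G$). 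Neither the transposition of $g_3,g_4$ derivatives in your display nor the sign affects the ``vanishes iff'' conclusion, and the rest of the argument is fine.
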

\begin{proof}
We write $g_{ij}$ for the derivative of the function $g_i$ (from the proof of Lemma \ref{lem:locallygraph}) 
with respect to its $j$th variable, for $i=1,2,3,4$ and $j=1,2$. The Jacobian matrix of $\Phi_v$, evaluated at 
$u=(x,x',y,y')\in \pi_1(N_v)$, where $N_v$ is the neighborhood of $v$ given in Lemma \ref{lem:locallygraph}, equals
\begin{equation}\label{eq:detzero}
J_{\Phi_v}(u)=\left(
\begin{array}{cccc}
g_{11}(x,y)	& g_{21}(x,y')	& 0				& 0\\
0			& 0				& g_{31}(x',y)	& g_{41}(x',y')\\
g_{12}(x,y)	& 0				& g_{32}(x',y)	& 0\\
0			& g_{22}(x,y')	& 0				& g_{42}(x',y')\\
\end{array}\right),
\end{equation}
or, by implicit differentiation,
\[
J_{\Phi_v}(u)=
\left(\begin{array}{cccc}
-\frac{F_1(x,y,z_1)}{F_3(x,y,z_1)}& -\frac{F_1(x,y',z_2)}{F_3(x,y',z_2)}&0&0\\
0&0&-\frac{F_1(x',y,z_3)}{F_3(x',y,z_3)}&-\frac{F_1(x',y',z_4)}{F_3(x',y',z_4)}\\
-\frac{F_2(x,y,z_1)}{F_3(x,y,z_1)}&0&-\frac{F_2(x',y,z_3)}{F_3(x',y,z_3)}&0\\
0&-\frac{F_2(x,y',z_2)}{F_3(x,y',z_2)}&0&-\frac{F_2(x',y',z_4)}{F_3(x',y',z_4)}
\end{array}\right),
\]
for $z_1=g_1(x,y)$, $z_2=g_2(x,y')$, $z_3=g_3(x',y)$, and $z_4=g_4(x',y')$.
Since $N_v\cap V_0=\emptyset$, all the denominators are non-zero (and, for that matter, so are all the numerators).
Write $v=(a,a',b,b',c_1,c_2,c_3,c_4)$ and observe that, by construction, 
$c_1=g_1(a,b)$, $c_2=g_2(a,b')$, $c_3=g_3(a',b)$, and $c_4=g_4(a',b')$.
Computing the determinant explicitly at the point $u=\pi_1(v)=(a,a',b,b')$ and clearing denominators gives exactly $G(v)$, where $G$ is the polynomial defining $W$.
Thus, 
$
\det J_{\Phi_{v}}(\pi_1(v))=0
$
if and only if 
$G(v)=0$.
\end{proof}


\subsection{The varieties \texorpdfstring{$V_\gamma$}{Vgamma}}

We now make precise what it means for a popular curve to be infinitely popular.

\begin{definition}
Let $\gamma\subset\C^2$ be a popular curve.
An irreducible curve $\gamma^*\subset \C^2$ is an \emph{associated curve} of $\gamma$ if for all but finitely many $(y,y')\in \gamma^*$ we have $\gamma\subset \gamma_{y,y'}$.
\end{definition}

The notation is meant to suggest that an associated curve $\gamma^*$ resembles a dual curve; however, like a popular curve $\gamma$, an associated curve may be a component of a dual curve, so strictly speaking the notions are separate.

Throughout this section, we let $\gamma$ be a popular curve and $\gamma^*$ an associated curve of $\gamma$.
In Section~\ref{sec:associatedcurves}, we will show that every popular curve has at least one associated curve.
As described in Section \ref{sec:overview}, we wish to show that if $\gamma$ is a popular curve and $\gamma^*$ is an associated curve, then locally $\Phi_v$
maps the three-dimensional set $\C^2\times \gamma^*$ into the two-dimensional set $\gamma\times\gamma$.
We will do this by showing that the intersection of $V$ and $\C^2\times \gamma^*\times\gamma\times\gamma$ is contained in $W$.

With each $\gamma\in\mathcal C$ and associated curve $\gamma^*$, we associate the variety 
\[
V_{\gamma}:= V\cap (\C^2\times \gamma^*\times\gamma\times\gamma)\subset \C^8.
\]
Note that $V_\gamma$ also depends on the choice of $\gamma^*$ (which is not necessarily unique), 
but we have suppressed this in the notation, sticking in what follows to some fixed $\gamma^*$.


\begin{lemma}\label{lem:Vgcont}
For all $\gamma\in\mathcal C$ we have $V_\gamma\subset W\cup V_0$ (for any choice of $\gamma^*$).
\end{lemma}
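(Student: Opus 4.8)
The plan is to show that on $V_\gamma \setminus V_0$ the polynomial $G$ (equivalently, $\det J_{\Phi_v}$) vanishes, which, combined with Lemma~\ref{lem:locallysingular}, gives $V_\gamma \setminus V_0 \subset W$, hence $V_\gamma \subset W \cup V_0$. Fix a point $v = (a,a',b,b',c_1,c_2,c_3,c_4) \in V_\gamma \setminus V_0$ and let $N_v$, $\Phi_v = (g_1,g_2,g_3,g_4)$ be as in Lemma~\ref{lem:locallygraph}. The key geometric input is that $\gamma^*$ is an associated curve of $\gamma$, so for all but finitely many $(y,y') \in \gamma^*$ we have $\gamma \subset \gamma_{y,y'}$; shrinking $N_v$ if necessary (and discarding $v$ from a proper subvariety, which is harmless since we only need the conclusion on a dense subset of each component of $V_\gamma$), I may assume this holds for every $(y,y')$ in the relevant piece of $\gamma^*$ near $(b,b')$.

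First I would argue that $\Phi_v$ maps (an open piece of) $\C^2 \times \gamma^*$ into $\gamma \times \gamma$. Concretely, take $(x,x',y,y') \in \pi_1(N_v)$ with $(y,y') \in \gamma^*$. By construction $z_1 := g_1(x,y)$ satisfies $F(x,y,z_1)=0$, so $(z_1, z_1)$-type relations aside, the point $z_1$ lies in the "first coordinate" of some point of $\gamma_{y,y'}$ — more precisely, since $F(x,y,z_1) = 0$ and $F(x,y',z_2)=0$ with $z_2 = g_2(x,y')$, the pair $(z_1,z_2)$ lies in $\gamma_{y,y'}$ by the definition \eqref{eq:gamma} (the witness being $x$), at least after the closure, and for $(x,y)$ in a dense open subset it lies in the set before the closure. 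Because $\gamma \subset \gamma_{y,y'}$ and $\gamma$ is one of the components, I need to pin down that $(z_1,z_2)$ actually lands on $\gamma$: this is where the hypothesis $v \in V_\gamma$ enters, since the last four coordinates $(c_1,c_2,c_3,c_4)$ of $v$ satisfy $(c_1,c_4) \in \gamma \times \gamma$ and $(c_1,c_2), (c_3,c_4) \in \gamma$, and by continuity of $\Phi_v$ and irreducibility of $\gamma$, the image stays on $\gamma$ near $v$. Thus $(g_1(x,y), g_2(x,y')) \in \gamma$ and similarly $(g_3(x',y), g_4(x',y')) \in \gamma$ whenever $(y,y') \in \gamma^*$.

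Consequently $\Phi_v$ restricted to the three-dimensional analytic manifold $\{(x,x',y,y') \in \pi_1(N_v) : (y,y') \in \gamma^*\}$ has image contained in the two-dimensional set $\gamma \times \gamma$. A smooth map from a $3$-dimensional manifold whose image lies in a $2$-dimensional variety cannot have full-rank ($= 4$) Jacobian anywhere on that manifold — equivalently, by the inverse mapping theorem (or just the rank theorem), $\det J_{\Phi_v}$ vanishes identically on $\{(y,y') \in \gamma^*\} \cap \pi_1(N_v)$. In particular $\det J_{\Phi_v}(\pi_1(v)) = 0$ since $(b,b') \in \gamma^*$, so by Lemma~\ref{lem:locallysingular} we get $G(v) = 0$, i.e.\ $v \in W$. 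This shows $V_\gamma \setminus V_0 \subset W$; taking Zariski closures and using that $W$ is closed gives $V_\gamma \subset W \cup V_0$ after noting that points of $V_\gamma$ not of the above form lie in $V_0$.

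The main obstacle I expect is the bookkeeping around the closure operations and the finitely many bad points: the definition of $\gamma_{y,y'}$ involves a Zariski closure, and "$\gamma \subset \gamma_{y,y'}$ for all but finitely many $(y,y') \in \gamma^*$" only controls the closure, not the set of actual witnesses $x$; one must check that for a dense open subset of the local chart, the point $(g_1(x,y), g_2(x,y'))$ genuinely lies on $\gamma$ (not just in $\gamma_{y,y'}$, and not merely in the finite discrepancy set), and then extend to all of $V_\gamma$ by continuity/irreducibility. Handling this carefully — presumably by working component-by-component on $V_\gamma$ and invoking that a nonempty Zariski-open subset of an irreducible variety is dense — is the delicate part; the differential-topological step (full-rank Jacobian is impossible) is then immediate.
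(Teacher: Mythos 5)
Your overall strategy is the same as the paper's: combine Lemma~\ref{lem:locallysingular} with the fact that $\Phi_v$ carries a three\mbox{-}dimensional set into the two\mbox{-}dimensional set $\gamma\times\gamma$, which forces $\det J_{\Phi_v}(\pi_1(v))=0$. The place where your proposal has a genuine gap is the containment claim itself. You assert that $\Phi_v$ maps (a local piece of) the full slice $\{(x,x',y,y')\in\pi_1(N_v):(y,y')\in\gamma^*\}$ into $\gamma\times\gamma$, i.e.\ that $(g_1(x,y),g_2(x,y'))\in\gamma$ whenever $(y,y')\in\gamma^*$. What follows from the definitions is only that $(g_1(x,y),g_2(x,y'))$ lies in $\gamma_{y,y'}$ (with witness $x$), and that $\gamma$ is \emph{one component} of $\gamma_{y,y'}$. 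The curve $\gamma_{y,y'}$ can have further components, which vary with $(y,y')$, and nothing prevents other components of $\gamma_{b,b'}$, or of nearby $\gamma_{y,y'}$, from passing through or arbitrarily close to $(c_1,c_2)$ even when $(c_1,c_2)$ is a regular point of $\gamma$; the ``continuity plus irreducibility of $\gamma$'' sketch does not rule out the image wandering onto one of those other components. You flag this yourself as ``the delicate part,'' but it is the crux of the argument and is not resolved. (One can show the claim is in fact true, but only \emph{after} establishing that $V_\gamma$ is three\mbox{-}dimensional near $v$, which is precisely the nontrivial content of the proof; one cannot get it for free from continuity.)

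The paper sidesteps the issue. Instead of showing that $\Phi_v$ maps all of $\C^2\times\gamma^*$ (locally) into $\gamma\times\gamma$, it restricts attention to $\pi_1(V_\gamma\cap N_v)$: for a point of $V_\gamma$ the last four coordinates lie in $\gamma\times\gamma$ by definition of $V_\gamma$, so the containment $\overline{\Phi}_v(V_\gamma\cap N_v)\subset\gamma\times\gamma$ is automatic. The nontrivial step is then to show that $V_\gamma\cap N_v$ is locally three\mbox{-}dimensional, which the paper does by constructing an explicit bianalytic parametrization $(y,z_1,z_3)\mapsto\bigl(\sigma_1(y,z_1),\sigma_2(y,z_3),y,\rho_3(y),z_1,\rho_1(z_1),z_3,\rho_2(z_3)\bigr)$, obtained by applying the implicit function theorem to $\gamma$, $\gamma^*$, and $Z(F)$ at $v$ (after first restricting $v$ to the variant $V_\gamma'$ with regular points on $\gamma,\gamma^*$, then recovering the full statement by a closure step much as you suggest at the end). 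To repair your version you would essentially have to prove the same dimension fact, at which point the cleaner move is to pass to $\pi_1(V_\gamma\cap N_v)$ and avoid the component-tracking problem from the outset.
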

\begin{proof}
Let $\gamma_r^*$, $\gamma_r$ denote the subsets of regular points of $\gamma^*$, $\gamma$, respectively, and define
$$V_\gamma':= V\cap (\C^2\times \gamma^*_r\times\gamma_r\times\gamma_r).
$$
It is sufficient to show that $V_\gamma'\subset W\cup V_0$.
Indeed, since $W$ and $V_0$ are varieties, it follows that the closure of $V\cap (\C^2\times \gamma^*_r\times\gamma_r\times\gamma_r)$ is contained in $W\cup V_0$. 
This closure equals $V_\gamma$, since the removed points form a lower-dimensional subset.

Let $v\in V_\gamma'\minus V_0$ and assume, for contradiction, that $v\not\in W$.
Lemma \ref{lem:locallygraph} gives an open neighborhood $N_v$ of $v$, disjoint from $V_0$,
so that $V\cap N_v$ is the graph of an analytic map $\Phi_v:B_1\to B_2$, where $B_1 :=\pi_1(N_v)$ and $B_2:=\pi_2(N_v)$.
By Lemma \ref{lem:locallysingular}, $\det (J_{\Phi_v}(\pi_1(v)))\neq 0$.
By the inverse mapping theorem 
(Lemma \ref{lem:imt}),
$\Phi_v$ is bianalytic on a sufficiently small neighborhood of $\pi_1(v)$, which, by shrinking $N_v$ if needed, we may assume to be $B_1$.  
Consider the mapping $\overline{\Phi}_v:=\Phi_v\circ \pi_1$ restricted to $V\cap N_v$. 
Note that $\overline{\Phi}_v$ is bianalytic. 
Indeed, $\pi_1$ restricted to $V\cap N_v$ is clearly bianalytic (its inverse is $u\mapsto (u,\Phi_v(u))$),
so $\overline{\Phi}_v$ is the composition of two bianalytic functions, hence itself bianalytic.
By definition of $V_\gamma$ we have  
$\overline{\Phi}_v(V_\gamma\cap N_v)\subset \gamma\times \gamma.$
Write $v=(a,a',b,b',c_1,c_2,c_3,c_4)$, and note that,
by definition of $V_\gamma'$, $(c_1, c_2),(c_3, c_4)$ are regular points of $\gamma$ and $(b,b')$ is a regular point of $\gamma^*$.

We claim that there exists an open set $N\subset N_v$ such that $V_\gamma\cap N$ is locally three-dimensional.
Indeed, we may assume, without loss of generality, that none of the tangents to $\gamma$ at $(c_1,c_2)$, $(c_3,c_4)$, and to $\gamma^*$
at $(b,b')$ are vertical in the respective planes (otherwise, we simply switch the roles of the first and the second coordinate in the relevant copy of $\C^2$).
Applying the implicit function theorem (Lemma \ref{lem:ift1}) 
to $\gamma$ and $\gamma^*$ at these regular points, we may therefore write 
$z_2=\rho_1(z_1)$, $z_4=\rho_2(z_3)$, and $y'=\rho_3(y)$
in sufficiently small neighborhoods of $(c_1,c_2)$, $(c_3,c_4)$, $(b,b')$ along the respective curves, 
for suitable analytic functions $\rho_1,\rho_2,\rho_3$.
Similarly, applying the implicit function theorem to $Z(F)$ in sufficiently small neighborhoods of $(a,b,c_1)$, $(a',b,c_3)$ 
(which we may, since we are away from $V_0$), we may write
$x=\sigma_1(y,z_1)$, $x'=\sigma_2(y,z_3)$, for analytic functions $\sigma_1, \sigma_2$.
Combining the functions above, we obtain an open neighborhood $N$ of $v$ such that the map
\[
(y,z_1,z_3)\mapsto (\sigma_1(y,z_1), \sigma_2(y,z_3),y,\rho_3(y), z_1,\rho_1(z_1),z_3,\rho_2(z_3))
\]
is bianalytic from an open neighborhood of $(b,c_1,c_3)$ to $V_\gamma\cap N$.
This implies that $V_\gamma\cap N$ is locally three-dimensional.  Since $\gamma\times \gamma$ has local dimension $2$ at every 
pair of regular points, and $\overline{\Phi}_v$ preserves local dimension, since it is bianalytic, this yields a contradiction,
which completes the proof of the lemma.
\end{proof}

Next we show that the projection of $V_\gamma$ to the first four coordinates is three-dimensional.

\begin{lemma}\label{lem:projVg}
Let $\gamma\in\mathcal C$ and assume that $\gamma$ is not an axis-parallel line.
Then 
$$\cl(\pi_1(V_\gamma))=\C^2\times \gamma^*.
$$ 
\end{lemma}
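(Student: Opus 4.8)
\textbf{Proof plan for Lemma~\ref{lem:projVg}.}
The goal is to show that the closure of the projection $\pi_1(V_\gamma)$ fills up all of $\C^2\times\gamma^*$. Since $V_\gamma\subset \C^2\times\gamma^*\times\gamma\times\gamma$ by definition, we automatically have $\pi_1(V_\gamma)\subset \C^2\times\gamma^*$, so only the reverse containment (of closures) needs work. The plan is to take a ``generic'' point $(x_0,x_0',y_0,y_0')\in \C^2\times\gamma^*$ and exhibit $z_1,z_2,z_3,z_4$ with $(x_0,x_0',y_0,y_0',z_1,z_2,z_3,z_4)\in V_\gamma$; since $V_\gamma$ is cut out from $V$ by also requiring $(z_1,z_2)\in\gamma$ and $(z_3,z_4)\in\gamma$, I need the $z_i$ chosen so that $F$ vanishes on the four relevant triples \emph{and} the two pairs land on $\gamma$. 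The key tool is the definition of an associated curve: for all but finitely many $(y,y')\in\gamma^*$, we have $\gamma\subset\gamma_{y,y'}$. Unwinding the definition of $\gamma_{y,y'}$ in~\eqref{eq:gamma}, membership $(z_1,z_2)\in\gamma_{y_0,y_0'}$ means (up to the closure) that there exists $x$ with $F(x,y_0,z_1)=F(x,y_0',z_2)=0$; but for the projection statement we want to \emph{prescribe} $x=x_0$ independently of $z_1$, which is exactly the subtlety the Zariski closure in~\eqref{eq:gamma} hides.

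So the cleaner route is to argue at the level of varieties rather than points. First I would fix an associated curve $\gamma^*$ and restrict to the dense open subset $\gamma^{*\circ}\subset\gamma^*$ of points $(y,y')$ for which $\gamma\subset\gamma_{y,y'}$ and for which $(y,y')\notin\mathcal S$, so $\gamma_{y,y'}$ is an honest curve of degree $\le d^2$. For such $(y_0,y_0')$, consider the variety $V_{y_0,y_0'}=\{(x,z_1,z_2)\mid F(x,y_0,z_1)=0,\ F(x,y_0',z_2)=0\}\subset\C^3$ from the proof of Lemma~\ref{lem:Vdim4}, which is one-dimensional, and whose image under projection to $(z_1,z_2)$ has closure $\gamma_{y_0,y_0'}\supset\gamma$. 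The fiber of $V_\gamma$ over $(y_0,y_0')$ (after also fixing nothing in the $x$-coordinates yet) is governed by this $V_{y_0,y_0'}$ together with the constraint $(z_1,z_2)\in\gamma$ and the two further equations $F(x',y_0,z_3)=F(x',y_0',z_4)=0$ with $(z_3,z_4)\in\gamma$. The crucial observation is: because $\gamma\subset\gamma_{y_0,y_0'}$ and $\gamma$ is a curve, and because $\gamma_{y_0,y_0'}$ is (the closure of) the image of the one-dimensional $V_{y_0,y_0'}$, the preimage in $V_{y_0,y_0'}$ of $\gamma$ is one-dimensional, hence (a component of) all of $V_{y_0,y_0'}$; in particular, projecting that preimage to the $x$-coordinate gives a cofinite subset of $\C$ (it cannot be a single point $x$, or else $F(x,y_0,z)\equiv 0$ along $\gamma$'s first coordinate, forcing $y-y_0\mid F$, contradicting irreducibility — here is where the hypothesis that $\gamma$ is not axis-parallel gets used, to rule out the degenerate case where $\gamma$ is a vertical/horizontal line and the argument collapses).

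Putting this together: for generic $(y_0,y_0')\in\gamma^{*\circ}$, the set of $x$ such that there exists $(z_1,z_2)\in\gamma$ with $F(x,y_0,z_1)=F(x,y_0',z_2)=0$ is cofinite in $\C$; symmetrically, so is the set of $x'$ with a corresponding $(z_3,z_4)\in\gamma$. Hence the fiber $(\pi_1(V_\gamma))\cap(\C^2\times\{(y_0,y_0')\})$ contains a set whose closure is $\C^2\times\{(y_0,y_0')\}$. Since this holds for all $(y_0,y_0')$ in the dense subset $\gamma^{*\circ}$ of the irreducible curve $\gamma^*$, the union $\pi_1(V_\gamma)$ is dense in $\C^2\times\gamma^*$, i.e.\ $\cl(\pi_1(V_\gamma))\supset\C^2\times\gamma^*$. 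Combined with the trivial inclusion, this gives equality. To make the fiberwise-to-global density step rigorous I would invoke the standard fact (Lemma~\ref{lem:dense}, or a direct dimension count) that a subset of an irreducible variety $\C^2\times\gamma^*$ that is dense in a dense family of fibers over $\gamma^*$ is dense in the whole; equivalently, that $\cl(\pi_1(V_\gamma))$ is a closed subvariety of the irreducible three-dimensional variety $\C^2\times\gamma^*$ meeting a dense open subset of its fibers in the full fiber, hence equals it.

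\textbf{Main obstacle.} The delicate point is the passage from ``$(z_1,z_2)\in\gamma_{y_0,y_0'}$'' back to a statement that pins down the $x$-coordinate, because the definition of $\gamma_{y,y'}$ involves a Zariski closure and only guarantees the existence of a suitable $x$ for a dense subset of $\gamma_{y,y'}$, not for every point. Handling this correctly — by working with the one-dimensional variety $V_{y_0,y_0'}$ and the honest preimage of $\gamma$ inside it, rather than naively inverting the projection pointwise — is the crux, and it is precisely here that the hypothesis ``$\gamma$ is not an axis-parallel line'' must be invoked to exclude the degenerate situation in which the preimage of $\gamma$ in $V_{y_0,y_0'}$ projects to a single $x$-value (which would make $\pi_1(V_\gamma)$ only two-dimensional rather than three-dimensional).
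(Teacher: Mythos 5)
Your overall plan matches the paper's proof: both establish the easy inclusion $\cl(\pi_1(V_\gamma))\subseteq\C^2\times\gamma^*$, then show that over a dense subset of $(y_0,y_0')\in\gamma^*$ the set of admissible $x$ (the paper's $\beta_{y_0,y_0'}$, your projection of the preimage of $\gamma$ in $V_{y_0,y_0'}$) is dense in $\C$, and finally use the union-of-closures fact to conclude density in all of $\C^2\times\gamma^*$. You correctly identify that the whole lemma hinges on this fiberwise density, and that the ``not axis-parallel'' hypothesis is what rules out a degenerate finite $x$-fiber.

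However, the one step you actually argue in detail is flawed. You write that if the preimage of $\gamma$ in $V_{y_0,y_0'}$ projects to a single $x$-value $x_0$, then ``$F(x,y_0,z)\equiv 0$ \ldots forcing $y-y_0\mid F$, contradicting irreducibility.'' This is not a valid deduction: what follows from the degenerate situation is $F(x_0,y_0,z)\equiv 0$ as a polynomial in $z$ \emph{for that single $x_0$}, which does \emph{not} imply $y-y_0\mid F$ (that would require $F(x,y_0,z)\equiv 0$ in both $x$ and $z$, a much stronger statement). In fact no contradiction with irreducibility of $F$ arises here at all — there can legitimately be finitely many such $(x_0,y_0)$. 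The correct conclusion, which is what the paper uses, is a constraint on $\gamma$ rather than on $F$: if the $x$-fiber were finite, then $\gamma$ would be contained in the closure of a finite union of sets of the form $\{z_1: F(x,y_0,z_1)=0\}\times\{z_2: F(x,y_0',z_2)=0\}$, each of which is either finite or a union of axis-parallel lines, so the irreducible curve $\gamma$ would have to be an axis-parallel line, contradicting the hypothesis. You do gesture at this (``here is where the hypothesis that $\gamma$ is not axis-parallel gets used''), but you present it as if it were the same chain of reasoning as the bogus $y-y_0\mid F$ claim, rather than its replacement. A secondary, smaller gap: you only argue against the $x$-projection being a \emph{single} point, whereas what must be ruled out is that it is any \emph{finite} set; the same reasoning extends, but it should be stated, since a one-dimensional set can project to a finite set of several points.
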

\begin{proof}
We clearly have 
$$\pi_1(V_\gamma)\subseteq \pi_1(\C^2\times \gamma^*\times\gamma\times\gamma) = \C^2\times \gamma^*,$$
so, since $\C^2\times \gamma^*$ is a variety, we get 
$$\cl(\pi_1(V_\gamma))\subseteq \C^2\times \gamma^*.$$

By definition (and Lemma~\ref{lem:quantelim}),
there is a finite subset $S\subset \gamma^*$ such that, for all $(b,b')\in \gamma^*\minus S$, 
$\gamma_{b,b'}$ is a curve and $\gamma\subset \gamma_{b,b'}$. 
It follows from the definitions of $V_\gamma$ and $V$ that
\begin{equation}\label{eq:betas}
\pi_1(V_\gamma)\supseteq \bigcup_{(b,b')\in\gamma^*\minus S}
\beta_{b,b'}\times \beta_{b,b'} \times\{(b,b')\},
\end{equation}
where
\[
\beta_{b,b'}:=\{x\in \C\mid \exists (c_1,c_2)\in\gamma~\text{such that}~F(x,b,c_1)=F(x,b',c_2)=0\}.
\]

We claim that $\cl(\beta_{b,b'})  =\C$. For this, it is sufficient to show that $\beta_{b,b'}$ is infinite. 
Assume to the contrary that it is finite. In this case $\gamma$ is contained in the closure of the finite union
\[
\bigcup_{x\in \beta_{b,b'}}\big\{(z_1,z_2)\in \C^2 \mid F(x,b,z_1)=F(x,b',z_2)=0\big\}.
\]
Since $\gamma$ is infinite, one of the sets in the union must be infinite. Then $\gamma$ must be a line parallel to one of the axes in $\C^2$, contradicting the assumption of the lemma.

Hence 
\begin{align*}
\cl\Big(\bigcup_{(b,b')\in\gamma^*\minus S}
\beta_{b,b'}\times \beta_{b,b'} \times\{(b,b')\}
\Big) &\supseteq
\bigcup_{(b,b')\in\gamma^*\minus S}\cl\big(
\beta_{b,b'}\times \beta_{b,b'} \times \{(b,b')\}
\big)\\
&=\C^2\times \cl\Big(\bigcup_{(b,b')\in \gamma^*\minus S} \{(b,b')\}\Big)
\end{align*}
using that the closure of an infinite union \emph{contains} the union of the closures, 
and that the closure of a product is the product of the closures.
Together with \eqref{eq:betas} this gives
\[\cl(\pi_1(V_\gamma))\supset\C^2\times \cl\big(\gamma^*\minus S\big) =\C^2\times \gamma^*,\]
completing the proof of the lemma.
\end{proof}



\subsection{The associated curves}\label{sec:associatedcurves}

In this section we show that if a curve $\gamma$ is popular, then it must be {\em infinitely} popular.
First we need the following sharpened form of B\'ezout's inequality for many curves. 
Our proof is adapted from Tao \cite{Tao11b}.
See Appendix \ref{sec:degree} for the definition of degree.
Note that for a reducible variety, its degree is the sum of the degrees of its irreducible components; in particular, 
if a one-dimensional variety contains zero-dimensional components (isolated points), then the degree is the sum of the 
degrees of the purely one-dimensional irreducible components, plus the number of zero-dimensional components.

\begin{lemma}[{\bf B\'ezout for many curves}]
\label{lem:bezoutmany}
If $\mathcal F$ is a (possibly infinite) family of algebraic curves in $\C^2$, each of degree at most $\delta$, then
\[\deg\Big(\bigcap_{C\in\mathcal F}C\Big)\leq \delta^2.\]
In other words, either 
$\bigcap_{C\in\mathcal F}C$ is zero-dimensional and has cardinality at most $\delta^2$, 
or it has dimension 1 and degree at most $\delta^2$.
\end{lemma}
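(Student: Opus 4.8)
The plan is to reduce the statement about a possibly infinite family $\mathcal F$ to a statement about a \emph{finite} subfamily, and then to handle the finite case by an inductive application of ordinary B\'ezout (Theorem~\ref{thm:bezout}). The first observation is that $\bigcap_{C\in\mathcal F}C$ is an intersection of Zariski-closed sets, hence Zariski-closed, so it is itself an algebraic variety $X$ of dimension $0$ or $1$ (it cannot be $2$-dimensional, since each $C$ is a curve). By the Noetherian property of the Zariski topology --- equivalently, the descending chain condition on the closed sets obtained as finite partial intersections $\bigcap_{C\in\mathcal F_0}C$ over finite subsets $\mathcal F_0\subset\mathcal F$ --- there is a finite subfamily $\mathcal F_0\subseteq \mathcal F$ with $\bigcap_{C\in\mathcal F_0}C = \bigcap_{C\in\mathcal F}C = X$. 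So it suffices to prove the bound $\deg(X)\le\delta^2$ when $\mathcal F_0=\{C_1,\dots,C_k\}$ is finite.

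For the finite case I would argue as follows. If $k=1$ there is nothing to prove since $\deg(C_1)\le\delta\le\delta^2$. Otherwise, consider the partial intersections $X_j := C_1\cap\cdots\cap C_j$, so $X = X_k$ and $X_1 = C_1$. I want to show, by induction on $j$, that $\deg(X_j)\le\delta^2$. The key step is the inductive one: given that $X_{j-1}$ has dimension $\le 1$ and degree $\le\delta^2$, intersect with $C_j$. Decompose $X_{j-1}$ into its irreducible components. For each $1$-dimensional component $Y$ of $X_{j-1}$: either $Y\subseteq C_j$, in which case $Y$ survives unchanged into $X_j$ and contributes $\deg(Y)$; or $Y\not\subseteq C_j$, in which case $Y\cap C_j$ is $0$-dimensional and, by B\'ezout's inequality applied to $Y$ and $C_j$ (Theorem~\ref{thm:bezout}), has at most $\deg(Y)\cdot\deg(C_j)\le \deg(Y)\cdot\delta$ points. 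The $0$-dimensional components (isolated points) of $X_{j-1}$ either lie in $C_j$ (surviving, contributing $1$ each) or not (disappearing). Summing over all components and using $\sum_Y \deg(Y)\le \deg(X_{j-1})\le\delta^2$, we see that $\deg(X_j)$ is bounded by a sum in which each surviving $1$-dimensional component contributes $\le\deg(Y)$ and each former $1$-dimensional component that dropped to dimension $0$ contributes $\le\delta\cdot\deg(Y)$; naively this gives $\delta\cdot\delta^2$, which is too weak, so the bound needs to be stated more carefully.

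The fix --- and this is really the heart of the argument --- is to track the dimension. Once $X_j$ has become $0$-dimensional at some stage, its cardinality only decreases under further intersections, so it suffices to bound the cardinality at the \emph{first} moment the intersection becomes $0$-dimensional. Say $X_{j-1}$ is still $1$-dimensional (degree $\le\delta^2$ by induction) and $X_j = X_{j-1}\cap C_j$ is $0$-dimensional; this forces \emph{every} $1$-dimensional component $Y$ of $X_{j-1}$ to satisfy $Y\not\subseteq C_j$, so each such $Y$ contributes at most $\deg(Y)\cdot\delta$ points to $X_j$, and the isolated points of $X_{j-1}$ contribute at most one each. Hence $|X_j|\le \delta\cdot(\text{sum of degrees of }1\text{-dim components of }X_{j-1}) + (\#\text{ isolated points of }X_{j-1})$. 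This is still not obviously $\le\delta^2$. To get the clean bound I would instead run the induction differently: bound the \emph{number of irreducible components} and their total degree jointly. Actually the cleanest route, following Tao~\cite{Tao11b}, is: let $X = \bigcap_{C\in\mathcal F_0} C$ and write $X = X'\cup X''$ with $X'$ purely $1$-dimensional and $X''$ its finite set of isolated points. If $X' = \emptyset$, pick any two distinct $C_i,C_j$ in $\mathcal F_0$ that do not share a common component (such exist unless all of $\mathcal F_0$ shares a component, handled separately) — hmm, this requires care too.

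Let me state the robust version of the inductive claim that actually closes: \emph{for any finite family $C_1,\dots,C_k$ of curves of degree $\le\delta$, $\deg(C_1\cap\cdots\cap C_k)\le\delta^2$}, proved by induction on $k$ where the inductive step intersects the degree-$\le\delta^2$ variety $X_{k-1}$ with $C_k$ and uses that for a variety $Z$ of dimension $\le 1$ and a curve $C$ of degree $\le\delta$, one has $\deg(Z\cap C)\le \max\{\deg(Z),\ \delta^2\}$ — because each $1$-dimensional component of $Z$ that is not contained in $C$ meets $C$ in at most $\delta\cdot(\text{its degree})$ points but, crucially, a single irreducible curve of degree $e\le\delta^2$ meeting $C$ of degree $\le\delta$ properly gives $\le e\delta$ points only when $e\le\delta$, i.e. we must keep components separate. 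The correct bookkeeping: each \emph{irreducible} component of $X_{k-1}$ has degree $\le\delta$ (it appears as a component of $C_1$, hence has degree at most $\deg C_1\le\delta$ — here I use that every component of $C_1\cap\cdots$ is contained in $C_1$), there are at most $\delta$ of them contributing to the $1$-dimensional part and at most $\delta^2$ isolated points; intersecting with $C_k$, a surviving $1$-dim component keeps degree $\le\delta$, a dropped one yields $\le\delta\cdot\delta=\delta^2$ points but then the whole thing is $0$-dimensional and we are in the base of a sub-induction. I expect the main obstacle to be exactly this dimension-bookkeeping: phrasing the induction so that the two regimes (still $1$-dimensional vs.\ already $0$-dimensional) are handled uniformly and the constant stays at $\delta^2$ rather than degrading. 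The key facts I will lean on are Theorem~\ref{thm:bezout} (B\'ezout's inequality for two curves without a common component), Lemma~\ref{lem:degreebound} (degree and number of components of an intersection of polynomials), and Noetherianity to pass from the infinite family to a finite one.
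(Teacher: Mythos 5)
Your proposal correctly identifies the Noetherian reduction to a finite subfamily $\{C_1,\dots,C_s\}$ and the important observation that every irreducible component of the intersection is a subcurve of $C_1$, hence has degree at most $\delta$. But as you yourself acknowledge, your inductive attempts do not close: intersecting the running variety $X_{j-1}$ (of degree up to $\delta^2$) with $C_j$ and applying B\'ezout componentwise degrades the bound to $\delta^3$, and none of the reformulations you sketch actually fixes this. The ``robust version'' in your final paragraph still allows several $1$-dimensional components to drop to dimension $0$ in the same step, each contributing $O(\delta^2)$ points, so the total is not controlled by $\delta^2$. This is a genuine gap, not a matter of presentation.

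The missing idea is to partition $C_1$ rather than to iterate B\'ezout on the running intersection. Concretely, suppose first that $X$ is zero-dimensional. Write $C_1=D_2\cup E_2$, where $D_2\cap C_2$ is finite and $E_2$ is the union of those components of $C_1$ that are also components of $C_2$; then inductively split $E_{i-1}\subseteq C_{i-1}$ as $D_i\cup E_i$, with $D_i\cap C_i$ finite and $E_i$ the union of components of $E_{i-1}$ shared with $C_i$. Since $X$ is zero-dimensional, $E_s=\emptyset$, so $C_1=\bigcup_{i=2}^s D_i$ is a disjoint decomposition into subcurves. Every point of $X$ lies in $C_1$, hence in some $D_i$, and also in $C_i$, so $X\subseteq\bigcup_{i=2}^s(C_i\cap D_i)$; each of these intersections is finite by construction, and B\'ezout gives $|X|\le\sum_i\deg(C_i)\deg(D_i)\le\delta\sum_i\deg(D_i)\le\delta\cdot\deg(C_1)\le\delta^2$. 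The crucial saving is that the degrees $\deg(D_i)$ sum to at most $\deg(C_1)\le\delta$, because the $D_i$ partition $C_1$ --- this is what your induction lacked. The one-dimensional case then reduces to the zero-dimensional one by peeling off the purely one-dimensional part $X_1$ (with $\delta_1:=\deg(X_1)\le\delta$), removing it from every curve to get a family of degree $\le\delta-\delta_1$, and applying the zero-dimensional bound to the residual isolated points, giving $\deg(X)\le(\delta-\delta_1)^2+\delta_1\le\delta^2$.
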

\begin{proof}
The variety $X:= \bigcap_{C\in\mathcal F}C$ has dimension either $0$ or $1$.
If $X$ is zero-dimensional, then it is a finite set of points.
By the Noetherian property (see Harris~\cite[p.\,18]{Ha92}), 
there is a finite subset of curves $C_1,\ldots,C_s\in \mathcal{F}$ such that $X = \bigcap_{i=1}^s C_i$. We must have $s\ge 2$.

Each irreducible component of $C_1$ either has finite intersection with $C_2$, or it is also a component of $C_2$.
Thus we can write $C_1 = D_2\cup E_2$,
 with curves (or empty sets) $D_2$ and $E_2$, such that $D_2\cap C_2$ is finite and $E_2$ is the union of some irreducible components of $C_2$.
For $i =3,\ldots,s$, we inductively partition $E_{i-1}$, which is a subset of $C_{i-1}$, as $D_i\cup E_i$, 
such that $D_i\cap C_i$ is finite and $E_i$ is the union of some irreducible components of $C_i$.
Then $E_s$ must be empty (or else $X$ would be one-dimensional), so $C_1 = \bigcup_{i=2}^s D_i$ and
$X \subseteq \bigcup_{i=2}^s \left(C_i\cap D_i \right)$,  
since each $x\in X$ is contained in $C_1$, so in some $D_i$, and also in $C_i$.
Therefore, $X$ is finite and, using the standard Bezout's inequality (Theorem \ref{thm:bezout}),
\[
|X| \leq \sum_{i=2}^s \deg(C_i)\cdot \deg(D_i) \leq \delta \cdot \sum_{i=2}^s \deg(D_i) 
\leq \delta^2.
\]
Here we have $\sum \deg(D_i)\leq \deg(C_1)$ because the $D_i$ are distinct subcurves of $C_1$.

Now suppose that $X$ is one-dimensional.
Let $X_1$ be the maximal purely one-dimensional subvariety, and let $X_0$ the remainder, which must be zero-dimensional.
Set $\delta_1:=\deg(X_1)$; since $X_1$ is a subset of any curve of $\mathcal{F}$, we have $\delta_1\leq \delta$.
Remove $X_1$ from every curve in $\mathcal{F}$, take the closure of each curve again, and let $\mathcal{F}'$ 
be the set of the resulting curves, each of which has degree at most $\delta-\delta_1$. By the choice
of $X_1$, we have that $\bigcap_{C\in \mathcal{F}'} C$ is zero-dimensional.
By applying the argument above to $\mathcal{F}'$, we get that $|X_0|\leq (\delta-\delta_1)^2$, and thus $\deg(X)\leq (\delta-\delta_1)^2+\delta_1 \leq \delta^2$.
\end{proof}

Recall that $\mathcal{C}$ is the set of popular curves, 
i.e., irreducible curves $\gamma$ that are contained in $\gamma_{y,y'}$ for more 
than $d^4$ points $(y,y')\in \C^2\minus\mathcal{S}$ (where $\mathcal S$ is the set constructed in Section~\ref{sec:proofofmain}).
Lemma~\ref{lem:associated} shows that if $\gamma$ is popular, then there is a one-dimensional set of curves $\gamma_{y,y'}$ that contain $\gamma$. 

\begin{lemma}\label{lem:associated}
Every $\gamma\in\mathcal{C}$ has at least one associated curve.
More precisely, for every $\gamma\in\mathcal C$ there exists an irreducible algebraic curve $\gamma^*\subset\C^2$ of degree at most $d^2$ such that for all but finitely many $(y,y')\in \gamma^*$ we have $\gamma\subset \gamma_{y,y'}$.
\end{lemma}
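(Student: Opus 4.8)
The plan is to realize the associated curve $\gamma^*$ as a one-dimensional component of a suitable dual-curve-like object, obtained by intersecting the curves $\gamma^*_{z,z'}$ over the regular points $(z,z')$ of $\gamma$. Concretely, let $P\subset\C^2\minus\mathcal{T}$ be the set of regular points of $\gamma$ lying outside the exceptional set $\mathcal{T}$; note that $P$ is cofinite in $\gamma$, so it is infinite. For each $(z,z')\in P$ the set $\gamma^*_{z,z'}$ is an algebraic curve of degree at most $d^2$ (Lemma~\ref{lem:quantelim}), and I would consider the intersection $X:=\bigcap_{(z,z')\in P}\gamma^*_{z,z'}$. By the B\'ezout-for-many-curves bound (Lemma~\ref{lem:bezoutmany}), $X$ has degree at most $d^4$; in particular it has at most $d^4$ isolated points and its purely one-dimensional part has degree at most $d^2$.

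The heart of the argument is to show that $X$ is in fact \emph{one-dimensional}, and this is where I would use the hypothesis that $\gamma$ is popular. Since $\gamma\in\mathcal{C}$, there are more than $d^4$ points $(y,y')\in\C^2\minus\mathcal{S}$ with $\gamma\subset\gamma_{y,y'}$. For each such $(y,y')$ and each of the cofinitely many $(z,z')\in\gamma$, the relation "$(z,z')\in\gamma_{y,y'}$ iff $(y,y')\in\gamma^*_{z,z'}$" holds up to finitely many exceptions (by Lemma~\ref{lem:projectionclosure} and the discussion following the definition of $\gamma^*_{z,z'}$); hence, after discarding finitely many bad $(z,z')$, each of these $>d^4$ points $(y,y')$ lies in $\gamma^*_{z,z'}$ for all $(z,z')\in P$, i.e.\ lies in $X$. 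So $X$ contains more than $d^4$ points. If $X$ were zero-dimensional, Lemma~\ref{lem:bezoutmany} would force $|X|\le d^4$, a contradiction. Therefore $X$ is one-dimensional, and I take $\gamma^*$ to be any irreducible component of its purely one-dimensional part; such a component has degree at most $d^2$.

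It remains to verify that this $\gamma^*$ is genuinely an associated curve of $\gamma$, i.e.\ that $\gamma\subset\gamma_{y,y'}$ for all but finitely many $(y,y')\in\gamma^*$. For $(y,y')\in\gamma^*\subset X$, by construction $(y,y')\in\gamma^*_{z,z'}$ for every $(z,z')\in P$; translating back through the near-duality (again using Lemma~\ref{lem:projectionclosure}), this gives $(z,z')\in\gamma_{y,y'}$ for all but finitely many $(z,z')\in\gamma$, provided $(y,y')\notin\mathcal{S}$ so that $\gamma_{y,y'}$ is actually a curve. Since a curve $\gamma_{y,y'}$ that contains cofinitely many points of the irreducible curve $\gamma$ must contain $\gamma$ entirely, we conclude $\gamma\subset\gamma_{y,y'}$ for all $(y,y')\in\gamma^*\minus\mathcal{S}$, and $\mathcal{S}$ is finite. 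This establishes the lemma.

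I expect the main obstacle to be bookkeeping the finitely many exceptional points introduced by the closure operations: the duality $(z,z')\in\gamma_{y,y'}\Leftrightarrow(y,y')\in\gamma^*_{z,z'}$ is only an equivalence "off a finite set," and one must be careful that when it is applied simultaneously over the infinite family $P$ and the $>d^4$ points $(y,y')$, the exceptional sets do not swamp the counting argument. The resolution is that for each \emph{fixed} pair the exceptional set is finite, so one first fixes the $>d^4$ good points $(y,y')$, then discards the finitely many $(z,z')$ bad for any of them — still leaving $P$ cofinite, hence infinite — so that Lemma~\ref{lem:bezoutmany} applies cleanly to the intersection over the surviving $P$.
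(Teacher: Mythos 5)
Your proposal is correct and follows essentially the same route as the paper: it uses the more-than-$d^4$ popular witnesses $(y,y')$, the near-duality to transfer them into $\gamma^*_{z,z'}$ for cofinitely many $(z,z')\in\gamma$, Lemma~\ref{lem:bezoutmany} to cap the degree of the intersection of that infinite family at $d^4$, the pigeonhole that more than $d^4$ points force a one-dimensional component, and then the reverse duality to verify that component is an associated curve. Your remarks about carefully tracking the finitely many closure-induced exceptions match the paper's handling (the paper simply restricts to those $(z,z')$ whose dual curve contains all of $I$), so there is no gap.
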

\begin{proof}
By definition of $\mathcal{C}$, if $\gamma\in \mathcal{C}$, then there exists a set $I\subset \C^2\minus\mathcal{S}$ of size $|I| = d^4+1$ such that $\gamma\subset \gamma_{y,y'}$ for all $(y,y')\in I$.
This means that for all $(y,y')\in I$ and for all but finitely many $(z,z')\in \gamma$, 
there is an $x\in \C$ such that $F(x,y,z)=F(x,y',z')=0$, which implies that $(y,y')\in \gamma^*_{z,z'}$.
Thus we have $I\subset \gamma^*_{z,z'}$ for all but finitely many $(z,z')\in \gamma$.

Let $\mathcal{F}$ be the infinite family of curves $\gamma^*_{z,z'}$ with $(z,z')\in \gamma$ and $I\subset \gamma^*_{z,z'}$, and define
\[S_I := \bigcap_{\gamma^*_{z,z'}\in \mathcal{F}} \gamma^*_{z,z'}. \]
Then we have $I\subset S_I$.
Since all the curves in $\mathcal{F}$ have degree at most $d^2$, Lemma \ref{lem:bezoutmany} implies that $S_I$ has degree at most $d^4$.
Since $|I|>d^4$, $S_I$ must have dimension 1.
Let $\gamma^*$ be any irreducible one-dimensional component of $S_I$.

If $(y,y')\in \gamma^*$, then for all but finitely many $(z,z')\in \gamma$ we have $(y,y')\in \gamma^*_{z,z'}$.
It follows that for all but finitely many $(y,y')\in \gamma^*$, and for all but finitely many $(z,z')\in \gamma$ 
(where the excluded points $(z,z')$ depend on the choice of $(y,y')$), 
we have $(z,z')\in \gamma_{y,y'}$.
Since both $\gamma$ and $\gamma_{y,y'}$ are algebraic curves, and $\gamma$ is irreducible, it follows that $\gamma\subset \gamma_{y,y'}$ for all but finitely many $(y,y')\in \gamma^*$. 
This means that $\gamma^*$ is an associated curve of $\gamma$.
\end{proof}

In the next two sections we separate our analysis into two cases, according to the dimension of $\cl(\pi_1(W))$. 


\subsection{Case 1: \texorpdfstring{$\dim \cl(\pi_1(W))\leq 3$}{dim at most 3} implies few popular curves}\label{sec:case1}
Throughout this subsection we assume that $\dim\cl(\pi_1(W))\le 3$. In this case we establish the existence of the set 
$\mathcal{Y}$ in Proposition~\ref{prop:keyprop}(a) (and similarly the set $\mathcal{Z}$ in Proposition~\ref{prop:keyprop}(b)).

As the statement of Lemma \ref{lem:projVg} 
suggests, popular curves that are axis-parallel lines require a different treatment, provided by the following simple lemma.

\begin{lemma}\label{lem:axisparallel}
There is a one-dimensional variety $\mathcal{Y}_1\subset\C^2$ with $\deg(\mathcal{Y}_1)=O(d^2)$, 
containing $\mathcal{S}$, 
such that for every $(y_1,y_2)\in \C^2\minus \mathcal{Y}_1$
the curve $\gamma_{y_1,y_2}$ contains no axis-parallel line.
\end{lemma}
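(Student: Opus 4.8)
The statement says: there is a one-dimensional variety $\mathcal{Y}_1\subset\C^2$ of degree $O(d^2)$, containing $\mathcal{S}$, such that for $(y_1,y_2)\notin\mathcal{Y}_1$ the curve $\gamma_{y_1,y_2}$ contains no axis-parallel line. The plan is to understand exactly when $\gamma_{y_1,y_2}$ contains a horizontal line $\{z'=c'\}$ or a vertical line $\{z=c\}$, and show that this forces $(y_1,y_2)$ to lie on an explicit low-degree variety.

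\textbf{Key steps.} First, exclude the exceptional set $\mathcal{S}$ of size $O(d^4)$ from Lemma~\ref{lem:quantelim}; off $\mathcal{S}$, the set $\gamma_{y_1,y_2}$ is a genuine curve of degree $\le d^2$. Now suppose $\gamma_{y_1,y_2}$ contains the vertical line $\ell=\{(c,z')\mid z'\in\C\}$ for some fixed $c$. Unwinding the definition \eqref{eq:gamma}: for all but finitely many $z'\in\C$ there is $x\in\C$ with $F(x,y_1,c)=0$ and $F(x,y_2,z')=0$. Since $F(\cdot,y_2,z')=0$ should have a solution $x$ that \emph{additionally} satisfies $F(x,y_1,c)=0$, and $F(x,y_1,c)$ (as a polynomial in $x$) has only finitely many roots (it is not identically zero for $(y_1,y_2)\notin\mathcal{S}$, by the reasoning in Lemma~\ref{lem:quantelim}), there must be a single $x=x_0$ among those roots such that $F(x_0,y_2,z')=0$ for infinitely many $z'$, i.e. $F(x_0,y_2,z)\equiv 0$ as a polynomial in $z$. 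The plan is to encode the two conditions ``$\exists x_0: F(x_0,y_1,c)=0$ for some $c$, and $F(x_0,y_2,z)\equiv 0$'' as the vanishing of a resultant/discriminant-type polynomial in $(y_1,y_2)$. Concretely, write $F(x,y,z)=\sum_i \beta_i(x,y)z^i$; then $F(x_0,y_2,z)\equiv 0$ means $\beta_i(x_0,y_2)=0$ for all $i$. The existence of a common root $x_0$ of the $\beta_i(\cdot,y_2)$ is captured by vanishing of appropriate resultants $\mathrm{Res}_x(\beta_0(x,y_2),\beta_{i_0}(x,y_2))$, which are polynomials in $y_2$ alone of degree $O(d^2)$; these are not identically zero because the $\beta_i$ have no common factor (irreducibility of $F$) and $\partial F/\partial z\not\equiv0$. (One must also incorporate the requirement that such an $x_0$ be a root of $F(x,y_1,c)$ for \emph{some} $c$; but $F(x_0,y_1,z)\not\equiv0$ and $\partial F/\partial z\ne0$ guarantee $F(x_0,y_1,z)=0$ has \emph{some} root $c$ automatically, so this imposes no extra constraint — the constraint is purely on $y_2$.) Thus the vertical-line condition confines $y_2$ to a zero-set $Z(g)$ with $\deg g=O(d^2)$, hence confines $(y_1,y_2)$ to $\C\times Z(g)$, which we thicken to a one-dimensional variety. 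By the symmetric argument (swapping the roles of the two copies of $\C$, i.e. of $z$ and $z'$, which swaps $y_1$ and $y_2$), the horizontal-line condition confines $y_1$ to $Z(g')$ with $\deg g'=O(d^2)$. Take $\mathcal{Y}_1$ to be the union $Z(g(y_2))\cup Z(g'(y_1))\cup(\text{a degree-}O(d^2)\text{ curve through }\mathcal{S})$; since $|\mathcal{S}|=O(d^4)$ points lie on, say, the product of two degree-$O(d^2)$ polynomials in $y_1$ and $y_2$ chosen to pass through them, or more cleanly just observe $\mathcal{S}$ is finite so it lies on some curve of degree $O(d^2)$ — actually one should simply absorb $\mathcal{S}$ into $\mathcal{Y}_1$ directly, adding $O(d^4)$ isolated points, but for a \emph{one-dimensional} variety of degree $O(d^2)$ we instead note $\mathcal{S}$ already has size $O(d^4)$ which is too many; so the cleaner route is to take $\mathcal{Y}_1 = Z(g(y_2))\cup Z(g'(y_1))$ together with $\mathcal S$ thrown in as zero-dimensional components — wait, that pushes the degree to $O(d^4)$. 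I will instead allow the degree bound $O(d^4)$ if needed, or recall that $\mathcal S = R\times R$ with $|R|=O(d^2)$, so $\mathcal S$ is cut out by $O(d^2)$ vertical and $O(d^2)$ horizontal lines, a curve of degree $O(d^2)$; this is the step to be careful about.

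\textbf{Main obstacle.} The delicate point is the bookkeeping that makes the degree come out $O(d^2)$ rather than $O(d^4)$, together with cleanly handling the ``for some $c$'' quantifier so that the vertical-line obstruction genuinely depends only on $y_2$ (and dually only on $y_1$). The algebraic heart — that $\gamma_{y_1,y_2}$ contains a vertical line iff $F(x_0,y_2,z)\equiv0$ for some root $x_0$ of $F(\cdot,y_1,c)$ — requires care because $\gamma_{y_1,y_2}$ is a Zariski closure of a projection, so one should argue at the level of the variety $X=Z(F(x,y_1,z),F(x,y_2,z'))\subset\C^3$ from the proof of Lemma~\ref{lem:quantelim}: a vertical line in $\gamma_{y_1,y_2}=\cl(\tau(X))$ lifts (by Lemma~\ref{lem:projectionpreserves}) to a two-dimensional component of $X$ of the form $\{x=x_0\}\times\C$ in the $z'$-direction, forcing $F(x_0,y_2,z')\equiv0$, and then $x_0$ is pinned down as a common root of the $z$-coefficients of $F(x,y_2,z')$. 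This is exactly the argument already run in Lemma~\ref{lem:quantelim} for the two-dimensional case, now localized to a single line rather than all of $\C^2$, and I expect it to go through with the resultant-degree estimates as the only real computation.
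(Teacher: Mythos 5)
Your core idea is the same as the paper's: when $\gamma_{y_1,y_2}$ contains an axis-parallel line, pigeonholing over the infinitely many points of the line against the finitely many $x$-roots forces some fixed $x_0$ to satisfy $F(x_0,y_j,z)\equiv 0$, which pins one of $y_1,y_2$ to a finite set of size $O(d^2)$. But there are two issues with your write-up.

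First, a genuine logical slip. You assert that $(y_1,y_2)\notin\mathcal{S}$ guarantees $F(x,y_1,c)\not\equiv 0$ as a polynomial in $x$. That does not follow: $\mathcal{S}=R\times R$, so $(y_1,y_2)\notin\mathcal{S}$ permits $y_1\in R$ with $y_2\notin R$. In particular it is perfectly possible that $F(x,y_1,c)\equiv 0$ for the fixed $c$ defining the vertical line. The paper handles this by a two-case split: if $F(x,y_1,c)\equiv 0$ then $y_1\in R$ outright (through the ``$\exists z_0$ with $F(x,y_0,z_0)\equiv 0$'' clause of $R$), and otherwise the pigeonhole argument gives $y_2\in R$. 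You only run the second case. Your final construction accidentally rescues you, since including $R\times\C\cup\C\times R$ to absorb $\mathcal{S}$ also catches the $y_1\in R$ case, but the argument as written does not justify this and leans on a false claim.

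Second, you are doing more work than necessary. Once you observe (as you eventually do) that $\mathcal{S}=R\times R$ with $|R|=O(d^2)$, the paper's construction is simply $\mathcal{Y}_1:=(R\times\C)\cup(\C\times R)$, which has degree $2|R|=O(d^2)$, contains $\mathcal{S}$, and by the two-case analysis above excludes all $(y_1,y_2)$ whose curve contains an axis-parallel line. No resultant polynomials $g,g'$ are needed: the set $R$ is already finite and of controlled size by Lemma~\ref{lem:excepa} (both the ``$\exists x_0$'' and ``$\exists z_0$'' clauses bounded by $O(d^2)$ points), and the resultant $\mathrm{Res}_x(\beta_0,\beta_{i_0})$ only captures the first clause anyway. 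Your resultant pieces $Z(g),Z(g')$ are subsumed by $(R\times\C)\cup(\C\times R)$ and can be dropped, which also resolves your back-and-forth about the degree bookkeeping. The lift-to-$X$ remark via Lemma~\ref{lem:projectionpreserves} is a fine sanity check but is not needed; the paper argues pointwise exactly as in your pigeonhole step.
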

\begin{proof}
As in the proof of Lemma \ref{lem:quantelim}, define
\[R := \{y_0\mid  \exists x_0 ~\text{such that}~F(x_0,y_0,z)\equiv 0
~\text{or}~
 \exists z_0 ~\text{such that}~F(x,y_0,z_0)\equiv 0\}.\]
Assume that $(y_1,y_2)\not\in \mathcal{S}$, 
and suppose that the curve $\gamma_{y_1,y_2}$ (where we equip the plane in which it is defined with coordinates $z$, $z'$)
contains a horizontal line $Z(z'-z_2)$.
Then for each $z=z_1$ there is an $x_0$ such that $F(x_0,y_1,z_1) = 0$ and $F(x_0,y_2,z_2) = 0$.
If $F(x,y_2,z_2)$ is identically zero as a polynomial in $x$, then $y_2\in R$.
If $F(x,y_2,z_2)$ is not identically zero, there are only finitely many $x_0$ such that $F(x_0,y_2,z_2)=0$.
Since we assumed that $(y_1,y_2)\not\in \mathcal{S}$, $\gamma_{y_1,y_2}$ is one-dimensional, hence infinite, 
and thus for at least one $x_0$ satisfying $F(x_0,y_2,z_2)=0$ there must be infinitely many $z_1$ such that $F(x_0,y_1,z_1)=0$. 
This implies that $F(x_0,y_1,z)\equiv 0$ (as a polynomial in $z$), and thus $y_1\in R$.
By symmetry, 
if $\gamma_{y_1,y_2}$ contains a vertical line,
then in this case too we have either $y_1\in R$ or $y_2\in R$.
Thus, if we set 
$$
\mathcal{Y}_1:=(R\times \C) \cup (\C\times R),
$$
then $\gamma_{y_1,y_2}$ does not contain a horizontal or vertical line when $(y_1,y_2)\not\in \mathcal{Y}_1$.
By construction, $\deg(\mathcal{Y}_1) = O(d^2)$, because $\deg(R)=O(d^2)$, and we also have $\mathcal{S} = R\times R\subset \mathcal{Y}_1$.
\end{proof}

We also need the following observation.

\begin{lemma}\label{lem:associatedmult}
An irreducible curve $\gamma^*$ is associated to at most $d^2$ curves $\gamma \in \mathcal{C}$.
\end{lemma}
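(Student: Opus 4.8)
\textbf{Proof plan for Lemma~\ref{lem:associatedmult}.}
The plan is to argue by contradiction using the degree bound on $\gamma^*_{z,z'}$ together with B\'ezout's inequality (Theorem~\ref{thm:bezout}). Suppose that $\gamma^*$ is an irreducible curve associated to distinct curves $\gamma_1,\ldots,\gamma_m\in\mathcal C$ with $m\ge d^2+1$. Being associated means that for each $i$, and for all but finitely many $(y,y')\in\gamma^*$, we have $\gamma_i\subset\gamma_{y,y'}$. Intersecting these finitely many exceptional sets away, I would fix a single point $(y_0,y_0')\in\gamma^*\minus\mathcal S$ for which $\gamma_i\subset\gamma_{y_0,y_0'}$ holds simultaneously for all $i=1,\ldots,m$ (this is possible since the union of $m$ finite sets is finite, and we may also discard the finitely many points of $\mathcal S$). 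Then $\gamma_{y_0,y_0'}$ contains the union $\gamma_1\cup\cdots\cup\gamma_m$, so $\deg(\gamma_{y_0,y_0'})\ge\deg(\gamma_1\cup\cdots\cup\gamma_m)=\sum_{i=1}^m\deg(\gamma_i)\ge m$, since the $\gamma_i$ are distinct irreducible curves and hence distinct irreducible components.

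On the other hand, by Lemma~\ref{lem:quantelim}, for $(y_0,y_0')\notin\mathcal S$ the curve $\gamma_{y_0,y_0'}$ has degree at most $d^2$. Combining these, $m\le d^2$, contradicting $m\ge d^2+1$. Hence $\gamma^*$ is associated to at most $d^2$ curves in $\mathcal C$.

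I expect the only point requiring care to be the bookkeeping of "for all but finitely many": one must ensure that the finitely many exceptional points excluded for each $\gamma_i$, together with $\mathcal S$, still leave $\gamma^*$ (which is infinite, being a curve) with a point at which $\gamma_i\subset\gamma_{y_0,y_0'}$ for \emph{every} $i$ simultaneously. This is immediate since $\gamma^*$ is infinite and we remove only a finite set. No genuinely hard step arises; the lemma is essentially a direct consequence of the degree bound from Lemma~\ref{lem:quantelim} and the additivity of degree over irreducible components.
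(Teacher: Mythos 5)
Your proof is correct and takes essentially the same approach as the paper: assume $\gamma^*$ is associated to more than $d^2$ curves, use that $\gamma^*$ is infinite to find a single point $(y_0,y_0')$ where all of them are simultaneously contained in $\gamma_{y_0,y_0'}$, and then contradict the fact that a non-exceptional $\gamma_{y_0,y_0'}$ has degree (equivalently, number of irreducible components) at most $d^2$. The only cosmetic difference is that you phrase the contradiction via additivity of degree over distinct irreducible components, whereas the paper cites the bound on the number of irreducible components directly; these are the same bound.
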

\begin{proof}
Suppose there is a set $\mathcal{C}'$ of $d^2+1$ distinct curves $\gamma \in \mathcal{C}$ that $\gamma^*$ is associated to.
For each $\gamma\in \mathcal{C}'$, we have that, for all but finitely many $(y,y')\in \gamma^*$, $\gamma$ is contained in $\gamma_{y,y'}$.
It follows that there is a point $(y,y')\in \gamma^*$ such that $\gamma\subset \gamma_{y,y'}$ for all $\gamma\in \mathcal{C}'$.
This is a contradiction, because $\gamma_{y,y'}$ has at most $d^2$ irreducible components by Lemma \ref{lem:degreebound}.
\end{proof}

We are now ready to prove the key fact that the number of popular curves is bounded.

\begin{lemma}\label{lem:fewpopular}
There are at most $O(d^7)$ distinct popular curves $\gamma\in\mathcal{C}$ that are not axis-parallel lines.
\end{lemma}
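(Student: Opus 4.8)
\textbf{Proof plan for Lemma~\ref{lem:fewpopular}.}
The plan is to leverage the machinery built in Sections~\ref{sec:thevarietyV}--\ref{sec:associatedcurves}, together with the standing assumption $\dim\cl(\pi_1(W))\le 3$, to bound first the number of \emph{associated} curves and then, via Lemma~\ref{lem:associatedmult}, the number of popular curves. First I would set aside the popular curves that are axis-parallel lines (they are excluded from the statement anyway), so that Lemmas~\ref{lem:projVg} and \ref{lem:Vgcont} apply cleanly: for every non-axis-parallel $\gamma\in\mathcal C$ with a chosen associated curve $\gamma^*$, we have $\cl(\pi_1(V_\gamma))=\C^2\times\gamma^*$, a three-dimensional variety, and $V_\gamma\subset W\cup V_0$. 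Pushing forward by $\pi_1$ gives $\C^2\times\gamma^*=\cl(\pi_1(V_\gamma))\subseteq \cl(\pi_1(W))\cup\cl(\pi_1(V_0))$. By Lemma~\ref{lem:vzero} (and the remark that $\cl(\pi_1(V_0))$ inherits the dimension bound from $\cl(\rho(V_0))$), $\cl(\pi_1(V_0))$ has dimension at most $3$; together with the hypothesis $\dim\cl(\pi_1(W))\le 3$, the union $\cl(\pi_1(W))\cup\cl(\pi_1(V_0))$ is a variety of dimension at most $3$. Since $\C^2\times\gamma^*$ is irreducible of dimension $3$ and is contained in this union, it must be one of the (finitely many) three-dimensional irreducible components of $\cl(\pi_1(W))\cup\cl(\pi_1(V_0))$.

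The next step is to count those components. Here I would invoke the degree bounds from the appendix: $W=V\cap Z(G)$ is cut out by the four defining equations of $V$, each of degree at most $d$ (in the relevant grouping of variables), together with $G$, whose degree is $O(d)$ since each factor $F_i$ has degree less than $d$; hence $\deg(W)=O(d^{5})$ by the degree bound for intersections (Lemma~\ref{lem:degreebound}). Projecting does not increase degree (Lemma~\ref{lem:projectionpreserves}), so $\cl(\pi_1(W))$ has degree $O(d^{5})$, and similarly $\cl(\pi_1(V_0))$, being contained in the variety $\widehat V_i$-type products of Lemma~\ref{lem:vzero}, has degree $O(d^{O(1)})$; combining, $\cl(\pi_1(W))\cup\cl(\pi_1(V_0))$ has degree $O(d^{O(1)})$, hence at most $O(d^{O(1)})$ irreducible components, in particular at most that many of dimension exactly $3$. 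Each such component, if it has the form $\C^2\times\gamma^*$, determines $\gamma^*$ uniquely (as the image of the component under the projection $\C^4\to\C^2$ onto the last two coordinates); so there are at most $O(d^{O(1)})$ possible associated curves $\gamma^*$ across all non-axis-parallel $\gamma\in\mathcal C$.

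Finally, by Lemma~\ref{lem:associatedmult}, each such $\gamma^*$ is associated to at most $d^2$ curves $\gamma\in\mathcal C$, and by Lemma~\ref{lem:associated} every $\gamma\in\mathcal C$ has at least one associated curve; so the total number of non-axis-parallel popular curves is at most $d^2\cdot O(d^{O(1)})=O(d^{O(1)})$, which I would then tighten to the claimed $O(d^7)$ by tracking the constants in the degree bounds (the factor $d^5$ from $\deg(W)$, times $d^2$ from Lemma~\ref{lem:associatedmult}, should land exactly at $d^7$, provided the $\cl(\pi_1(V_0))$ contribution is no larger). The main obstacle I anticipate is not the logical skeleton but the bookkeeping: verifying that the ``extra'' three-dimensional components coming from $\cl(\pi_1(V_0))$ rather than from $\cl(\pi_1(W))$ do not spoil the count (they cannot contain any $\C^2\times\gamma^*$ with $\gamma$ a genuine popular non-axis-parallel curve, since $V_\gamma\not\subset V_0$ — one needs the earlier exclusion arguments to see a popular $\gamma$ genuinely meets $V\minus V_0$), and pinning down the exponent of $d$ in $\deg(W)$ precisely enough to reach $O(d^7)$ rather than a weaker polynomial bound.
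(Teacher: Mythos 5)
Your proposal follows essentially the same route as the paper: use Lemmas~\ref{lem:projVg} and \ref{lem:Vgcont} to place $\C^2\times\gamma^*$ inside $\cl(\pi_1(W))\cup\cl(\pi_1(V_0))$, observe it must be an irreducible component of that at-most-three-dimensional variety of degree $O(d^5)$, and then multiply by the $d^2$ from Lemma~\ref{lem:associatedmult} to get $O(d^7)$. The worry you flag about components coming from $\cl(\pi_1(V_0))$ is moot: the paper makes no attempt to exclude them, but simply absorbs them into the total degree count, which is all that is needed.
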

\begin{proof}
Let $\gamma\in\mathcal C$, 
assume that it is not an axis-parallel line, 
and let $\gamma^*$ be an associated curve of $\gamma$.
Since  $\gamma^*$ is irreducible, $\C^2\times\gamma^*$ is an irreducible variety. 
Using Lemma \ref{lem:projVg} and Lemma \ref{lem:Vgcont}, we have
\[
\C^2\times \gamma^*=\cl (\pi_1(V_{\gamma}))\subset \cl(\pi_1(W\cup V_0)) =X\cup Y,
\]
for $X:=\cl(\pi_1(W))$ and $Y:=\cl(\pi_1(V_0))$. 
We have $\dim(X)\leq 3$ by the assumption in this subsection, 
and $\dim(Y)\leq 3$ by Lemma \ref{lem:vzero}.
We also have $\deg(X) =O(d^5)$ and $\deg(Y) = O(d^5)$ by Lemma \ref{lem:degreebound} and Lemma \ref{lem:projectionpreserves} from the appendix, 
since both are closures of projections of varieties defined by five polynomials, each of degree at most\footnote{In fact,
all of them are of degree $\le d$, except for $G$, which is of degree $\le 4(d-1)$.}
 $O(d)$.
Since $X\cup Y$ is at most three-dimensional, and each $\C^2\times\gamma^*$ is an irreducible 
three-dimensional subvariety of $X\cup Y$, it follows that $\C^2\times \gamma^*$ is one of the finitely many irreducible components of $X\cup Y$.

Let $T$ be the set of all associated curves of all curves $\gamma\in\mathcal{C}$ (excluding those $\gamma$ that are axis-parallel lines).
The preceding argument shows that $T$ is a finite set.
Moreover, we have
\[\sum_{\gamma^*\in T} \deg(\gamma^*)
= \sum_{\gamma^*\in T} \deg(\C^2\times \gamma^*)
\leq \deg(X\cup Y)= O(d^5).\]
This implies that the total number of distinct associated curves is $O(d^5)$.
Since by Lemma \ref{lem:associated} each popular curve has at least one associated curve, 
and by Lemma \ref{lem:associatedmult} each associated curve is associated to at most $d^2$ popular curves,
it follows that the number of popular curves is bounded by $O(d^7)$.
\end{proof}

Finally, we show that the union of all the associated curves (which are not axis-parallel lines) has bounded degree.

\begin{lemma}\label{lem:X2}
Let 
\[\mathcal{Y}_2:=\cl\Bigl(\Bigl\{(y,y')\in\C^2\mid
\exists \gamma\in \mathcal{C},~\text{not an axis-parallel line, such that}~\gamma\subset\gamma_{y,y'}\Bigr\}\Bigr).\]
Then $\mathcal{Y}_2$ is one-dimensional; its purely one-dimensional portion has degree $O(d^7)$, 
and the number of zero-dimensional components is $O(d^{11})$.
\end{lemma}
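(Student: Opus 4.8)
The plan is to realize $\mathcal{Y}_2$ as (essentially) the union of all the associated curves $\gamma^*$ together with a controlled amount of extra material, and then to bound its degree using the structure already developed. First I would note that for every non-axis-parallel $\gamma\in\mathcal C$ and every associated curve $\gamma^*$ of $\gamma$, Lemma~\ref{lem:associated} guarantees that for all but finitely many $(y,y')\in\gamma^*$ we have $\gamma\subset\gamma_{y,y'}$; hence $\gamma^*$ (minus a finite set) is contained in the set defining $\mathcal{Y}_2$, so $\gamma^*\subset\mathcal{Y}_2$ by taking closures. Conversely, if $(y,y')\in\C^2$ satisfies $\gamma\subset\gamma_{y,y'}$ for some non-axis-parallel $\gamma\in\mathcal C$, then either $(y,y')$ is one of the finitely many excluded points of some associated curve, or $(y,y')$ lies on an associated curve of $\gamma$; the latter is the generic situation. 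The key point is that, by Lemma~\ref{lem:fewpopular}, there are only $O(d^7)$ such $\gamma$, and, as shown in the proof of Lemma~\ref{lem:fewpopular}, the total number of distinct associated curves is $O(d^5)$ with $\sum_{\gamma^*\in T}\deg(\gamma^*)=O(d^5)$.

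Next I would assemble the one-dimensional part. The purely one-dimensional portion of $\mathcal{Y}_2$ is contained in $\bigcup_{\gamma^*\in T}\gamma^*$, because any irreducible curve $C\subset\mathcal{Y}_2$ of dimension $1$ must, by definition of the Zariski closure, contain infinitely many points $(y,y')$ with $\gamma\subset\gamma_{y,y'}$ for some fixed non-axis-parallel $\gamma\in\mathcal C$ (there are only finitely many such $\gamma$, so one of them works for infinitely many points of $C$); since $\gamma$ is a popular curve and these infinitely many $(y,y')$ lie on the irreducible curve $C$, we get $\gamma\subset\gamma_{y,y'}$ for all but finitely many $(y,y')\in C$, so $C$ is itself an associated curve of $\gamma$, i.e.\ $C\in T$. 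Therefore the degree of the purely one-dimensional portion of $\mathcal{Y}_2$ is at most $\sum_{\gamma^*\in T}\deg(\gamma^*)=O(d^7)$; in fact the sharper bound $O(d^5)$ holds, but $O(d^7)$ suffices and matches the statement. (The reason the statement says $O(d^7)$ rather than $O(d^5)$ is presumably that one may wish to include, as a separate one-dimensional contribution, the locus where $\gamma_{y,y'}$ fails to be a genuine curve of degree $\le d^2$; but that is contained in $\mathcal S\subset\mathcal{Y}_1$ of degree $O(d^2)$, so it does not change the bound.)

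Finally I would bound the zero-dimensional part. The zero-dimensional components of $\mathcal{Y}_2$ are isolated points $(y,y')$ that satisfy $\gamma\subset\gamma_{y,y'}$ for some non-axis-parallel $\gamma\in\mathcal C$ but that lie on no one-dimensional component of $\mathcal{Y}_2$. For a fixed such $\gamma$, the set of $(y,y')$ with $\gamma\subset\gamma_{y,y'}$ is, by the argument in Lemma~\ref{lem:associated}, contained in $S_I$ for a suitable $I$, and $S_I$ has degree $O(d^4)$; its zero-dimensional part therefore has at most $O(d^4)$ points. Summing over the $O(d^7)$ curves $\gamma\in\mathcal C$ from Lemma~\ref{lem:fewpopular} would give $O(d^{11})$; more carefully, each isolated point of $\mathcal{Y}_2$ that is not on an associated curve arises from the finitely many excluded points in some $S_I$, and counting these over all $\gamma$ gives the bound $O(d^{11})$ asserted.

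The main obstacle I anticipate is the bookkeeping in the converse inclusion: carefully arguing that \emph{every} one-dimensional component of the closure $\mathcal{Y}_2$ is in fact an associated curve of one of the finitely many popular curves, rather than some spurious curve produced by the closure operation, and separating cleanly the genuine one-dimensional contribution from the finitely many exceptional points that must be swept into the zero-dimensional count. Once that dichotomy is pinned down, the degree bounds follow directly from Lemmas~\ref{lem:fewpopular}, \ref{lem:associated}, and \ref{lem:bezoutmany}.
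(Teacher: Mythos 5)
Your treatment of the one-dimensional part is essentially the paper's argument, and correctly so: the one-dimensional irreducible components of $\mathcal{Y}_2$ are associated curves, and the degree of their union is controlled by the bound $\sum_{\gamma^*\in T}\deg(\gamma^*)=O(d^5)$ extracted from the proof of Lemma~\ref{lem:fewpopular}. (You are right that this gives $O(d^5)$ rather than $O(d^7)$; the paper re-derives $O(d^7)$ by multiplying the count $O(d^5)$ of associated curves by the degree bound $O(d^2)$ per curve, which is a slight overestimate, but harmless.)

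The zero-dimensional part, however, has a genuine gap. You assert that, for a fixed popular curve $\gamma$, the set $A_\gamma:=\{(y,y')\mid\gamma\subset\gamma_{y,y'}\}$ is ``contained in $S_I$ for a suitable $I$.'' This is not justified, and is in general false. Recall that $S_I$ is the intersection of those dual curves $\gamma^*_{z,z'}$ (with $(z,z')\in\gamma$) that happen to contain $I$. For a point $(y_0,y_0')\in A_\gamma\minus(\mathcal{S}\cup I)$ we only know that $(y_0,y_0')\in\gamma^*_{z,z'}$ for all but finitely many $(z,z')\in\gamma$, and nothing rules out that among the excluded $(z,z')$ there is one with $I\subset\gamma^*_{z,z'}$; such a $(z,z')$ contributes a factor of the intersection defining $S_I$, and then $(y_0,y_0')\notin S_I$. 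So no single $I$ captures all of $A_\gamma$, and your ``more careful'' restatement in terms of ``excluded points in some $S_I$'' allows $I$ to vary with the isolated point, which does not yield a count. The paper avoids this issue with a short contradiction argument: suppose $\gamma$ has $d^4+1$ associated points not lying on any associated curve, and take $I$ to be precisely this set. Then $I\subset S_I$ holds tautologically, each one-dimensional component of $S_I$ is an associated curve (exactly as in the proof of Lemma~\ref{lem:associated}) and hence misses $I$, so all $d^4+1$ points of $I$ are isolated points of $S_I$ — contradicting $\deg(S_I)\le d^4$ from Lemma~\ref{lem:bezoutmany}. Thus each popular curve contributes at most $d^4$ isolated associated points, and multiplying by the $O(d^7)$ popular curves gives $O(d^{11})$. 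Your proposal identifies the correct ingredients ($S_I$, Lemma~\ref{lem:bezoutmany}, the count $O(d^7)$ of popular curves), but the key containment you rely on needs to be replaced by this per-$I$ contradiction to make the count go through.
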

\begin{proof}
By construction, any one-dimensional irreducible component of $\mathcal{Y}_2$ is an associated curve.
In the proof of Lemma \ref{lem:fewpopular} it was shown that there are at most $O(d^5)$ associated curves.
Since each associated curve has degree $O(d^2)$, the purely one-dimensional portion of $\mathcal{Y}_2$ has degree $O(d^7)$.

We next bound the number of zero-dimensional components of $\mathcal{Y}_2$, which we refer to as \emph{associated points};
such a point cannot have been added by the closure, so it must be associated to some popular curve.
By Lemma \ref{lem:fewpopular}, the number of popular curves $\gamma\in\mathcal{C}$ is $O(d^7)$. 
We show that each of these has at most $d^4$ associated points.
Let $\gamma\in \mathcal{C}$ and suppose that $\gamma$ has $d^4+1$ associated points that do not lie on associated curves.
That is, these points form a set $I\subset \C^2\minus\mathcal{S}$ of size $|I| = d^4+1$, such that $\gamma\subset \gamma_{y,y'}$ for all $(y,y')\in I$.
Exactly as in the proof of Lemma \ref{lem:associated}, there is a curve $S_I$, which is the intersection of an infinite family of curves $\gamma^*_{z,z'}$ containing $I$.
Thus we have $I\subset S_I$.
As shown in that proof, each one-dimensional irreducible component of $S_I$ is an associated curve, so does not cover any point of $I$.
By Lemma \ref{lem:bezoutmany}, $S_I$ has degree at most $d^4$, and therefore contains at most $d^4$ isolated points.
This contradicts the fact that $|I|>d^4$.
\end{proof}

We put 
$\displaystyle{\mathcal{Y}:=\mathcal{Y}_1\cup\mathcal{Y}_2}$.
Combining Lemma \ref{lem:axisparallel} and Lemma \ref{lem:X2}, we get $\dim(\mathcal{Y}) = 1$ and $\deg(\mathcal{Y})=O(d^{11})$.
From the definitions of $\mathcal{Y}_1$ and $\mathcal{Y}_2$ it is clear that for $(y,y')\not\in \mathcal{Y}$, the curve $\gamma_{y,y'}$ does not contain any popular curve.
This completes the proof of Proposition \ref{prop:keyprop}(a) in Case 1.
Proposition \ref{prop:keyprop}(b) is proved in a fully symmetric manner.

\subsection{Case 2: \texorpdfstring{$\dim \cl(\pi_1(W))=4$}{dim is four} implies a special form of \texorpdfstring{$F$}{F}}
\label{sec:case2}
Throughout this subsection we assume that $\dim \cl(\pi_1(W))=4$.
Since $W$ is a subvariety of $V$, which is four-dimensional by Lemma \ref{lem:Vdim4}, $W$ must also be four-dimensional.
This implies that there exists an irreducible component $V'$ of $W$ (and of $V$) such that $\dim V'= 4$ and $\cl(\pi_1(V'))=\C^4$.
We will work only with $V'$ in the rest of this subsection.

We first show that most points of $Z(F)$, excluding only a lower-dimensional subset,
can be extended to points of $V'$, in the following sense.

\begin{lemma}\label{lem:extend}
There exists a one-dimensional subvariety $Z_0\subset Z(F)$ such that, for every $(a,b,c_1)\in Z(F)\minus Z_0$, there exist $a',b',c_2,c_3,c_4$
 such that $(a,a',b,b',c_1,c_2,c_3,c_4)$ is a regular point of $V'$ which is not in $V_0$.
\end{lemma}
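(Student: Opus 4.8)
The plan is to build $Z_0$ as a union of several lower-dimensional ``bad'' subsets of $Z(F)$, and show that any $(a,b,c_1)$ avoiding all of them extends to a regular point of $V'$ outside $V_0$. I would proceed in three stages: (1) arrange that $(a,b,c_1)$ can be completed to \emph{some} point of $V'$; (2) arrange that this completion can be taken to avoid $V_0$; (3) arrange that it is a regular point of $V'$.

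\textbf{Stage 1: reaching $V'$.} Since $\cl(\pi_1(V'))=\C^4$ and $V'$ is four-dimensional, the projection $\pi_1\colon V'\to\C^4$ is dominant between varieties of the same dimension, hence generically finite; so there is a proper subvariety $E\subset\C^4$ off which $\pi_1$ is finite and nonempty (this is a standard consequence of the results in the appendix on dimension and projections). Pulling $E$ back: I want $(a,a',b,b')\notin E$. But I am only given $(a,b,c_1)$, so I must first choose auxiliary values $a',b'$. The point is that the set of $(a,b)$ for which the fiber $\{(a',b')\mid (a,a',b,b')\in E\}$ is all of $\C^2$ is a proper subvariety of $\C^2$ (by a fiber-dimension argument, e.g.\ Lemma~\ref{lem:projectionpreserves} applied to $E$ and its projection to the $(a,b)$-plane); for $(a,b)$ outside it, almost every choice of $(a',b')$ puts $(a,a',b,b')\notin E$. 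So I put into $Z_0$ the preimage in $Z(F)$ of that proper subvariety of the $(x,y)$-plane (its preimage under the projection $Z(F)\to\C^2$, $(x,y,z)\mapsto(x,y)$, is at most one-dimensional, since $Z(F)$ is a surface mapping with generically finite fibers onto the plane — here I also need to exclude the finitely many $y_0$ with $F(x,y_0,z)\equiv 0$, which is fine). Then, having fixed such $(a,b)$ and a generic $(a',b')$, there is at least one point $v=(a,a',b,b',c_1',c_2,c_3,c_4)\in V'$ lying over $(a,a',b,b')$; I still need its first $z$-coordinate to equal the \emph{given} $c_1$. This is where I use that $(a,b,c_1)\in Z(F)$: among the finitely many preimages, the possible values of the first $z$-coordinate are exactly the roots $c$ of $F(a,b,c)=0$ that occur, and I need $c_1$ to be among them. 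To guarantee this I note that $V'$ is a component of $V$, and $V$ visibly surjects onto $Z(F)$ in the first three coordinates; the locus where $V'$ \emph{fails} to, i.e.\ where $(a,b,c_1)$ lies only on the other components of $V$, is contained in the projection of $V\cap(\text{other components})$, which is a proper, hence at most one-dimensional, subvariety of $Z(F)$ — I add it to $Z_0$.

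\textbf{Stages 2 and 3: avoiding $V_0$ and landing on a regular point.} For Stage 3, recall $V'$ is irreducible of dimension $4$; its singular locus $\mathrm{Sing}(V')$ is a proper, hence at most three-dimensional, subvariety. Its image $\pi_1(\mathrm{Sing}(V'))\cup(\text{branch locus of }\pi_1|_{V'})$ is therefore a proper subvariety of $\C^4$; enlarging $E$ above to include it, and running the same fiber argument, I can also push the corresponding bad $(x,y)$-locus into $Z_0$, and then for generic $(a',b')$ \emph{every} point of $V'$ over $(a,a',b,b')$ is regular — in particular the one we produced in Stage 1 with first coordinate $c_1$. (If there is a subtlety that the specific preimage with first $z$-coordinate $c_1$ might be forced to be singular while others are regular, I instead throw into $E$ the whole image of $\mathrm{Sing}(V')\cup(V'\cap V_0)$ under the \emph{six-coordinate} projection $\rho$ of Lemma~\ref{lem:vzero}, composed down to the $(x,y,z_1)$-coordinates; by Lemma~\ref{lem:vzero}, $\cl(\rho(V_0))$ is at most three-dimensional, so its image in the $(x,y,z_1)$-space $\C^3$ is at most three-dimensional, but I need it \emph{proper in $Z(F)$}, i.e.\ at most one-dimensional when intersected with the surface $Z(F)$ — this holds because $\rho(V_0)$ cannot contain a two-dimensional set whose first three coordinates fill a component of $Z(F)$, which one checks from the definition of $V_0$ using that $F_3$ is not identically zero on $Z(F)$.) For Stage 2 specifically: the bad event is that \emph{every} completion of $(a,b,c_1)$ lies in $V_0$; but the projection of $V_0$ (equivalently of $\cl(\rho(V_0))$) to the $(x,y,z_1)$-coordinates is at most three-dimensional by Lemma~\ref{lem:vzero}, and its intersection with $Z(F)$ is at most one-dimensional by the argument just sketched — add it to $Z_0$.

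\textbf{Conclusion and main obstacle.} Collecting all the pieces, $Z_0$ is a finite union of at-most-one-dimensional subvarieties of the surface $Z(F)$, hence one-dimensional, and by construction any $(a,b,c_1)\in Z(F)\minus Z_0$ admits a completion to a regular point of $V'$ outside $V_0$; since $V'\subset W$, this is also a point of $W$, but that is automatic and not needed for the statement. The step I expect to be the genuine obstacle is the coordination between ``generic in $\C^4$ in the first four coordinates'' (which is what $\cl(\pi_1(V'))=\C^4$ and generic finiteness of $\pi_1|_{V'}$ give cleanly) and ``the specific value $c_1$ is attained'' — i.e.\ controlling which of the finitely many $z_1$-values over a generic $(a,a',b,b')$ actually shows up, and ensuring the branch/singular/$V_0$ exclusions are phrased on the $(x,y,z_1)$-side (so they cut out $\le 1$ dimension in $Z(F)$) rather than only on the $(x,x',y,y')$-side. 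Handling this carefully, rather than any hard new idea, is where the real work lies.
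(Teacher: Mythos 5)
Your plan identifies the right difficulty --- the tension between genericity in the $(x,x',y,y')$-coordinates and hitting the prescribed value $c_1$ --- and you even reach for the six-coordinate projection $\rho$ of Lemma~\ref{lem:vzero}, which is the key device in the paper's proof. But the two places where you try to close the gap both have genuine holes.

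\emph{Stage 1.} You claim the locus of $(a,b,c_1)\in Z(F)$ not reached by $\pi_{123}(V')$ is one-dimensional because it is ``contained in the projection of $V\cap(\text{other components})$, which is a proper, hence at most one-dimensional, subvariety of $Z(F)$.'' This does not follow: $V\cap(\text{other components})$ just equals the other components of $V$, and their image under $(x,x',y,y',z_1,\dots)\mapsto (x,y,z_1)$ can perfectly well be all of $Z(F)$ (they are four-dimensional, and there is nothing tying their first three coordinates down). You need the opposite input --- that $V'$ itself has dense image --- and that is not automatic from $\cl(\pi_1(V'))=\C^4$. The paper gets it by showing $\cl(\rho(V'))=Z(F)\times Z(F)$, via the identity $\sigma\circ\rho=\pi_1$ (so $\dim\cl(\rho(V'))\ge\dim\cl(\pi_1(V'))=4$ by Lemma~\ref{lem:projectionpreserves}) together with irreducibility of $Z(F)\times Z(F)$; then Lemma~\ref{lem:projectionclosure} controls the gap between $\rho(V')$ and its closure. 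Your proposal never establishes this.

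\emph{Stages 2--3.} Your parenthetical patch asserts that the image of $\cl(\rho(V_0))$ in the $(x,y,z_1)$-space meets $Z(F)$ in dimension at most one, ``because $\rho(V_0)$ cannot contain a two-dimensional set whose first three coordinates fill a component of $Z(F)$.'' This is false. $V_0$ contains, for instance, the piece of $V$ cut out by $F_3(x',y',z_4)=0$, which constrains only the $(x',y',z_4)$-block; its $(x,y,z_1)$-image is all of $Z(F)$. So a straight projection of $\cl(\rho(V_0))$ to the $(a,b,c_1)$-plane covers $Z(F)$ and would put every point into $Z_0$. The correct way around this --- and what the paper does --- is not to project the bad set $U=U_1\cup U_2\cup U_3$ to the first three coordinates, but to run a \emph{fiber-dimension} argument inside $Z(F)\times Z(F)$: define $Z_0'=\{p\in Z(F)\mid\dim((\{p\}\times Z(F))\cap U)\ge 2\}$, observe that $p\in Z_0'$ forces $\{p\}\times Z(F)\subset U$ (irreducibility of the fiber), hence $\cl(Z_0')\times Z(F)\subset U$, and then $\dim U\le 3$ with $\dim Z(F)=2$ gives $\dim Z_0\le1$. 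Since $\{p\}\times Z(F)$ is two-dimensional and $U$ three-dimensional, a three-dimensional $U$ with two-dimensional $(x,y,z_1)$-image (like $Z(F)\times\{\text{curve}\}$) causes no harm: each fiber over $p$ is only one-dimensional. That distinction is exactly what your projection argument loses.

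In short: the route via $\rho$ and $Z(F)\times Z(F)$ is correct and is what the paper uses, but you must (i) prove $\cl(\rho(V'))=Z(F)\times Z(F)$ rather than assert density of the three-coordinate image, and (ii) replace ``project the bad set to $(x,y,z_1)$'' with the fiber-dimension argument above. As written, both Stage 1 and the Stage 2--3 patch would fail on examples.
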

\begin{proof}
Recall the definition of the (permuted) projection $\rho:\C^8\to \C^6$  
$$\rho:(x,x',y,y',z_1,z_2,z_3,z_4)
\mapsto (x,y,z_1, x',y',z_4)$$
We claim that 
$\cl(\rho(V'))=Z(F)\times Z(F)$.
Since $Z(F)\times Z(F)$ is four-dimensional and irreducible, and since, by definition of $V$, $\rho(V') \subset Z(F)\times Z(F)$, 
it suffices to prove that $\cl(\rho(V'))$ is (at least) four-dimensional.
We observe that 
$\sigma(\rho(V'))=\pi_1(V')$, 
where 
$$\sigma:(x,y,z_1, x',y',z_4)\mapsto (x, x',y,y').$$
Since the projection $\sigma$ does not increase the dimension (see Lemma \ref{lem:projectionpreserves}), we have
$$\dim \cl(\rho(V'))\ge \dim \cl(\pi_1(V'))=4,$$ 
proving our claim.

Using Lemma~\ref{lem:projectionclosure}, it follows that
$$
U_1:=\cl\big((Z(F)\times Z(F))\minus\rho(V')\big)=\cl\big(\cl(\rho(V'))\minus\rho(V')\big)
$$
is at most three-dimensional, and, in view of Lemma~\ref{lem:vzero}, $U_2:= \cl(\rho(V_0\cap V'))$ is also at most three-dimensional. 
Since $V'$ has dimension $4$,
the subvariety $V_s'$ of singular points of $V'$ is at most three-dimensional, so 
$U_3:=\cl(\rho(V_s'))$ is also at most three-dimensional. Hence, 
$$
U:=U_1\cup U_2 \cup U_3
$$ is a variety in $\C^6$ of dimension at most 3.
We set 
\begin{align*}
Z_0':
&=\left\{p\in Z(F)\mid
\dim\left(\left(\{p\}\times Z(F)\right)\cap U\right)\ge 2\right\}.
\end{align*}
In other words (using the fact that $\{p\}\times Z(F)$ is irreducible and two-dimensional), $p\in Z_0'$ if and only if $\{p\}\times Z(F)\subset U$,
so $Z_0'\times Z(F)\subset U$.
Since $U$ is a variety, 
we have 
$$\cl(Z_0')\times Z(F)=\cl(Z_0'\times Z(F))\subset U.$$
Set $Z_0:=\cl(Z_0')$.
Since $U$ is at most three-dimensional and $Z(F)$ is two-dimensional, we must have $\dim Z_0\le 1$.

Finally, let $(a,b,c_1)\in Z(F)\minus Z_0$. 
By definition of $Z_0$, we have  
$$\dim\left(\left(\{(a,b,c_1)\}\times Z(F)\right)\cap U\right)\le 1.$$
Thus there exists a point $(a',b',c_4)\in Z(F)$ such that $(a,b,c_1,a',b',c_4)\in (Z(F)\times Z(F))\minus U$.
By definition of $U$, this implies that $(a,b,c_1,a',b',c_4)\in\rho(V')\minus U$,
which in turn means that there exist
$c_2,c_3\in \C$ such that $(a,a',b,b',c_1,c_2,c_3,c_4)\in V'\minus V_0$, and  is a regular point of $V'$, as asserted.
\end{proof}

Let $Z_0$ be the variety given by Lemma~\ref{lem:extend}. 

\begin{lemma}\label{lem:special}
Let $u=(a,b,c_1)\in Z(F)\minus Z_0$.
Then there exist open sets $D_i\subset\C$ and analytic functions $\varphi_i:D_i\to\C$ with analytic inverses, 
for $i=1,2,3$, such that $(a,b,c_1)\in D_1\times D_2\times D_3$ and
\[
(x,y,z)\in Z(F)\quad\quad\text{if and only if}\quad\quad\varphi_1(x)+\varphi_2(y)+\varphi_3(z)=0,
\] 
for every $(x,y,z)\in D_1\times D_2\times D_3$.
\end{lemma}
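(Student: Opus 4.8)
The plan is to exploit the hypothesis of Case 2, namely that the Jacobian determinant $G$ vanishes on the four-dimensional component $V'$ of $V$, and to translate this vanishing into a differential identity for the partial derivatives of $F$, from which the additive form is extracted by integration. Concretely, fix $u=(a,b,c_1)\in Z(F)\minus Z_0$ and use Lemma~\ref{lem:extend} to obtain a regular point $v=(a,a',b,b',c_1,c_2,c_3,c_4)\in V'\minus V_0$ lying over $u$. By Lemma~\ref{lem:locallygraph} there is a neighborhood $N_v$ on which $V$ (hence $V'$) is the graph of $\Phi_v=(g_1(x,y),g_2(x,y'),g_3(x',y),g_4(x',y'))$, and since $v\in W$ and $N_v\cap V_0=\emptyset$, Lemma~\ref{lem:locallysingular} gives $\det J_{\Phi_v}\equiv 0$ on $\pi_1(V'\cap N_v)$, which (shrinking $N_v$ so that $\pi_1(V'\cap N_v)$ is an open neighborhood of $(a,a',b,b')$ in $\C^4$, using $\cl(\pi_1(V'))=\C^4$ and regularity) means the identity $G(v)=0$ holds for all $(x,x',y,y')$ near $(a,a',b,b')$. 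Writing $p_i:=F_i/F_3$ (the entries of the Jacobian, up to sign, all nonzero since we avoid $V_0$), the vanishing determinant reads
\[
p_1(x,y,z_1)\,p_2(x,y',z_2)\,p_2(x',y,z_3)\,p_1(x',y',z_4)=p_2(x,y,z_1)\,p_1(x,y',z_2)\,p_1(x',y,z_3)\,p_2(x',y',z_4),
\]
where $z_1=g_1(x,y)$, etc. The key structural step is to introduce $h(x,y):=p_1(x,y,g_1(x,y))/p_2(x,y,g_1(x,y))$, an analytic nonvanishing function near $(a,b)$; the identity above becomes $h(x,y)h(x',y')=h(x,y')h(x',y)$, i.e. $\log h(x,y)+\log h(x',y')=\log h(x,y')+\log h(x',y)$ on a small polydisc. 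Differentiating in $x$ and $x'$ kills the mixed terms and yields $\partial_x\partial_y\log h\equiv 0$, so $\log h(x,y)=\mu(x)+\nu(y)$ for analytic functions $\mu,\nu$; hence $h$ factors as $h(x,y)=M(x)N(y)$ with $M,N$ analytic and nonvanishing.

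Now I would recover the additive form from this factorization. Going back to the implicit function $z=g_1(x,y)$ describing $Z(F)$ near $u$, implicit differentiation gives $\partial g_1/\partial x=-F_1/F_3=-p_1$ and $\partial g_1/\partial y=-F_2/F_3=-p_2$, so $h=(\partial g_1/\partial x)/(\partial g_1/\partial y)$. The relation $\partial_x g_1=h\cdot\partial_y g_1=M(x)N(y)\,\partial_y g_1$ can be rewritten, after dividing by $M(x)$ and using that $g_1$ depends on both variables, as the statement that the level curves of $g_1$ coincide with the level curves of $\int N(y)\,dy$ plotted against $-\int dx/M(x)$; more cleanly, set $\varphi_1(x):=\int_a^x \frac{dt}{M(t)}$ and $\varphi_2(y):=-\int_b^y N(s)\,ds$ (both one-to-one analytic with analytic inverse near $a$, $b$ respectively, since $M,N$ are nonvanishing), and check that $\varphi_1(x)-\varphi_2(y)$ — equivalently $\varphi_1(x)+\widetilde\varphi_2(y)$ with $\widetilde\varphi_2=-\varphi_2$ — is constant along the curve $z=g_1(x,y)$ for each fixed value of $z$. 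This exhibits $g_1(x,y)$ locally as a function of $\varphi_1(x)+\widetilde\varphi_2(y)$; letting $\psi$ denote that function (analytic, and with nonzero derivative where $F_3\neq 0$, hence locally invertible near $c_1$), we get $z=\psi(\varphi_1(x)+\widetilde\varphi_2(y))$ near $u$, and setting $\varphi_3(z):=-\psi^{-1}(z)$ gives $\varphi_1(x)+\widetilde\varphi_2(y)+\varphi_3(z)=0$ as an equivalent description of $Z(F)$ on a small polydisc $D_1\times D_2\times D_3$ containing $u$. Renaming $\widetilde\varphi_2$ as $\varphi_2$ finishes the statement.

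The main obstacle I anticipate is the passage from ``$\det J_{\Phi_v}\equiv 0$ at the single regular point $v$, on $V'\cap N_v$'' to ``$G\equiv 0$ as a function of $(x,x',y,y')$ on a full open neighborhood of $(a,a',b,b')$ in $\C^4$'': this requires knowing that $\pi_1$ restricted to $V'\cap N_v$ is a biholomorphism onto an open subset of $\C^4$, which uses $\dim V'=4$, $\cl(\pi_1(V'))=\C^4$, and that $v$ is a regular point of $V'$ lying over a point where the fibers of $\pi_1$ are zero-dimensional — all of which are available but must be assembled carefully, and one must also be sure the local graph $\Phi_v$ of $V$ really coincides with (the graph of) $V'$ near $v$, which holds because $V'$ is a component of $V$ through $v$ of the same dimension as the graph. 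A secondary, more routine difficulty is bookkeeping the four ``denominators'' $p_i$ and the four points $(x,y),(x,y'),(x',y),(x',y')$ so that the determinant genuinely collapses to the symmetric relation $h(x,y)h(x',y')=h(x,y')h(x',y)$; this is where excluding $V_1$ and $V_2$ (not just $V_3$) from $V_0$ — so that all numerators $F_1,F_2$ are also nonzero — is used, ensuring $h$ is nonvanishing and the logarithm is legitimate. Once the identity $h(x,y)=M(x)N(y)$ is in hand, the remaining integration steps are standard one-variable analysis and pose no real difficulty.
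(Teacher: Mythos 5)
There is a genuine gap at the ``key structural step.'' You write the vanishing-determinant identity as
\[
p_1(x,y,z_1)\,p_2(x,y',z_2)\,p_2(x',y,z_3)\,p_1(x',y',z_4)=p_2(x,y,z_1)\,p_1(x,y',z_2)\,p_1(x',y,z_3)\,p_2(x',y',z_4),
\]
and then claim that, after defining $h(x,y):=p_1(x,y,g_1(x,y))/p_2(x,y,g_1(x,y))$, this becomes $h(x,y)h(x',y')=h(x,y')h(x',y)$, from which you deduce $\partial_x\partial_y\log h\equiv 0$. But the identity does \emph{not} involve a single two-variable function. The terms $z_1=g_1(x,y)$, $z_2=g_2(x,y')$, $z_3=g_3(x',y)$, $z_4=g_4(x',y')$ come from four \emph{separate} applications of the implicit function theorem at the four distinct points $(a,b,c_1)$, $(a,b',c_2)$, $(a',b,c_3)$, $(a',b',c_4)$ of $Z(F)$, each defined on its own small product neighborhood. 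In general $b\neq b'$ and $a\neq a'$, so the four domains $N_1\times N_3$, $N_1\times N_4$, $N_2\times N_3$, $N_2\times N_4$ are disjoint, and there is no reason for $g_1,\dots,g_4$ to be restrictions of one analytic branch. Thus $h(x,y')$, $h(x',y)$, $h(x',y')$ in your formula would have to be defined using $g_2,g_3,g_4$, not $g_1$, and the expression $h(x,y)h(x',y')=h(x,y')h(x',y)$ is an abuse of notation conflating four different functions. The cross-differentiation argument $\partial_x\partial_y\log h\equiv 0$ then has no legitimate domain on which to run, and the factorization $h(x,y)=M(x)N(y)$ does not follow. (One also cannot sidestep this by taking $a'=a$, $b'=b$, $c_i=c_1$: Lemma~\ref{lem:extend} only produces \emph{some} lifted point $v\in V'\minus V_0$ that is regular in $V'$; the diagonal choice need not lie in the particular component $V'$ nor be a regular point of it.)

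The paper circumvents exactly this issue by a different manipulation: it \emph{freezes} $x'=a'$ and $y'=b'$. Then $g_{21}(x,b')$, $g_{22}(x,b')$ become one-variable functions of $x$ (using $g_2$), $g_{31}(a',y)$, $g_{32}(a',y)$ become one-variable functions of $y$ (using $g_3$), and $g_{41}(a',b')$, $g_{42}(a',b')$ are constants; the determinant identity collapses to the separated form $g_{11}(x,y)/p(x)=g_{12}(x,y)/q(y)$ on a single neighborhood $D\times E$ of $(a,b)$, involving only the one branch $g_1$. One then takes primitives $\varphi_1'=p$, $\varphi_2'=q$, changes coordinates to $\xi=\varphi_1+\varphi_2$, $\eta=\varphi_1-\varphi_2$, and shows $\partial_\eta g_1\equiv 0$ directly. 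Your subsequent integration steps are in the right spirit and would be fine if you first recovered the identity in this separated one-branch form; as written, the derivation of the multiplicative factorization of $h$ does not go through. The rest of your outline (extending $u$ to a regular point of $V'$ via Lemma~\ref{lem:extend}, identifying $V'$ with the local graph, concluding $G\equiv 0$ on an open subset of $\C^4$ via $\cl(\pi_1(V'))=\C^4$ and regularity, and the final inversion defining $\varphi_3$) matches the paper's proof.
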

\begin{proof}
By applying Lemma \ref{lem:extend} to $u=(a,b,c_1)$, we obtain $a',b',c_2,c_3,c_4\in\C$, such that the point 
$v:=(a,a',b,b',c_1,c_2,c_3,c_4)$ belongs to $V'\minus V_0$ and is regular in $V'$. 
By Lemma \ref{lem:locallygraph}, 
there exist neighborhoods $D$ of $a$, $D'$ of $a'$, $E$ of $b$, and $E'$ of $b'$, 
and a mapping
\[
\Phi_v:(x,x',y,y')\mapsto\big( g_1(x,y),~g_2(x,y'),~g_3(x',y),~g_4(x',y') \big),
\]
analytic on $D\times D'\times E\times E'$, such that its graph is the intersection $V\cap N_v$ for some open neighborhood $N_v$ of $v$ in $\C^8$.
Shrinking the sets $D$, $D'$, $E$, $E'$ as needed, the
image of $N_v$ under the projection $\pi_1$ can be assumed to be $D\times D'\times E\times E'$. 
Note that, since $v$ is a regular point of $V'$, $V'\cap N_v$ is necessarily four-dimensional, 
so it must coincide with $V\cap N_v$, if we take $N_v$ to be sufficiently small.
Thus, restricting the analysis  to the neighborhood $N_v$, we may use $V$ and $V'$ interchangeably in what follows.

Since $V'\subset W$, we have (recall that $W=Z(G)\cap V$)
$$
G(x,x',y,y',z_1,z_2,z_3,z_4)=0,
$$
for every $(x,x',y,y',z_1,z_2,z_3,z_4)\in V'\cap N_v$.
By the implicit function theorem (Lemma \ref{lem:ift2}), the functions $g_1,\ldots,g_4$ satisfy, in a suitable neighborhood of $v$, 
$$
g_{ij}(x,y)=-\frac{F_j(x,y,g_i(x,y))}{F_3(x,y,g_i(x,y))},$$
for $i=1,\ldots,4$ and $j=1,2$. By the definition of $G$,
this is easily seen to imply that
\[
g_{11}(x,y)g_{22}(x,y')g_{32}(x',y)g_{41}(x',y')
= 
g_{12}(x,y)g_{21}(x,y')g_{31}(x',y)g_{42}(x',y'),
\]
for every $(x,x',y,y')\in D\times D'\times E\times E'$.
In particular, fixing $x'=a'$ and $y'=b'$, there exists an open neighborhood $D\times E$ of $(a, b)\in\C^2$, such that
\begin{equation}\label{eq:deteq}
g_{11}(x,y)g_{22}(x,b')g_{32}(a',y)g_{41}(a',b')=g_{12}(x,y)g_{21}(x,b')g_{31}(a',y)g_{42}(a',b'),
\end{equation}
for every $(x,y)\in D\times E$.

Because $v\not\in V_0$, we have 
$$g_{11}(a,b)=-\frac{F_1(a,b,c_1)}{F_3(a,b,c_1)}\neq 0.
$$ 
Similarly,
$g_{22}(a,b')$, $g_{32}(a',b)$, $g_{41}(a',b')$, $g_{12}(a,b)$, $g_{21}(a,b')$, $g_{31}(a',b)$, and $g_{42}(a',b')$ are all nonzero. 
The continuity of all the relevant functions implies that, by shrinking $D\times E$ if needed, we may assume that, for all $(x,y)\in D\times E$, neither side of \eqref{eq:deteq} is zero.
Thus we can rewrite \eqref{eq:deteq} as
\begin{equation}\label{ratio1}
\frac{g_{11}(x,y)}{p(x)}=\frac{g_{12}(x,y)}{q(y)},
\end{equation}
where 
$$
p(x)=g_{21}(x,b')g_{42}(a',b')/g_{22}(x,b')\quad\text{and}\quad q(y)=g_{32}(a',y)g_{41}(a',b')/g_{31}(a',y)
$$
are analytic and nonzero on $D$ and $E$, respectively.
By Lang \cite[Theorem III.6.1]{Lan},
there exist analytic primitives $\varphi_1,\varphi_2$ so that $\varphi_1'(x)=p(x)$ on $D$ and $\varphi_2'(y)=q(y)$ on $E$.
Since, by construction, $\varphi_1',\varphi_2'$ are nonzero, and using the inverse mapping theorem, 
each of $\varphi_1,\varphi_2$ has an analytic inverse on its domain, possibly after shrinking $D$ and $E$ further.

We express the function $g_1(x,y)$ in terms of new coordinates $(\xi,\eta)$, given by
\begin{equation}\label{sys}
\xi=\varphi_1(x)+\varphi_2(y),\quad \eta=\varphi_1(x)-\varphi_2(y).
\end{equation}
Since $\varphi_1$, $\varphi_2$ 
are injections in suitable respective neighborhoods of $a$, $b$, we may assume that the system (\ref{sys}) is invertible in $D\times E$.
Returning to the standard notation, denoting partial derivatives by variable subscripts, we have
$$
\xi_{x}=\varphi_1'(x), ~~~
\xi_{y}
=\varphi_2'(y), ~~~
\eta_{x}
=\varphi_1'(x),~~~\text{and}~~~
\eta_{y}
=-\varphi_2'(y).
$$
Using the chain rule, we obtain
\[
g_{11}=g_{1\xi} \xi_x+g_{1\eta} \eta_x=\varphi_1'(x)(g_{1\xi}+g_{1\eta})=p(x)(g_{1\xi}+g_{1\eta})
\]
\[
g_{12}=g_{1\xi}\xi_y+g_{1\eta}\eta_y=\varphi_2'(y)(g_{1\xi}-g_{1\eta})=q(y)(g_{1\xi}-g_{1\eta}),
\]
which gives
$$\frac{g_{11}(x,y)}{p(x)}-\frac{g_{12}(x,y)}{q(y)}\equiv2g_{1\eta}(x,y),
$$
on $D\times E$. Combining this with (\ref{ratio1}), we get
$$
g_{1\eta}(x,y)\equiv 0.
$$
This means that $g_1$ depends only on the variable $\xi$, so it has the form
$$
g_1(x,y)=\psi(\varphi_1(x)+\varphi_2(y)),
$$
for a suitable analytic function $\psi$. 
The analyticity of $\psi$ is an easy consequence of the analyticity of $\varphi_1, \varphi_2$, and $g_1$, 
and the fact that $\varphi_1'(x)$ and $\varphi_2'(y)$ are nonzero,
combined with repeated applications of the chain rule.
Let 
$$S:=\{\varphi_1(x)+\varphi_2(y)\mid (x,y)\in D\times E\}~\quad\text{and}\quad T:=\{\psi(z)\mid z\in S\}.$$
We observe that 
$$
g_{11}(x,y)=\psi'(\varphi_1(x)+\varphi_2(y))\cdot p(x).
$$
As argued above, we have $g_{11}(x,y)\neq 0$ for all $(x,y)\in D\times E$, implying that $\psi'(\varphi_1(x)+\varphi_2(y))$ is nonzero for $(x,y)\in D\times E$.
Therefore, by the inverse mapping theorem (Lemma \ref{lem:imt}), $\psi:S\to T$ is invertible, possibly after shrinking $S$ and $T$.

Letting $\varphi_3(z):=-\psi^{-1}(z)$, we get, for $(x,y,z)\in D\times E\times T$, that
$$
\varphi_1(x)+\varphi_2(y)+\varphi_3(z)=0
$$
if and only if $(x,y,z)\in Z(F)\cap (D\times E\times T)$. 
This completes the proof of the lemma.
\end{proof}
Finally, Lemma \ref{lem:special} has established that $F$ satisfies property $(ii)$ of Theorem \ref{thm:main1}, which completes the proof of Proposition \ref{prop:keyprop}.
\qed


\section{An incidence bound}
\label{sec:incgeom}
For any finite set $\Pi\subset \C^2$ of distinct points and a finite set $\Gamma$ of curves in $\C^2$, define 
$$
I(\Pi,\Gamma) := |\{(p,\gamma)\in \Pi\times\Gamma\mid p\in \gamma\}|$$ 
to be the number of \emph{incidences} between the points and the curves.
If $\Gamma$ is a multiset, we count each incidence $(p,\gamma)$ with the multiplicity of $\gamma$.
A key tool in our proof is the following incidence bound, recently proved by Solymosi and De Zeeuw \cite{SZ15}.

\begin{theorem}
\label{thm:incprod}
Let $A_1,A_2\subset \C$ be finite and let $\Pi'\subseteq\Pi= A_1\times A_2$. Let $\Gamma$ be a finite set of algebraic curves of degree at most $\delta$ in $\C^2$,
without common components,
such that any pair of points of $\Pi'$ are both contained in at most $\mu$ curves of $\Gamma$.
Then
\[I(\Pi', \Gamma) = O\left(\delta^{4/3}\mu^{1/3}|\Pi|^{2/3}|\Gamma|^{2/3}+\mu|\Pi|+\delta^4|\Gamma|\right).\]
\end{theorem}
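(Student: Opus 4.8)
The final statement to prove is Theorem~\ref{thm:incprod}, the incidence bound of Solymosi and De Zeeuw: for $\Pi' \subseteq \Pi = A_1 \times A_2$ and a family $\Gamma$ of degree-$\le\delta$ curves without common components, in which every pair of points of $\Pi'$ lies on at most $\mu$ curves, one has $I(\Pi',\Gamma) = O(\delta^{4/3}\mu^{1/3}|\Pi|^{2/3}|\Gamma|^{2/3} + \mu|\Pi| + \delta^4|\Gamma|)$.

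\bigskip

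\noindent\textbf{Proof proposal.}

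The plan is to follow the standard polynomial-partitioning / crossing-number route to Szemer\'edi--Trotter-type bounds, but to exploit the Cartesian-product structure of $\Pi$ in place of a cell decomposition, exactly as in Solymosi--De Zeeuw \cite{SZ15}. First I would reduce to the case where every curve of $\Gamma$ is \emph{rich}, say incident to at least some threshold $k$ of points of $\Pi'$; curves below the threshold contribute at most $k|\Gamma|$ incidences, and $k$ will be chosen at the end to balance terms (this is where the $\delta^4|\Gamma|$ and part of the main term come from). The heart of the argument is a bound on the number of rich curves: if $m_{\ge k}$ denotes the number of curves incident to $\ge k$ points of $\Pi'$, then a double-counting over pairs of points on a common curve, combined with the hypothesis that each pair of points lies on $\le\mu$ curves, gives $\binom{k}{2} m_{\ge k} \le \mu\binom{|\Pi'|}{2}$, hence $m_{\ge k} = O(\mu |\Pi|^2 / k^2)$. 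This alone is too weak; the improvement comes from the Cartesian-product structure.

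The key geometric input, and the step I expect to be the main obstacle, is a better bound on the number of rich curves using the grid structure of $\Pi = A_1 \times A_2$. The idea (due to Solymosi and originally used for point--line incidences on a grid) is that a curve of degree $\le\delta$ passing through $t$ points of a grid $A_1\times A_2$ must, by B\'ezout, meet most vertical lines $\{a\}\times\C$ ($a\in A_1$) in at most $\delta$ points, so it has at least $t/\delta$ ``columns'' containing one of its points; consecutive points within a fixed column, across all curves, can be charged to pairs of points sharing both a column and a curve, and the bound $\mu$ on curves through a pair of points lets one control the total. Carrying this out carefully — partitioning $A_1$ (or $A_2$) into blocks of an appropriate size $r$, applying the crude pair-counting bound within each block, and summing — yields the refined estimate $m_{\ge k} = O\big(\delta^2\mu^2|\Pi|^2/k^3 + \delta^2\mu|\Pi|/k\big)$ or a similar shape, which is exactly what is needed to produce the $4/3$-exponents. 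One must be somewhat careful that $\Gamma$ has no common components so that B\'ezout applies to pairs of distinct curves, and that the threshold $k$ stays above $2\delta$ (say) so the ``columns'' argument is non-degenerate.

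Granting the rich-curves bound, the conclusion is routine: write $I(\Pi',\Gamma) = \sum_{j} I_j$ where $I_j$ counts incidences with curves whose point-count lies in a dyadic range $[2^j, 2^{j+1})$, bound $I_j \le 2^{j+1} m_{\ge 2^j}$ using the rich-curves estimate, sum the geometric series, and add back the contribution $O(\delta\,|\Gamma| + k_0|\Gamma|)$ of the curves below the initial threshold $k_0$. Optimizing the choice of the cutoff between the two regimes gives the three terms $\delta^{4/3}\mu^{1/3}|\Pi|^{2/3}|\Gamma|^{2/3}$, $\mu|\Pi|$, and $\delta^4|\Gamma|$. The whole argument is elementary — only B\'ezout's inequality (Theorem~\ref{thm:bezout}) and double counting are used, with no need for polynomial partitioning — so the only real work is bookkeeping the block size $r$ and the dyadic summation so that the stated exponents come out cleanly; for the full details I refer to \cite{SZ15}.
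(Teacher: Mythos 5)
The paper does not prove Theorem~\ref{thm:incprod} at all: it is quoted verbatim as an external result of Solymosi and De Zeeuw \cite{SZ15}, so there is no in-paper proof against which to compare your argument. Your sketch does correctly identify the distinguishing features of the \cite{SZ15} approach --- no polynomial partitioning, reliance on the Cartesian-product structure of $\Pi$, B\'ezout's inequality together with the ``no common components'' hypothesis to control intersections, the $\mu$-bound on curves through a pair of points in the double count, and a dyadic summation over rich curves to assemble the final bound. However, as written this is an outline rather than a proof: the decisive rich-curves estimate, the one step that produces the $4/3$-exponents, is left at ``or a similar shape,'' the block size $r$ and the threshold $k_0$ are never fixed, and the argument closes by deferring to \cite{SZ15} for the details that actually make the exponents come out. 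Since the theorem is a cited black box in this paper, filling those gaps is not required for the paper's purposes; the new content in Section~\ref{sec:incgeom} is Theorem~\ref{cor:multcor}, which upgrades Theorem~\ref{thm:incprod} to multisets with $(\lambda,\mu)$-bounded multiplicity by a graph-coloring decomposition of $\Pi'$ and $\Gamma$, and that is the argument you would need to reproduce to match what the paper actually proves here.
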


Theorem \ref{thm:incprod} is a special instance of the Szemer\'edi--Trotter bound 
(originally proved for points and lines in the real plane in \cite{ST83}) for complex ``pseudo-lines with bounded multiplicity'', 
under the strong assumption that the point set is (a subset of) a Cartesian product.
This assumption leads to a relatively simple proof, which also allows the dependence of $I(\Pi',\Gamma)$ on the parameters $\delta,\mu$ to be determined in the fairly sharp explicit form that we stated above.
For an arbitrary point set, the bound has been proved for complex lines by T\'oth \cite{Tot} and by Zahl \cite{Zah},
while for complex curves it 
was obtained by Solymosi and Tao \cite{SoT} and by Zahl \cite{Zah},
but only with an extra factor of the form $|\Pi|^\varepsilon$, under additional and fairly strong conditions, 
and without explicit dependence on the parameters $\delta$, $\mu$.
Recently, Sheffer and Zahl \cite{ShZa15} removed these conditions, but not the extra factor $|\Pi|^\varepsilon$.

The reason for stating Theorem \ref{thm:incprod} in terms of a subset $\Pi'\subseteq \Pi$ is that this is the type of set we get in the proof of Theorem~\ref{thm:main2}. 
One cannot simply replace $\Pi'$ by $\Pi$, because the condition concerning the number of curves 
passing through a pair of points depends on the specific choice of $\Pi'$. 
Fortunately, the result in \cite{SZ15} holds equally well for subsets (see \cite[Remark 15]{SZ15}), 
albeit resulting in an upper bound in terms of the size of the entire Cartesian product $\Pi$, as stated above.

Theorem~\ref{cor:multcor} is a generalization of Theorem~\ref{thm:incprod} to systems of points 
and curves that do not quite meet the conditions of the theorem, 
but that do satisfy them after appropriate decompositions of the sets of points and curves.
We repeat Definition~\ref{def:boundedmult} here for the convenience of the reader.

\begin{definition}
Let $\Pi$ be a finite set of distinct points in $\C^2$, 
and let $\Gamma$ be a finite multiset of curves in $\C^2$.
We say that the system $(\Pi,\Gamma)$ has \emph{$(\lambda,\mu)$-bounded multiplicity} 
if\\ 
$(a)$ for any curve $\gamma\in \Gamma$,
there are at most $\lambda$ curves $\gamma'\in \Gamma$ such that 
there are more than $\mu$ points contained in both $\gamma$ and $\gamma'$; and\\
$(b)$ for any point $p\in \Pi$, 
there are at most $\lambda$ points $p'\in \Pi$ such that there are more than $\mu$ curves that contain both $p$ and $p'$.\\
(In both $(a)$ and $(b)$ the curves should be counted with  multiplicity.)
\end{definition}


We now state and prove the incidence bound that we use in the proof of our main theorem.

\begin{theorem}\label{cor:multcor}
Let $A_1,A_2$ be finite subsets of $\C$ and $\Pi'\subseteq \Pi = A_1\times A_2$,
and let $\Gamma$ be a finite multiset of algebraic curves of degree at most $\delta$, such that the system $(\Pi,\Gamma)$ has $(\lambda,\mu)$-bounded multiplicity.
Then 
\[I(\Pi, \Gamma) 
=O\left(\delta^{4/3}\lambda^{4/3}\mu^{1/3}|\Pi|^{2/3}|\Gamma|^{2/3}+\lambda^2\mu|\Pi|+\delta^4\lambda|\Gamma|\right).
\]
\end{theorem}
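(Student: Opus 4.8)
The plan is to reduce Theorem~\ref{cor:multcor} to Theorem~\ref{thm:incprod} by decomposing $\Gamma$ into pieces that have no common components and that consist of curves pairwise meeting in few points, and symmetrically decomposing $\Pi$ into pieces in which no two points lie on many common curves. More precisely, I would first build an auxiliary graph $G_\Gamma$ on the vertex set $\Gamma$, joining $\gamma$ and $\gamma'$ whenever they share more than $\mu$ points (counted with multiplicity, as in Definition~\ref{def:boundedmult}). By hypothesis $(a)$, $G_\Gamma$ has maximum degree at most $\lambda$, so it is $(\lambda+1)$-colorable by a greedy argument; this partitions $\Gamma$ into $O(\lambda)$ multisets $\Gamma_1,\dots,\Gamma_r$, each of which consists of curves any two of which meet in at most $\mu$ points. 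In particular no two curves in a single $\Gamma_j$ share a component, so within each $\Gamma_j$ the hypotheses of Theorem~\ref{thm:incprod} concerning the curves are met (one still needs to handle multiplicities, but repeated copies of a single curve contribute incidences linear in the multiplicity, and these are absorbed into the final bound, or one simply notes that Theorem~\ref{thm:incprod} applied to the underlying set of curves, rescaled by the maximum multiplicity, suffices — the cleanest route is to observe that each $\Gamma_j$ can be further split into $O(1)$ genuinely-distinct subfamilies plus a low-multiplicity remainder, but in fact the statement of Theorem~\ref{thm:incprod} is robust enough that counting with multiplicity only costs a constant).

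Symmetrically, I would build a graph $G_\Pi$ on $\Pi$, joining $p$ and $p'$ whenever more than $\mu$ curves of $\Gamma$ contain both; by hypothesis $(b)$ this graph has maximum degree at most $\lambda$, hence is $(\lambda+1)$-colorable, giving a partition of $\Pi$ into $O(\lambda)$ subsets $\Pi_1,\dots,\Pi_s$, each with the property that any two of its points lie on at most $\mu$ common curves of $\Gamma$. Crucially, each $\Pi_i$ is a subset of the Cartesian product $\Pi = A_1\times A_2$, so it is exactly of the form $\Pi'\subseteq\Pi$ required by Theorem~\ref{thm:incprod}, and the "$\mu$" in that theorem is satisfied because we have arranged that any pair of points in $\Pi_i$ shares at most $\mu$ curves. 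Then I would write
\[
I(\Pi,\Gamma) = \sum_{i=1}^{s}\sum_{j=1}^{r} I(\Pi_i,\Gamma_j),
\]
and apply Theorem~\ref{thm:incprod} to each of the $O(\lambda^2)$ summands, using $|\Pi_i|\le|\Pi|$ and $|\Gamma_j|\le|\Gamma|$ (the theorem's bound is stated in terms of the size of the ambient Cartesian product $|\Pi|$ anyway, which is convenient here).

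Summing the resulting bounds,
\[
I(\Pi,\Gamma) = O\!\left(\sum_{i,j}\Big(\delta^{4/3}\mu^{1/3}|\Pi|^{2/3}|\Gamma|^{2/3}+\mu|\Pi|+\delta^4|\Gamma|\Big)\right),
\]
and since the number of index pairs $(i,j)$ is $O(\lambda^2)$, while the first term is really summed over $O(\lambda)$ values of $j$ for each fixed $i$ and over $O(\lambda)$ values of $i$ — giving $\sum_{i,j}|\Gamma_j|^{2/3}\le \lambda\sum_i(\sum_j|\Gamma_j|)^{2/3}\cdot$(constant) via concavity, i.e.\ $O(\lambda^{4/3}|\Gamma|^{2/3})$ once the $|\Pi_i|^{2/3}$ factors are likewise handled — one obtains exactly the claimed
\[
I(\Pi,\Gamma) = O\!\left(\delta^{4/3}\lambda^{4/3}\mu^{1/3}|\Pi|^{2/3}|\Gamma|^{2/3}+\lambda^2\mu|\Pi|+\delta^4\lambda|\Gamma|\right).
\]
The main technical point to get right is the bookkeeping in this summation: naively bounding each of the $O(\lambda^2)$ terms by the same quantity would give $\lambda^2\mu^{1/3}\delta^{4/3}|\Pi|^{2/3}|\Gamma|^{2/3}$ in the first term rather than $\lambda^{4/3}$; the improvement to $\lambda^{4/3}$ comes from applying Hölder's (or the power-mean) inequality to the sums $\sum_j|\Gamma_j|^{2/3}$ and $\sum_i|\Pi_i|^{2/3}$ separately, exploiting that $\sum_j|\Gamma_j| = |\Gamma|$ and $\sum_i|\Pi_i| = |\Pi'|\le|\Pi|$ with only $O(\lambda)$ parts in each sum. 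This concavity estimate, together with the (routine but necessary) verification that the multiset structure of $\Gamma$ and the passage to a proper coloring do not introduce hidden common components, is the only place where any care is required; everything else is a direct invocation of Theorem~\ref{thm:incprod}.
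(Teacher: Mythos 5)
Your proposal is correct and follows essentially the same approach as the paper: greedily color a bounded-degree auxiliary graph on the curves (and symmetrically on the points) with $\lambda+1$ colors, apply Theorem~\ref{thm:incprod} to each of the $O(\lambda^2)$ resulting cells, and sum using H\"older's inequality on $\sum_\beta|\Gamma_\beta|^{2/3}$. One small simplification that would have removed your parenthetical hedging about multiplicities: take the vertex set of $G_\Gamma$ to be the multiset $\Gamma$ with each occurrence a separate vertex; then coinciding (or component-sharing) copies intersect in more than $\mu$ points, hence form a clique and are automatically placed in distinct color classes, so each $\Gamma_\beta$ is already a genuine set of curves without common components.
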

\begin{proof}
We partition $\Pi'$ into subsets $\Pi_\alpha$ and $\Gamma$ into subsets $\Gamma_\beta$, so 
that for each pair $\alpha, \beta$, we can apply Theorem \ref{thm:incprod} to the system $(\Pi_\alpha,\Gamma_\beta)$.
Each of the subsets $\Gamma_\beta$ will consist of distinct elements, so we bypass the issue that $\Gamma$ might be a multiset.

Construct a graph $G$ with the curves in $\Gamma$ as vertices (where a curve that occurs $k$ times in the multiset corresponds to $k$ distinct vertices), 
and with an edge between any pair of curves that have intersection size larger than $\mu$
(thus all vertices representing the same curve form a clique).
By definition of $(\lambda,\mu)$-bounded multiplicity, $G$ has maximum vertex degree at most $\lambda$, 
so we can color the graph with $\lambda+1$ colors. In other words, we can partition the curves into $\lambda+1$ sets $\Gamma_\beta$, 
so that any pair of curves within the same $\Gamma_\beta$ have at most $\mu$ points of $\Pi'$ in common.
In particular, any pair of coinciding curves, or of curves that share some irreducible component, are placed in different color classes. 

We do the same for the points.
We construct a graph with the points of $\Pi'$ as vertices, and with an edge between any pair of points for which there are more than $\mu$ curves from $\Gamma$ containing both points.
Again the definition of $(\lambda,\mu)$-bounded multiplicity lets us color this graph with $\lambda+1$ colors.
We get subsets $\Pi_\alpha$ of $\Pi'$, such that for any two points in the same $\Pi_\alpha$, there are at most $\mu$ curves $\gamma\in\Gamma$ that pass through both of them.
Thus we can apply Theorem \ref{thm:incprod} for every pair $\alpha,\beta$
(making use of the fact that $\Pi_\alpha\subseteq \Pi'\subseteq\Pi$ and $\Pi$ is a Cartesian product)
 to obtain
\[I(\Pi_\alpha, \Gamma_\beta) = 
O\left(\delta^{4/3}\mu^{1/3}|\Pi|^{2/3}|\Gamma_\beta|^{2/3}+\mu|\Pi|+\delta^4|\Gamma_\beta|\right).
\]
Summing over all $(\lambda+1)^2$ pairs $\alpha,\beta$ and using H\"older's inequality 
gives the asserted bound.
\end{proof}

\section{The real case}\label{sec:real}


In this section we prove Theorem \ref{thm:main3}.
To avoid confusion, given a real polynomial $F$, in this section we write $Z_\R(F)$ for its \emph{real} zero set, and $Z_\C(F)$ for its \emph{complex} zero set.
We refer to Basu et al.~\cite{BPR03} for the relevant definitions.
Any regular point of a real zero set has a neighborhood where the zero set is a real manifold, and the dimension of the zero set is the maximum over all regular points of the dimension of that manifold. 
We recall that a real function $f:I\to \R$ on an interval $I$ is \emph{real-analytic} if it has a power series expansion at each point of $I$.

We want to prove that if $F\in \R[x,y,z]$ is irreducible over $\R$ and does not satisfy the 
unbalanced property $(i^*)$ of Theorem \ref{thm:main2} (over $\R$, and thus also over $\C$), then it satisfies property $(ii)_\R$ of Theorem \ref{thm:main3}.
We first treat the case where $F$ is irreducible over $\C$, and then the case where $F$ is reducible over $\C$, which requires some extra care.
The two respective lemmas that establish these properties give us Theorem \ref{thm:main3}.

\begin{lemma}\label{lem:realirred}
Let $F\in \R[x,y,z]$ be irreducible over $\C$ and assume that $Z_\R(F)$ has dimension 2.
If $F$ does not satisfy $(i^*)$, then it satisfies $(ii)_\R$.
\end{lemma}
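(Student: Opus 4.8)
\textbf{Proof plan for Lemma~\ref{lem:realirred}.}
The plan is to run the complex proof of Theorem~\ref{thm:main2} verbatim on $F$ viewed as a polynomial in $\C[x,y,z]$, and then argue that the special form $(ii)$ produced by Proposition~\ref{prop:keyprop} can be realized with functions $\varphi_i$ that map real intervals to $\R$. Since by hypothesis $F$ does not satisfy $(i^*)$ over $\R$, and $Z_\R(F)\subseteq Z_\C(F)$, it does not satisfy $(i^*)$ over $\C$ either; hence Theorem~\ref{thm:main2} forces $F$ to satisfy $(ii)$, so there is a one-dimensional $Z_0\subset Z_\C(F)$ such that the local analytic equivalence with $\varphi_1(x)+\varphi_2(y)+\varphi_3(z)=0$ holds near every point of $Z_\C(F)\minus Z_0$. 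The real variety $Z_0\cap Z_\R(F)$ has real dimension at most $1$, and this will be the exceptional set in $(ii)_\R$.

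The substantive step is to show that at a \emph{real} regular point $v=(a,b,c)\in Z_\R(F)\minus Z_0$ one can choose the $\varphi_i$ to be real-analytic on real intervals. Here I would not invoke the complex form as a black box, but instead re-examine the proof of Lemma~\ref{lem:special}: every object constructed there — the variety $V$ and its component $V'$, the polynomial $G$, the implicit-function solutions $g_i$, the primitives $\varphi_1,\varphi_2$ obtained by integrating the nonvanishing analytic functions $p(x),q(y)$, and the inverse $\psi^{-1}$ — is defined over $\R$ whenever the base point is real. Concretely: because $F$ has real coefficients and $v$ is real with $F_3(a,b,c)\neq 0$ (we are away from $V_0$, and being at a regular point of $Z_\R(F)$ of dimension $2$ we may assume, after possibly permuting coordinates, that $\partial F/\partial z\neq 0$ at $v$), the implicit function theorem over $\R$ yields a \emph{real}-analytic $g_1$ on a neighborhood of $(a,b)$ in $\R^2$; similarly for $g_2,g_3,g_4$ once $a',b',c_2,c_3,c_4$ are chosen. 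These real choices exist because Lemma~\ref{lem:extend}, applied to the real point $u$, only requires avoiding a subvariety of real dimension $\le 3$ inside $Z_\R(F)\times Z_\R(F)$ (dimension $4$), so a real extension point $(a',b',c_4)$ avoiding $U$ can be found; the remaining $c_2,c_3$ are then real by the real implicit function theorem applied to $F(a,y',z_2)=0$ and $F(x',b,z_3)=0$. The identity \eqref{eq:deteq}, being a consequence of $V'\subset Z(G)$ and of real analytic identities, holds on a real neighborhood; the functions $p,q$ are real-valued and nonvanishing there; their real primitives $\varphi_1,\varphi_2$ exist by elementary real calculus (integration along an interval), with $\varphi_1',\varphi_2'\neq 0$, hence real-analytic inverses by the real inverse function theorem; and $\psi$, defined by $g_1(x,y)=\psi(\varphi_1(x)+\varphi_2(y))$, is real-analytic with nonvanishing derivative, so $\varphi_3:=-\psi^{-1}$ is real-analytic on a real interval. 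The resulting identity $(x,y,z)\in Z_\C(F)\iff \varphi_1(x)+\varphi_2(y)+\varphi_3(z)=0$ on $D_1\times D_2\times D_3$, when restricted to reals, gives exactly $(ii)_\R$ on $I_1\times I_2\times I_3:=(D_1\cap\R)\times(D_2\cap\R)\times(D_3\cap\R)$, since $Z_\R(F)=Z_\C(F)\cap\R^3$.

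The main obstacle I anticipate is the choice of the real extension point in Lemma~\ref{lem:extend}: one must check that the exceptional variety $U\subset\C^6$ there, when intersected with $Z_\R(F)\times Z_\R(F)$, still has real dimension at most $3$, so that a real point of $Z_\R(F)\times Z_\R(F)$ outside $U$ exists whenever $Z_\R(F)$ has real dimension $2$. This uses that $Z_\R(F)$ has real dimension $2$ (an explicit hypothesis of the lemma) together with the fact that a complex subvariety of complex dimension $\le 3$ meets $\R^6$ in a set of real dimension $\le 3$; combined with the irreducibility of $F$ over $\C$ this guarantees $\cl(\rho(V'))=Z_\C(F)\times Z_\C(F)$ has a real point of full real dimension $4$ avoiding $U$. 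A secondary (but routine) point is to verify that being away from the one-dimensional $Z_0$ and from $V_0$ does not conflict with staying at a real regular point; since all these excluded sets are proper subvarieties and $Z_\R(F)$ has real dimension $2$, their union meets $Z_\R(F)$ in real dimension $\le 1$, so the generic real point of $Z_\R(F)$ avoids all of them, and we enlarge $Z_0$ to include this bad real locus. Once these dimension bookkeeping facts are in place, every analytic construction in the proof of Lemma~\ref{lem:special} descends to $\R$ mechanically, and the lemma follows.
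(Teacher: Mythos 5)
Your approach is genuinely different from the paper's, which does \emph{not} re-run the construction of Lemma~\ref{lem:special} over $\R$. Instead, the paper takes the complex property~$(ii)$ as a black box: it writes each complex $\varphi_j$ as $\mathsf{Re}(\varphi_j)+i\,\mathsf{Im}(\varphi_j)$, observes that both parts are real-analytic, notes that for real $(x,y,z)$ the condition $\sum\varphi_j=0$ splits into $\sum\mathsf{Re}(\varphi_j)=0$ and $\sum\mathsf{Im}(\varphi_j)=0$, argues that at least one of the resulting real hypersurfaces is proper, and then uses regularity of $(a,b,c)$ in $Z_\R(F)$ together with the real implicit function theorem to show that this hypersurface coincides with $Z_\R(F)$ in a small real box. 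This sidesteps all the real descent issues your plan has to confront.

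The gap in your plan is the existence of a \emph{real} point of $V'\minus V_0$ lying over $(a,b,c_1)$. You correctly argue that a real $(a',b',c_4)\in Z_\R(F)$ with $(a,b,c_1,a',b',c_4)\in\rho(V')\minus U$ exists (the dimension bookkeeping for this works). You then produce real $c_2,c_3$ via the real implicit function theorem, which yields a real point of $V$; but nothing guarantees that this point lies in the particular complex irreducible component $V'$ of $W=V\cap Z(G)$. The component $V'$ need not be defined over $\R$ (irreducible components of a real-defined complex variety can come in conjugate pairs, as with $Z(x+iy)\subset Z(x^2+y^2)$), and even when it is, the complex preimage in $V'$ of the real point $(a,b,c_1,a',b',c_4)$ produced by Lemma~\ref{lem:extend} may have non-real $z_2,z_3$ while your real IFT point lies in a different component of $V$ that is not contained in $Z(G)$. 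Without membership in $V'$ (or at least in $W$) at a real regular point, the crucial identity $G\equiv 0$ on a neighborhood --- from which~\eqref{eq:deteq} and hence the separated form of $g_1$ are derived --- is not available, so the construction of real $p,q,\varphi_1,\varphi_2,\psi$ in Lemma~\ref{lem:special} does not go through over $\R$. Patching this would require showing either that $V'$ has a 4-real-dimensional real locus through a suitable point, or some uniqueness/conjugation argument forcing the complex $\tilde c_2,\tilde c_3$ to coincide with your real IFT solutions; neither is addressed, and the paper's real/imaginary-part trick avoids the question entirely.
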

\begin{proof}
By Theorem \ref{thm:main2}, property $(ii)$ holds (over $\C$), 
with an excluded one-dimensional subvariety $Y\subset Z_\C(F)$.
Let $Z_0$ be the union of $Y\cap\R^3$ with the set of singular points of $Z_\R(F)$; it is a variety of dimension at most 1.


Since property $(ii)$ holds over $\C$, for each $(a,b,c)\in Z_\R(F)\minus Z_{0}$,
there exist open sets $D_i\subset \C$ and analytic injections $\varphi_i:D_i\to \C$, 
such that $(a,b,c)\in D_1\times D_2\times D_3$, and, for every 
$(x,y,z)\in D_1\times D_2\times D_3$, we have
\begin{equation}\label{oritt}
(x,y,z)\in Z_\C(F) \quad\text{if and only}\quad \varphi_1(x)+\varphi_2(y)+\varphi_3(z)=0.
\end{equation}
By shrinking $D_1$, $D_2$, $D_3$, if needed, we may assume that
the sets $J_i:=D_i\cap \R$ are nonempty open intervals, containing, respectively, the points $a,b,c$.
Then we have $\left(D_1\times D_2\times D_3\right)\cap\R^3 = J_1\times J_2\times J_3$. 
Write $\varphi_j(x)=\mathsf{Re}(\varphi_j(x)) + i\cdot \mathsf{Im}(\varphi_j(x))$,
for $j=1,2,3$.
Each of the six functions
$\mathsf{Re}(\varphi_j)$, $\mathsf{Im}(\varphi_j)$, for $j=1,2,3$, is a real-analytic function,
since the real (or imaginary) part of a power series expansion for $\varphi_j$ is a real power series expansion for $\mathsf{Re}(\varphi_j)$ (or $\mathsf{Im}(\varphi_j)$).
By \eqref{oritt}, for every real point 
$(x,y,z)\in J_1\times J_2\times J_3$, we have $(x,y,z)\in Z_\R(F)$ if and only if
$$
\mathsf{Re}(\varphi_1(x)) + \mathsf{Re}(\varphi_2(y)) + \mathsf{Re}(\varphi_3(z)) =
\mathsf{Im}(\varphi_1(x)) + \mathsf{Im}(\varphi_2(y)) + \mathsf{Im}(\varphi_3(z)) = 0.
$$
Thus at least one of the two sets
\begin{align*}
\sigma_R & := \left\{ (x,y,z)\in J_1\times J_2\times J_3 \mid \mathsf{Re}(\varphi_1(x)) + \mathsf{Re}(\varphi_2(y)) + \mathsf{Re}(\varphi_3(z)) = 0 \right\}, \\
\sigma_I & := \left\{ (x,y,z)\in J_1\times J_2\times J_3 \mid \mathsf{Im}(\varphi_1(x)) + \mathsf{Im}(\varphi_2(y)) + \mathsf{Im}(\varphi_3(z)) = 0 \right\}
\end{align*}
does not equal $J_1\times J_2\times J_3$; say it is $\sigma_R$.
By what has just been shown, $\sigma_R$ contains
$Z_\R(F) \cap \left(J_1\times J_2\times J_3\right)$.

Since the point $(a,b,c)$ is not in $Z_0$, it is regular, so we can assume without loss of generality that $\partial F/\partial z(a,b,c)\neq 0$. 
It follows, by the real version of the implicit function theorem, that there is an open subset of $J_1\times J_2\times J_3$ in which $Z_\R(F)$ is the graph of a real-analytic function in $x,y$.
Since $\sigma_R$ is also the graph of a real-analytic function, it follows that $\sigma_R$ and $Z_\R(F)$ coincide in this open subset.
By shrinking this open subset we can assume that it is of the form $I_1\times I_2\times I_3$ for intervals $I_1\subset J_1,I_2\subset J_2,I_3\subset J_3$.
In other words, we have shown that, for every real point 
$(x,y,z)\in I_1\times I_2\times I_3$, 
$$
(x,y,z)\in Z_\R(F) \quad\text{if and only if}\quad
\mathsf{Re}(\varphi_1(x)) + \mathsf{Re}(\varphi_2(y)) + \mathsf{Re}(\varphi_3(z)) = 0 ,
$$
thereby showing that property $(ii)_\R$ holds.
\end{proof}

\begin{lemma}
Let $F\in \R[x,y,z]$ be irreducible over $\R$ but reducible over $\C$, and assume that $Z_\R(F)$ has dimension 2.
If $F$ does not satisfy $(i^*)$, then it satisfies $(ii)_\R$.
\end{lemma}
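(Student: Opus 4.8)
The plan is to reduce this lemma to the already-proved Lemma~\ref{lem:realirred} by passing to a complex irreducible factor of $F$. First I would record the relevant factorization. Since $F$ is real, the multiset of its monic complex irreducible factors is invariant under conjugation of coefficients; no such factor can be (a scalar multiple of) a real polynomial, as it would then be a proper real divisor of $F$, contradicting irreducibility over $\R$; and there cannot be two distinct conjugate pairs of factors, since the products of each pair would be coprime proper real factors of $F$, again contradicting irreducibility over $\R$. Hence $F = c\,G\overline{G}$, where $c\in\R\minus\{0\}$, $G\in\C[x,y,z]$ is irreducible over $\C$, $\overline{G}$ denotes the polynomial obtained from $G$ by conjugating its coefficients, and $\deg G = d/2\le d$. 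Then $Z_\C(F)=Z_\C(G)\cup Z_\C(\overline{G})$, and since $\overline{G}(p)=\overline{G(p)}$ for every $p\in\R^3$, we have $G(p)=0$ if and only if $\overline{G}(p)=0$ for real $p$; consequently $Z_\R(F)=Z_\C(G)\cap\R^3=Z_\C(\overline{G})\cap\R^3$.

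Next I would apply Theorem~\ref{thm:main2} to $G$. Its hypotheses are met: $G$ is irreducible over $\C$, and if some partial derivative $\partial G/\partial x$ were identically zero, then $G$, hence $\overline{G}$, hence $F=cG\overline{G}$, would be independent of $x$, contradicting the standing assumption on $F$; similarly for $y$ and $z$. Observe also that $G$ satisfies $(i^*)$ if and only if $\overline{G}$ does, since $|Z_\C(\overline{G})\cap(A\times B\times C)|=|Z_\C(G)\cap(\overline{A}\times\overline{B}\times\overline{C})|$ and conjugation preserves cardinalities. If $G$ satisfied $(i^*)$, then so would $\overline{G}$, and, since $\deg G,\deg\overline{G}\le d$ and $Z_\C(F)\subseteq Z_\C(G)\cup Z_\C(\overline{G})$, adding the two bounds would show that $F$ itself satisfies $(i^*)$ over $\C$, hence over $\R$ (as $Z_\R(F)\subseteq Z_\C(F)$). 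Since we assume $F$ does not satisfy $(i^*)$, it follows that $G$ does not satisfy $(i^*)$, so by Theorem~\ref{thm:main2}, $G$ satisfies property $(ii)$ over $\C$.

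Finally I would descend property $(ii)$ for $G$ to property $(ii)_\R$ for $F$, following the proof of Lemma~\ref{lem:realirred} with $Z_\C(G)$ in the role played there by $Z_\C(F)$. Let $Y\subset Z_\C(G)$ be the excluded one-dimensional subvariety furnished by $(ii)$ for $G$, and set $Z_0:=(Y\cap\R^3)\cup\{\text{singular points of }Z_\R(F)\}$; since $Z_\R(F)=Z_\C(G)\cap\R^3$ has dimension $2$, this is a subvariety of $Z_\R(F)$ of dimension at most $1$. For $v=(a,b,c)\in Z_\R(F)\minus Z_0$ we have $v\in Z_\C(G)\minus Y$, so $(ii)$ for $G$ provides open sets $D_i\subset\C$ and one-to-one analytic $\varphi_i$ with analytic inverses such that $v\in D_1\times D_2\times D_3$ and, for all $(x,y,z)\in D_1\times D_2\times D_3$, $(x,y,z)\in Z_\C(G)$ if and only if $\varphi_1(x)+\varphi_2(y)+\varphi_3(z)=0$. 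Because $Z_\R(F)=Z_\C(G)\cap\R^3$, the rest of the argument is word for word that of Lemma~\ref{lem:realirred}: shrink the $D_i$ to real intervals $J_i$, split each $\varphi_i$ into its real and imaginary parts, note that one of the two resulting real-analytic equations already cuts out $Z_\R(F)$ locally, and use the real implicit function theorem at the regular point $v$ to match local dimensions; this yields property $(ii)_\R$ for $F$.

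The main obstacle is the bookkeeping in the first two steps: producing the factorization $F=cG\overline{G}$ (ruling out a real factor and ruling out more than one conjugate pair) and verifying that the hypotheses of Theorem~\ref{thm:main2}, as well as the failure of $(i^*)$, transfer correctly between $F$ and its complex factor $G$. Once $G$ is in hand, the descent to the reals is a routine invocation of the already-proved Lemma~\ref{lem:realirred}.
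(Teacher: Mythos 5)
Your proposal is correct and follows essentially the same route as the paper: factor $F = c\,G\overline{G}$ with $G$ irreducible over $\C$, transfer the failure of $(i^*)$ from $F$ to $G$, apply Theorem~\ref{thm:main2} to $G$ to obtain property $(ii)$ over $\C$, and then descend to $\R$ as in Lemma~\ref{lem:realirred}. There is a small but genuine streamlining in your descent: you use the identity $Z_\R(F) = Z_\C(G)\cap\R^3$ (which holds because $F(p) = c\,|G(p)|^2$ for $p\in\R^3$) to feed the \emph{single} factor $G$ directly into the machinery of Lemma~\ref{lem:realirred}. The paper instead first establishes property $(ii)$ over $\C$ for $F$ itself, for which it enlarges the excluded set by $Z_\C(H,\overline{H})$ and must verify that this intersection is one-dimensional via the coprimality of $\mathsf{Re}(H)$ and $\mathsf{Im}(H)$; your observation renders that extra step unnecessary. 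Both routes then conclude by the same real/imaginary-part splitting and real implicit function theorem argument.
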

\begin{proof}
In this case, $F$ must be of the form $H \bar H$, with $H$ an irreducible complex polynomial. This is because
for every complex factor $H$ of $F$, $\bar H$ is also a factor, which follows from the
unique factorization property and the fact that $\bar F = F$. 
If we had a nontrivial factorization of the form $F=H_1\bar H_1H_2\bar H_2$, then $F = (H_1\bar H_1)\cdot (H_2\bar H_2)$ would be a factorization over $\R$, contradicting the assumption that
$F$ is irreducible over $\R$.

If property $(i^*)$ holds for either of the polynomials $H$, $\bar H$, then it holds for both, since 
\[\overline{Z_\C(H)\cap (A\times B\times C)} = Z_\C(\bar H) \cap (\bar A\times \bar B\times \bar C).\]
Since $Z_\C(F) = Z_\C(H)\cup Z_\C(\bar H)$, it follows that if $F$ does not satisfy property $(i^*)$, then at least one of $H$ or $\bar H$
does not satisfy it either, and thus neither of them does. 
Applying Theorem~\ref{thm:main2} to both $H$ and $\bar H$ gives that property $(ii)$ holds (over $\C$) for both.

Let $Y_H$, $Y_{\bar H}$ be the respective one-dimensional excluded subvarieties of $Z_\C(H)$, $Z_\C(\bar H)$ in the statement
of property $(ii)$. 
We put 
\[Y:=Y_H\cup Y_{\bar H}\cup Z_\C(H,\bar H),\]
and claim that $Z_\C(H,\bar H)$ is also at most one-dimensional. Indeed, $H=\bar H=0$ is equivalent to $\mathsf{Re}(H)=\mathsf{Im}(H)=0$, which is the common zero set of two coprime polynomials 
(that $\mathsf{Re}(H)$ and $\mathsf{Im}(H)$ are coprime follows from the irreducibility of $H$).
We thus conclude that, also in this case, property $(ii)$
holds for $F$ (over $\C$), with the excluded subvariety $Y$.

We can now complete the proof of this lemma exactly as in the proof of Lemma \ref{lem:realirred}, using this 
excluded subvariety $Y$.
\end{proof}

\section{Application: Collinear triples on complex algebraic curves}\label{sec:collinear}

In this section we use our main theorem to derive improvements of the results of 
Elekes and Szab\'o \cite{ES13} on collinear triples determined by a set of points on an algebraic curve.
We obtain stronger bounds that also hold in an unbalanced setting, and we extend them to the complex plane.
Our proof is similar to that of \cite{ES13}, in that it reduces the statements to Theorem \ref{thm:main2} 
in the same way that Elekes and Szab\'o reduced their statements in \cite{ES13} to the main theorem of \cite{ES12}.
But our proof is considerably simpler, partly because in \cite{ES13} the authors established a more general 
statement about real continuous curves, which they then applied to real algebraic curves, while we focus 
entirely on complex algebraic curves. The corresponding statements over $\R$ then follow directly.

The main result is the following theorem, from which the other results will be deduced.
Of course, just as in Theorem \ref{thm:main2}, an analogous bound holds for any permutation of $S_1,S_2,S_3$.
The explicit dependence of the constant of proportionality on the degree $d$ of the curves can easily be deduced from that in Theorem \ref{thm:main2}.
We call a triple $(p_1,p_2,p_3)$ of points \emph{proper} if no two of the points $p_i$ are the same, and we call it \emph{improper} otherwise.

\begin{theorem}\label{thm:threecurves}
Let $C_1,C_2,C_3$ be three (not necessarily distinct) irreducible algebraic curves of degree at most $d$ in $\C^2$,
and let $S_1\subset C_1, S_2\subset C_2,S_3\subset C_3$ be finite subsets.
Then the number of proper collinear triples in $S_1\times S_2\times S_3$ is
\[O_d\left(|S_1|^{1/2}|S_2|^{2/3}|S_3|^{2/3} + |S_1|^{1/2}\left(|S_1|^{1/2}+|S_2| + |S_3|\right)\right),\]
unless $C_1\cup C_2 \cup C_3$ is a line or a cubic curve.
\end{theorem}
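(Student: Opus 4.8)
The plan is to reduce Theorem~\ref{thm:threecurves} to Theorem~\ref{thm:main2} by constructing, for each pair of curves, a polynomial $F$ whose zero set captures collinearity. First I would parametrize: since each $C_i$ is an irreducible curve of degree at most $d$, it admits a rational parametrization or, more robustly, I can work with the curves directly. The natural move is to consider the collinearity condition. Three points $p_1 \in C_1$, $p_2\in C_2$, $p_3\in C_3$ are collinear precisely when the $3\times 3$ determinant with rows $(x_i,y_i,1)$ vanishes. To put this in the Elekes--Szab\'o form, I would pick a coordinate on each $C_i$: for a generic line direction, the first coordinate $x$ gives a finite-to-one map $C_i \to \C$, so a point of $C_i$ is (up to bounded multiplicity) determined by its $x$-coordinate. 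Eliminating the $y$-coordinates using the defining polynomials of $C_1,C_2,C_3$, the collinearity determinant becomes a polynomial relation $F(x_1,x_2,x_3)=0$ of degree $O_d(1)$, and the number of proper collinear triples is, up to the bounded multiplicity of these projections, $|Z(F) \cap (A_1\times A_2 \times A_3)|$ where $A_i$ is the set of $x$-coordinates of $S_i$. Applying Theorem~\ref{thm:main2} (in its unbalanced form, with the roles arranged so $S_1$ plays the role of the ``$1/2$-power'' set) then yields the stated bound, \emph{provided} $F$ does not have the special additive form $(ii)$.

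The core of the argument is therefore to show that if $F$ \emph{does} satisfy property $(ii)$ of Theorem~\ref{thm:main2}, then $C_1\cup C_2\cup C_3$ must be a line or a cubic. This is where I would follow the ideas of Elekes and Szab\'o~\cite{ES13}. Property $(ii)$ says that, locally away from a one-dimensional exceptional set, $Z(F)$ is given by $\varphi_1(x_1)+\varphi_2(x_2)+\varphi_3(x_3)=0$ with analytic $\varphi_i$ having analytic inverses. Composing with the local parametrizations of the $C_i$, this means there are local analytic coordinates on each $C_i$ in which the collinearity relation among triples of points (one from each curve) becomes additive: $\psi_1(p_1)+\psi_2(p_2)+\psi_3(p_3)=0$. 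A classical fact --- essentially the characterization underlying the group law on a cubic --- is that a family of curves admitting such an ``abelian'' parametrization of its collinearity/incidence structure must be (a component of) a line or a plane cubic; the additive structure is exactly the group law on the (possibly singular or degenerate) cubic, with lines meeting it in triples summing to zero. I would make this precise by differentiating the local identity to extract a relation purely among the curves, then invoking the rigidity of such relations. One clean route: the existence of the additive local form forces the existence of a nonzero abelian differential (or a translation-invariant structure) compatible with collinearity, and for a curve in $\mathbb{P}^2$ this pins down the genus and degree, leaving only lines (degree $1$) and cubics (degree $3$, genus $\le 1$).

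The main obstacle I expect is precisely this last deduction --- going from the analytic additive local form back to the global algebraic conclusion that the union is a line or cubic --- because one must handle the cases where the $C_i$ coincide or are distinct, where the curves are singular, and where the parametrizing maps have branch points, and one must ensure the exceptional one-dimensional set from property $(ii)$ does not obstruct the argument (it only affects finitely many points on each curve after pulling back, so it is harmless, but this needs checking). A secondary technical point is the bounded-multiplicity bookkeeping in the reduction: the $x$-coordinate projection $C_i\to\C$ is at most $d$-to-one, the $y$-elimination introduces spurious factors that must be stripped (one should take the irreducible factor of the resultant actually vanishing on the image, or argue that the extra components contribute only $O_d(|S_i|+|S_j|)$ to the count), and improper triples must be excluded --- but improper triples with, say, $p_1=p_2$ correspond to at most $O_d(1)$ values and contribute a lower-order term absorbed into the $|S_1|^{1/2}(|S_1|^{1/2}+|S_2|+|S_3|)$ error. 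Once property $(ii)$ is ruled out under the hypothesis, Theorem~\ref{thm:main2} applied with $\delta=O_d(1)$ gives exactly the claimed unbalanced bound, and the symmetry in $S_1,S_2,S_3$ follows from the symmetry in Theorem~\ref{thm:main2}.
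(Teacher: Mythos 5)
Your reduction to Theorem~\ref{thm:main2} is essentially the same as the paper's: project the collinearity variety to the $x$-coordinates (after a generic rotation so that the projections $S_i\to\C$ are injective), strip off the diagonal planes and the spurious components arising from elimination, and apply the unbalanced bound. That part is sound, and your accounting of the lower-order error terms is reasonable.

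The genuine gap is in the key step: showing that if the resulting polynomial satisfies property $(ii)$ then $C_1\cup C_2\cup C_3$ is a line or a cubic. You sketch an argument via ``a nonzero abelian differential'' that ``pins down the genus and degree, leaving only lines and cubics,'' but this is not a proof, and as stated it is not even correct: genus does not determine degree. A rational curve of any degree has geometric genus $0$, and singular plane curves of arbitrarily high degree have genus $\le 1$; so knowing that the curve carries some translation-invariant structure of abelian type, and hence has low genus, does not by itself restrict the degree to $1$ or $3$. The assertion you call a ``classical fact'' --- that an additive local parametrization of collinearity forces the curve to be a line or a cubic --- is exactly what has to be established, and you do not establish it. What is genuinely classical (Lie's theorem on linearizable webs, the converse of Abel's theorem) would take real work to invoke here, particularly in the presence of singular curves, branch points of the $x$-projection, and the one-dimensional exceptional set from property $(ii)$, none of which you address beyond noting they ``need checking.''

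The paper instead settles this step by an explicit and elementary ``cantilever'' construction: starting from a proper collinear triple $(p_1,p_2,p_3)$ where collinearity is group-related, and a nearby point $q\in C_2$, one builds a growing configuration of points on $C_1\cup C_2\cup C_3$ whose pairwise collinearities are forced by the additive relation. The crucial observation is that after the first nine such points are placed, all subsequent points of the cantilever are determined by intersecting lines alone, with no further reference to the curves. One then passes a cubic $D$ through the nine initial points; by the Elekes--Szab\'o result that collinearity is group-related on any cubic, the entire cantilever lies on $D$. Extending the cantilever to have more than $3d$ points on each $C_i$ and invoking B\'ezout forces each $C_i$ to share a component with $D$, and a short case analysis (irreducible cubic, conic plus line, triple line) finishes. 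This argument is self-contained, avoids any genus or differential considerations, and handles the singular and degenerate cases uniformly. You would need to either supply an actual proof along the differential-geometric lines you gesture at (filling in the genus-to-degree gap, which I do not see how to do cheaply), or adopt something like the cantilever construction.
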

\begin{proof}
Following \cite{ES13}, we say that \emph{collinearity is group-related} around a proper collinear triple $(p_1,p_2,p_3)\in C_1\times C_2\times C_3$, 
if for each $i=1,2,3$ there is an open subset $U_i$ of $C_i$ containing $p_i$, together with a one-to-one analytic map $\varphi_i:U_i\to \C$, 
such that a triple $(q_1,q_2,q_3)\in U_1\times U_2\times U_3$ is collinear if and only if 
 $$\varphi_1(q_1)+\varphi_2(q_2)+\varphi_3(q_3) = 0.$$
This situation is depicted in Figure \ref{fig:collinearity}.

\begin{figure}[htb]
\begin{center}
 \scalebox{0.5}{\includegraphics{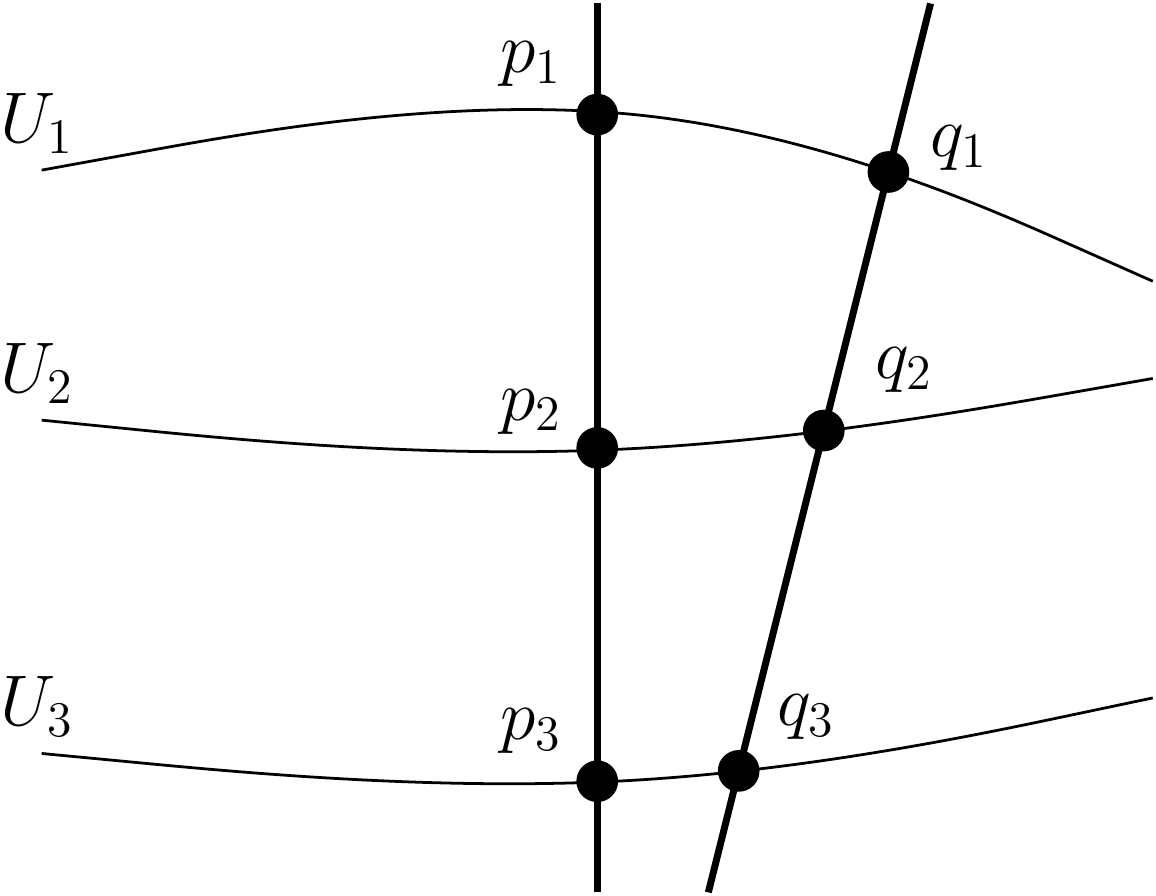}}
\caption{The situation in the definition of group-related collinearity.}
\label{fig:collinearity}
\end{center}
\end{figure}

We define a polynomial $L$ by
\[L(x_1,y_1,x_2,y_2,x_3,y_3) = \left|\begin{matrix} 1&x_1&y_1\\1&x_2&y_2\\1&x_3&y_3\end{matrix}\right|, \]
and a variety $X\subset \C^6$, equipped with coordinates $(x_1,y_1,x_2,y_2,x_3,y_3)$, by
\[X:= (C_1\times C_2\times C_3)\cap Z(L), \]
which is the set of all collinear triples in $C_1\times C_2\times C_3$.
Since $C_1\times C_2\times C_3$ is an irreducible three-dimensional variety, $X$ is purely two-dimensional by Lemma \ref{lem:dimension}, 
unless $L$ vanishes on all of $C_1\times C_2\times C_3$.
This exception only occurs if all triples in $C_1\times C_2\times C_3$ are collinear, which would 
imply that the three curves are the same line; this is excluded in the theorem.

By applying a generic rotation in $\C^2$ at the start of the proof, we can assume that no two points of $S_1$, $S_2$, or $S_3$ have the same $x$-coordinate.
Then the projection $\pi:\C^6\to\C^3$ defined by 
\[\pi(x_1,y_1,x_2,y_2,x_3,y_3)= (x_1,x_2,x_3)\]
is injective on the Cartesian product $S_1\times S_2\times S_3$.
Moreover, the image of $S_1\times S_2\times S_3$ is a Cartesian product $A_1\times A_2\times A_3$, with $A_i\subset \C$ of size $|A_i| = |S_i|$ for $i=1,2,3$.
Because of the generic rotation, $\cl(\pi(X))$ is also a purely two-dimensional variety.

The variety $X$ contains all collinear triples in $C_1\times C_2\times C_3$, but it also contains all improper triples. 
These are mapped onto the union of three planes (namely those defined by $x_1=x_2$, $x_2=x_3$, and $x_3=x_1$).
We remove these planes from $\cl(\pi(X))$, and we denote the closure of the remainder by $Y$.
If we write $M$ for the number of proper collinear triples in $S_1\times S_2\times S_3$,
then these $M$ triples are mapped to $M$ points in the intersection of $Y$ with the Cartesian product $A_1\times A_2\times A_3$.

Thus we can apply Theorem \ref{thm:main2} on each irreducible component of $Y$ to bound $M$,
noting that each such component is the zero set of some irreducible trivariate polynomial
(whose degree depends on $d$). This gives the bound in the statement of Theorem \ref{thm:threecurves}, 
unless condition $(ii)$ of Theorem \ref{thm:main2} holds on some irreducible component of $Y$.
Suppose $Y'$ is such a component, so condition $(ii)$ gives, for $i=1,2,3$, a number $t_i$, a 
neighborhood $D_i$ of $t_i$, and a one-to-one analytic map $\phi_i:D_i\to \C$, such that,
for each $(u,v,w)\in D_1\times D_2\times D_3$, $(u,v,w)\in Y'$ if and only if $\phi_1(u)+\phi_2(v)+\phi_3(w)=0$.
We can assume that $D_1\times D_2\times D_3$ does not contain any points that were added to $\pi(X)$ when taking the closure.

Write $\pi_i$ for the projection $(x_i,y_i)\mapsto x_i$.
We choose $p_i\in C_i$ so that $\pi_i(p_i) = t_i$.
We also pick an open neighborhood 
$U_i$ of $p_i$ in $C_i$ so that $\pi_i(U_i)\subset D_i$, 
and we define the analytic map $\varphi_i := \phi_i\circ \pi_i:U_i\to \C$.
By shrinking $U_i$, we can assume that $\varphi_i$ is one-to-one.
By shifting the triple $(p_1,p_2,p_3)$ slightly within $U_1\times U_2\times U_3$, we can assume 
that the triple is proper, since improper triples lie in a lower-dimensional subset of $Y'$.
Finally, we can assume that $p_1,p_2,p_3$ are regular points of their respective curves, since the
sets of singular points are discrete.
With these definitions, collinearity is group-related around some proper collinear triple $(p_1,p_2,p_3)\in C_1\times C_2\times C_3$, with $p_1,p_2,p_3$ regular.

We pick a point $q$ on $U_2$, close to $p_2$, and start to build a ``cantilever"\footnote{%
  The term ``cantilever'' was introduced for such a configuration in \cite{ES13}, reflecting 
  the fact that it resembles an overhanging structure supported on one end.} 
from $p_1$, $p_3$, and $q$ as in Figure \ref{fig:cantilever1}.
More precisely, we proceed as follows. 
Throughout, we assume that $q$ is chosen close enough to $p_2$ for each of the steps to work. 
For convenience, we modify the functions $\varphi_i$ so that $\varphi_1(p_1) = \varphi_2(p_2) = 0$ and $\varphi_2(q) = 1$ 
(the integer labels in Figure \ref{fig:cantilever1} show these values).
The line through $p_1$ and $q$ intersects $C_3$ in a point $r_3$ close to $p_3$, 
and since collinearity is group-related, we must have $\varphi_3(r_3) = -\varphi_1(p_1)-\varphi_2(q) = -1$.
Similarly, the line $p_3q$ intersects $C_1$ in a point $r_1$ close to $p_1$, with $\varphi_1(r_1) = -1$.
Next, the line $r_1r_3$ must intersect $C_2$ in a point $r_2$ close to $p_2$, with $\varphi_2(r_2) = 2$.
We can continue adding points this way as long as the $U_i$'s allow, 
and by choosing $q$ closer to $p_2$, we can continue for arbitrarily many steps.

\begin{figure}[htb]
 \begin{center}
\scalebox{0.4}{\includegraphics{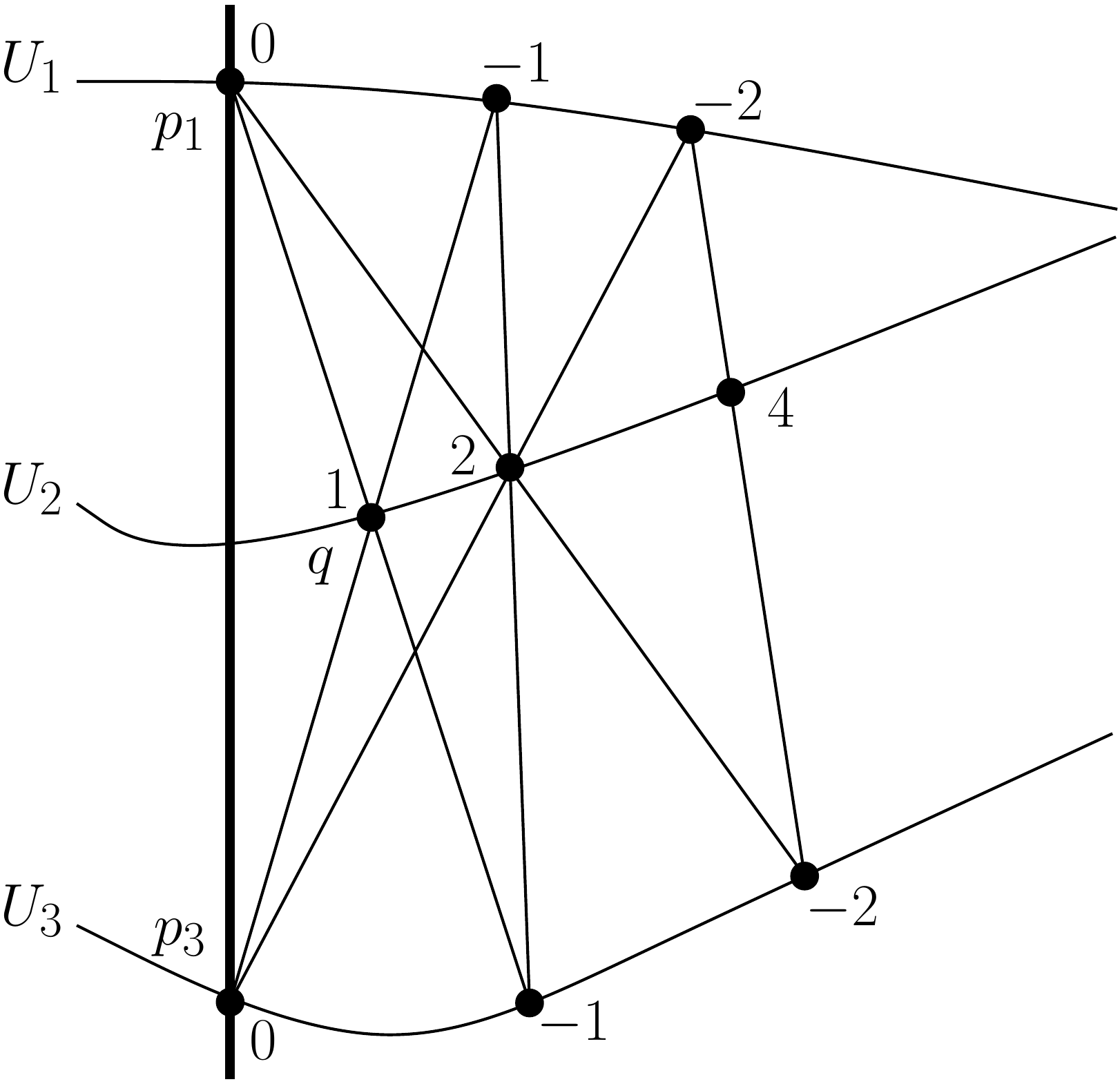}}
 \end{center}
 \caption{A cantilever of nine points built from $p_1$, $p_3$, and $q$.}
 \label{fig:cantilever1}
\end{figure}

The key observation is that, given the first nine points as depicted in Figure \ref{fig:cantilever1} (not counting $p_2$), the cantilever can be continued \emph{without using the curves} (see Figure \ref{fig:cantilever2}).
Specifically, given the points in Figure \ref{fig:cantilever1}, the point on $U_3$ with value $3$ is determined as the intersection point of two lines (which is not the case for the point with value $4$).
Next, the points on $U_1$ and $U_3$ with value $-3$ are determined, 
after which the point on $U_2$ with value $5$ is determined, etc.

\begin{figure}[htb]
\begin{center}
\scalebox{0.4}{\includegraphics{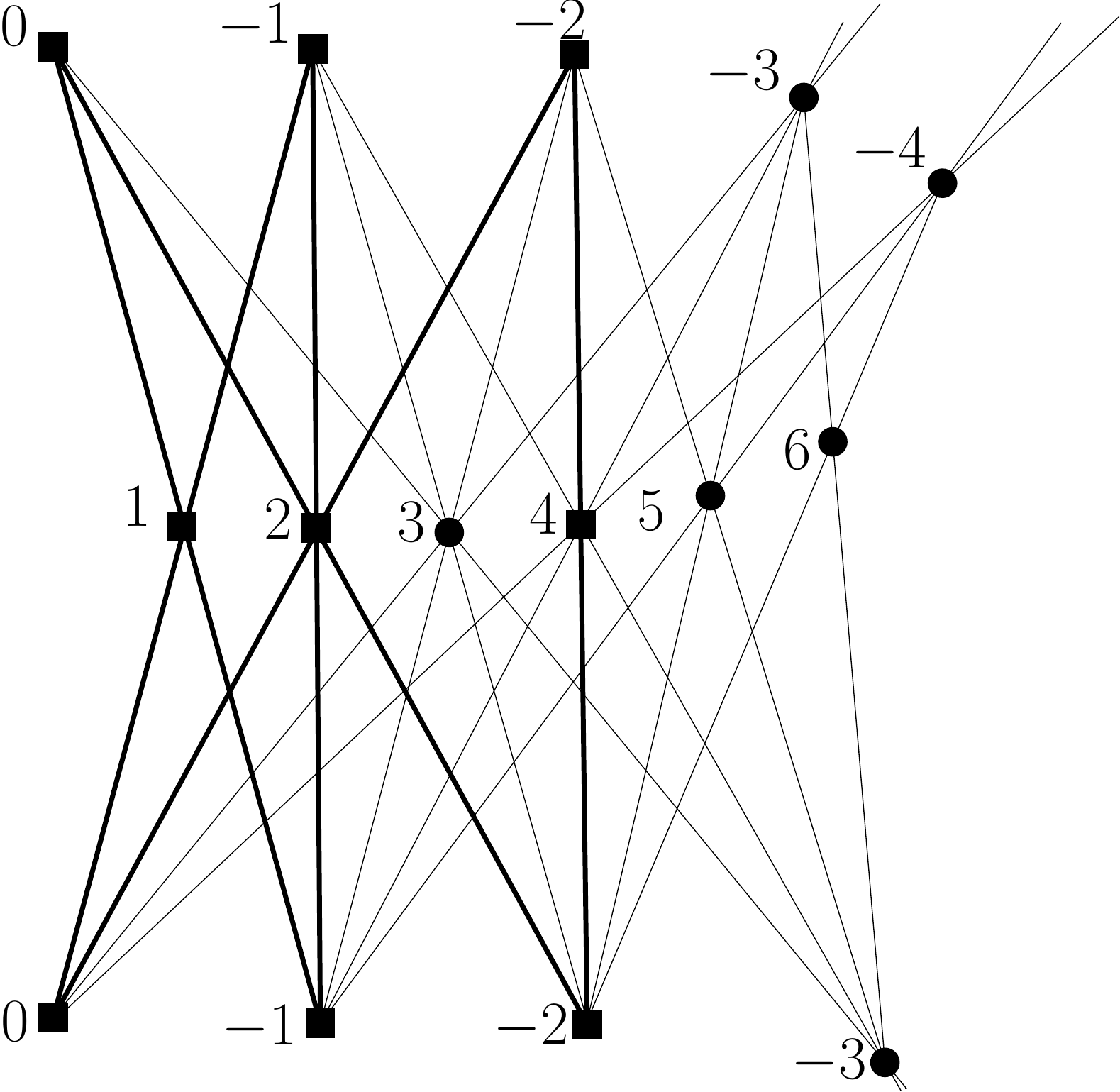}}
\end{center}
\caption{Given the nine points marked by squares, the other points are also determined.}
\label{fig:cantilever2}
\end{figure}

Given the curves $C_1,C_2,C_3$, the collinear triple $(p_1,p_2,p_3)$, and a choice of $q$ 
(close enough to $p_2$), we get a configuration of nine points on $C_1\cup C_2\cup C_3$ as in Figure \ref{fig:cantilever1}.
An easy and well-known fact is that there exists a cubic curve through any nine given points; 
let $D$ be a cubic through our configuration of nine points. As shown in Elekes and Szab\'o~\cite[Proposition 3.2]{ES13}, 
collinearity is group-related on any cubic curve, around any proper collinear triple of regular points 
(the proof consists of choosing a convenient representative of each type of cubic, with a coordinate system in which the group relation becomes apparent).
Therefore, any cantilever obtained by extending the cantilever of nine points on the cubic $D$ lies entirely in $D$.

Now choose $q$ so close to $p_2$ that we can build a cantilever that has at least $3d+1$ points in each $U_i$. 
For each $i=1,2,3$, these $3d+1$ points lie in the intersection $C_i\cap D$. Since their number is larger than
$\deg(C_i) \deg(D)$, B\'ezout's inequality (Theorem \ref{thm:bezout}) implies that $C_i$ and $D$ have a common factor.
If $D$ is irreducible then, since each $C_i$ is also irreducible, we conclude that $D=C_i$ for each $i$.
Hence $C_1\cup C_2\cup C_3$ is a cubic in this case, and we are done.

Suppose then that $D$ is the union of a conic $Q$ and a line $\ell$. If all the $C_i$'s are equal to $\ell$,
we get that $C_1\cup C_2\cup C_3$ is a line. If all the $C_i$'s are equal to $Q$, then
$ C_1\cup C_2\cup C_3$ is a conic, in which case $S_1$, $S_2$, $S_3$ cannot determine any proper collinearity.
We are thus left with the case where at least one $C_i$ is equal to $\ell$ and at least one is equal to $Q$, so in this case too
$C_1\cup C_2\cup C_3$ is a cubic, and the proof is complete. 
\end{proof}

\begin{corollary}\label{cor:baltrips}
Any $n$ points on an irreducible algebraic curve of degree $d$ in $\C^2$ determine
$O_d(n^{11/6})$ proper collinear triples, unless the curve is a line or a cubic.
\end{corollary}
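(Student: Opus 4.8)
The plan is to obtain Corollary~\ref{cor:baltrips} as the diagonal special case of Theorem~\ref{thm:threecurves}. Let $C$ be the given irreducible curve of degree $d$ and let $S$ be the set of $n$ points lying on it; we may assume that $C$ is neither a line nor a cubic, since otherwise the corollary asserts nothing. I would apply Theorem~\ref{thm:threecurves} with $C_1 = C_2 = C_3 = C$ and $S_1 = S_2 = S_3 = S$, so that $|S_i| = n$ for $i=1,2,3$. The exception in Theorem~\ref{thm:threecurves} is that $C_1 \cup C_2 \cup C_3$ is a line or a cubic curve; in the diagonal case this union is just $C$, so the exception reads ``$C$ is a line or a cubic'', which we have excluded. (No separate conic case arises: an irreducible conic is neither a line nor a cubic, so Theorem~\ref{thm:threecurves} still applies to it and supplies the stated bound, which for a conic holds trivially, since a line meets a conic in at most two points and hence there are no proper collinear triples at all.)

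Plugging $|S_1| = |S_2| = |S_3| = n$ into the bound of Theorem~\ref{thm:threecurves} gives
\[
O_d\Bigl(n^{1/2}\cdot n^{2/3}\cdot n^{2/3} + n^{1/2}\bigl(n^{1/2} + n + n\bigr)\Bigr)
= O_d\bigl(n^{11/6} + n^{3/2}\bigr) = O_d\bigl(n^{11/6}\bigr)
\]
for the number of proper collinear triples in $S \times S \times S$. Since each unordered triple of three distinct collinear points of $S$ corresponds to exactly $6$ ordered proper collinear triples in $S \times S \times S$, the number of proper collinear triples determined by the $n$ points of $S$ is also $O_d(n^{11/6})$, which is the assertion of the corollary. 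The explicit dependence of the implied constant on $d$ can be traced back through Theorem~\ref{thm:threecurves} to the dependence in Theorem~\ref{thm:main2}, as already remarked there.

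I do not expect any real obstacle at this stage: all of the substantive work --- the reduction to the Elekes--Szab\'o setting, the cantilever argument that forces a cubic, and the underlying incidence and group-structure machinery --- has already been carried out in the proof of Theorem~\ref{thm:threecurves} and in the main theorems. The only points that will require a moment's attention are bookkeeping ones: that the generic rotation invoked in the proof of Theorem~\ref{thm:threecurves} remains harmless when the three point sets coincide (a generic rotation still separates the $x$-coordinates of the single set $S$), that passing from ordered to unordered triples costs only the constant factor $6$, and that the ``line or cubic'' exception is inherited verbatim from the $C_1 = C_2 = C_3$ specialization.
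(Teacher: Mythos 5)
Your proof is correct and is essentially identical to the paper's one-line argument: apply Theorem~\ref{thm:threecurves} with $C_1=C_2=C_3$ the given curve and $S_1=S_2=S_3$ the given point set, plug $|S_i|=n$ into the bound, and observe that the exceptional case $C_1\cup C_2\cup C_3$ being a line or cubic becomes precisely ``$C$ is a line or a cubic''. The extra remarks about conics, the generic rotation, and the ordered/unordered bookkeeping are fine but not needed.
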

\begin{proof}
Apply Theorem \ref{thm:threecurves} with $S_1=S_2=S_3$ equal to the given set of $n$ points, 
and $C_1=C_2=C_3$ equal to the given curve.
\end{proof}

Elekes and Szab\'o \cite{ES13} showed that when $C$ is any cubic curve, there are constructions that give $\Omega(n^2)$ proper collinear triples.
When $C$ is a line, there are of course $\Omega(n^3)$ proper collinear triples.
Thus excluding these curves is necessary in Corollary \ref{cor:baltrips}.
The same can be said for Theorem \ref{thm:threecurves} and the two corollaries below.

\begin{corollary}
Any $n$ points on an algebraic curve of degree $d$ in $\C^2$ determine $O_d(n^{11/6})$ proper collinear quadruples, unless the curve contains a line.
\end{corollary}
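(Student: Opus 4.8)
The plan is to reduce the count of proper collinear quadruples to the count of proper collinear triples handled by Theorem~\ref{thm:threecurves}, together with a separate argument that neutralizes cubic components of the curve.

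First I would dispose of small degrees and of the ``many points on one line'' obstruction. Let $C$ be the given curve, of degree $d$, with no line among its irreducible components, and let $P$ be the given set of $n$ points. Since no line is contained in $C$, B\'ezout's inequality (Theorem~\ref{thm:bezout}) gives $|\ell\cap C|\le d$ for every line $\ell$, so every line contains at most $d$ points of $P$. In particular, if $d\le 3$ there are no proper collinear quadruples at all and the bound is trivial, so I may assume $d\ge 4$. Write $C=C_1\cup\cdots\cup C_k$ with $k\le d$ irreducible components, each $C_i$ of degree at least $2$ (being irreducible and not a line). Note also that any line $\ell$ different from an irreducible cubic component $D$ of $C$ satisfies $|\ell\cap D|\le 3$, so no four collinear points of $P$ can lie on a single irreducible cubic component.

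The main step is a good/bad dichotomy for proper collinear quadruples $\{p_1,p_2,p_3,p_4\}\subset P$. Call such a quadruple \emph{good} if some $3$-subset $\{p_a,p_b,p_c\}$ admits a choice of components $C_a\ni p_a$, $C_b\ni p_b$, $C_c\ni p_c$ with $C_a\cup C_b\cup C_c$ neither a line nor a cubic, and \emph{bad} otherwise. For a bad quadruple, every $3$-subset of its points lies on a common irreducible cubic component (since no $C_i$ is a line). Looking at two of the $3$-subsets through $p_1$, say $\{p_1,p_2,p_3\}\subset E$ and $\{p_1,p_2,p_4\}\subset E'$ with $E,E'$ irreducible cubic components, we see that $E=E'$ would force all four points onto the cubic $E$, contradicting the last observation of the previous paragraph; hence $E\ne E'$ and $p_1\in E\cap E'$, a set of size at most $9$ by B\'ezout. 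So each point of a bad quadruple lies in the fixed finite set obtained as the union of $D\cap D'$ over all distinct pairs of (cubic) components of $C$, a set of size $O_d(1)$; therefore there are only $O_d(1)$ bad quadruples.

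It remains to bound the number of good quadruples. Given a good quadruple, fix a good sub-triple, ordered as $(p_a,p_b,p_c)$ so as to match witnessing components $(C_i,C_j,C_k)$; the fourth point of the quadruple lies on the line through $p_a,p_b,p_c$ and hence is one of at most $d-3$ points of $P$. Consequently the number of good quadruples is at most
\[
\sum_{(i,j,k)}\bigl(\#\{\text{proper collinear triples in }(P\cap C_i)\times(P\cap C_j)\times(P\cap C_k)\}\bigr)\cdot(d-3),
\]
where the sum runs over the at most $k^3$ ordered triples of components with $C_i\cup C_j\cup C_k$ neither a line nor a cubic. For each such triple, Theorem~\ref{thm:threecurves} applies with $S_1=P\cap C_i$, $S_2=P\cap C_j$, $S_3=P\cap C_k$, all of size at most $n$, giving a bound of $O_d\bigl(n^{1/2}n^{2/3}n^{2/3}+n^{1/2}(n^{1/2}+n+n)\bigr)=O_d(n^{11/6})$. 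Summing over the $O_d(1)$ admissible component-triples yields $O_d(n^{11/6})$ good quadruples, and combining with the $O_d(1)$ bad quadruples completes the proof.

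The step I expect to be the main obstacle is the treatment of cubic components: a proper collinear triple can occur on an irreducible cubic with multiplicity $\Omega(n^2)$, so one cannot simply apply the triple bound component-triple by component-triple. The good/bad dichotomy, resting on the elementary fact that four collinear points cannot all lie on an irreducible cubic, is precisely what sidesteps this difficulty; the remaining work (orderings of triples and the polynomial dependence on $d$) is routine bookkeeping.
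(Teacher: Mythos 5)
Your proof is correct, and it takes a genuinely different route from the paper's. The paper's argument is a short pigeonhole: assign to each of the $N$ proper collinear quadruples an ordered $4$-tuple of irreducible components (one per point), note that there are at most $d^4$ such $4$-tuples, extract a single $4$-tuple $(C_1,C_2,C_3,C_4)$ carrying $\Omega_d(N)$ quadruples, and observe that each $3$-of-$4$ of these components carries $\Omega_d(N)$ proper collinear triples (using that a line meets $C$ in at most $d$ points, so each triple is over-counted by $O_d(1)$). Applying Theorem~\ref{thm:threecurves} to each $3$-of-$4$ then forces either $N=O_d(n^{11/6})$ or all four components to be the same irreducible cubic, and the latter is impossible since an irreducible cubic meets a line in at most $3$ points. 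Your proof instead partitions quadruples into ``good'' and ``bad'' and bounds each class directly: good quadruples are charged to a good sub-triple and handled component-triple by component-triple via Theorem~\ref{thm:threecurves}, while bad quadruples are shown to lie inside the $O_d(1)$-size union of pairwise intersections of distinct cubic components, which is a nice way to neutralize the cubic obstruction. Both arguments rest on the same two facts (B\'ezout's bound on $|\ell\cap C|$ and the impossibility of $4$ collinear points on an irreducible cubic); the paper's pigeonhole gives a shorter write-up, while yours makes the dependence on $d$ and the accounting of over-counting more transparent.
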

\begin{proof}
Suppose the curve determines $N$ proper collinear quadruples.
The curve has at most $d$ irreducible components, so by the pigeonhole principle, there must be 
four components (not necessarily distinct) that span $\Omega_d(N)$ proper collinear quadruples (with one point of the quadruple from each component).
Then every three among these span $\Omega_d(N)$ collinear triples, and thus, by Theorem \ref{thm:threecurves}, 
either $N = O_d(n^{11/6})$, or the union of these three components is a line or a cubic.
In the first case, we are done.
In the second case, we are also done, unless all three components are the same irreducible cubic.
If we have this for every three of the four components, then all four would be the same irreducible cubic, and they could not span any proper collinear quadruple at all, a contradiction.
\end{proof}

\begin{corollary}\label{cor:directions}
Any $n$ points on an irreducible algebraic curve of degree $d$ in $\C^2$ determine $\Omega(n^{4/3})$ distinct directions, unless the curve is a conic.
\end{corollary}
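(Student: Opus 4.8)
The plan is to reinterpret a direction as a point of the line at infinity and then reduce to the collinear-triples bound of Theorem~\ref{thm:threecurves}, exploiting the asymmetry of that bound in an essential way. Write $S$ for the given set of $n$ points on the irreducible degree-$d$ curve $C$, and let $D$ be the set of directions spanned by $S$, so that $m:=|D|$ is the quantity to be bounded from below. A direction $\delta\in D$ is naturally a point of the line at infinity $\ell_\infty\subset\mathbb{P}^2$, and for distinct $p_1,p_2\in S$ the triple $(\delta,p_1,p_2)$ is collinear exactly when $\delta$ is the direction of the line $\overline{p_1p_2}$; in particular $\delta$ is then determined by $p_1,p_2$.

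First I would move $\ell_\infty$ to a finite position. Since collinearity is preserved by projective transformations, apply a generic projective transformation $\tau$ of $\mathbb{P}^2$ that carries $\ell_\infty$ to an affine line and keeps $S$ and $D$ in the affine plane; then $S':=\tau(S)$ is a set of $n$ points on the irreducible degree-$d$ curve $C':=\tau(\overline{C})\cap\C^2\neq\ell_0$, where $\ell_0:=\tau(\ell_\infty)\cap\C^2$, and $D_0:=\tau(D)\subset\ell_0$ has $|D_0|=m$. By the preceding paragraph, the proper collinear triples of $D_0\times S'\times S'$ are precisely the triples $\big(\tau(\delta),\tau(p_1),\tau(p_2)\big)$ with $p_1\neq p_2\in S$ and $\delta$ the direction of $\overline{p_1p_2}$ (properness is automatic, since $\tau(\delta)\in\ell_0$ while $\tau(p_i)\in C'$). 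Hence $D_0\times S'\times S'$ contains exactly $n(n-1)$ proper collinear triples.

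Next I would invoke Theorem~\ref{thm:threecurves} with $(C_1,C_2,C_3)=(\ell_0,C',C')$ and $(S_1,S_2,S_3)=(D_0,S',S')$ --- in this order, so that the small set $D_0$ occupies the privileged slot that enters the bound with exponent $1/2$. Here $C_1\cup C_2\cup C_3=\ell_0\cup C'$ has degree $d+1$: it is never a line, and it is a cubic only when $d=2$, i.e. only when $C$ is a conic. Thus, provided $C$ is neither a line nor a conic (a line spans a single direction and, strictly, must also be excluded from the statement), Theorem~\ref{thm:threecurves} yields
\[
n(n-1)=O_d\!\left(m^{1/2}n^{4/3}+m+m^{1/2}n\right).
\]
Since $n(n-1)\geq n^2/2$, at least one of the three terms on the right must be $\Omega_d(n^2)$: the first term forces $m=\Omega_d(n^{4/3})$, while each of the other two forces the stronger $m=\Omega_d(n^2)$. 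In every case $m=\Omega_d(n^{4/3})$, which is the assertion.

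The main obstacle is the geometric reduction carried out in the second paragraph: setting up the correspondence between directions and incidences with the line at infinity, and verifying that the three-curve configuration $\ell_0\cup C'$ avoids the two forbidden cases of Theorem~\ref{thm:threecurves} --- it is precisely this last check that explains why conics (and lines) are the genuine exceptions. The remaining computation is routine, with one caveat worth recording: the order of $S_1,S_2,S_3$ is essential here, since the symmetric choice $(S_1,S_2,S_3)=(S',S',D_0)$ would only give the weaker bound $m=\Omega_d(n^{5/4})$.
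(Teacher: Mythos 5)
Your argument is essentially the paper's own: take $C_1$ to be the line at infinity, $C_2=C_3=C$, $S_1$ the set of directions, $S_2=S_3=S$, count the $\Theta(n^2)$ proper collinear triples, and apply Theorem~\ref{thm:threecurves} in the order that puts the small set $S_1$ in the privileged slot. Your explicit projective transformation to bring $\ell_\infty$ to a finite line is a nice bit of extra care (Theorem~\ref{thm:threecurves} is stated for affine curves), and your remark that degree-$1$ curves must also be excluded is correct, but neither changes the substance of the argument.
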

\begin{proof}
Let $C_2$ and $C_3$ be two copies of this curve, and let $C_1$ be the line at infinity in the projective plane.
We take both $S_2$ and $S_3$ to be the given set of $n$ points,
and we let $S_1$ be the set of all points  where a line through a pair of points from $S_2=S_3$ intersects $C_1$.
In other words, the points of $S_1$ correspond exactly to the directions determined by the given $n$ points.
Furthermore, the three curves $C_1,C_2,C_3$ determine $\binom{n}{2}$ proper collinear triples, one for each pair of distinct points from $S_2=S_3$.

Combining this lower bound with Theorem \ref{thm:threecurves} gives that either
\[n^2 = O_d\left(|S_1|^{1/2}\cdot n^{4/3}\right),\]
or $C_1\cup C_2\cup C_3$ is a line or cubic.
The former implies that the number of directions is $|S_1| = \Omega_d(n^{4/3})$.
In the latter case, 
the fact that $C_1$ is a line distinct from $C_2=C_3$ implies that $C_2=C_3$ is a conic.
\end{proof}

Finally, we note that the statements above also hold over $\R$.
Indeed, an irreducible algebraic curve in $\R^2$ is contained (under the standard embedding of $\R^2$ in $\C^2$) in an irreducible algebraic curve in $\C^2$, which has the same degree. 
A real line containing three points of the real curve is contained in a complex line, which contains at least three points of the complex curve.
Thus an upper bound on the number of proper collinear triples on a complex curve implies an upper bound on the number of proper collinear triples on a real curve.



\appendix

\section{Tools from algebraic geometry}
\label{sec:alggeom}

In this section we review some basic notions and facts from algebraic geometry, and establish various properties that we need in our proof.
Although the material reviewed here is fairly standard, we include it to make the paper more self-contained, and to aid readers whose background is mostly in combinatorial geometry.

\subsection{Definitions and basic facts}
A \emph{variety} in $\C^D$ is a set of the form 
\[ Z(f_1,\dots,f_k) 
:= \left\{(z_1,\ldots,z_D)\in \C^D\mid f_i(z_1,\ldots, z_D)=0 ~\text{for}~ i=1,\ldots,k\right\},\]
for polynomials $f_1,\ldots,f_k\in \C[z_1,\ldots,z_D]$.
Such sets are normally called \emph{affine} varieties, but since this is the only type of variety that we consider, we refer to them simply as varieties.
If $X, Y$ are varieties, then $X\cup Y$, $X\cap Y$ and $X\times Y$ are also varieties.

A \emph{subvariety} of a variety $X$ is a subset of $X$ that is a variety. 
A \emph{proper subvariety} of $X$ is a subvariety which is neither $X$ nor the empty set.
A variety is \emph{irreducible} if it is not the union of two proper subvarieties.
If $X$ and $Y$ are irreducible varieties, then the Cartesian product $X\times Y$ is also irreducible (Harris \cite[Exercise 5.9]{Ha92}).
An \emph{irreducible component} of a variety $X$ is an irreducible subvariety that is not a proper subvariety of any irreducible proper subvariety of $X$.
Every variety has a decomposition into finitely many irreducible components (see \cite[Theorem 5.7]{Ha92}).
This decomposition is unique (up to permutations), and any irreducible component of the variety must occur in it.

We are particularly interested in curves in $\C^2$. 
An \emph{(algebraic) curve} in $\C^2$ is any set of the form $Z(f)$ for $f\in \C[x,y]\minus\C$. 
The \emph{degree} of a curve $\gamma$ is the degree of a squarefree polynomial $f$ such that $\gamma=Z(f)$.
An irreducible component of a curve is a curve.
Note that by our definition, not every one-dimensional variety in $\C^2$ (see below) is a curve; 
for instance, $Z(xy, (y-1)y)$, i.e., the union of the $x$-axis and the point $(0,1)$, is a one-dimensional variety that cannot be described as the zero set of one polynomial.

\subsection{Dimension}\label{sec:dimension}

For a variety $X\subset \C^D$, we say that $z_0\in X$ is \emph{regular} if there exists a neighborhood $N$ of $z_0$ such that $X\cap N$ is a complex manifold (see \cite[Exercise 14.1]{Ha92}); we then say that the \emph{local dimension} of $X$ at $z_0$ is the dimension of that manifold (see \cite[Exercise 14.3]{Ha92}). Otherwise we say $z_0$ is \emph{singular}.

The \emph{dimension} $\dim(X)$ of an irreducible variety $X$ is the maximum of the local dimensions at its regular points. 
For a general variety $X$, 
$\dim(X)$ is the maximum of the dimensions of the irreducible components of $X$.
We refer to \cite[Lecture 11]{Ha92} for several equivalent definitions of dimension, but we note that we only use the simple properties stated below.

For varieties $X,Y$, 
we have $\dim(X\cup Y) = \max\{\dim(X),\dim(Y)\}$, 
and $\dim (X\times Y) = \dim(X) + \dim(Y)$.
We have $\dim(\C^D) = D$.
If $X\subset \C^D$ is a variety and $\dim(X) = D$, then $X = \C^D$.
If $f\in\C[z_1,\dots, z_D]\minus \C$, then $\dim(Z(f)) = D-1$.
If $f,g\in \C[z_1,\ldots,z_D]$ are coprime polynomials, and $Z(f,g)$ is nonempty, then $\dim(Z(f,g)) = D-2$.

We say that a variety is \emph{pure-dimensional} if it has the same local dimension at all of its regular points.
By our definition, curves are always pure-dimensional.
An irreducible variety is pure-dimensional (this is clear from the equivalent definitions in \cite[Lecture 11]{Ha92}),
 so a variety is pure-dimensional if and only if each of its irreducible components has the same dimension.
If $Y$ is an irreducible subvariety of 
$X$ and $\dim(X)=\dim(Y)$, 
then $Y$ is one of the irreducible components of $X$. 

A point on a variety in $\C^D$ is \emph{singular} if it is not regular.
The subset of singular points of an irreducible variety is a proper subvariety (see \cite[Exercise 14.3]{Ha92} or Hartshorne \cite[Theorem I.5.3]{Ha77}),
and therefore lower-dimensional. 
The same follows for reducible varieties: The set of singular points is the union of the sets of singular points of the irreducible components, 
and the set of intersection points of the various irreducible components; both sets have dimension lower than that of the component of highest dimension.

The following fact is very useful (see \cite[Exercise 11.6]{Ha92} and \cite[Proposition I.7.1]{Ha77}).
\begin{lemma}\label{lem:dimension}
Let $X\subset \C^D$ be an irreducible variety of dimension $k$ and $f\in \C[z_1,\ldots,z_D]$.
Then $X\cap Z(f)$ is either $X$, the empty set, or has pure dimension $k-1$.
\end{lemma}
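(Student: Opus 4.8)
The plan is to translate the statement into commutative algebra and invoke Krull's principal ideal theorem together with the dimension theory of affine domains. First I would dispose of the two trivial cases: if $f$ vanishes identically on $X$, i.e.\ $f\in I(X)$, then $Z(f)\supseteq X$ and hence $X\cap Z(f)=X$; and if $X\cap Z(f)=\emptyset$ there is nothing to prove. So from now on assume $f\notin I(X)$ and $X\cap Z(f)\neq\emptyset$.

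Let $A:=\C[z_1,\dots,z_D]/I(X)$ be the coordinate ring of $X$. Since $X$ is irreducible, $A$ is an integral domain, finitely generated over $\C$, and its Krull dimension equals $\dim(X)=k$. Let $\bar f$ denote the image of $f$ in $A$. The hypothesis $f\notin I(X)$ says $\bar f\neq 0$, and $X\cap Z(f)\neq\emptyset$ says $(\bar f)$ is a proper ideal, so $\bar f$ is a nonzero non-unit. The irreducible components of $X\cap Z(f)$ correspond bijectively to the minimal primes $\mathfrak p_1,\dots,\mathfrak p_s$ of $A$ over the principal ideal $(\bar f)$, and the component attached to $\mathfrak p_i$ has dimension $\dim(A/\mathfrak p_i)$.

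Now I would apply Krull's \emph{Hauptidealsatz}: each minimal prime $\mathfrak p_i$ over the principal ideal $(\bar f)$ has height at most $1$. On the other hand, since $\bar f\neq 0$, the zero ideal (the unique minimal prime of the domain $A$) does not contain $\bar f$, so no $\mathfrak p_i$ equals $(0)$; hence $\operatorname{ht}(\mathfrak p_i)\geq 1$, and therefore $\operatorname{ht}(\mathfrak p_i)=1$ for every $i$. Finally I would invoke the fact that an affine domain over a field is catenary and equidimensional, so that the dimension formula $\dim(A/\mathfrak p)+\operatorname{ht}(\mathfrak p)=\dim(A)$ holds for every prime $\mathfrak p$ of $A$; this is exactly where the irreducibility of $X$, which makes $A$ a domain, is used. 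Applying the formula to each $\mathfrak p_i$ gives $\dim(A/\mathfrak p_i)=k-1$, so every irreducible component of $X\cap Z(f)$ has dimension exactly $k-1$; that is, $X\cap Z(f)$ is pure of dimension $k-1$, as asserted.

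The main obstacle is not any single step but assembling the right pieces of commutative algebra: Krull's principal ideal theorem by itself only yields that each component has dimension \emph{at least} $k-1$, and one genuinely needs the dimension formula for affine domains — equivalently, that such rings are catenary and equidimensional — to pin the dimension down to exactly $k-1$ and to exclude any lower-dimensional component. Both facts are standard and underlie the references \cite[Exercise 11.6]{Ha92} and \cite[Proposition I.7.1]{Ha77}, so in the final write-up one may simply cite them; an alternative, more geometric route via Noether normalization and projection to $\C^k$ is also available but seems no shorter. It is worth stressing that it is the equidimensionality, and not merely the principal ideal theorem, that forces the conclusion to be about \emph{pure} dimension.
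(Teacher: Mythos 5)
Your proof is correct, and it supplies the standard commutative-algebra argument underlying the two references the paper cites in lieu of a proof ([Ha92, Exercise 11.6] and [Ha77, Proposition I.7.1]); the paper itself gives no argument, so there is nothing to compare against beyond those citations. Your emphasis that Krull's principal ideal theorem alone only gives $\operatorname{ht}(\mathfrak p_i)\le 1$, and that the dimension formula $\dim(A/\mathfrak p)+\operatorname{ht}(\mathfrak p)=\dim(A)$ for finitely generated domains over a field is what pins down the codimension to \emph{exactly} one (equivalently, rules out lower-dimensional components), is exactly the right point to stress, and the irreducibility of $X$ is used precisely to make $A$ a domain so that formula applies.
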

It follows that, for a nonempty variety $X\subset \C^D$ 
(not necessarily irreducible) which is defined by $\ell$ polynomials, we have $\dim(X)\geq D-\ell$.


\subsection{Degree}\label{sec:degree}
We define the \emph{degree} $\deg(X)$ of an irreducible variety $X$ of dimension $k$ in $\C^D$ as in Heintz \cite{Hei}, by 
\[\deg(X) := \sup\{|X\cap L|
\mid L~\text{is a}~(D-k)\text{-flat}~\text{such that}~X\cap L~\text{is finite}
\}.\]
For a reducible variety $X$, we define $\deg(X)$ to be the sum of the degrees of its irreducible components.
For curves, this definition coincides with our earlier definition.
We have $\deg(\C^D)=1$ and $\deg(Z(f))= \deg(f)$ if $f$ is a square-free polynomial. 
If $S$ is a finite set, then $\deg(S) = |S|$.
If $X$ and $Y$ are varieties, then $\deg(X\times Y) = \deg(X)\cdot \deg(Y)$.
If $X\subset Y$ are pure-dimensional varieties of the same dimension, then $\deg(X)\leq \deg(Y)$.

The following bound is proved in Heintz \cite[Theorem 1]{Hei}. 

\begin{theorem}[\bf Generalized B\'ezout]\label{thm:bezout}
If $X$ and $Y$ are varieties in $\C^D$,
then 
\[\deg(X\cap Y) \leq \deg(X)\cdot \deg(Y).\]
\end{theorem}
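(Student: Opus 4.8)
This is Heintz's refined B\'ezout inequality, so strictly speaking one could simply invoke \cite{Hei}; here I outline the argument I would give to make it self-contained. The plan has three stages: reduce to irreducible varieties, pass to projective space, and then apply the classical \emph{join} construction, which turns an arbitrary intersection into a linear section of a variety whose degree is \emph{exactly} the product $\deg(X)\deg(Y)$. For the first reduction, write $X=\bigcup_i X_i$ and $Y=\bigcup_j Y_j$ as unions of irreducible components; then $X\cap Y=\bigcup_{i,j}(X_i\cap Y_j)$, and since every irreducible component of a union of varieties is a component of one of the pieces, $\deg(X\cap Y)\le\sum_{i,j}\deg(X_i\cap Y_j)$. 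As $\deg(X)=\sum_i\deg(X_i)$ and $\deg(Y)=\sum_j\deg(Y_j)$, it suffices to treat the case in which $X$ and $Y$ are irreducible. For the second reduction, replace $X,Y$ by their projective closures $\overline X,\overline Y\subset\mathbb{P}^D$: the affine degree of a variety equals the degree of its projective closure (a generic affine flat of complementary dimension extends to a projective one meeting $\overline X$ only in the affine chart), and each irreducible component of $X\cap Y$ has closure that is an irreducible component of $\overline X\cap\overline Y$ (it cannot lie at infinity, nor be properly contained in a larger component, without contradicting maximality of the affine component). Hence $\deg(X\cap Y)\le\deg(\overline X\cap\overline Y)$, and it is enough to bound the latter.

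Now embed $\mathbb{P}^D$ into $\mathbb{P}^{2D+1}$ as two complementary linear subspaces $P=\{[x:0]\}$ and $P'=\{[0:y]\}$, and let $J=J(\overline X,\overline Y)\subset\mathbb{P}^{2D+1}$ be the union of all lines joining a point of $\overline X\subset P$ to a point of $\overline Y\subset P'$ (see Harris \cite[Lectures 8 and 11]{Ha92}). Because $\overline X$ and $\overline Y$ lie in complementary linear subspaces, the join is automatically in general position: it has dimension $\dim\overline X+\dim\overline Y+1$ and degree $\deg(\overline X)\cdot\deg(\overline Y)$, \emph{regardless} of how $\overline X$ and $\overline Y$ meet back inside $\mathbb{P}^D$. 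A direct computation shows that the diagonal linear subspace $\Delta=\{[x:x]\mid x\in\mathbb{P}^D\}$, which has codimension $D+1$ in $\mathbb{P}^{2D+1}$, satisfies $J\cap\Delta=\{[x:x]\mid x\in\overline X\cap\overline Y\}$, a copy of $\overline X\cap\overline Y$ under a linear embedding, hence of the same degree. Finally, intersecting with a hyperplane never increases the sum of the degrees of the irreducible components: an irreducible $W$ of degree $d$ with $W\not\subseteq H$ has $\deg(W\cap H)\le d$, while $W\subseteq H$ leaves the degree unchanged, and for reducible $W$ one sums over components. Slicing $J$ by the codimension-$(D+1)$ flat $\Delta$ one hyperplane at a time therefore gives $\deg(\overline X\cap\overline Y)=\deg(J\cap\Delta)\le\deg(J)=\deg(\overline X)\cdot\deg(\overline Y)$, which combined with the two reductions proves Theorem \ref{thm:bezout}.

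The main obstacle is the degree formula for the join in the second stage: proving rigorously that the join of two varieties lying in complementary linear subspaces has the expected dimension and degree exactly $\deg(\overline X)\deg(\overline Y)$, including the degenerate situations where $\overline X\cap\overline Y$ is empty or has excess dimension. In those cases the join itself still behaves perfectly, and all the ``excess'' is absorbed only when one slices by $\Delta$ in the last step. This is exactly the point at which the product form of the bound is forced, as opposed to the much weaker estimate $\deg(X\cap Y)\le\deg(X)\prod_k\deg(g_k)$ that one would get by intersecting $X$ with the hypersurfaces $Z(g_k)$ defining $Y$ one at a time — the product of the degrees of a set of defining equations of $Y$ can be vastly larger than $\deg(Y)$, so the naive approach does not suffice and the join (or an equivalent intersection-multiplicity computation) is essential.
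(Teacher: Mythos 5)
The paper does not actually prove this theorem; it cites Heintz \cite[Theorem 1]{Hei} and uses the result as a black box, so there is no argument in the text to compare yours against. Your proof via the ruled join and the diagonal is correct and essentially self-contained; it is the standard modern route to the refined B\'ezout inequality, appearing for instance in Fulton, \emph{Intersection Theory}, Example 8.4.6. All three stages are sound: the reduction to irreducible factors works because every irreducible component of a finite union of varieties is a component of one of the pieces, so $\deg(X\cap Y)\le\sum_{i,j}\deg(X_i\cap Y_j)$ and the degrees multiply out; the passage to projective closures preserves degree under the paper's definition (Section A.3) because a generic affine flat of complementary dimension misses $\overline X$ at infinity, and closures of affine components of $X\cap Y$ are components of $\overline X\cap\overline Y$; and the final step is $D+1$ applications of the single-hyperplane B\'ezout bound $\deg(W\cap H)\le\deg W$, which is indeed strictly easier than the general statement (it follows from the Hilbert polynomial) and is all that the slicing of $J$ by $\Delta$ requires. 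You correctly flag the degree formula $\deg J(\overline X,\overline Y)=\deg\overline X\cdot\deg\overline Y$ as the genuinely nontrivial input; it holds unconditionally because placing $\overline X$ and $\overline Y$ in complementary linear subspaces of $\mathbb{P}^{2D+1}$ forces the join to have the expected dimension $\dim\overline X+\dim\overline Y+1$ regardless of how $\overline X$ and $\overline Y$ meet back in $\mathbb{P}^D$, which is precisely what lets all excess intersection be deferred to the $\Delta$-slicing step. So this is a valid way to make the theorem self-contained, but given that the paper only needs the statement and its role here is purely as infrastructure, the citation is the economical choice.
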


Note that Theorem \ref{thm:bezout} implies B\'ezout's inequality for curves $C_1,C_2$ in the plane, which is usually stated in the following form: Either
$C_1\cap C_2$ is finite, and then $|C_1\cap C_2| = \deg(C_1\cap C_2)\leq \deg(C_1)\cdot \deg(C_2)$, or else $C_1\cap C_2$ is one-dimensional, which means that $C_1$ and $C_2$ have a common component.

The following bound is an immediate consequence. 
Since the degree of a reducible variety is the sum of the degrees of its irreducible components, this lemma also gives a bound on the number of irreducible components of a variety.

\begin{lemma}\label{lem:degreebound}
If a variety $X$ in $\C^D$ is defined by $m$ polynomials of degree $\delta_1,\ldots, \delta_m$, then 
\[\deg(X)\leq \prod_{i=1}^m \delta_i.\]
In particular, the number of irreducible components of $X$ is at most $\prod_{i=1}^m \delta_i$.
\end{lemma}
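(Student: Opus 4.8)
The plan is to obtain Lemma~\ref{lem:degreebound} by iterating the Generalized B\'ezout inequality (Theorem~\ref{thm:bezout}) over the $m$ defining polynomials, after first disposing of a single hypersurface.

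\textbf{Step 1: the single-hypersurface bound.} I would first record that for any nonzero polynomial $f\in\C[z_1,\dots,z_D]$ of degree $\delta$ one has $\deg(Z(f))\le\delta$. If $f$ is a nonzero constant this is trivial, since then $Z(f)=\emptyset$ and $\deg(\emptyset)=0$. Otherwise, let $g$ be the product of the distinct irreducible factors of $f$; then $g$ is squarefree, $Z(f)=Z(g)$, and $\deg(g)\le\deg(f)=\delta$, so by the fact recorded in Section~\ref{sec:degree} we get $\deg(Z(f))=\deg(Z(g))=\deg(g)\le\delta$.

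\textbf{Step 2: induction on the number of equations.} Write $X=Z(f_1,\dots,f_m)=Z(f_1)\cap\dots\cap Z(f_m)$; we may discard any $f_i$ that is identically zero, since it imposes no constraint, so all remaining $f_i$ are nonzero. Then I would prove by induction on $k$ that $\deg\bigl(Z(f_1)\cap\dots\cap Z(f_k)\bigr)\le\prod_{i=1}^k\delta_i$. The base case $k=1$ is Step~1. For the inductive step, put $X_{k-1}:=Z(f_1)\cap\dots\cap Z(f_{k-1})$ and combine Theorem~\ref{thm:bezout} with Step~1 and the inductive hypothesis:
\[
\deg(X_k)=\deg\bigl(X_{k-1}\cap Z(f_k)\bigr)\le\deg(X_{k-1})\cdot\deg(Z(f_k))\le\Bigl(\prod_{i=1}^{k-1}\delta_i\Bigr)\cdot\delta_k .
\]
Taking $k=m$ yields $\deg(X)\le\prod_{i=1}^m\delta_i$, which is the first assertion.

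\textbf{Step 3: counting components.} For the final statement, recall that $\deg(X)$ is by definition the sum $\sum_j\deg(X_j)$ of the degrees of the irreducible components $X_j$ of $X$, and that every nonempty irreducible variety has degree at least $1$ (a flat of complementary dimension through one of its points, chosen so as to meet the variety in a finite set, meets it in at least that one point, so the supremum defining the degree is $\ge 1$). Hence the number of irreducible components of $X$ is at most $\sum_j\deg(X_j)=\deg(X)\le\prod_{i=1}^m\delta_i$. There is no serious obstacle here; the only points requiring (minor) care are the passage to a squarefree polynomial in Step~1, so that the hypersurface-degree fact from Section~\ref{sec:degree} applies verbatim, and the disposal of the degenerate cases where some $f_i$ is constant or identically zero.
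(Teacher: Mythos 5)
Your proof is correct and matches the paper's intent: the paper simply calls Lemma~\ref{lem:degreebound} ``an immediate consequence'' of Theorem~\ref{thm:bezout}, and your argument (reduce each $Z(f_i)$ to a squarefree hypersurface, iterate the Generalized B\'ezout inequality over the $m$ defining equations, and bound the number of components by observing that each nonempty irreducible component has degree at least one) is exactly the standard way to unpack that. The minor caveats you flag at the end (squarefree reduction, degenerate constant/zero $f_i$) are handled appropriately and do not affect the conclusion.
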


Finally, we state the Schwartz-Zippel lemma \cite{Sch80,Zi89}.
We state it over $\C$, although it holds over any field.

\begin{lemma}\label{lem:schwartzzippel}
Let $G\in\C[x_1,\ldots,x_D]$ be a nonzero polynomial, and $S\subset \C$ a finite set. Then
\[|Z(G)\cap S^D|\leq \deg(G)\cdot|S|^{D-1}. \]
\end{lemma}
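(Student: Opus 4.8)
The plan is to prove the bound by induction on the number of variables $D$, following the classical Schwartz--Zippel argument. The base case $D=1$ is immediate: a nonzero univariate polynomial $G\in\C[x_1]$ has at most $\deg(G)$ roots, so $|Z(G)\cap S|\le \deg(G)=\deg(G)\cdot|S|^{0}$. (The cases where $G$ is a nonzero constant, or where $S=\emptyset$, are trivial, since then both sides vanish.)

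For the inductive step, assuming the statement for $D-1$ variables, I would regard $G$ as a polynomial in $x_D$ with coefficients in $\C[x_1,\ldots,x_{D-1}]$, and write
\[
G=\sum_{i=0}^{k} g_i(x_1,\ldots,x_{D-1})\,x_D^{\,i},
\]
where $k$ is the degree of $G$ in $x_D$, so that the leading coefficient $g_k$ is not identically zero and $\deg(g_k)\le \deg(G)-k$. I then split the prefixes $(a_1,\ldots,a_{D-1})\in S^{D-1}$ according to whether $g_k$ vanishes on them. If $g_k(a_1,\ldots,a_{D-1})\ne 0$, then $G(a_1,\ldots,a_{D-1},x_D)$ is a nonzero univariate polynomial of degree $k$ in $x_D$, hence has at most $k$ zeros with $x_D\in S$; since there are at most $|S|^{D-1}$ such prefixes, this case contributes at most $k\,|S|^{D-1}$ points of $Z(G)\cap S^{D}$. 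If instead $g_k(a_1,\ldots,a_{D-1})=0$, then by the induction hypothesis applied to the nonzero polynomial $g_k\in\C[x_1,\ldots,x_{D-1}]$ there are at most $\deg(g_k)\,|S|^{D-2}\le(\deg(G)-k)\,|S|^{D-2}$ such prefixes, and each of them can be completed to a point of $S^{D}$ in at most $|S|$ ways (using the trivial bound on the number of $x_D\in S$), contributing at most $(\deg(G)-k)\,|S|^{D-1}$ points. Adding the two contributions yields
\[
|Z(G)\cap S^{D}|\le k\,|S|^{D-1}+(\deg(G)-k)\,|S|^{D-1}=\deg(G)\cdot|S|^{D-1},
\]
which completes the induction.

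There is no genuine obstacle here; the proof is entirely elementary. The only place that calls for a little care is the bookkeeping of degrees in the inductive step --- one must use $\deg(g_k)\le \deg(G)-k$ so that the "$g_k\ne 0$" and "$g_k=0$" contributions combine to exactly $\deg(G)\,|S|^{D-1}$ --- and making sure the degenerate cases ($G$ constant, $S$ empty, or $k=\deg(G)$ so that $g_k$ is itself a constant and the second case is vacuous) are handled without incident.
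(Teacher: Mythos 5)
Your proof is correct, but there is nothing in the paper to compare it against: the paper treats Lemma~\ref{lem:schwartzzippel} as a known fact and simply cites the original sources (Schwartz and Zippel), with no proof given. Your induction on the number of variables is the standard textbook argument, and the bookkeeping is right: writing $G=\sum_{i=0}^{k} g_i\,x_D^{\,i}$ with $g_k\not\equiv 0$, using $\deg(g_k)\le\deg(G)-k$, bounding the contribution of prefixes with $g_k\ne 0$ by $k\,|S|^{D-1}$ and the contribution of prefixes with $g_k=0$ by $(\deg(G)-k)\,|S|^{D-1}$, and summing. The degenerate cases you flag ($G$ constant, $S$ empty, $g_k$ a nonzero constant) indeed all go through trivially. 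Nothing to correct.
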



\subsection{Projections and closure}
\label{sec:projclos}

We will frequently use standard projections,
i.e., projections that send a point of $\C^D$ to a point of $\C^{D'}$, for $D'<D$, by omitting some of its coordinates.
We will use the word \emph{projection} to refer to any such map, although most of the properties we state below are true for more general projections.

The image $\pi(X)$ of a variety $X$ under a projection  $\pi$ does \emph{not} have to be a variety; 
if we for instance apply $\pi:(x,y)\mapsto x$ 
 to $X = Z(xy-1)$, the image is $\pi(X) = \C\backslash\{0\}$.
Therefore, we will have to enlarge the image of a projection to make it a variety.

For any set $S\subset \C^D$, we define its \emph{(Zariski) closure} to be the intersection of all varieties containing $S$,
and denote it by $\cl(S)$.
This is the closure in the ``Zariski topology'' (see \cite[Lecture 2]{Ha92}); because we also deal with the standard topology of $\C^D$, we avoid the Zariski terminology, but we make an exception for ``closure''.
We need the following facts.
The closure of the union of finitely many sets equals the union of the closures, while the union of infinitely many sets \emph{contains} the union of the closure.
The closure of a finite product of sets is the product of the closures.
If $S\subset \C$ is infinite, then $\cl(S) = \C$.
The following lemma gives another useful fact.

\begin{lemma}\label{lem:dense}
Let $X$ be a pure-dimensional variety and $Y$ a lower-dimensional variety.
Then $$\cl(X\minus Y) = X.$$
\end{lemma}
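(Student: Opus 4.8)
The plan is to reduce to the irreducible case and then invoke the principle that an irreducible variety cannot be written as a union of two proper subvarieties. Write $k:=\dim(X)$, and let $X=X_1\cup\cdots\cup X_m$ be the decomposition of $X$ into irreducible components. Since $X$ is pure-dimensional, every component $X_i$ has dimension exactly $k$, whereas $\dim(Y)<k$ by hypothesis. The inclusion $\cl(X\minus Y)\subseteq X$ is immediate, because $X\minus Y\subseteq X$ and $X$ is itself a variety, hence closed. So the whole task is the reverse inclusion, and for that it suffices to prove $X_i\subseteq\cl(X_i\minus Y)$ for each $i$: indeed $X_i\minus Y\subseteq X\minus Y$ gives $X_i\subseteq\cl(X_i\minus Y)\subseteq\cl(X\minus Y)$, and taking the union over $i$ yields $X\subseteq\cl(X\minus Y)$.

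To handle a single component, fix $i$ and consider the subvariety $X_i\cap Y$ of $X_i$. It cannot equal $X_i$, since otherwise $X_i\subseteq Y$ and then $k=\dim(X_i)\le\dim(Y)<k$, a contradiction; in particular $X_i\minus Y$ is nonempty, so $\cl(X_i\minus Y)$ is a nonempty subvariety of $X_i$. Now write
\[
X_i=(X_i\cap Y)\cup(X_i\minus Y)\subseteq(X_i\cap Y)\cup\cl(X_i\minus Y).
\]
If $\cl(X_i\minus Y)$ were a proper subvariety of $X_i$, this display would exhibit $X_i$ as a union of two proper subvarieties, contradicting the irreducibility of $X_i$. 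Hence $\cl(X_i\minus Y)=X_i$, which completes the argument.

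There is essentially no hard step here; the only things worth keeping in mind are that pure-dimensionality is exactly what guarantees each $X_i$ has dimension $k>\dim(Y)$ (so that $X_i\not\subseteq Y$ and the intersection $X_i\cap Y$ is a proper subvariety), and that passing to irreducible components is necessary precisely so that the ``irreducible variety $\ne$ union of two proper subvarieties'' principle applies. I would also note that the statement would fail without the pure-dimensionality assumption — e.g.\ if $X$ is the union of a plane and a line meeting it, and $Y$ is that line — which is why the hypothesis appears.
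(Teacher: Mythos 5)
Your argument is correct and follows the same route as the paper: reduce to the irreducible components (which all have full dimension by pure-dimensionality), observe that each $X_i\cap Y$ is not all of $X_i$, and conclude via the fact that an irreducible variety is not the union of two proper subvarieties. One cosmetic remark: the paper's convention (Section~A.1) is that a ``proper subvariety'' is neither the whole variety \emph{nor empty}, so the paper first dispatches the case $X\cap Y=\emptyset$ separately; in your version, if some $X_i\cap Y$ is empty then $X_i\minus Y=X_i$ and the claim is immediate for that component, so the ``union of two proper subvarieties'' step is only needed when $X_i\cap Y\neq\emptyset$ --- harmless, but worth saying in one line to match the paper's terminology.
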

\begin{proof}\hspace{-5pt}\footnote{We provide some short proofs in this subsection, because we could not find references for the exact statements that we need.}
Consider first the case where $X$ is irreducible. If $X\cap Y=\emptyset$ the assertion clearly holds, so we can assume this is not the case. 
Since $\dim(Y)<\dim (X)$, we have that $X\minus Y$ is nonempty, and $X\cap Y$ is a proper subvariety of $X$.
Suppose $\cl(X\minus Y) = Z$, for some proper subvariety $Z$ of $X$.
Then $X= (X\cap Y)\cup Z$, a union of two proper subvarieties, 
contradicting the assumption that $X$ is irreducible.

For the general case, let $X_1,\ldots, X_m$ be the irreducible components of $X$; so $\dim(X_i)=\dim(X)$ for each $i$, and $Y\neq X_i$ for any $i$.
We have $\cl(X_i\minus Y) = X_i$ for each $i$.
Thus $\cl(X\minus Y) = \bigcup_{i=1}^m \cl(X_i\minus Y) = \bigcup_{i=1}^m X_i = X$.
\end{proof}

We frequently combine a projection $\pi$ with the closure operation, giving us a variety $\cl(\pi(X))$, and we need the fact that this only requires adding a lower-dimensional set to $\pi(X)$.
We deduce this from the following theorem, a proof of which can be found in Basu, Pollack, and Roy \cite[Theorem 1.22]{BPR03},  \cite[Theorem 3.16]{Ha92}, or \cite[Exercise II.3.18]{Ha77}.  
A \emph{constructible} set in $\C^D$ is one that can be defined using
any boolean combination of polynomial equations, or, in an equivalent form that is more convenient to us, any set of the form
$\bigcup_{i=1}^s (X_i\minus Y_i)$
for varieties $X_i,Y_i$ in $\C^D$.

\begin{theorem}[{\bf Chevalley
}]\label{thm:chevalley}
Let $\pi:\C^D\to\C^{D'}$ be a projection and $X\subset \C^D$ a constructible set.
Then $\pi(X)$ is a constructible set.
\end{theorem}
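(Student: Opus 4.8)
The plan is to establish Theorem~\ref{thm:chevalley} by peeling off formal reductions until a single genuinely geometric assertion remains, and then proving that by elimination theory. \emph{Reduction to one coordinate.} Every standard projection $\C^D\to\C^{D'}$ factors as a composition of projections each dropping a single coordinate, and the class of maps sending constructible sets to constructible sets is closed under composition; so it suffices to treat $\pi:\C^D\to\C^{D-1}$, $(z_1,\dots,z_{D-1},t)\mapsto(z_1,\dots,z_{D-1})$. \emph{Reduction to irreducible varieties.} A constructible $S=\bigcup_i(X_i\minus Y_i)$ maps under $\pi$ to $\bigcup_i\pi(X_i\minus Y_i)$, so one may reduce to $S=X\minus Y$ with $X$ irreducible and $Y\subsetneq X$ closed; writing $Y=\bigcup_j\big(Z(g_j)\cap X\big)$ and using that $X\minus Z(g_j)$ is isomorphic, via $x\mapsto\big(x,g_j(x)^{-1}\big)$, to a \emph{closed} subvariety of $\C^{D+1}$ on which $\pi$ is again a composition of coordinate projections, the statement follows once we know it for images of closed varieties. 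I would therefore package everything into a single Noetherian induction: the statement to be proved is that for \emph{every} closed variety $X\subseteq\C^D$ and every constructible $S\subseteq X$, the set $\pi(S)$ is constructible; the graph trick above is what lets the induction swallow the intermediate (non-closed) constructible sets that arise.

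The induction is on $X$, using the descending chain condition on closed subsets of $\C^D$. The reducible case is immediate: if $X=\bigcup X_i$ with the $X_i$ proper closed subsets, then $\pi(S)=\bigcup\pi(S\cap X_i)$ and each $S\cap X_i\subseteq X_i$, so the inductive hypothesis applies. For $X$ irreducible, the one step requiring real work is the classical fact that a dominant one-coordinate projection has image containing a dense open subset of its closure: setting $Y:=\cl(\pi(X))$ (also irreducible), there is a nonzero $f\in\C[Y]$ with $Y_f:=\{y\in Y\mid f(y)\neq0\}\subseteq\pi(X)$. I would obtain this by elimination theory: regard the defining polynomials of $X$ as polynomials in $t$ with coefficients in $\C[z_1,\dots,z_{D-1}]$, stratify $\C^{D-1}$ according to which of these leading coefficients vanish, and observe that on each stratum the condition ``the fiber over $(z_1,\dots,z_{D-1})$ is nonempty'' is governed by the vanishing or nonvanishing of suitable (sub)resultants of the defining polynomials, which are themselves polynomials in $z_1,\dots,z_{D-1}$. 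This already exhibits $\pi(X)$ as constructible, and picking the stratum through a generic point of $Y$ yields the desired $Y_f$. (Alternatively, one applies Noether normalization to the domain $\big(\C[Y]\minus\{0\}\big)^{-1}\C[X]$ over $\C(Y)$ and clears finitely many denominators to find $f$ with $\C[X]_f$ a finite module over a polynomial ring over $\C[Y]_f$, whence every fiber over $Y_f$ is nonempty.)

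Granting this, the induction closes cleanly. Pick $\widetilde f\in\C[z_1,\dots,z_{D-1}]$ restricting to $f$ on $Y$; since $\widetilde f$ does not vanish identically on $X$ (because $Y=\cl(\pi(X))$ and $f\not\equiv0$ on $Y$), the set $X':=X\cap\pi^{-1}(Z(\widetilde f))$ is a \emph{proper} closed subvariety of $X$. Every point of $\pi(X)\minus Y_f$ is the image of some point of $X$ lying over $Z(\widetilde f)$, i.e.\ of a point of $X'$, so
\[
\pi(S)\;=\;\big(\pi(S)\cap Y_f\big)\;\cup\;\pi\big(S\cap X'\big).
\]
The first term is a constructible subset of $\C^{D-1}$ (it is the intersection of $Y_f$ with the constructible set $\pi(S)\cap Y_f=\{y\in Y_f\mid y\in\pi(S)\}$, which by the elimination argument is cut out on the relevant strata by (sub)resultant conditions on $S$), and the second is constructible by the inductive hypothesis applied to $S\cap X'\subseteq X'\subsetneq X$. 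Hence $\pi(S)$ is constructible, completing the induction and, via the reductions above, the proof.

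\emph{Main obstacle.} All the difficulty is concentrated in the fact that a dominant one-coordinate projection has image containing a dense open subset of its closure; the remainder is bookkeeping with finite unions, the graph trick, and Noetherian descent. In the elimination-theoretic proof of that fact, the delicate point is handling the loci where leading coefficients degenerate — so that a naive resultant vanishes identically for the wrong reason — and tracking how the degree of $\gcd_t$ of the defining polynomials jumps across the strata; in the algebraic proof, the subtlety is upgrading a statement about the function field $\C(Y)$ to one about an honest Zariski-open subset of $Y$, which is precisely what clearing finitely many denominators to produce $f$ accomplishes.
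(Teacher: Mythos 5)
The paper does not actually prove Chevalley's theorem; it simply cites standard references (Basu--Pollack--Roy, Harris, Hartshorne). So you are supplying a proof where the paper gives none, and the route you take is the classical one: reduce to single-coordinate projections, reduce to irreducible locally closed pieces, use the Rabinowitsch graph trick to trade locally closed sets for closed ones in one more dimension, and run a Noetherian induction whose engine is the generic-openness fact that a dominant projection of an irreducible variety hits a dense open subset of the closure of its image (proved either by resultant/subresultant elimination or by Noether normalization over $\C(Y)$ after clearing denominators). That skeleton is correct and is essentially the proof one finds in the references the paper cites.

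There is, however, a wrinkle in the way you set up the induction. You phrase the inductive statement as ``for every closed $X\subseteq\C^D$ and every constructible $S\subseteq X$, $\pi(S)$ is constructible,'' and then in the irreducible case you split $\pi(S)=(\pi(S)\cap Y_f)\cup\pi(S\cap X')$. The second piece is fine by induction on the proper closed $X'$. But your justification that the first piece $\pi(S)\cap Y_f$ is constructible --- ``cut out on the relevant strata by (sub)resultant conditions on $S$'' --- is a hand-wave that does not follow from the key fact you proved, which only gives $Y_f\subseteq\pi(X)$, not any control over $\pi(S)$ for a proper constructible $S\subsetneq X$. (Knowing that the whole fiber over $y\in Y_f$ is nonempty says nothing about whether it meets $S$.) This is exactly where the graph trick you mention at the beginning needs to be invoked, not merely gestured at: first write $S$ as a finite union of sets of the form $X\setminus Z(g)$ with $X$ irreducible closed, note that $Y=Z(g_1,\dots,g_k)\cap X$ gives $X\setminus Y=\bigcup_j\big(X\setminus Z(g_j)\big)$ (you wrote $Y=\bigcup_j Z(g_j)\cap X$, which has the intersection and union swapped), and embed each $X\setminus Z(g_j)$ as a \emph{closed} irreducible subvariety of $\C^{D+1}$. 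After this reduction, the object whose image you are projecting is itself a closed irreducible variety, the key fact applies verbatim to give $Y_f\subseteq\pi(S)$, and the Noetherian induction (now carried out purely over closed varieties in a fixed ambient space) closes with no loose ends. So the proof is recoverable, but as written the inductive step for general constructible $S$ does not go through; carry out the graph-trick reduction fully before starting the induction rather than trying to absorb it inside the inductive statement.
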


The following lemma states that when taking the closure of the image of a projection, the set of added points is lower-dimensional.
For instance, when projecting a curve, the image is a curve with only finitely many points removed.

\begin{lemma}\label{lem:projectionclosure}
Let $\pi:\C^D\to\C^{D'}$ be a projection and $X\subset \C^D$ a variety.
Then
\[\cl(\pi(X))\minus \pi(X)\]
is contained in a lower-dimensional subvariety of $\cl(\pi(X))$ (and thus its closure is lower-dimensional).
\end{lemma}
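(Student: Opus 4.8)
The plan is to deduce the lemma from Chevalley's theorem (Theorem~\ref{thm:chevalley}) together with the elementary topology of constructible sets, handling the irreducible case first and then reducing the general case to it. First I would record the reduction. Let $X_1,\dots,X_m$ be the irreducible components of $X$. Then $\pi(X)=\bigcup_{j=1}^m\pi(X_j)$ and $\cl(\pi(X))=\bigcup_{j=1}^m\cl(\pi(X_j))$, so
\[\cl(\pi(X))\minus\pi(X)\subseteq\bigcup_{j=1}^m\bigl(\cl(\pi(X_j))\minus\pi(X_j)\bigr).\]
If the lemma is known for each irreducible $X_j$, each set on the right is contained in a proper subvariety $W_j\subsetneq\cl(\pi(X_j))$; since $\cl(\pi(X_j))$ is irreducible this gives $\dim W_j<\dim\cl(\pi(X_j))\le\dim\cl(\pi(X))$, so $\bigcup_{j=1}^m W_j$ is a subvariety of $\cl(\pi(X))$ of strictly smaller dimension, as required.

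So assume $X$ is irreducible. Since $\pi$ restricted to $X$ is a morphism and the continuous image of an irreducible space is irreducible, $Z:=\cl(\pi(X))$ is an irreducible variety. By Theorem~\ref{thm:chevalley}, $\pi(X)$ is a constructible set, so we may write $\pi(X)=\bigcup_{i=1}^s(X_i\minus Y_i)$ for varieties $X_i,Y_i$. Intersecting each $X_i$ and $Y_i$ with $Z$ (using $\pi(X)\subseteq Z$) and discarding empty terms, we may assume that each $X_i$ is a subvariety of $Z$ and each $Y_i$ a proper subvariety of $X_i$. From
\[Z=\cl(\pi(X))\subseteq\bigcup_{i=1}^s\cl(X_i\minus Y_i)\subseteq\bigcup_{i=1}^s X_i\]
and the irreducibility of $Z$, we get $Z\subseteq X_{i_0}$ for some index $i_0$, hence $Z=X_{i_0}$ (since $X_{i_0}\subseteq Z$). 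Then $Y_{i_0}$ is a proper subvariety of $Z$, and $\pi(X)\supseteq X_{i_0}\minus Y_{i_0}=Z\minus Y_{i_0}$. Therefore
\[\cl(\pi(X))\minus\pi(X)=Z\minus\pi(X)\subseteq Z\minus(Z\minus Y_{i_0})=Y_{i_0},\]
and $Y_{i_0}$, being a proper subvariety of the irreducible variety $Z=\cl(\pi(X))$, is lower-dimensional; this proves the irreducible case and hence the lemma.

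The only real content here is the extraction of a dense Zariski-open subset $Z\minus Y_{i_0}$ of $Z=\cl(\pi(X))$ from the constructible description of $\pi(X)$, i.e.\ the standard fact that a constructible set whose closure is an irreducible variety contains a nonempty open subset of that variety; everything else is formal manipulation of finite unions, closures, and dimensions. The one spot in the reducible case that deserves a moment's care is checking that each $W_j$ has dimension strictly below $\dim\cl(\pi(X))$ even when the component $X_j$ has a low-dimensional image, which is immediate from $\dim W_j<\dim\cl(\pi(X_j))\le\dim\cl(\pi(X))$ as noted above. I do not anticipate a serious obstacle beyond being careful that the $X_i$ in the constructible decomposition are replaced by $X_i\cap Z$ before arguing.
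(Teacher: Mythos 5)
Your proof is correct, and it rests on the same pivotal tool as the paper's — Chevalley's theorem giving a constructible decomposition of $\pi(X)$ — but you organize the argument differently. The paper does not reduce to irreducible $X$; instead it normalizes the constructible decomposition so that each $X_i$ is irreducible and each $Y_i\cap X_i$ is proper in $X_i$, invokes Lemma~\ref{lem:dense} to identify $\cl(\pi(X))$ with $\bigcup_i X_i$, and then shows that the union $\bigcup_i(Y_i\cap X_i)$ is the required lower-dimensional subvariety. You reduce to the irreducible case up front, observe that $Z=\cl(\pi(X))$ is then itself irreducible, and use the fact that an irreducible variety covered by finitely many closed subsets must be covered by a single one of them to find an index $i_0$ with $X_{i_0}=Z$, so that a single $Y_{i_0}$ does the job. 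Your route has the small advantage of not needing Lemma~\ref{lem:dense} (the ``irreducible minus lower-dimensional is dense'' fact) at all, trading it for the elementary observation about irreducible sets and finite covers; the paper's route avoids the component-by-component reduction and a final recombination step. Both are clean, and the dimension bookkeeping in your reducible case (using $\dim W_j<\dim\cl(\pi(X_j))\le\dim\cl(\pi(X))$) is handled correctly.
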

\begin{proof}
By Theorem \ref{thm:chevalley}, $\pi(X)$ is a constructible set, 
so we can write $\pi(X) = \bigcup_{i=1}^s (X_i\minus Y_i)$
with varieties $X_i,Y_i$ in $\C^{D'}$.
More precisely, we can write
$\pi(X) = \bigcup_{i=1}^s (X_i\minus (Y_i\cap X_i))$, assume that each $X_i$ is irreducible, and assume that $Y_i\cap X_i$ is a proper subvariety of $X_i$ for each $i$.
By Lemma \ref{lem:dense}, we have $\cl(\pi(X)) = \bigcup_{i=1}^s X_i$, and thus
\[
\cl(\pi(X))\minus \pi(X) \subseteq \bigcup_{i=1}^s(Y_i\cap X_i). 
\]
Since $Y_i\cap X_i$ is a proper subvariety of $X_i$, there must be some polynomial that vanishes on $Y_i$ but not on $X_i$. Lemma \ref{lem:dimension} then implies that $Y_i\cap X_i$ is lower-dimensional, which proves the lemma.
\end{proof}

Finally, we need the basic fact that a projection, combined with closure, does not increase the dimension or degree of a variety.

\begin{lemma}\label{lem:projectionpreserves}
Let $\pi:\C^D\to \C^{D'}$ be a projection and $X\subset \C^D$ a variety.
Then
\[\dim(\cl(\pi(X)))\leq \dim(X)~~~\text{and}~~~\deg(\cl(\pi(X)))\leq \deg(X).\]
\end{lemma}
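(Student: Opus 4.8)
The plan is to strip the statement down to its essential case and then settle it via a ``cylinder vs.\ non-cylinder'' dichotomy, using generic slicing by flats to control the degree.

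\textbf{Reductions.} Since $\pi(X)=\bigcup_i\pi(X_i)$ over the irreducible components $X_i$ of $X$, and the closure of a finite union is the union of the closures, $\cl(\pi(X))=\bigcup_i\cl(\pi(X_i))$; as the dimension of a union is the maximum of the dimensions, and the degree of a variety is the sum of the degrees of its irreducible components (each of which is a component of some $\cl(\pi(X_i))$), it suffices to prove both inequalities when $X$ is irreducible. Next, a standard projection is a composition of one-coordinate projections, and for $\pi=\sigma\circ\tau$ with $\tau$ a one-coordinate projection one has $\cl(\pi(X))=\cl(\sigma(\cl(\tau(X))))$ (because the preimage of a variety under a projection is a variety, so $\sigma(\cl(S))\subseteq\cl(\sigma(S))$ for every set $S$); hence it suffices to treat a projection $\pi:\C^D\to\C^{D-1}$ forgetting the last coordinate $z_D$. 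Put $Y:=\cl(\pi(X))$; a short argument (if $Y=Z_1\cup Z_2$ then $X=(X\cap\pi^{-1}(Z_1))\cup(X\cap\pi^{-1}(Z_2))$, and neither term can equal $X$ unless $Y\subseteq Z_i$) shows $Y$ is irreducible, and note that $\pi^{-1}(Y)=Y\times\C$ (after reordering coordinates) is irreducible of dimension $\dim(Y)+1$, with $X\subseteq\pi^{-1}(Y)$.

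\textbf{Dichotomy and dimension.} If $X=\pi^{-1}(Y)$, then $\dim(X)=\dim(Y)+1$ and $\deg(X)=\deg(Y\times\C)=\deg(Y)$, and both inequalities hold. Otherwise $X$ is an irreducible \emph{proper} subvariety of the irreducible $\pi^{-1}(Y)$, so $\dim(X)\le\dim(Y)$. For the reverse inequality, pick $g$ in the ideal of $X$ not vanishing on $\pi^{-1}(Y)$; expanding $g=\sum_i a_i(z_1,\dots,z_{D-1})z_D^i$, some coefficient $a_r$ with $r\ge1$ must fail to vanish on $Y$ (otherwise $a_0=g-\sum_{i\ge1}a_iz_D^i$ would lie in the elimination ideal $I(X)\cap\C[z_1,\dots,z_{D-1}]$, forcing $g$ to vanish on $\pi^{-1}(Y)$). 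Over the dense open set $Y\minus Z(a_r)$ the fibers of $\pi|_X$ are then finite and nonempty, so $\pi|_X:X\to Y$ is a dominant morphism with generically finite fibers, whence $\dim(Y)\le\dim(X)$. Thus $\dim(X)=\dim(Y)=:m$.

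\textbf{Degree in the non-cylinder case, and the main obstacle.} By the definition of degree there is a $(D-1-m)$-flat $L\subseteq\C^{D-1}$ with $Y\cap L$ finite of size $\deg(Y)$; choosing $L$ generic we may additionally arrange, using Lemma~\ref{lem:projectionclosure} (which places $Y\minus\pi(X)$ inside a subvariety of dimension $<m$ that a generic such $L$ avoids) and repeated use of Lemma~\ref{lem:dimension}, that $Y\cap L\subseteq\pi(X)$ and that $X\cap\pi^{-1}(L)$ is zero-dimensional. Since $\pi^{-1}(L)$ is a $(D-m)$-flat, i.e.\ a $(D-\dim X)$-flat, and $X\cap\pi^{-1}(L)$ is finite and surjects onto $\pi(X)\cap L=Y\cap L$, we conclude $\deg(X)\ge|X\cap\pi^{-1}(L)|\ge|Y\cap L|=\deg(Y)$. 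The main obstacle is the reverse dimension inequality in the non-cylinder case: ruling out a dimension jump under projection is precisely the fiber-dimension phenomenon, and the cleanest route is to extract from the ideal of $X$ a relation of positive degree in $z_D$ that is generically monic over $Y$ and then invoke the standard fact about dominant morphisms with finite fibers. A secondary nuisance is the genericity bookkeeping in the last step — producing a single flat $L$ that simultaneously realizes $\deg(Y)$, avoids $\cl(Y\minus\pi(X))$, and cuts $X$ in the expected (zero) dimension.
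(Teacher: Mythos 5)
Your proposal is correct in substance, but it follows a genuinely different route from the paper's one-paragraph citation proof, so let me compare. The paper simply invokes [Ha92, Theorem 11.12] for the dimension inequality, and for the degree inequality it chains together Chevalley's theorem (Theorem~\ref{thm:chevalley}), Heintz's observation that a constructible set and its closure have equal degree, and Heintz's bound $\deg(\varphi(X))\le\deg(X)$ for linear $\varphi$. You instead reduce to one-coordinate projections of irreducible $X$, observe $X\subseteq\pi^{-1}(Y)=Y\times\C$, split on whether $X$ equals this cylinder, and in the non-cylinder case extract from $I(X)$ a polynomial with a coefficient $a_r$ ($r\ge1$) not vanishing on $Y$ — your elimination-ideal argument for the existence of such an $a_r$ is clean and correct — to get generically finite, nonempty fibers, and then cut by a generic flat $L$ (avoiding $Z(a_r)$ and $\cl(Y\minus\pi(X))$ via Lemma~\ref{lem:projectionclosure}, which is not circular here) to transport a degree-realizing configuration on $Y$ up to a finite configuration on $X$. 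This buys you a proof that is more self-contained and illuminating (the cylinder dichotomy makes the geometry explicit), at the cost of relying on two standard inputs that the paper's appendix never states: the fiber-dimension theorem for dominant morphisms (which is, in effect, the very [Ha92, Theorem 11.12] the paper cites) and the fact that the supremum in the definition of degree is attained on a dense set of flats of complementary dimension, so that a single $L$ can be chosen satisfying all your genericity requirements simultaneously. You flag both of these yourself; they are true and standard, but in a polished write-up they would need a citation or a short justification, so the paper's approach is shorter if one is willing to lean on references, while yours is preferable if one wants the mechanism visible.
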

\begin{proof}
The first inequality follows from \cite[Theorem 11.12]{Ha92}.
The second inequality is implied by the following facts.
By Theorem \ref{thm:chevalley}, $\pi(X)$ is constructible. By Heintz \cite[Remark 2]{Hei}, the degree of a constructible set equals the degree of its closure. 
By \cite[Lemma 2]{Hei}, $\deg(\varphi(X))\leq \deg(X)$ for any linear map $\varphi$. 
\end{proof}

\subsection{Analysis}\label{sec:analysis}

We require several concepts and facts from complex analysis in several variables.\footnote{We have placed these tools from analysis in the ``Tools from algebraic geometry'' section only for convenience.}
Precise definitions related to \emph{analytic} (or equivalently, \emph{holomorphic}) maps from $\C^m$ to $\C^n$ can be found in Fritzsche and Grauert \cite{FG02}.
In brief,
a function $f$ from an open set in $\C^m$ to $\C$ is analytic if it is locally represented by a power series in the $m$ coordinate variables, 
and a map $(f_1,\ldots, f_n)$ from an open set in $\C^m$ to $\C^n$ is analytic if each $f_i$ is analytic.
Polynomial maps are of course analytic.
A map $f$ between two open sets in $\C^m$ is \emph{bianalytic} if it is analytic and bijective, and $f^{-1}$  is analytic.

For an analytic map ${\bf f} = (f_1,\ldots,f_m)$ from an open set $U\subset \C^k$ to $\C^m$,
with $k\ge m$, we define the \emph{Jacobian matrix} of ${\bf f}$ at $z = (z_1,\ldots,z_k)\in U$ to be 
\[
J_{\bf f}(z) := \left(\frac{\bd f_i}{ \bd z_j}(z) \right)_{\substack{1\leq i\leq m\\1\leq j\leq k}}.
\]
Given a representation $\C^k=\C^n\times \C^m$, we write
\[
J_{\bf f}^{m}(z) := \left(\frac{\bd f_i}{ \bd z_j}(z) \right)_{\substack{1\leq i\leq m\\n+1\leq j\leq n+m}}
\]
for the rightmost square submatrix of $J_{\bf f}(z)$.

For proofs of Lemmas \ref{lem:ift1}, \ref{lem:ift2}, and \ref{lem:imt} below, see Fritzsche and Grauert \cite[Chapter 7]{FG02}. The first lemma implies the other two.

\begin{lemma}[{\bf Implicit function theorem}]\label{lem:ift1}
Let $B\subset \C^n\times \C^m$ be an open set, ${\bf f}:B\to \C^m$ an analytic map,
and $z_0\in B$ a point with ${\bf f}(z_0) = 0$ and $\det J_{\bf f}^{m}(z_0) \neq 0$.
Then there are open sets $U\subset \C^n,V\subset \C^m$ such that $z_0\in U\times V\subset B$, and an analytic map ${\bf g}:U\to V$ such that 
\[
\{(u,v)\in U\times V\mid {\bf f}(u,v)=0\}
= \{ (u,{\bf g}(u))\mid u\in U\}.
\]
\end{lemma}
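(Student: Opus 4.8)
The plan is to reduce the complex statement to the classical (real) implicit function theorem and then upgrade the resulting implicit map to a holomorphic one; this keeps us from using Lemma~\ref{lem:imt}, which in the paper's logical order is a consequence of this lemma. After translating the domain we may assume $z_0=0$, so ${\bf f}(0)=0$.

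First I would regard ${\bf f}$ as a $C^1$ (indeed real-analytic) map from an open subset of $\R^{2n}\times\R^{2m}$ to $\R^{2m}$, under the standard identifications $\C^n\cong\R^{2n}$, $\C^m\cong\R^{2m}$. The one linear-algebra input needed is that for a $\C$-linear endomorphism $T$ of $\C^m$ the realification $T_\R\colon\R^{2m}\to\R^{2m}$ satisfies $\det_\R(T_\R)=|\det_\C T|^2$; applying this to $T=J_{\bf f}^{m}(0)$, the derivative of $v\mapsto{\bf f}(0,v)$ at $v=0$, the hypothesis $\det J_{\bf f}^{m}(0)\neq 0$ shows that the real Jacobian of $v\mapsto{\bf f}(0,v)$ at the origin is invertible. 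The real implicit function theorem then yields open sets $U\subset\C^n$ and $V\subset\C^m$ with $0\in U\times V\subset B$, and a $C^1$ map ${\bf g}\colon U\to V$ with $\{(u,v)\in U\times V\mid{\bf f}(u,v)=0\}=\{(u,{\bf g}(u))\mid u\in U\}$; by continuity of $z\mapsto\det J_{\bf f}^{m}(z)$ we may shrink $U\times V$ so that $\det J_{\bf f}^{m}\neq 0$ throughout it.

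It remains to show that ${\bf g}$ is holomorphic. Here I would differentiate the identity ${\bf f}(u,{\bf g}(u))\equiv 0$ on $U$ using the chain rule for the first-order Wirtinger derivatives, which is valid since ${\bf g}$ is $C^1$. Since ${\bf f}$ is holomorphic, its derivatives with respect to the conjugate variables $\bar u$ and $\bar v$ vanish identically on $B$, so the chain rule collapses to the matrix identity $J_{\bf f}^{m}(u,{\bf g}(u))\cdot\bigl(\partial g_i/\partial\bar u_j\bigr)=0$ on $U$. Because $J_{\bf f}^{m}$ is invertible on the shrunken neighborhood, this forces $\partial g_i/\partial\bar u_j\equiv 0$ for all $i,j$, so ${\bf g}$ satisfies the Cauchy--Riemann equations and is holomorphic; together with the graph identity this proves the lemma. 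A clean alternative that avoids the real theorem is to fix $u$ near $0$ and solve ${\bf f}(u,v)=0$ for $v$ via the map $v\mapsto v-J_{\bf f}^{m}(0)^{-1}{\bf f}(u,v)$, which is a contraction on a small closed ball once $u,v$ are close enough to $0$; the unique fixed point defines ${\bf g}(u)$, and ${\bf g}$ is holomorphic as a locally uniform limit of the iterates, each of which is holomorphic in $u$ because ${\bf f}$ is.

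The main obstacle, and really the only nonroutine step, is the holomorphy upgrade: one must set up the Wirtinger chain rule carefully and be sure to have shrunk the neighborhoods so that $\det J_{\bf f}^{m}$ stays nonzero, which is exactly what lets the relation $J_{\bf f}^{m}\cdot(\partial{\bf g}/\partial\bar u)=0$ be inverted. Everything else is a direct appeal to the real implicit function theorem (or, in the alternative, to the Banach fixed point theorem together with Weierstrass's theorem on locally uniform limits), plus routine bookkeeping to arrange the neighborhood as a product $U\times V$ contained in $B$.
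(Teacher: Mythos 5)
The paper does not prove this lemma; it simply cites Fritzsche and Grauert \cite[Chapter 7]{FG02} for Lemmas~\ref{lem:ift1}, \ref{lem:ift2}, and \ref{lem:imt}, noting only that the first implies the other two. Your argument therefore supplies a proof where the paper supplies a reference, and it is a correct, standard derivation of the holomorphic implicit function theorem from the real one. The two ingredients you single out are exactly right: the linear-algebra fact $\det_\R(T_\R)=|\det_\C T|^2$ (which makes the hypothesis $\det J_{\bf f}^{m}(z_0)\neq 0$ translate into invertibility of the real Jacobian of $v\mapsto{\bf f}(z_0^{(1)},v)$, since that map is holomorphic), and the Wirtinger chain-rule computation showing $J_{\bf f}^{m}(u,{\bf g}(u))\cdot\bigl(\partial g_i/\partial\bar u_j\bigr)=0$, which forces the Cauchy--Riemann equations for the $C^1$ map ${\bf g}$ once one has shrunk $U\times V$ so that $J_{\bf f}^{m}$ stays invertible there. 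You are also careful, and correct, to avoid any appeal to Lemma~\ref{lem:imt}, since in the paper's stated logical order that lemma is a consequence of the present one. The fixed-point alternative you sketch (Newton-style iteration $v\mapsto v-J_{\bf f}^{m}(0)^{-1}{\bf f}(u,v)$ plus Weierstrass's theorem on locally uniform limits of holomorphic maps) is likewise sound and gives holomorphy of ${\bf g}$ directly without passing through the real theorem at all; it is the route taken in several textbook treatments, including Fritzsche--Grauert. Either route is an acceptable replacement for the paper's citation.
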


One instance of the implicit function theorem is especially important to us, and we state it here explicitly for convenience.

\begin{lemma}[{\bf Implicit function theorem for a surface in $\C^3$}]\label{lem:ift2}
Let $B\subset \C^3$ be an open set, $f\in \C[z_1,z_2,z_3]\backslash\{0\}$ a polynomial,
and $(a,b,c)\in B$ a point with $f(a,b,c) = 0$ and 
$\bd f/\bd z_3(a,b,c) \neq 0$.
Then there are open sets $U\subset \C^2, V\subset \C$ such that $(a,b,c)\in U\times V\subset B$, 
and an analytic map $g:U\to V$ such that 
\[\{(z_1,z_2,z_3)\in U\times V\mid f(z_1,z_2,z_3)=0\}
= \{ (z_1,z_2,g(z_1,z_2))\mid (z_1,z_2)\in U\}.\]
\end{lemma}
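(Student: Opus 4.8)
The plan is to obtain Lemma~\ref{lem:ift2} as the special case $n=2$, $m=1$ of the general implicit function theorem, Lemma~\ref{lem:ift1}. First I would observe that a polynomial $f\in\C[z_1,z_2,z_3]\minus\{0\}$ is in particular analytic on any open set $B\subset\C^3$, so, writing $\C^3=\C^2\times\C$ with the first factor carrying the coordinates $(z_1,z_2)$ and the second carrying $z_3$, the map ${\bf f}:=f:B\to\C$ is an analytic map of the kind to which Lemma~\ref{lem:ift1} applies. Next I would identify the relevant Jacobian submatrix: since $m=1$, the matrix $J_{\bf f}^{m}(z_0)$ is the $1\times 1$ matrix $\bigl(\bd f/\bd z_3(z_0)\bigr)$, whose determinant is simply $\bd f/\bd z_3(z_0)$. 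Thus at $z_0=(a,b,c)$ we have ${\bf f}(z_0)=f(a,b,c)=0$ and $\det J_{\bf f}^{m}(z_0)=\bd f/\bd z_3(a,b,c)\neq 0$ by hypothesis, so the hypotheses of Lemma~\ref{lem:ift1} are met.

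Applying Lemma~\ref{lem:ift1} then yields open sets $U\subset\C^2$ and $V\subset\C$ with $(a,b,c)\in U\times V\subset B$, together with an analytic map $g:U\to V$ satisfying
\[\{(z_1,z_2,z_3)\in U\times V\mid f(z_1,z_2,z_3)=0\}=\{(z_1,z_2,g(z_1,z_2))\mid (z_1,z_2)\in U\},\]
which is exactly the conclusion of Lemma~\ref{lem:ift2}. So the entire proof is a one-line specialization once the notational correspondence is set up.

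I do not expect any genuine obstacle here; the only thing to be careful about is the bookkeeping translation between the two lemmas, namely that the distinguished variable is the last coordinate $z_3$ and that for $m=1$ the determinant condition degenerates to the nonvanishing of a single partial derivative. If one preferred not to invoke Lemma~\ref{lem:ift1} as a black box, one could instead give the classical single-variable argument: for $(z_1,z_2)$ near $(a,b)$ the function $z_3\mapsto f(z_1,z_2,z_3)$ has, by the nonvanishing of $\bd f/\bd z_3$ and Rouch\'e's theorem, a unique simple zero $g(z_1,z_2)$ near $c$, and analyticity of $g$ follows from the Cauchy-type integral representation $g(z_1,z_2)=\frac{1}{2\pi i}\oint \zeta\,\frac{\bd f/\bd z_3(z_1,z_2,\zeta)}{f(z_1,z_2,\zeta)}\,d\zeta$ over a small circle around $c$. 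Since Lemma~\ref{lem:ift1} is already available, the short deduction is the one I would use.
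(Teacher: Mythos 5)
Your proposal is correct and matches the paper exactly: the paper itself states that Lemma~\ref{lem:ift2} is a special case of Lemma~\ref{lem:ift1} (``The first lemma implies the other two''), and your specialization with $n=2$, $m=1$, reducing the determinant condition to $\bd f/\bd z_3(a,b,c)\neq 0$, is precisely that deduction. The alternative Rouch\'e/argument-principle sketch you mention is a nice classical route but is not needed here.
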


In certain parts of our proof we use the following equivalent formulation of the implicit function theorem (see \cite{FG02}).
We write ${\bf f}|_{U}$ for the restriction of the function ${\bf f}$ to  a subset $U$ of its domain.

\begin{lemma}[{\bf Inverse mapping theorem}]\label{lem:imt}
Let $B_1,B_2\subset \C^n$ be open, $z_0\in B_1$, and ${\bf f}:B_1\to B_2$ analytic.
Then $\det J_{\bf f}(z_0)\neq 0$ if and only if
there are open sets $U\subset B_1$, $V\subset B_2$ such that $z_0\in U$, ${\bf f}(z_0)\in V$, 
and ${\bf f}|_{U}:U\to V$ is bianalytic.
\end{lemma}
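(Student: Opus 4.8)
The plan is to deduce Lemma~\ref{lem:imt} from the implicit function theorem, Lemma~\ref{lem:ift1} --- this is precisely the implication announced in the remark preceding the statement. I would prove the two directions of the equivalence separately.

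For the forward implication, assume $\det J_{\bf f}(z_0)\neq 0$ and set $w_0:={\bf f}(z_0)$. The idea is to view the equation ${\bf f}(z)=w$ as defining $z$ implicitly as a function of $w$. Concretely, I would work in $\C^n\times\C^n$ with coordinates $(w,z)$, putting the $z$-block \emph{last} to match the convention of Lemma~\ref{lem:ift1}, and apply that lemma to the analytic map $F(w,z):={\bf f}(z)-w$ on the open set $\C^n\times B_1$, at the point $(w_0,z_0)$. One has $F(w_0,z_0)=0$, and the relevant square submatrix $J_F^{\,n}(w_0,z_0)$ --- the partials in the $z$-variables --- is exactly $J_{\bf f}(z_0)$, hence invertible. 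Lemma~\ref{lem:ift1} then yields open sets $V\ni w_0$, $U_1\ni z_0$ and an analytic map ${\bf g}:V\to U_1$ with ${\bf f}(z)=w$ if and only if $z={\bf g}(w)$, for $(w,z)\in V\times U_1$; in particular ${\bf f}\circ{\bf g}=\mathrm{id}_V$, and ${\bf g}({\bf f}(z))=z$ whenever $z\in U_1$ and ${\bf f}(z)\in V$. I would then take $U:=U_1\cap{\bf f}^{-1}(V)$, an open neighborhood of $z_0$, and check that ${\bf f}|_U:U\to V$ is bianalytic with inverse ${\bf g}$.

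For the reverse implication, if ${\bf f}|_U:U\to V$ is bianalytic with analytic inverse ${\bf h}$, then ${\bf h}\circ{\bf f}=\mathrm{id}_U$ near $z_0$, and the chain rule for holomorphic maps gives $J_{\bf h}({\bf f}(z_0))\,J_{\bf f}(z_0)=I_n$, so $J_{\bf f}(z_0)$ is invertible, i.e.\ $\det J_{\bf f}(z_0)\neq 0$.

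I do not expect a genuine obstacle: this is standard infrastructure, and every analytic input --- the implicit function theorem, the chain rule, and the identification of $J_F^{\,n}$ with $J_{\bf f}$ --- is available from Section~\ref{sec:analysis} or from Fritzsche and Grauert~\cite{FG02}. The only point needing a moment's care is the bookkeeping with the open sets in the forward direction: verifying, from ${\bf f}\circ{\bf g}=\mathrm{id}_V$, that ${\bf g}(V)\subset U$ and hence ${\bf f}(U)=V$ (not merely ${\bf f}(U)\subseteq V$), so that the restriction ${\bf f}|_U$ is a genuine bijection onto $V$ whose inverse is exactly the analytic map ${\bf g}$ produced by Lemma~\ref{lem:ift1}.
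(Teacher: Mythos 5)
Your argument is correct and matches the paper exactly, which simply refers to Fritzsche and Grauert~\cite[Chapter 7]{FG02} and notes that Lemma~\ref{lem:ift1} implies Lemma~\ref{lem:imt} --- precisely the derivation you carry out. The one bookkeeping point you should tidy is that the statement requires $V\subset B_2$: applying Lemma~\ref{lem:ift1} to $F(w,z)={\bf f}(z)-w$ on $B_2\times B_1$ rather than on $\C^n\times B_1$ yields this automatically (alternatively, intersect the $V$ you obtain with $B_2$ and shrink $U$ accordingly); the rest of the forward direction and the chain-rule argument for the converse are fine as written.
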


\end{document}